\newcommand{\ZZ}{{\mathbb Z}}
\newcommand{\Z}{{\mathbb Z}}
\newcommand{\CC}{{\mathbb C}}
\newcommand{\R}{{\mathbb R}}
\newcommand{\TT}{{\mathbb T}}
\newcommand{\T}{{\mathbb T}}
\newcommand{\TP}{{\mathbb{T}P}}
\newcommand{\GL}{{\text{GL}}}
\newcommand{\CP}{{\mathbb{C}P}}
\newcommand{\A}{{\mathcal A}}
\newcommand{\C}{{\mathcal C}}
\newcommand{\F}{{\mathcal F}}
\newcommand{\X}{{\mathcal X}}
\newcommand{\M}{{\mathcal M}}
\renewcommand{\P}{{\mathcal P}}
\renewcommand{\M}{{\mathcal M}}
\renewcommand{\L}{{\mathcal L}}
\newcommand{\Ln}{{\mathcal L}}
\newcommand{\V}{{\mathcal V}}
\newcommand{\Log}{\text{Log}}
\newcommand{\Ed}{\text{Edge}}
\renewcommand{\div}{\text{div}}
\newcommand{\val}{\text{val}}
\newcommand{\trp}{\text{Trop}}
\newtheorem{introthm}{Theorem}
\newtheorem{introproposition}[introthm]{Proposition}
\newtheorem{introconj}[introthm]{Conjecture}
\newtheorem{thm}{Theorem}[section]
\newtheorem{defn}[thm]{Definition}
\newtheorem{definition}[thm]{Definition}
\newtheorem{prop}[thm]{Proposition}
\newtheorem{proposition}[thm]{Proposition}
\newtheorem{lemma}[thm]{Lemma}
\newtheorem{cor}[thm]{Corollary}
\newtheorem{corollary}[thm]{Corollary}
\newtheorem{conj}[thm]{Conjecture}
          {\theoremstyle{definition}
\newtheorem{rem}[thm]{Remark}}
          {\theoremstyle{definition}

\newtheorem{example}[thm]{Example}
\newtheorem{construction}[thm]{Construction}}
 \numberwithin{equation}{section}
\tikzset{%
  add/.style args={#1 and #2}{to path={%
 ($(\tikztostart)!-#1!(\tikztotarget)$)--($(\tikztotarget)!-#2!(\tikztostart)$)%
  \tikztonodes}}
} 
\newcommand{\comment}[1]{}
\begin{document}
\title{Tropical surfaces}
\author[Kristin Shaw]{Kristin Shaw}
\address{Kristin Shaw, 
Technische Universit\"at
Berlin, MA 6-2, 10623 Berlin, Germany.}
\email{shaw@math.tu-berlin.de} 
\address{}
\email{}

\thanks{This research is supported by an Alexander von Humboldt postdoctoral fellowship.}


\begin{abstract}
We present tools and definitions to study abstract tropical 
manifolds in dimension $2$, which we call simply tropical surfaces.  This includes 
explicit descriptions of intersection numbers of $1$-cycles, normal bundles to some curves and 
tropical Chern cycles and numbers. 
We provide a new method for constructing tropical surfaces, called the tropical sum, similar to the 
fiber sum of usual manifolds.

We prove a tropical adjunction formula for curves in compact tropical surfaces satisfying a local condition, a partial 
Castelnuovo-Enriques criterion for contracting $-1$-curves,  and also invariance of $(p, q)$-homology and Chow groups under tropical modification. 
Finally we prove a tropical version of Noether's formula for compact surfaces 
constructed from tropical toric surfaces by way of summations and tropical modifications. 
\end{abstract}
\maketitle

\tableofcontents

\section{Introduction}

Tropical geometry is a recent field of mathematics which can be loosely described as geometry over the tropical semi-field. 
This semi-field is also known as the max-plus semi-field and is  $\T := \R \cup \{-\infty\}$ equipped with the operations of maximum  and addition.

 Tropical spaces are polyhedral in nature and carry an integral affine structure. Sometimes these spaces arise via limits, degenerations, or skeletons of algebraic varieties  defined over a field. 
However, this is not always the case. A popular example  comes from the theory of matroids. The 
tropicalization of a linear space defined over a field  depends only on an associated (perhaps, valuated) matroid \cite{SturmPoly}. This led to the notion of  the Bergman fan of a matroid, which gives a representation of any matroid as a tropical linear space. 
In classical manifold theory, manifolds  are locally modeled on affine spaces, therefore we take matroidal fans as  the local models of tropical manifolds. 
Interestingly, whether a   matroid is representable over a field or not, the associated tropical linear space has many properties analogous to those of smooth spaces in classical geometry.  This eventually lead to properties of  tropical manifolds akin to properties of non-singular algebraic varieties. 

\bigskip
Abstract tropical manifolds of dimension $1$, tropical curves,  are simply metric graphs. 
A Riemann-Roch theorem for divisors on graphs given  by Baker and Norine \cite{BakerNorine1} and extended to  tropical curves \cite{GatKer}, \cite{MikZha:Jac}, sparked a study of linear series on graphs with many applications to the study of linear series on classical algebraic curves. These applications come usually by way of specialization theorems, which allow one to deduce inequalities between ranks of divisors and their tropicalizations \cite{Baker1}. 
For example, there is  a proof of Brill-Noether theorem  \cite{Pay3}, the study of gonality of curves \cite{AminiKool}  and even applications to the arithmetic geometry of curves 
\cite{KatRabZur}. On the combinatorial side, the theory of divisors on tropical curves is related to  chip-firing games, see \cite{Baker:ChipFire}.

Before this, the enumerative geometry of  tropical curves in $\R^2$ had powerful applications to real and complex enumerative geometry by way of Mikhalkin's correspondence theorem \cite{Mik1}.  Abstract tropical curves have also had applications to enumerative geometry via Hurzwitz numbers \cite{CJM}, \cite{Br13}, \cite{MarkwigRau:RealHurwitz}, as well as via the study of the intersection theory of their moduli spaces  \cite{MikPsi}, \cite{GKM}, \cite{MarkwigRau:TropicalDescendent}. 

The goal of this paper is to provide necessary definitions and tools to study higher dimensional  tropical manifolds, focusing on dimension $2$.
The definition of abstract tropical varieties from \cite{MIk3} has been adapted to consider tropical spaces locally modeled on matroidal fans.  These spaces will be called tropical manifolds.
Rather than adopt the term ``non-singular tropical variety", the name ``tropical manifold" is more fitting since there are examples of tropical manifolds which are models of  non-algebraic complex manifolds \cite{RuggShaw}. 

Before summarizing the main results, we would like to point out the work of  Cartwright on tropical complexes \cite{Cart:comp}, \cite{Cartwright:surfaces}. A tropical complex is a simplicial complex equipped with some extra numerical data.  One should think of the simplicial complex as the dual complex of a degeneration and the additional data encoding intersection numbers. 
Tropical complexes of dimension $1$, do not require this extra numerical information and can be considered simply as discrete graphs appearing in the work of Baker and Norine. 

Cartwright  proves specialization lemmas for ranks of divisors on tropical complexes. For tropical complexes of dimension $2$, there is an analogue of the Hodge Index Theorem on the tropical Neron-Severi group, as well as a tropical version of Noether's formula.
For tropical surfaces studied here,  there is also an intersection product on the $(1, 1)$-tropical homology group of a compact tropical surface, see \cite{MikZhaEig}, \cite{ShawTh}. The intersection product on the $(1, 1)$-tropical homology group does not necessarily satisfy the analogue of the Hodge Index Theorem \cite{ShawHomo}.  However, this does not imply that the tropical Hodge Index Theorem fails when the intersection pairing is restricted to the  Neron-Severi group of a tropical surface.

\bigskip 
We now summarize the outline and results of this paper. We begin by describing the local models of tropical manifolds; 
fan tropical linear spaces in $\R^N$ and in tropical affine space. A fan tropical linear space in $\R^N$ is given by a matroid and a choice of $\Z^N$-basis. Here we are only interested in the support of the fan, also called the coarse structure in \cite{Ard}. In general, it is possible to obtain the same tropical fan plane from non-isomorphic matroids, if we also choose different bases of $\Z^N$. However, in restricting to dimension $2$ we prove that this is not possible. 

\begin{introproposition}
The underlying matroid of a fan tropical plane in $\R^N$ is unique. 
\end{introproposition}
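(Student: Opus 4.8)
The plan is to reconstruct $M$ directly from the support $X\subseteq\R^N$, using only intrinsic polyhedral and integral-affine data; since the reconstruction never refers to a chosen presentation, uniqueness will follow. First I fix the canonical coarsest fan structure on $X$: its open $2$-cells are the connected components of the locus where $X$ is locally a plane, and the remaining $1$-dimensional strata are the \emph{visible} rays, each taken with its primitive integer generator. Recall that for $X=B(M)$ with $M$ a simple rank-$3$ matroid the finer Bergman structure has rays indexed by the proper nonempty flats, i.e.\ the points and lines of the associated point-line geometry, and $2$-cones indexed by the flags ``point $<$ line''. The transverse valence of a ray equals the number of $2$-cones through it, that is, the number of lines through the point, resp.\ of points on the line; valence-$2$ rays are locally planar and hence do \emph{not} appear among the visible rays.

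Along each visible ray the transverse slice is a $1$-dimensional balanced fan, a star of rays summing to zero. I would use these transverse stars together with the balancing condition to reinstate the hidden, valence-$2$ flats and thereby recover the full Bergman structure: the complete set of point-rays and line-rays, and the bipartite incidence relation recorded by the $2$-cones, each of which meets exactly one point-ray and one line-ray.

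It then remains to decide which bipartite class consists of point-rays. The decisive asymmetry is that any two distinct point-rays have exactly one common neighbour, the unique line through the two points, which always exists in rank $3$, whereas two line-rays meet in at most one point and may be disjoint, hence have at most one and possibly no common neighbour. When $M$ has two disjoint lines this already singles out the point side. In the remaining symmetric, projective-plane type configurations I would appeal to the integral structure, where the primitive generator of a line-ray equals the sum, modulo the lineality space, of the generators of its incident point-rays; thus the point-rays form a distinguished spanning family from which every line-ray is a $\{0,1\}$-combination. Once the two classes are labelled, the $2$-cones give the lattice of flats of the simple matroid $\mathrm{si}(M)$, hence $\mathrm{si}(M)$ itself; parallel elements are read off as coincident generator directions and loops as directions lying in the lineality space, so $M$ is recovered.

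The main obstacle is precisely the two points where dimension $2$ is essential. First, lines with only two points, and points lying on only two lines, produce invisible valence-$2$ rays, so the matroid cannot be read off from the coarse structure alone and these flats must be forced from the transverse stars and the balancing condition; checking that this reinstatement is unambiguous will require care. Second, separating point-rays from line-rays in the symmetric configurations cannot be done from the incidence graph alone and genuinely needs the integral-affine structure; verifying that the $\{0,1\}$-combination characterisation of line-rays is forced in rank $3$, and that no relabelling survives it, is what I expect to be the technical heart of the argument.
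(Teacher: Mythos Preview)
Your plan is \emph{intrinsic}: reconstruct $M$ from the support $X$ alone. The paper's proof is \emph{comparative}: it assumes two presentations $\trp_{\Delta_1}(M_1)=\trp_{\Delta_2}(M_2)$ and shows $M_1\cong M_2$ by analysing how the two labellings of the coarse link graph $G$ can differ. Both routes end at the same integral-structure obstruction in the projective-plane case (the paper phrases it as ``the incidence matrix of a finite projective plane has determinant $>1$'', which is your ``line-rays are $\{0,1\}$-sums of point-rays but not conversely''), so your step~4 is on the right track.

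The real gap is your step~2. You want to reinstate the invisible valence-$2$ rays to recover the fine (bipartite) Bergman structure, and only then run the bipartition argument of step~4. But the reinstatement is circular as stated: to know \emph{which} coarse $2$-cones to subdivide, and \emph{where}, you already need to know which of the bounding rays are rank-$1$ versus rank-$2$. For instance, a coarse cone bounded by two rank-$1$ rays $u_i,u_j$ hides the rank-$2$ ray $u_i+u_j$; one bounded by a rank-$1$ and a rank-$2$ ray hides nothing; and one bounded by two rank-$2$ rays $u_F,u_G$ (which can happen exactly in the parallel-connection cases of Corollary~\ref{cor:missinglines}) hides a rank-$1$ ray at $u_F+u_G$. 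The ``transverse stars together with the balancing condition'' do not by themselves distinguish these cases: balancing is already satisfied by the coarse fan, and the transverse star at a visible ray only records the number and directions of the adjacent coarse cones, not how they should be subdivided. Your common-neighbour test in step~4 is a statement about the \emph{fine} flag graph and does not apply to the coarse graph before reinstatement (two point-rays joined by a hidden $2$-element line are directly adjacent there, with no common neighbour at all), so you cannot bootstrap step~4 to resolve step~2.

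The paper sidesteps this entirely. It never reinstates: it works directly with the coarse link graph $G$ and observes that each presentation $M_i$ gives a partition of the vertices of $G$ into ``element'' vertices and ``rank-$2$ flat'' vertices, with the key combinatorial fact (once the degenerate cases of Corollary~\ref{cor:missinglines} are set aside) that no two rank-$2$ vertices are adjacent. It then studies the set $\tilde V$ of vertices where the two partitions disagree, and shows that the induced subgraph on $\tilde V$ is either the incidence graph of a finite projective plane (ruled out by the determinant argument, i.e.\ your integral obstruction) or a hexagon. In the hexagon case an explicit bijection of ground sets gives the isomorphism $M_1\cong M_2$; this is the ``saturated triangle'' configuration singled out in the corollary following Theorem~\ref{thm:MatEquiv}. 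The advantage of the comparative approach is that it turns the ambiguity you are worried about into a concrete finite object $\tilde V$ whose structure can be pinned down, rather than trying to resolve the ambiguity a priori. If you want to salvage the intrinsic route, you would need to replace step~2 by an argument that any two valid partitions of the coarse graph into rank-$1$ and rank-$2$ vertices (subject to the adjacency and sum constraints) yield isomorphic matroids --- but that is precisely what the paper's comparison of labellings accomplishes.
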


This however does not imply that the coordinates with respect to which a fan tropical linear space is equal to a Bergman fan of a matroid is unique, just that it is not possible to change the underlying matroid by changing the $\Z^N$-basis, see Example \ref{ex:delPezzo5}. 
There are known counter-examples to the above statement for fan  tropical linear spaces of dimension greater than $2$.

The rest of Section \ref{sec:prel} defines tropical modifications of fan tropical linear spaces, $1$-cycles in fan tropical planes, and also intersections of $1$-cycles in fan tropical planes using  the definitions from \cite{Br17}.  
Tropical intersection theory is, for the most part, defined locally and these definitions will be used when passing to general tropical surfaces. 

Section \ref{sec:SURF} switches from local to global considerations. 
Section \ref{sec:defsuf} gives the general  definition of tropical manifolds and Section \ref{sec:bdyArr} describes their boundaries. 
In Section \ref{sec:Cherncycles}, the definition of   Chern cycles of tropical manifolds are extended beyond the first Chern cycles.
 For tropical surfaces
we give a description of  $c_2(X)$ in terms of combinatorial data of 
 tropical fan planes.  We also show that the canonical cycle of a tropical manifold is balanced. 

\begin{introproposition}
The canonical cycle $K_X$ of a  tropical manifold $X$  is a balanced tropical cycle. 
\end{introproposition}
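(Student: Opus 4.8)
The plan is to establish balancing locally, since the balancing condition is checked one codimension-two face at a time and every face of $X$ lies in a chart modeled on a fan tropical linear space. First I would recall, from the Chern cycle construction, that $K_X$ is a codimension-one weighted polyhedral complex whose restriction to a chart modeled on a matroidal fan $B(M)$ (possibly times a linear factor $\R^k$) is an explicit cycle $K_{B(M)}$ supported on the codimension-one skeleton, with weights determined by the local combinatorial data of the fan. Since the transition maps between charts are integral-affine isomorphisms, they carry lattices, primitive integer vectors, and weights from one chart to another, so the local cycles agree on overlaps and glue to a well-defined weighted complex $K_X$.

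Next I would reduce balancing at an arbitrary codimension-two face $\tau$ of $X$ to a statement in a single local model. Every maximal cell of $K_X$ incident to $\tau$, together with $\tau$, lies in one chart, where the configuration is the corresponding codimension-two cone of $B(M) \times \R^k$. The $\R^k$-factor splits off and contributes nothing to the balancing equation, so the problem collapses to verifying the balancing condition for $K_{B(M)}$ at each codimension-two cone, and after passing to a transverse slice it suffices to treat the cone point of a two-dimensional Bergman fan, that is, $M$ a loopless matroid of rank three.

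The core step, and the step I expect to be the main obstacle, is this local verification. The rays of $B(M)$ are indexed by the proper flats of $M$, with the ray of $F$ generated by $e_F = \sum_{i \in F} e_i$, and $K_{B(M)}$ assigns to it a weight $w_F$ read off from the transverse tropical curve along that ray — an Euler-characteristic-type quantity of the form $(\text{valence}) - 2$, controlled by the number of maximal cones incident to the ray, equivalently by the flats adjacent to $F$ in the lattice of flats. Balancing at the cone point is the integral identity $\sum_F w_F\, e_F = 0$ in $\R^E / \langle \mathbf{1} \rangle$, which after collecting the coefficient of each generator $e_i$ is equivalent to $\sum_{F \ni i} w_F$ being independent of the element $i$. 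I would prove this by a counting argument over the flats of $M$, reducing it to a symmetric relation among the numbers of flats of each rank through a fixed element; the delicate point is to match the Euler-characteristic weights $w_F$ with these flat counts so that the dependence on $i$ cancels. This combinatorial identity is of the same nature as the incidence relations that already make $B(M)$ a balanced cycle, and once it is established, global balancing of $K_X$ follows immediately from the gluing of the first step.
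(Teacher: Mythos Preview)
Your proposal is correct and follows essentially the same route as the paper: reduce to a codimension-two face, split off the $\R^{n-2}$ factor, and then verify balancing at the cone point of a rank-$3$ Bergman fan by showing that $\sum_{F\ni i} w_F$ is independent of $i$. The paper carries out exactly this computation, making the combinatorial identity explicit: with $w_{u_i}=|\{p_I:i\in I\}|-2$ and $w_{u_I}=|I|-2$, the covering axiom for flats gives $\sum_{I\ni i}(|I|-1)=N_x$, whence the common value is $N_x-2$; this is the concrete form of the ``counting argument'' you allude to.
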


Section \ref{sec:int1cyc} describes how to pass from local intersection theory from Section \ref{sec:fancycles}
 to intersections of $1$-cycles of curves on surfaces. Also we describe the 
normal bundles to boundary curves, in order to define their self-intersections. This will also be used to define the tropical sum in Section \ref{sec:construction}.
At the boundary of a tropical manifold, intersection products are no longer defined on the cycle level, and must be defined up to equivalence. Here we consider only numerical equivalence for simplicity. 

There is also a tropical intersection theory based on Cartier divisors, see  \cite{MIk3}, \cite{AlRa1}, and the $2$ theories agree. 
Moreover, we show that on any tropical manifold there is an equivalence of Weil and Cartier divisors. This is a property of non-singular spaces in classical algebraic geometry. 

\begin{introthm}\label{thm:eqvCartWeilIntro}
Every codimension $1$ cycle  on a tropical manifold is a Cartier divisor. 
\end{introthm}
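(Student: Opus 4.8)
The plan is to establish this locally and then patch the local data together, using the fact that a tropical manifold is, by definition, locally modeled on matroidal fans (Bergman fans of matroids). The key point is that near any point $p$ of $X$, the space looks like an open subset of a fan tropical linear space $V \subset \R^N$, so any codimension $1$ cycle $D$ restricts to a codimension $1$ tropical cycle in $V$, and I want to produce a tropical rational function (piecewise-integral-affine function whose divisor of corners recovers the cycle) realizing $D$ as a Cartier divisor on this chart.

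First I would reduce to the local/fan situation. Since being a Cartier divisor is a local condition — one needs, on each open set of a covering, a tropical rational function whose associated Weil divisor agrees with $D$ — it suffices to solve the problem on each matroidal-fan chart and then check that the locally defined functions differ by integral-affine (hence invertible in the tropical sense) functions on overlaps, so that they glue to a global Cartier divisor datum. The heart of the matter is therefore the purely local statement: on a matroidal fan $V = B(M) \subset \R^N$ of dimension $n$, every codimension $1$ (i.e. weight-balanced, dimension $n-1$) fan cycle is cut out as the corner locus of a piecewise integral-affine function on $V$.

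For the local step my approach would be to use the matroidal/Bergman-fan structure explicitly. A codimension $1$ balanced fan in $V$ is determined by its weights on the codimension $1$ cones of a suitable fan structure refining $V$; I would try to realize these weights as the corner weights of a concrete piecewise-linear function, building the function cone-by-cone outward from the origin using the balancing condition to guarantee consistency. The crucial input that makes this possible — and which fails for general (non-matroidal) fans — is that matroidal fans behave like smooth varieties: one can invoke the known local structure of the Chow/Minkowski-weight groups of Bergman fans, or equivalently the fact (from the cited tropical intersection theory, \cite{Br17}, \cite{AlRa1}) that on a matroidal fan every Minkowski weight of codimension $1$ is represented by a Cartier divisor because the relevant local Picard-type group surjects onto the group of codimension $1$ cycles. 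Here one may use that the star of each cone of a matroidal fan is again matroidal, allowing an inductive argument on dimension or on the number of cones.

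The main obstacle I anticipate is the gluing/consistency step rather than the existence of the local function. On a single cone the function is easy to prescribe, but ensuring that the choices made on adjacent cones agree along their common faces — so that a single continuous piecewise-integral-affine function exists on the whole chart — is exactly where the balancing condition and the matroidal hypothesis must be used carefully; this is the analogue of exactness of the sequence relating Weil divisors, Cartier divisors, and rational functions, and it is precisely the property that distinguishes matroidal fans (smooth local models) from arbitrary tropical cycles, where Weil and Cartier divisors genuinely differ. I would expect to reduce this obstacle to the established local intersection-theoretic results for matroidal fans cited earlier, so that the theorem follows by combining the local realization with a straightforward partition-of-unity-style gluing over a finite atlas.
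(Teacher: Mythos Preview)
Your proposal is essentially correct and follows the same overall architecture as the paper's proof: reduce to charts, invoke the local result that every codimension $1$ fan cycle in a matroidal fan is the divisor of a tropical rational function (the paper cites this as \cite[Lemma~2.23]{ShawInt} rather than reproving it cone-by-cone), and then glue.

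One point worth correcting is your assessment of where the difficulty lies. You flag the gluing step as the main obstacle, but in fact it is automatic and the paper dispatches it in one line: if $f_\alpha$ and $f_\beta$ both have divisor $D$ on the overlap $U_\alpha \cap U_\beta$, then $``f_\alpha/f_\beta"$ has zero divisor there, hence is integer affine (invertible). No partition-of-unity argument is needed; a Cartier divisor is by definition just the collection $\{f_\alpha\}$ subject to this compatibility, and the compatibility comes for free once local existence is known. The actual content is entirely in the local statement that every codimension $1$ balanced fan in a Bergman fan is a principal divisor, which is precisely the matroidal input you identify.
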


Finishing off Section \ref{sec:SURF}, we summarize the definitions of tropical rational equivalence from \cite{MIk3}, and also $(p, q)$-holomology from \cite{IKMZ}. Via a cycle map a $1$-cycle $A$  in a tropical surface produces a $(1, 1)$ homology cycle $[A]$.  
For consistency,  we show that the intersection of $1$-cycles in a tropical surface is compatible with their intersection as $(1, 1)$-cycles. It  follows that  rational equivalence respects the numerical  intersection pairing.

\begin{introproposition}
The intersection of $1$-cycles $A, B$ in a compact tropical surface $X$ is numerically equivalent to their intersection as $(1, 1)$-cycles.
\end{introproposition}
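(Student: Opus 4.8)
The plan is to show that both intersection numbers arise as sums of purely local contributions and that these contributions agree point by point. First I would recall that the intersection $A \cdot B$ of $1$-cycles constructed in Section \ref{sec:int1cyc} is a $0$-cycle whose support lies in $A \cap B$, each of its weights being computed in a neighbourhood modelled on a fan tropical plane by the local rules of Section \ref{sec:fancycles} and \cite{Br17}. On the other side, the cycle map sends $A$ and $B$ to classes $[A], [B] \in H_{1,1}(X)$, and for compact $X$ the $(1,1)$-intersection is the image of $[A] \cdot [B]$ under the degree isomorphism $H_{0,0}(X) \cong \Z$, using the intersection product of \cite{MikZhaEig}, \cite{ShawTh}. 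Since the proposition only asks for numerical equivalence, I am free to replace $A$ and $B$ by numerically (indeed rationally) equivalent cycles; the first step is therefore to move them into general position, so that $A \cap B$ is a finite set of points, each lying in the relative interior of a facet of both cycles and away from the boundary $\partial X$.

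The second step is to localise the homological pairing. Using the cellular complex computing tropical homology from \cite{IKMZ}, the product $[A] \cdot [B] \in H_{0,0}(X)$ is represented by a $0$-chain supported on $A \cap B$, obtained by a cellular intersection pairing that only involves the stalks $\F_p$ of the multi-tangent sheaf in arbitrarily small neighbourhoods of the intersection points. Consequently both $A \cdot B$ and $[A] \cdot [B]$ decompose as $\sum_{p \in A \cap B} m_p$, and it suffices to verify, for each $p$, that the two local weights coincide. Because both constructions are defined in a neighbourhood of $p$ isomorphic to an open subset of a fan tropical plane, this reduces the statement to a single matroidal fan.

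The final and hardest step is this local comparison. In the local model the intersection number of $1$-cycles of Section \ref{sec:fancycles} is given by the stable intersection and fan displacement rule of \cite{Br17}, whereas the local homological weight is the value of the pairing $\F_1 \otimes \F_1 \to \F_2 \to \Z$ on the primitive tangent multivectors carrying the two $(1,1)$-classes, where the last map is the degree of the matroidal fan. I would make both expressions explicit in terms of the lattice of flats of the matroid and check their equality; when the local model is simply $\R^2$ this is the classical fact that stable intersection computes the cup product, and the real content is to extend it to the case where the two curves meet at the apex of a genuinely matroidal fan. I expect the main obstacle to be precisely this matroidal case: one must show that the additional terms appearing in the displacement-rule multiplicity are reproduced exactly by the structure of $\F$ on the fan, which demands a careful bookkeeping of the primitive ray generators and the balancing condition rather than any single clean identity. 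Once the local weights are matched, summing over $A \cap B$ and applying the isomorphism $H_{0,0}(X) \cong \Z$ shows that $A \cdot B$ is numerically equivalent to $[A] \cdot [B]$, completing the proof.
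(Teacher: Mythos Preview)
Your plan has a circularity at the first step. You propose to replace $A$ and $B$ by rationally equivalent tropical cycles in general position, but the fact that the cycle-level product $A\cdot B$ is invariant under rational equivalence is exactly Corollary~\ref{cor:equivInt}, which in the paper is \emph{deduced from} the proposition you are trying to prove together with Proposition~\ref{prop:equivInt}. So you cannot appeal to it here. Even setting that aside, tropical $1$-cycles in a surface are rigid objects (they must be balanced), and there is no reason to expect that one can move them off the $0$-skeleton of $X$ or arrange that $A\cap B$ avoids fan vertices; two fan curves in a fan plane always meet at the apex.

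The paper's argument avoids both problems by never moving the tropical cycles at all. Instead it perturbs the $(1,1)$-representatives $[A]$ and $[B]$ to homologous $(1,1)$-cycles $a,b$ that are \emph{not} of the form $[A'],[B']$ for any tropical cycles. This is the content of Lemma~\ref{lem:homol}: in a fan chart one slides the framing of $[A]$ onto the $1$-skeleton of $P$ and then onto a translate of the standard tropical line, so that $a$ and $b$ intersect transversally and the transversal multiplicities visibly reproduce the formula $d_1d_2-\sum_{p_I}(\overline{A}\cdot\overline{B})_{p_I}$ of Definition~\ref{def:localInt}. Your step~3, asserting that $[A]\cdot[B]$ is automatically represented by a $0$-chain supported on $A\cap B$, is precisely what this lemma establishes and is not a formality: the $(1,1)$-product is only defined via transversal representatives, and producing such representatives that stay in prescribed neighbourhoods of $A$ and $B$ (so that the computation localises) is the real work. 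Your step~4 gestures at a direct lattice-of-flats verification, but without the explicit homotopy of Lemma~\ref{lem:homol} there is no mechanism that matches the corner contributions of Definition~\ref{def:cornerInt} to anything on the homological side.
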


It is also the case that the intersection product on tropical cycles descends to rational equivalence, however we do not require this here so we do not include the proof.  Such statements have been proved in the non-compact setting, using a bounded version of rational equivalence  \cite{AllHamRau:RatEq}.

In Section \ref{sec:construction} we describe $2$ operations  to construct  tropical surfaces. The first is already familiar in  tropical  geometry and is known as tropical modification. This operation was introduced by Mikhalkin in \cite{MIk3} and  produces from a tropical variety $X$ and a tropical Cartier divisor, a new tropical variety  $\tilde{X}$.  Between the $2$ varieties there is a tropical morphism $\delta:\tilde{X} \to X$. 
We restrict to so-called locally degree $1$ modifications $\delta: \tilde{X} \to X$ 
which are along  codimension $1$-cycles satisfying an extra condition. This additional condition ensures that the  modification produces a tropical manifold.  The next proposition then follows  from Theorem \ref{thm:eqvCartWeilIntro}. 

\begin{introproposition}
Let $X$ be a tropical manifold, and $D \subset X$ a locally degree $1$, then  there is a tropical manifold $\tilde{X}$ such that $\delta: \tilde{X} \to X$ is the tropical modification along $D$. 
\end{introproposition}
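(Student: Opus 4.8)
The plan is to leverage Theorem~\ref{thm:eqvCartWeilIntro} in order to reduce the statement to a purely local assertion about matroidal fans. First I would observe that, since $D$ is a codimension~$1$ cycle on the tropical manifold $X$, Theorem~\ref{thm:eqvCartWeilIntro} guarantees that $D$ is a Cartier divisor. Consequently $D$ is represented locally by tropical rational functions, and the tropical modification $\delta \colon \tilde{X} \to X$ along $D$ is well-defined as a tropical variety equipped with a tropical morphism, following the construction of Mikhalkin~\cite{MIk3}. The only thing that then remains is to verify that $\tilde{X}$ satisfies the definition of a tropical manifold from Section~\ref{sec:defsuf}, namely that it is locally modeled on matroidal fans.

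Since being a tropical manifold is a local condition, I would check the local model of $\tilde{X}$ at each point $\tilde{p}$ with $\delta(\tilde{p}) = p$. Away from the support of $D$ the modification is trivial, so the local model of $\tilde{X}$ at $\tilde{p}$ coincides with that of $X$ at $p$, which is matroidal by hypothesis. The substance of the argument is therefore at points $p$ lying on $D$. There, $X$ is locally identified with a matroidal fan $\Sigma_M \subset \R^N$ for some matroid $M$, and the \emph{locally degree $1$} condition is exactly what ensures that, in this chart, $D$ is cut out on $\Sigma_M$ by a degree~$1$ tropical rational function.

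The heart of the proof is to show that the modification of a matroidal fan $\Sigma_M$ along such a locally degree~$1$ divisor is again a matroidal fan. I would argue that the divisor of a degree~$1$ function on $\Sigma_M$, being cut out by a tropical linear form, has its support and weights governed by a corank~$1$ structure of the matroid $M$, so that the open modification adds a single new ray in the $-\infty$ direction along this locus. I would then identify the resulting fan in $\R^{N+1}$ with the Bergman fan of a single-element extension $M'$ of $M$, where the new ground-set element records the modifying direction. Matching the cones of the modified fan with the flats of $M'$ cone-by-cone yields a matroidal local model at every point over $D$, and hence $\tilde{X}$ is a tropical manifold.

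I expect the principal obstacle to be precisely this last identification: pinning down the matroid $M'$ and verifying that the balancing cells added by the tropical modification reproduce exactly the cones of the Bergman fan of $M'$. The local degree~$1$ hypothesis is what should force the extension to be a \emph{single-element} extension rather than a more complicated matroid operation, so the care lies in checking that the extra condition built into the definition of a locally degree~$1$ cycle translates precisely into the combinatorial data of a valid matroid extension. Once this local correspondence is established, the globalization is routine, since both the modification and the manifold structure are assembled from compatible local charts.
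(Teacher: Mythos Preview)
Your overall strategy is correct, but the emphasis is inverted relative to the paper's proof, and this matters for what actually needs to be argued.

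The fact you identify as ``the heart of the proof'' --- that a degree~$1$ modification of a matroidal fan is again matroidal, via a single-element matroid extension --- is not proved in the paper at this point. It is established earlier, in Section~\ref{sec:mod}, with reference to \cite[Proposition~2.25]{ShawInt}. So by the time one reaches this proposition, the local statement is taken as known: the locally degree~$1$ hypothesis (Definition~\ref{def:mod} and the definition immediately following) is formulated precisely so that in each chart the pair $(V_\alpha, D_\alpha)$ is $(\trp_{\Delta_\alpha}(M_\alpha), \trp_{\Delta_\alpha}(N_\alpha))$ for the \emph{same} basis $\Delta_\alpha$, which is exactly the input to the local result.

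Conversely, the step you dismiss as ``routine globalization'' is where the paper actually does the work. You write that the modification ``is well-defined as a tropical variety\ldots following the construction of Mikhalkin,'' but Mikhalkin's construction as recalled in Section~\ref{sec:mod} is for a single chart in $\T^N$. For an abstract manifold one must glue the local modifications $\tilde V_\alpha \subset \T^{N_\alpha+1}$. The paper writes down the transition maps $\tilde\Phi_{\alpha\beta}$ explicitly (the last coordinate transforms by $``x_{N_\beta+1}\, g_\alpha/g_\beta"$), checks the cocycle condition using that $\{g_\alpha\}$ is a Cartier divisor, and verifies that the quotient is Hausdorff and of finite type. None of this is deep, but it is not automatic either, and your proposal does not indicate how $\tilde X$ is assembled as a topological space with an atlas.

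In short: keep your first paragraph, replace the matroid-extension discussion with a citation to Section~\ref{sec:mod} (or \cite{ShawInt}), and then spell out the gluing of charts and the verification of the axioms in Definition~\ref{Man}. That is the content of the paper's proof.
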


In dimension $2$, we use the above proposition to prove a tropical version of the adjunction formula.  This formula relates the $1^{st}$ Betti number of the tropical curve $C$ in a surface $X$ to its self-intersection and intersection with the canonical cycle. 
Recall that the genus of  a tropical curve $C$ is $b_1(C)$. 

\begin{introthm}[Tropical Adjunction Formula]
Let $C $ be a locally degree $1$ tropical curve in a compact tropical surface $X$, then 
$$b_1(C) = \frac{K_X \cdot C + C^2}{2} + 1,$$
where $b_1(C)$ is the $1^{st}$ Betti number of $C$. 
\end{introthm}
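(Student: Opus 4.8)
The plan is to rewrite the desired identity as
$$ K_X \cdot C + C^2 = 2\,b_1(C) - 2, $$
and to recognize the right-hand side as the degree of the canonical divisor $K_C$ of the abstract tropical curve $C$. Recall that $K_C$ assigns to each vertex $v$ of $C$ the integer $\val(v) - 2$, where $\val(v)$ is the valence of $v$, so that
$$ \deg K_C = \sum_{v} \bigl(\val(v) - 2\bigr) = 2\,b_1(C) - 2, $$
the last equality following from $\sum_v \val(v) = 2|E|$ and $b_1(C) = |E| - |V| + 1$ for connected $C$, where $V, E$ denote the vertex and edge sets. Thus it suffices to prove that $K_X \cdot C + C^2$ equals $\deg K_C$, and I would do this vertex by vertex.

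First I would pass from cycles to divisors on $C$. By Theorem \ref{thm:eqvCartWeilIntro} both $K_X$ and $C$ are Cartier divisors on the tropical manifold $X$, so each restricts to a divisor on the curve $C$ whose degree is the corresponding intersection number, $K_X \cdot C$ respectively $C^2$. The heart of the argument is then the local adjunction identity: at every vertex $v$ of $C$,
$$ (K_X|_C)(v) + (C|_C)(v) = \val(v) - 2, $$
that is, $(K_X + C)|_C = K_C$. Summing over the vertices of $C$ and taking degrees then yields $K_X \cdot C + C^2 = \deg K_C = 2\,b_1(C) - 2$, which rearranges to the stated formula.

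It remains to establish the local identity, and this is where the local models and the modification proposition enter. Near $v$ the surface $X$ is a matroidal fan $\Sigma_v$ and $C$ is a locally degree $1$ one-cycle in $\Sigma_v$. On one hand I would read off the local contribution of $K_X$ to the intersection with $C$ from the combinatorial description of the canonical (Chern) cycle given in Section \ref{sec:Cherncycles}, which is supported on the rays of $\Sigma_v$. On the other hand I would compute the local self-intersection $(C^2)_v$ as the degree at $v$ of the normal bundle of $C$ in $X$: because $C$ is locally degree $1$, the modification $\delta : \tilde{X} \to X$ along $C$ is again a tropical manifold, so the normal bundle and hence $(C^2)_v$ are well defined and computable from the fan data. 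Matching these two local contributions against $\val(v) - 2$ is the required identity.

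The main obstacle I anticipate is precisely this local computation in $\Sigma_v$: correctly extracting the ray weights of the canonical cycle from the matroid, computing the normal-bundle degree via the locally degree $1$ modification $\delta$, and verifying that the two contributions combine to $\val(v) - 2$ independently of the particular matroidal model chosen at $v$. Once the local identity is in hand, the global statement follows by the summation above, using only that the degree of the tropical canonical divisor of $C$ is $2\,b_1(C) - 2$.
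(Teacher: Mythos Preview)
Your plan is conceptually sound but takes a different route from the paper, and the step you flag as the ``main obstacle'' is exactly the step you have not done---so as written there is a genuine gap.

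The paper does \emph{not} attempt the local identity $(K_X+C)|_C = K_C$ in a general matroidal fan. Instead it argues in two moves. First it proves adjunction when $C$ is an irreducible \emph{boundary} curve: writing $K_X = K_X^o - C - \sum_i D_i$, the intersections $K_X^o\cdot C$ and $\sum_i D_i\cdot C$ are read off directly from Definitions~\ref{intbddy} and~\ref{2bddy} as valency counts, and a one-line Euler characteristic computation on a graph $G$ built from $C$ gives $K_X\cdot C + C^2 = 2b_1(C)-2$. Second, for arbitrary locally degree~$1$ $C$, it uses Proposition~\ref{prop:modEx} to perform a modification $\delta:\tilde X\to X$ along $C$; then $\tilde C\subset\tilde X$ is a boundary curve, so adjunction holds for it, and Corollaries~\ref{cor:IntModNum} and~\ref{cor:canMod} transport the identity back ($\tilde C^2=C^2$, $K_{\tilde X}\cdot\tilde C=K_X\cdot C$, $b_1(\tilde C)=b_1(C)$).

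So the paper uses modification \emph{globally} to reduce to the boundary case, where the local model is $\text{(tropical line)}\times\T$ and the computation is trivial; you propose to use modification \emph{locally} only to extract $(C^2)_v$ and then match it against $(K_X\cdot C)_v$ inside an arbitrary matroidal fan. That matching is not done in your proposal, and it is not automatic: it requires unwinding Definition~\ref{def:localInt} for both $K_P\cdot C$ and $C\cdot C$ at the cone point and checking that the corner contributions at the points $p_I$ of $\A_M$ cancel appropriately to leave exactly $\val(v)-2$. If you want to pursue your direct route, that is the computation you must supply; otherwise, the paper's reduction-to-boundary argument is what actually closes the proof.
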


We also show that   tropical manifolds $\tilde{X}$ and $X$ related by  a locally degree $1$ modification have the same tropical Chow groups and also $(p, q)$-homology groups. 

\begin{introthm}
Let $\delta: \tilde{X} \to X$ be a modification of tropical manifolds then the tropical Chow groups are isomorphic for all $k$
$$CH_{k}(\tilde{X}) \cong CH_k(\tilde{X}).$$ 

\end{introthm}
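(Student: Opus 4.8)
The plan is to prove the stronger statement that pushforward and pullback along $\delta$ are mutually inverse, giving $CH_k(\tilde X)\cong CH_k(X)$ for every $k$ (the displayed formula, as printed, should compare $\tilde X$ with $X$). Write $D\subset X$ for the locally degree $1$ codimension $1$ cycle defining the modification and let $E:=\delta^{-1}(D)$ be the exceptional locus created by $\delta$. Since $\delta$ is proper, the tropical intersection theory of \cite{MIk3} and \cite{AlRa1} furnishes a proper pushforward $\delta_*\colon CH_k(\tilde X)\to CH_k(X)$ compatible with rational equivalence, and because $D$ is Cartier by Theorem \ref{thm:eqvCartWeilIntro} the modification carries a pullback $\delta^*\colon CH_k(X)\to CH_k(\tilde X)$. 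First I would establish the projection formula $\delta_*\circ\delta^*=\mathrm{id}_{CH_k(X)}$, which holds because $\delta$ has degree $1$ away from $D$; this already shows that $\delta^*$ is injective and $\delta_*$ is surjective. The theorem then reduces to the single statement $\ker\delta_*=0$.

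Next I would localize. Away from $D$ the map $\delta$ restricts to an isomorphism $\tilde X\setminus E\xrightarrow{\sim} X\setminus D$, so for a class $W$ with $\delta_*W=0$ the restriction of $W$ to $\tilde X\setminus E$ is carried isomorphically to $\delta_*W|_{X\setminus D}=0$, and hence vanishes. Using the tropical excision (localization) sequence $CH_k(E)\xrightarrow{\iota_*} CH_k(\tilde X)\to CH_k(\tilde X\setminus E)\to 0$, right-exactness then shows that every such $W$ lies in the image $\iota_*CH_k(E)$ of the classes supported on the exceptional locus. Thus it suffices to prove that a class $\alpha\in CH_k(E)$ with $\delta_*\iota_*\alpha=0$ already satisfies $\iota_*\alpha\sim 0$ in $\tilde X$.

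The key geometric input is the structure of $E$: as $\delta$ is locally degree $1$, $E$ fibers over $D$ with fibers the rays adjoined in the new modification coordinate, so $E$ is a ray bundle (tropical affine line bundle) over $D$ with projection $\delta|_E$. The extra coordinate of the modification provides a tropical rational function on $\tilde X$, linear along these fibers, whose divisor slides any cycle supported on $E$ in the fiber direction. I would use this function to show that such a class $\iota_*\alpha$ is rationally equivalent, inside $\tilde X$, either to a cycle supported on the base $D\subset E$ of the ray bundle or to zero; combined with $\delta_*\iota_*\alpha=0$ this forces $\iota_*\alpha=0$ in $CH_k(\tilde X)$. Carrying out this sliding argument honestly is the main obstacle: one must identify the fiber coordinate precisely, check that the resulting rational function is well defined on the whole polyhedral complex $\tilde X$ and not merely on $E$, and verify that sliding cycles to the fiber boundary produces genuine rational equivalences compatible with the balancing conditions on a tropical manifold. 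The homotopy invariance $CH_k(E)\cong CH_{k-1}(D)$ for the ray bundle $E\to D$ is the formal shadow of this computation and can be used to organize it. Once $\ker\delta_*=0$ is established, $\delta_*$ and $\delta^*$ are mutually inverse and the isomorphism follows for every $k$.
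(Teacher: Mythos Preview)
Your overall strategy is sound and shares its geometric core with the paper's proof: one shows $\delta_*\delta^*=\mathrm{id}$ on the cycle level, and then that any cycle supported on the exceptional faces can be ``slid to infinity'' along the extra modification coordinate, so that $A$ and $\delta^*\delta_*A$ are rationally equivalent. Where you differ is in packaging. You invoke a tropical excision sequence $CH_k(E)\to CH_k(\tilde X)\to CH_k(\tilde X\setminus E)\to 0$ and homotopy invariance for ray bundles to reduce to the sliding step; the paper bypasses this scaffolding entirely and works directly with the difference $\Delta_A=A-\delta^*\delta_*A$, which is automatically supported on the contracted faces, and then writes down an explicit family $B\subset \tilde X\times\TP^1$ whose fiber at $-\infty$ is $\Delta_A$ and whose fiber at $+\infty$ is $0$. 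The family is built chart-by-chart by translating $\Delta_A$ in the $-e_{N_\alpha+1}$ direction over $[0,\infty]$, taking a constant product over $[-\infty,0]$, and adding the obvious facets over $\{0\}$ with weight $-1$ to restore balancing.

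Two cautions about your route. First, the localization sequence and homotopy invariance you cite are not established in the paper or in the references \cite{MIk3}, \cite{AlRa1} for the family-based rational equivalence used here; you would have to prove them, and in the end that proof will amount to exactly the explicit family construction the paper gives. Second, you phrase the sliding as coming from ``a tropical rational function on $\tilde X$'' whose divisor moves the cycle. Be careful: the rational equivalence in this paper is defined via families in $X\times\TP^1$, not via bounded rational functions (see Remark~\ref{rem:bddRatEq}), and the slide you need sends the cycle off to $-\infty$ in the fiber, which is precisely the unbounded move that the family definition permits but the bounded-function version of \cite{AllHamRau:RatEq} does not. So your sketch is correct in spirit, but to make it honest you will end up writing down essentially the same explicit cycle $B$ in $\tilde X\times\TP^1$ that the paper constructs; the excision formalism buys organizational clarity at the cost of extra lemmas you do not otherwise need.
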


\begin{introthm}
Let $\delta: \tilde{X} \to X$ be a modification of tropical manifolds then their tropical $(p, q)$-homology groups are isomorphic for all $p, q$, 
 $$H_{p, q}(\tilde{X}) \cong H_{p, q}(X).$$
\end{introthm}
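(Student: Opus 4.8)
The plan is to realize $\delta$ as an isomorphism away from the center of the modification and to show that the exceptional part contributes nothing to tropical homology, after which a Mayer--Vietoris comparison finishes the argument. Write $D \subset X$ for the center of the modification and $E = \delta^{-1}(D)$ for the exceptional locus, so that $\delta$ restricts to an isomorphism $\tilde X \setminus E \xrightarrow{\sim} X \setminus D$. Recall that, locally, $\tilde X$ is obtained from a copy of $X$ (the graph of the piecewise linear function defining the modification) by attaching downward rays along $D$ in the new modification direction $e_0$; in these local coordinates $\delta$ is simply the projection forgetting $e_0$. Since $\delta$ is proper it induces a pushforward $\delta_* : H_{p,q}(\tilde X) \to H_{p,q}(X)$, and the goal is to prove that $\delta_*$ is an isomorphism for all $p,q$.

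First I would fix a polyhedral structure on $X$ adapted to $D$ and lift it to $\tilde X$, so that the cellular $(p,q)$-chain complexes of both spaces are defined with coefficients in the multitangent sheaves $\F_p$. The chains on $\tilde X$ split into those supported on the graph and those supported on the open exceptional rays $D \times \R_{<0}$. The crux is the local behaviour of $\F_p$ along these rays: in the interior of a ray over a cell of $D$ the tangent directions of $\tilde X$ are exactly those of $X$ together with $e_0$, so that $\F_p^{\tilde X}$ decomposes as $\delta^* \F_p^{X} \oplus (e_0 \wedge \delta^* \F_{p-1}^{X})$. With this splitting the subcomplex of chains supported on the open rays becomes, up to a shift, the mapping cone of an identity map and is therefore acyclic, while the quotient complex is identified through $\delta_*$ with the $(p,q)$-chain complex of $X$.

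With the local acyclicity in hand I would globalize by Mayer--Vietoris. Covering $\tilde X$ by $\delta^{-1}(U)$, for $U$ a regular neighbourhood of $D$ in $X$, and by $\tilde X \setminus E$, and comparing with the matching cover $U, X \setminus D$ of $X$ via $\delta_*$, one gets a morphism of long exact sequences. On $\tilde X \setminus E$ and on the overlap the map $\delta_*$ is induced by an isomorphism of tropical manifolds, and on $\delta^{-1}(U)$ it is an isomorphism on homology by the cone computation above; the five lemma then yields that $\delta_*$ is an isomorphism on every $H_{p,q}$, as claimed.

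The main obstacle is the local computation of $\F_p$ along and at the boundary of the exceptional rays. Along the interior of a ray the splitting $\F_p^{\tilde X} \cong \delta^* \F_p^X \oplus (e_0 \wedge \delta^* \F_{p-1}^X)$ is transparent, but at the vertices $D \times \{0\}$, where the attached rays meet the graph, the stalk of $\F_p$ is governed by the matroidal local model of $\tilde X$ described in Section \ref{sec:prel}; one must check there that the vertical direction $e_0$ still contributes an acyclic term and does not interact with the balancing conditions to create extra cycles. It is precisely here that the \emph{locally degree $1$} hypothesis enters, guaranteeing that the local model of the modification is the expected one and that the cone argument remains exact across the seam.
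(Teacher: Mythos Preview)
Your approach is genuinely different from the paper's, and more elaborate than needed. The paper avoids Mayer--Vietoris entirely: it embeds $X$ into $\tilde X$ as the graph of the defining rational function, observes that $\tilde X$ strongly deformation retracts onto this copy of $X$ by sliding in the $e_{N+1}$ direction, and then checks that the ordinary prism operator extends to $(p,q)$-chains. The only thing to verify is that the $\mathcal F_p$-coefficient of a cell can be carried along the retraction; this holds because a cell on the exceptional part either stays in its face or moves from a stratum of sedentarity containing $N+1$ into an adjacent one not containing it, and in that case there is still an injection $\mathcal F_p^{\Z}(E)\hookrightarrow\mathcal F_p^{\Z}(E')$. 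The relation $\partial P = \mathrm{id} - f_* - P\partial$ then survives verbatim, and $\delta_*$ and the inclusion $f_*$ are chain-homotopy inverses. In particular no separate local computation at the seam is required, and the locally degree~$1$ hypothesis is used only to guarantee that $\tilde X$ is again a tropical manifold, not in the homology argument itself.

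Your route can be made to work, but there is a concrete error in the local description of $\mathcal F_p$ on the exceptional rays. At a point in the interior of the attached facets over a face $\sigma$ of $D$, the only faces of $\tilde X$ present are those facets themselves, whose tangent directions are the directions of $D$ together with $e_0$. Hence
\[
\mathcal F_p^{\tilde X}\big|_{\text{open rays}}\;\cong\;\mathcal F_p^{D}(\sigma)\ \oplus\ \big(e_0\wedge \mathcal F_{p-1}^{D}(\sigma)\big),
\]
not $\delta^*\mathcal F_p^{X}\oplus(e_0\wedge\delta^*\mathcal F_{p-1}^{X})$: the directions of $X$ transverse to $D$ are simply not visible from the open rays. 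With the corrected coefficients, the chains supported on the open rays are no longer an acyclic ``cone of the identity''; their homology is essentially $H_{p,*}(D)\oplus H_{p-1,*}(D)$, which is nonzero. To finish one must then track how these classes are killed when the rays are glued to the graph along the seam and to the boundary divisor $\tilde D$ at $-\infty$, which is exactly the delicate point you flag but do not resolve. The paper's retraction argument sidesteps this completely by never isolating the exceptional locus.
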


The second operation described in Section \ref{sec:construction} is analogous to the fibre sum of manifolds.   Given tropical surfaces $X_1$, $X_2$ each containing isomorphic boundary curves $C_1$, $C_2$ respectively, under suitable conditions we may glue the $2$ surfaces together to produce a new tropical surface $X_1 \# X_2$. In general, this sum relies on a choice of identification of neighborhoods of the curves $C_1$ in $X_1$  and $C_2$ in $X_2$. 

For example, together tropical  sums and modifications can be used to contract locally degree $1$ tropical rational curves of self-intersection $-1$  (Example \ref{ex:-1}). 

\begin{introthm}[Partial Castelnuovo-Enriques Criterion]

Let $E$ be a locally degree $1$ tropical curve in  a tropical surface $X$, such that $b_1(E) = 0$ and $E^{2} = -1$ then there exists a tropical modification $\delta: \tilde{X} \to X$,  another tropical surface $Y$, and a tropical morphism 
$$\pi: \tilde{X} \to Y$$ 
which  sends $\tilde{E}$ to a point $y$ and is   an isomorphism on $\tilde{X} \backslash \tilde{E} \to Y \backslash y$. 
\end{introthm}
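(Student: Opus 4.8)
The plan is to first convert the two numerical hypotheses into usable geometric data, and then recognise a neighbourhood of $E$ as a standard tropical blow-up that can be collapsed by a tropical sum. Applying the Tropical Adjunction Formula to $E$, the hypotheses $b_1(E)=0$ and $E^2=-1$ give
$$0 = \frac{K_X\cdot E - 1}{2} + 1, \qquad \text{so} \qquad K_X\cdot E = -1,$$
so $E$ is a tropical $(-1)$-curve in the precise numerical sense. Since $b_1(E)=0$, the curve $E$ is a tree, and together with the locally degree $1$ hypothesis this makes $E$ a smooth genus $0$ tropical curve whose neighbourhood in $X$ is modeled, along each edge and at each vertex, on the total space of its normal bundle $N_E$. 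The condition $E^2=-1$ identifies $\deg N_E = -1$, i.e.\ $N_E = \Oc(-1)$, whose total space is exactly the fan tropical surface obtained by blowing up $\R^2$ (equivalently $\TP^2$) at a point, with $E$ in the role of the exceptional curve.

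Next I would produce the modification $\delta\colon\tilde X\to X$. Since $E$ is locally degree $1$, Theorem \ref{thm:eqvCartWeilIntro} realises $E$ as a Cartier divisor, and the modification proposition stated earlier yields a tropical manifold $\tilde X$ together with the map $\delta$. The role of the modification is to rigidify the situation: after modifying, the relevant curve $\tilde E$ is a complete genus $0$ tropical curve whose normal bundle is concretely $\Oc(-1)$, so that a neighbourhood $N(\tilde E)\subset\tilde X$ is \emph{isomorphic}, as a tropical surface with boundary, to a neighbourhood $N_0$ of the exceptional curve in the standard tropical blow-up. This is where the hypotheses are consumed in full: I would read off the integral affine structure and the sedentarities of $X$ along $E$ and verify, vertex by vertex and edge by edge, that once the modification is performed they agree with those of the blow-up model.

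Finally I would contract by a tropical sum. Let $W$ denote the closure of the complement of $N(\tilde E)$ in $\tilde X$, a tropical surface with boundary curve $C=\partial N(\tilde E)$, so that $\tilde X = W \,\#\, N_0$ as a tropical sum along $C$. The standard blow-up model carries a tropical morphism $N_0\to B$ onto a neighbourhood $B$ of a smooth point $y$, collapsing the exceptional curve to $y$ and restricting to an isomorphism $\partial N_0 \xrightarrow{\sim} \partial B$ of boundary curves. I would then set $Y := W \,\#\, B$, glued along the isomorphic boundary curve, and define $\pi\colon\tilde X\to Y$ to be the identity on $W$ and the contraction $N_0\to B$ on $N(\tilde E)$. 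By construction $\pi$ sends $\tilde E$ to $y$ and is an isomorphism on the complement, and it remains to check that $Y$ is a tropical surface (the tropical sum conditions hold because the two boundary curves and their normal data match) and that $\pi$ is a tropical morphism (the two defining pieces agree on the overlap $C$).

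The main obstacle is the middle step. The hard part will be proving that, after the correct modification, the neighbourhood of $\tilde E$ is \emph{globally} isomorphic to the standard blow-up neighbourhood $N_0$, and not merely locally similar to it. Controlling the integral affine structure and the boundary behaviour simultaneously along all of $\tilde E$ — so that the boundary curve $C$ is genuinely isomorphic to the link appearing in the smooth local model $B$ — is precisely what makes the tropical sum, and hence the contraction $\pi$, well defined.
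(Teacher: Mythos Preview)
Your overall plan --- modify along $E$, then use a tropical sum to contract --- matches the paper's, but the execution of the sum step is off, and this is exactly where you locate your ``main obstacle.''

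The point you are not exploiting is that the modification $\delta\colon \tilde X \to X$ turns $E$ into a \emph{boundary} curve $\tilde E$ of $\tilde X$ (order of sedentarity $1$). That is the whole purpose of the modification here. Once $\tilde E$ is a boundary curve, the tropical sum (Definition~\ref{def:SUM}) applies directly to $\tilde X$ along $\tilde E$: you do not first cut out a neighbourhood of $\tilde E$ and try to recognise it as a standard model; you sum $\tilde X$ with another tropical surface that carries a matching boundary curve. Concretely, the paper sums $\tilde X$ with the fan tropical plane $V\subset \TP^k$ which is the cone over a tropical line $L\cong \tilde E$ sitting in the coordinate hyperplane $\TP^k_{\{0\}}$ (here $k$ is the number of leaves of $\tilde E$). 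The cone point of $V$ becomes the image $y$, and the map $\pi$ is the identity away from a neighbourhood of $\tilde E$ and the obvious toric blow-down on that neighbourhood.

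This dissolves the obstacle you identified. The tropical sum requires only a bundle isomorphism $N_{\tilde X}(\tilde E)\cong N_V(L)^{-1}$, not a global identification of neighbourhoods. Since $b_1(\tilde E)=0$, tropical line bundles on $\tilde E$ are classified by degree, and $\tilde E^2=-1$ in $\tilde X$ together with $L^2=1$ in $V$ gives exactly what is needed. By contrast, your decomposition $\tilde X = W\,\#\,N_0$ is not well-posed in this framework: your $C=\partial N(\tilde E)$ is the graph of a section, not a sedentarity boundary curve, so $W$ is not a tropical surface with boundary to which Definition~\ref{def:SUM} applies. (The opening adjunction computation of $K_X\cdot E=-1$ is correct but is not used in the argument.)
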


Section \ref{sec:Noether} treats a  tropical analogue of Noether's formula for classical projective surfaces. Classically, this formula relates the holomorphic Euler characteristic $\sum_{q = 0}^2 (-1)^q h^{0, q}(\mathcal{X})$ with the Todd class of the surface $\frac{1}{12}(K^2_{\mathcal{X}} + c_2(\mathcal{X}))$. This formula is a special case of the Riemann-Roch formula for the trivial line bundle of a surface. 

To translate this tropically, firstly following \cite{IKMZ}, the ranks of the  $(p, q)$-homology  groups from \cite{IKMZ} play the role of Hodge numbers of tropical surfaces. 
When $p = 0$ the tropical $(p, q)$-homology groups correspond to the usual singular homology groups.  
Therefore, in the tropical version of Noether's formula, the  holomorphic Euler characteristic is replaced with the topological Euler characteristic of the tropical surface. The top tropical  Chern class $c_2(X)$ and also the square of the canonical class $K^2_X$ are defined in Section \ref{sec:Cherncycles}. 

\begin{introconj}[Tropical Noether's Formula]
A compact tropical surface $X$ satisfies 
$$\chi(X) = \frac{K_X^2 + c_2(X)}{12},$$
where $\chi(X)$ is the topological Euler characteristic of $X$. 
\end{introconj}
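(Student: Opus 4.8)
The formula is stated here as a conjecture in general, but the class of surfaces for which it can be established is precisely those built from tropical toric surfaces by iterated tropical modifications and tropical sums. The plan is therefore an induction on this construction. Introduce the \emph{Noether defect}
$$\nu(X) := 12\,\chi(X) - K_X^2 - c_2(X),$$
so that the goal becomes $\nu(X) = 0$ for every $X$ in the class. This reduces to three statements: (i) $\nu = 0$ for tropical toric surfaces; (ii) $\nu$ is unchanged by a locally degree $1$ modification; and (iii) $\nu$ is additive under the tropical sum, so that vanishing is propagated from the summands to the glued surface.

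For the base case, I would compute $\chi$, $K_X^2$ and $c_2(X)$ of a tropical toric surface directly from its fan, using the combinatorial descriptions of the canonical cycle and of $c_2(X)$ from Section~\ref{sec:Cherncycles}. These quantities coincide with the corresponding invariants of the associated complex toric surface, for which the classical Noether formula holds; alternatively one checks the resulting purely combinatorial identity among the rays and cones of the fan directly. Either route yields $\nu = 0$ in the toric case.

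For invariance under a modification $\delta\colon \tilde{X} \to X$, note first that the topological Euler characteristic is recovered from the $p=0$ part of tropical homology, $\chi(X) = \sum_q (-1)^q \dim H_{0,q}(X)$, so the isomorphisms $H_{0,q}(\tilde{X}) \cong H_{0,q}(X)$ of the homology-invariance theorem give $\chi(\tilde{X}) = \chi(X)$. It then remains to show $K_{\tilde{X}}^2 + c_2(\tilde{X}) = K_X^2 + c_2(X)$. Here I would use the isomorphism $CH_k(\tilde{X}) \cong CH_k(X)$ together with the explicit local descriptions to track how the modification alters the fan structure along $D$: a locally degree $1$ modification inserts a single controlled new ray in the local models meeting $D$, and the resulting change to the canonical self-intersection $K^2$ and the change to $c_2$ must be computed and shown to cancel in the sum $K^2 + c_2$. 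This gives $\nu(\tilde{X}) = \nu(X)$.

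The sum is the crux. For $X = X_1 \#_C X_2$, glued along isomorphic boundary curves $C_1 \cong C_2 \cong C$, a Mayer--Vietoris computation gives $\chi(X) = \chi(X_1) + \chi(X_2) - \chi(C)$, and for a connected tropical curve $\chi(C) = 1 - b_1(C)$. For the Chern terms, the canonical cycles $K_{X_1}, K_{X_2}$ glue to $K_X$ with correction terms supported near $C$ and governed by the normal bundles of $C$ in the two summands (Section~\ref{sec:int1cyc}); the compatibility required for the sum to be a tropical manifold forces these normal-bundle contributions to be opposite, the $C^2$ contribution in $X_1$ cancelling that in $X_2$. The essential input is the tropical adjunction formula, which gives $K_{X_i} \cdot C + C^2 = 2 b_1(C) - 2$ in each summand and thereby converts the boundary contributions to $K_X^2$ and to $c_2(X)$ into multiples of $b_1(C)$ and $\chi(C)$. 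I expect the main obstacle to be precisely this last step: identifying the exact boundary contributions to $K_X^2$ and $c_2(X)$ from the local fan and normal-bundle data along $C$, and checking that together they equal $-12\,\chi(C)$, so that $\nu(X) = \nu(X_1) + \nu(X_2)$. Combined with the inductive hypothesis $\nu(X_1) = \nu(X_2) = 0$, this completes the argument.
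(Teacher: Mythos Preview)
Your induction scheme is exactly the paper's, and the base case and the Euler-characteristic part of both inductive steps match. Two points of execution differ from the paper and are worth flagging.

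For modifications, you anticipate showing only that the changes to $K^2$ and $c_2$ cancel in the sum. In fact the paper proves something stronger and simpler: each is \emph{separately} invariant. Corollary~\ref{cor:canMod} gives $K_{\tilde X}^2=K_X^2$ via $\delta_\ast K_{\tilde X}=K_X$ and the intersection-number invariance of Corollary~\ref{cor:IntModNum}, while Lemma~\ref{cor:chern2mod} checks $\delta_\ast c_2(\tilde X)=c_2(X)$ directly from the local vertex formula (Lemma~\ref{lem:Chern2mult}). No cancellation is needed.

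For the sum, you plan to route through the adjunction formula. The paper instead works with the decomposition $K_X = K_X^o - \partial X$ and tracks each piece combinatorially (Lemmas~\ref{lem:minusprods} and~\ref{lem:sumsof2}), obtaining the explicit identities
\[
K_X^2 = K_{X_1}^2 + K_{X_2}^2 + 4K_C, \qquad c_2(X) = c_2(X_1)+c_2(X_2)+2K_C
\]
of Lemma~\ref{lem:SumChern}; since $K_C=-2\chi(C)$, the combination $K_X^2+c_2(X)$ changes by exactly $-12\chi(C)$, matching Mayer--Vietoris. Your adjunction route could in principle recover the same relation, but note that adjunction gives you $K_{X_i}\cdot C_i + C_i^2$, not $K_{X_i}^2$ itself; the step you flag as the ``main obstacle'' --- isolating the boundary contribution to $K_X^2$ --- is precisely what the paper's $K^o/\partial X$ decomposition handles directly, and this bypasses adjunction entirely.
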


Proving a sequence of lemmas which relate $K_X^2$ and $c_2(X)$ for sums of surfaces and modificatons we prove Noether's formula in special cases. 

\begin{introthm}
Tropical Noether's formula holds for any compact tropical surface obtained  by way of successive modifications and summations of tropical toric surfaces. 
\end{introthm}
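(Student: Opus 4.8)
The plan is to induct on the number of tropical modifications and summations used to assemble $X$ from tropical toric surfaces, so that the whole argument rests on three pieces: a base case for toric surfaces, an invariance lemma for modifications, and a gluing lemma for sums. In every case the goal $12\,\chi(X) = K_X^2 + c_2(X)$ will be reduced to showing that each operation transforms the quantity $K^2 + c_2$ in exactly the way dictated by the (already understood) transformation of $\chi$. \textbf{Base case.} First I would verify the formula directly for a compact tropical toric surface $X$ coming from a smooth complete fan $\Sigma$, equivalently a Delzant polytope $P$. Such an $X$ is contractible, so $\chi(X) = 1$ and the statement collapses to the combinatorial identity $K_X^2 + c_2(X) = 12$. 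Using the combinatorial descriptions of $c_2(X)$ and of the canonical cycle $K_X$ from Section~\ref{sec:Cherncycles}, I would write both $K_X^2$ and $c_2(X)$ as explicit sums over the rays and maximal cones of $\Sigma$ and check the identity; this is the toric instance of classical Noether's formula carried over to the combinatorial side, so the real content is only confirming that the tropical normalization of the Chern numbers agrees with the classical one.

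\textbf{Modifications.} Assume the formula holds for $X$ and let $\delta:\tilde X \to X$ be a locally degree $1$ modification. Because $\chi$ is the alternating sum of the ranks of the singular homology groups, which are the $(0,q)$-homology groups, the invariance theorem $H_{p,q}(\tilde X)\cong H_{p,q}(X)$ gives $\chi(\tilde X) = \chi(X)$ at once. It then suffices to prove $K_{\tilde X}^2 + c_2(\tilde X) = K_X^2 + c_2(X)$. Since a modification alters $X$ only near the divisor $D$, I would compute the changes in $K$ and in $c_2$ locally, cell by cell, and show that the increment $c_2(\tilde X) - c_2(X)$ cancels the increment $K_{\tilde X}^2 - K_X^2$, reflecting the classical invariance of $K^2 + c_2$ under blow-up.

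\textbf{Sums.} Finally assume $X_1, X_2$ satisfy the formula and let $X = X_1 \# X_2$ be their sum along isomorphic boundary curves $C_1 \cong C_2 =: C$ with dual normal data. Topologically $X = X_1 \cup_C X_2$, so inclusion--exclusion gives $\chi(X) = \chi(X_1) + \chi(X_2) - \chi(C)$ with $\chi(C) = 2 - 2b_1(C)$ for connected $C$. Combined with the inductive hypotheses, the target identity becomes equivalent to the gluing lemma
$$K_X^2 + c_2(X) = \big(K_{X_1}^2 + c_2(X_1)\big) + \big(K_{X_2}^2 + c_2(X_2)\big) - 12\,\chi(C),$$
which I would prove using the normal-bundle and self-intersection descriptions of boundary curves from Section~\ref{sec:int1cyc} together with the adjunction formula to control the contributions of $C$ to $K_X\cdot C$ and $C^2$ on each side.

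\textbf{Main obstacle.} I expect the hardest step to be the sum, specifically the bookkeeping along $C$: the canonical cycle and $c_2$ each acquire boundary contributions concentrated on the glued curve, and the whole argument hinges on showing that after summation these combine into precisely the term $12\,\chi(C)$ demanded by the Euler-characteristic gluing. Pinning down this constant, rather than merely showing the correction is supported on $C$, is where the adjunction formula and the compatibility of the $1$-cycle intersection product with $(1,1)$-homology must be used carefully.
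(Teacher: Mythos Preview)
Your proposal is essentially correct and follows the same three-step induction as the paper: verify directly for toric surfaces, show invariance under modification, and prove a gluing formula for sums. A few small differences in execution are worth noting.

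For modifications, you anticipate a cancellation between the increments of $K^2$ and $c_2$, in analogy with classical blow-up. In fact the paper shows (Corollary~\ref{cor:canMod} and Lemma~\ref{cor:chern2mod}) that each is \emph{separately} invariant: $K_{\tilde X}^2 = K_X^2$ and $\deg c_2(\tilde X) = \deg c_2(X)$. Tropical modification is not a blow-up and there is no nontrivial cancellation to track; this makes the step easier than you expect.

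For sums, the paper reaches your target identity by proving the two halves separately in Lemma~\ref{lem:SumChern}: $c_2(X) = c_2(X_1)+c_2(X_2)+2K_C$ and $K_X^2 = K_{X_1}^2+K_{X_2}^2+4K_C$, whence $6K_C = -12\chi(C)$. These are obtained not through adjunction or the $(1,1)$-pairing but by decomposing $K_X = K_X^o - \partial X$ and computing $(K_X^o)^2$, $K_X^o\cdot\partial X$, and $(\partial X)^2$ directly for the sum (Lemmas~\ref{lem:minusprods} and~\ref{lem:sumsof2}); the key inputs are $C_1^2 = -C_2^2$ and a count of the leaves of $C$. Your adjunction idea would likely also work, but the direct bookkeeping is cleaner than you fear. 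Finally, the paper also treats \emph{self}-sums (Lemma~\ref{lem:selfsumChern}), a case your outline omits but which follows by the same computation.
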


\noindent \textbf{Acknowledgment.}  Some of the contents of the paper are contained in the thesis \cite{ShawTh}. I am especially grateful to Grigory Mikhalkin for sharing his insight. Also to Beno\^it Bertrand, Erwan Brugall\'e, Dustin Cartwright,  Ilia Itenberg, Yael Karshon, Johannes Rau, and Bernd Sturmfels  for helpful discussions.

\section{Preliminaries}\label{sec:prel} 

\subsection{The tropical semi-field}

The tropical semi-field consists of the tropical numbers $\T = [-\infty, \infty)$ equipped with the operations; tropical multiplication, which is usual addition, and tropical addition, which is the maximum;
$$ ``xy" = x+y \qquad \text{and} \qquad ``x+y " = \text{max}\{x, y\} .$$ 
The additive and multiplicative identities are respectively, $-\infty$ and $0$. Tropical division is usual subtraction, whereas additive inverses do not exist due to the idempotency of addition; $``x + x" = x$. 

Tropical polynomial functions are convex integer affine functions $f: \T^N \to \T$ given by finite tropical sums  of the form	
$$f(x) = `` \sum_{I \in  (\Z_{\geq 0})^N} a_I x^I" = \max_{I \in  (\Z_{\geq 0})^N} 
\{ a_I + \langle I, x\rangle \}. $$
Notice that $2$ different tropical  polynomial expressions may produce the same function $\T^N \to \T$. 
Tropical rational functions are the difference of $2$ tropical polynomial functions $``g/h" = g-h$. Notice that this difference of functions is always 
defined on $\R^N$. 
The extension of such a function to points in $\T^N$ with coodinates equal to $-\infty$ may have indeterminacies.

\subsection{Standard tropical affine and projective spaces}\label{subsec:affproj}
Standard tropical affine space is $\T^N$ and the tropical torus is $(\T^*)^N = \R^N$. We equip $\T$ with the topology whose basis of open sets are the open intervals in $\R$ and $[-\infty, a)$. 

The space $\T^N$ has a stratification coming from the 
 order of sedentarity of points introduced by Losev and Mikhalkin.  

\begin{definition} \cite{MIk3}\label{def:sed}
The sedentarity of a point $x \in \T^N$ is 
$$S(x) = \{ i \ | \ x_i = -\infty \} \subseteq \{1, \dots , N\},$$ the order of sedentarity of $x$ is denoted by $s(x) = |S(x)|$. 
\end{definition}

For $I \subset \{1, \dots , N\}$, let  $\R^N_I$ denote the points of $\T^N$ of sedentarity $I$. Then $\R^N_I \cong \R^{n - |I|}$ and together these sets define a stratification of $\T^N$. We denote the closure in $\T^N$ of the stratum $\R^N_I$  by $\T^N_I$. 

Tropical projective space is defined analogously to classical projective space \cite{MIk3}  as 
$$\TP^N = \frac{\T^{N+1} \backslash (-\infty, \dots, -\infty)}{(x_0, \dots , x_N) \sim ``\lambda (x_0, \dots , x_N)"}.$$
This representation provides tropical homogeneous coordinates, denoted by $[x_0: \dots : x_N]$. 
Using these coordinates, the notion of sedentarity  extends to $\TP^N$. We use the same notation to denote the strata of $\TP^N$, namely  $\R^N_I$, where now $I \subsetneq \{0, \dots , N\}$. 
Denote the closure of $\R^N_I$ in $\TP^N$ by $\TP^N_I$, then we have $\TP^N_I \cong \TP^{n-|I|}$. 
The stratification of $\TP^N$ induced by the sedentarity  is the same as the face structure of the $N$-dimensional simplex. 

Tropical projective space  $\TP^N$ can also be glued from $N+1$ copies of tropical affine space as in the classical situation. 
Let $U_i$ be the open  set of points in $\TP^N$ with homogeneous coordinate 
$x_i \neq -\infty$ and the map $\phi_i :U_i \to \T^N$ be given by  $$\phi_i ([x_0: \dots: x_N]) = (x_0 -x_i, \dots, \hat{x}_i, \dots, x_N-x_0) \in \T^N.$$
On the overlaps of the charts the coordinate changes
$$\Phi_{ij}: \phi_i(U_i \cap U_j)  \to \phi_j(U_i \cap U_j),$$
are given explicitly by, 
$$(x_0, \dots, {x}_{i-1}, x_{i+1} \dots, x_N) \mapsto (x_0-x_j, \dots, x_{i-1}- x_j, -x_j, x_{i +1} -x_j , \dots, x_N - x_j).$$ 
Notice that the functions on the right hand side can be expressed using tropical operations as 
$``\frac{x_i}{x_j}"$.
In the image, the $j^{th}$ coordinate is removed (notice,  $x_j - x_j = 0$), so we have a map from $\T^N$ to $\T^N$.    
Let  $e_1,  \dots, e_N$  denote the standard basis vectors of $\R^N$, then $-e_1, \dots , -e_N, e_0 := \sum_{i = 1}^N e_i$
form distinguished directions in $\R^N$ with respect to this  compactification to $\TP^N$.  By this we mean that the closure in $\TP^N$ of  a half ray in $\R^N$ intersects the interior of a boundary hyperplane $\TP^N_{\{i\}}$ if and only if it is in one of these $N+1$ directions.

Given any other $\Z^N$-basis $\Delta = \{ u_1, \dots , u_N \}$, the tropical torus $\R^N$ can be compactified to projective space using $\Delta$ so that the distinguished directions are $u_0, \dots, u_N$, where  
$u_0 := - \sum_{i = 1}^N u_i$. 
Denote this compactification by  $\TP^N_{\Delta}$. The notation $\TP^N$  is reserved to denote $\TP^N_{\{-e_1, \dots , -e_N\}}$.

\subsection{Tropical fan planes}\label{sec:fanplanes}

The Bergman fan  \cite{Berg} of an algebraic variety $\V \subset (\CC^*)^N$ is defined as 
$$ \lim_{t \to \infty} \Log_t (\V) \subset \R^N.$$ 
In other words, the Bergman fan is the  limit  as $t \to \infty$ of amoebas of $\V$,  from  \cite{GKZ}. 
For $\V \subset (\CC^*)^N$ of dimension $k$, 
the set  $\lim_{t \to \infty} \Log_t (\V)$ is also $k$ dimensional and  can be equipped with a polyhedral fan structure. Moreover, positive integer weights can be assigned to the top dimensional faces so that the weighted polyhedral complex  satisfies to the balancing condition well known in tropical geometry, see \cite{St7}, \cite{BIMS}. The resulting weighted fan is denoted $\trp(\V) \subset \R^N$.

A $k$-dimensional   variety $\L \subset (\CC^*)^N$ is a  linear space if it is defined by a linear ideal in some monomials $z^{u_1}, \dots, z^{u_N}$ where the collection $u_1, \dots , u_N \subset \Z^N$ forms a $\Z^N$-basis.  A linear space $\L \subset (\CC^*)^N$ defines a \emph{matroid}  $M$ of rank $k+1$ on $N+1$ elements. The reader is directed to the standard reference on matroid theory \cite{Oxley} or also the more algebro-geometric  \cite{Katz:Matroid}.  
A  geometric way of obtaining the matroid is to first compactify  to $\bar{\L} =  \CP^k \subset \CP^N$ by way of  the coordinates $z^{u_1}, \dots, z^{u_N}$, so that $\bar{\L}$ is of degree $1$. 
Intersecting $\bar{\L}$ with any of the  $N+1$ coordinate hyperplanes  in $\CP^N$ defines a hyperplane arrangement $\mathcal{H}_0, \dots \mathcal{H}_N$ on $ \CP^k = \overline{\mathcal{L}}$. Define the rank function of the matroid on the ground set $\{0, \dots, N\}$ by 
 $$\text{rank}( I ) = \text{codim}_{\CP^k}( \cap_{i \in I } \mathcal{H}_i).$$
For a linear space $\L \subset (\CC^*)^N$, the tropicalization $\trp(\L)$ is a fan equipped with weight $1$ on all of its faces. The fan is determined by the underlying matroid of $\L$ and the $\Z^N$-basis $u_1, \dots , u_N$. This leads to the notion of Bergman fan for any matroid
 in \cite{SturmPoly}. 
  In general, the Bergman fan of a loopless matroid $M$ of rank $k+1$ on $N+1$ elements with respect to a $\Z^N$-basis $\Delta = \{u_1, \dots, u_N\}$ is a  $k$-dimensional tropical variety  which we denote, $\trp_{\Delta}(M) \subset \R^N$ where $\Delta$ is a $\Z^N$-basis. The weights on all top dimensional faces of $\trp_{\Delta}(M)$ are always $1$. 
  
    A description  of the Bergman fan of a matroid $M$ can be given in terms of its lattice of flats $\Lambda_M$ \cite{Ard}. A flat of a matroid is a subset of the ground set that is closed under the rank function. We review this construction only  in the case of rank $3$ matroids which  yield $2$ dimensional  fans. 

\begin{construction}\label{cons:matfan}\cite{Ard}
Let $M$ be a loopless matroid of rank $3$ on $N+1$ elements  labeled $0, \dots , N$.   
Let $\Delta = \{ u_1, \dots, u_N \}$ denote a  $\Z^N$-basis of $\R^N$  and set $u_0 = - \sum_{i = 1}^N$.
To construct $\trp_{\Delta}(M)$, first for every flat $F_I$  in the lattice of flats  $\Lambda_M$ fix the direction $u_I = \sum_{i \in I} u_i$. 
Then for every flag of flats 
$$\F = \{ \emptyset \subsetneq  F_{I_1} \subsetneq   \dots \subsetneq F_{I_l} \subsetneq \{0, \dots , N\} \}$$
 contained in the lattice of flats $\Lambda_M$ there is  a cone in $\trp_{\Delta}(M)$ 
spanned by the vectors $u_{I_1}, \dots u_{I_l}$. 
For a matroid of rank $3$ the maximum length of a chain of flats is $4$, including the flats $\emptyset$ and $\{0, \dots , N\}$ for which the corresponding direction $u_I$ is $0$.   Thus the maximal cones of $\trp_{\Delta}(M)$  are $2$ dimensional. This collection of cones is what is known as  the  fine fan structure \cite{Ard}.  Here we are only concerned with the support of this fan $\R^N$ which we denote $\trp_{\Delta}(M)$. This is known as the  coarse structure. 
 
To obtain  just the cones necessary for the support $\trp_{\Delta}(M)$  we can  delete 
 any rays of the above fan 
 corresponding to flats $F_I$ with $|I| = 2$, and also rays corresponding to single element flats $F_{\{i\}}$ which are contained in exactly $2$ flats of size $2$.  These $2$  types of flats produce exactly the  rays which subdivide a cone of dimension $2$.
\end{construction}

\begin{lemma}\label{lem:missingline}
Let $M$ be a rank $3$ simple matroid (loopless matroid and without double points) on $\{0, \dots , N\}$  and $\Delta$ a $\Z^N$-basis. Suppose that the Bergman fan $\trp_{\Delta}(M) \subset \R^N$  does not contain the ray in  the coarse structure in the direction  $u_i \in \Delta$ for some $i \in \{0, \dots , N\}$. Then $M$ is a parallel connection of $2$ rank $2$ matroids. 
\end{lemma}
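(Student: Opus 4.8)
The plan is to translate the fan-theoretic hypothesis into a statement about the lines (rank-$2$ flats) of $M$, and then recognize the resulting combinatorial configuration as a parallel connection. Throughout I would use that $M$ is simple of rank $3$, so its rank-$1$ flats are the singletons $\{i\}$, every pair of elements spans a unique line, and $M$ is determined by its lattice of flats.

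First I would pin down exactly when the ray $u_i$ survives in the coarse structure. The $2$-dimensional cones of the fine fan incident to $u_i$ are precisely the cones $\langle u_i, u_F\rangle$ spanned by $u_i$ and $u_F$, where $F$ ranges over the lines containing $i$; say there are $d$ of them, $F_1,\dots,F_d$. Passing to the quotient $\R^N/\R u_i$, the support near $u_i$ becomes the fan spanned by the transverse images $\bar u_{F_1},\dots,\bar u_{F_d}$, and $u_i$ is removed in the coarse structure exactly when this transverse fan is a single line, i.e. when $d=2$. The key computation is $\sum_{j=1}^d u_{F_j}=(d-1)u_i$: since each element $m\neq i$ lies on a unique line through $i$ while $i$ lies on all $d$ of them, one gets $\sum_j u_{F_j}=\sum_{m\neq i}u_m+d\,u_i=(\sum_m u_m)+(d-1)u_i=(d-1)u_i$, using $\sum_{m=0}^N u_m=0$. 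Hence for $d=2$ the two transverse rays satisfy $\bar u_{F_1}=-\bar u_{F_2}$, the local support is a plane, and $u_i$ is removable; for $d\geq 3$ there are at least three distinct transverse rays (distinct because the fine fan is a genuine fan), so the local support is not a plane and $u_i$ survives (and $d=1$ is impossible, as it would force rank $2$). So the hypothesis says exactly that $i$ lies on precisely two lines $F_1,F_2$. I expect this step to be the main obstacle, since it needs the local analysis of the coarse structure at a ray rather than a purely formal appeal to Construction \ref{cons:matfan}.

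Next I would extract the configuration. Because every element $m\neq i$ lies on the unique line $\mathrm{cl}\{i,m\}$, which must be $F_1$ or $F_2$, the two lines cover the ground set; and two distinct lines through $i$ meet only in $i$, so $F_1\cap F_2=\{i\}$ and $F_1\cup F_2=\{0,\dots,N\}$. Setting $M_1=M|_{F_1}$ and $M_2=M|_{F_2}$, each is a simple rank-$2$ matroid (a restriction to a line), and they share exactly the basepoint $i$.

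Finally I would identify $M$ with the parallel connection $P(M_1,M_2)$ at $i$. Both are simple rank-$3$ matroids on $F_1\cup F_2$, so it suffices to check they have the same lines. For $M$: the lines through $i$ are $F_1,F_2$ by hypothesis; a line $L$ not through $i$ cannot contain two points of $F_1$ (their span is the line $F_1$, which contains $i$) nor two of $F_2$, so $L$ has one point $a\in F_1\setminus i$ and one point $b\in F_2\setminus i$ and, by the same argument, no third point, whence $L=\{a,b\}$; conversely each such pair is a line. The lines of $P(M_1,M_2)$ admit the identical description, namely the two long lines $F_1,F_2$ together with the two-point transversals joining $M_1\setminus i$ to $M_2\setminus i$, so the lattices of flats agree and $M=P(M_1,M_2)$. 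The only routine point left is this last description of the lines of a parallel connection, which follows from its definition together with the rank formula $r(P)=r(M_1)+r(M_2)-1=3$.
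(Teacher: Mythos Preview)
Your proof is correct and follows essentially the same route as the paper's: both recognize that the absence of the ray $u_i$ in the coarse structure means $i$ lies on exactly two rank-$2$ flats $F_1,F_2$ which partition $\{0,\dots,N\}\setminus\{i\}$, and then identify the resulting line configuration as that of a parallel connection. Your treatment of the first step is more self-contained, supplying the computation $\sum_j u_{F_j}=(d-1)u_i$ explicitly, whereas the paper simply invokes the description of the coarse structure from Construction~\ref{cons:matfan}; the paper also observes that since $M$ is simple the restrictions $M|_{F_1},M|_{F_2}$ are automatically uniform, so the parallel connection is of $U_{2,|F_1|}$ and $U_{2,|F_2|}$.
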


\begin{proof}
Since we assume that $M$ has no loops or double points the rank $1$ flats of $M$ are precisely the elements of the ground set $\{0, \dots , N\}$. 
If there is no ray in direction $u_i$ in the coarse fan structure of $\trp_{\Delta}(M)$ then the element $i$ is contained in precisely $2$ flats of $M$ of  rank $2$, call them  $K$ and $L$. Moreover, by the axioms of flats 
the sets $K$ and $L$ partition  the ground set $\{0, \dots , N\} \backslash i$.  
Any other rank $2$ flat $J$ is $\{k,l\}$ where $k \in K$ and $l \in L$, since the intersection
$K \cap J$ is also a flat it  must be a  single element, and similarly for $L \cap J$. 
Moreover, $i \not \in J$ if $J \neq K, L$. 
Along with $\emptyset$ and $\{0, \dots , N\}$ this determines all of the flats of the rank $3$ matroid $M$. 
It is easy to check that these are precisely the flats of the parallel connection of 
$U_{2, k+1}$ and $U_{2, l+1}$ where $|K | = k+1$ and $|J| = j+1$. 
\end{proof}

\begin{corollary}\label{cor:missinglines}
Let $M$ be a rank $3$ simple matroid (loopless matroid and without double points) on $\{0, \dots , N\}$  and $\Delta$ a $\Z^N$-basis. Suppose that the Bergman fan $\trp_{\Delta}(M) \subset \R^N$  does not contain the ray in  the coarse structure in the direction  $u_i \in \Delta$ for some $i \in \{0, \dots , N\}$. Then 
$\trp_{\Delta}(M)$ is one of the following,
\begin{enumerate}
\item $\trp_{\Delta}(M)  = \R^2$, so that $M = U_{3, 3}$;
\item  $\trp_{\Delta}(M)  = \R \times L$, where $L$ is a fan tropical line, the underlying matroid is  $M = U_{2, N} \oplus \{ e\}$; 
\item $\trp_{\Delta}(M) $  is the cone over a complete bipartite graph $K_{k+1, l+1}$ and $M$ is the parallel connection of the uniform matroids $U_{2, k+1}$ and $U_{2, l+1}$, where $k+l = N$.
\end{enumerate}
\end{corollary}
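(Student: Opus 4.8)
The plan is to feed the structural conclusion of Lemma \ref{lem:missingline} into Construction \ref{cons:matfan} and read off the coarse fan. By the lemma $M$ is the parallel connection of $U_{2,k+1}$ and $U_{2,l+1}$ glued along $i$; writing $K,L$ for the two rank-$2$ flats through $i$ we have $|K|=k+1$, $|L|=l+1$, $K\cap L=\{i\}$, $K\cup L=\{0,\dots,N\}$, whence $k+l=N$, and simplicity forces $k,l\ge 1$. The flats are then completely determined: the atoms are the singletons, and the coatoms are $K$, $L$, and the pairs $\{a,b\}$ with $a\in K\setminus i$, $b\in L\setminus i$. I would split the argument according to $\min(k,l)$: the cases $k=l=1$, exactly one of $k,l$ equal to $1$, and $k,l\ge 2$.

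First I would record the two collinearity identities that drive everything. Since $u_0=-\sum_{j=1}^N u_j$ we have $\sum_{j=0}^N u_j=0$, so $u_F=\sum_{j\in F}u_j$ for every flat $F$; in particular $u_{\{a,b\}}=u_a+u_b$ lies in the relative interior of $\langle u_a,u_b\rangle$, while $u_K+u_L=\sum_{j}u_j+u_i=u_i$ (as $i$ is double counted), so $u_i$ lies in the relative interior of $\langle u_K,u_L\rangle$. These are exactly the collinearities needed to pass from the fine to the coarse structure: in the link of the fine fan — the atom/coatom incidence graph, with a ray $u_F$ for each proper flat $F$ and a $2$-cone $\langle u_p,u_H\rangle$ for each flag (atom $p\subset$ coatom $H$) — each size-$2$ coatom $\{a,b\}$ and the atom $i$ is a valence-$2$ vertex whose three rays are coplanar, so deleting these rays merges the two adjacent cones without altering the support.

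The main computation is the generic case $k,l\ge 2$. Here I would smooth away every ray $u_{\{a,b\}}$ (replacing the path $u_a-u_{\{a,b\}}-u_b$ by the edge $u_a-u_b$) and the ray $u_i$ (replacing $u_K-u_i-u_L$ by $u_K-u_L$). The surviving vertices are the $u_a$ ($a\in K\setminus i$), the $u_b$ ($b\in L\setminus i$), $u_K$, and $u_L$, with edges $u_au_b$, $u_au_K$, $u_bu_L$, and $u_Ku_L$; setting $A=\{u_a:a\in K\setminus i\}\cup\{u_L\}$ and $B=\{u_b:b\in L\setminus i\}\cup\{u_K\}$ one checks every edge runs between $A$ and $B$ and that all $(k+1)(l+1)$ of them occur, so the link is $K_{k+1,l+1}$, and since every surviving vertex has valence $\ge 3$ no further smoothing is possible; thus $\trp_\Delta(M)$ is the cone over $K_{k+1,l+1}$. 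The degenerate cases fall out of the same picture. When exactly one of $k,l$ equals $1$, say $l=1$, the coatom $L=\{i,e\}$ is itself a size-$2$ coatom and $u_K=-u_e$, so the same smoothing collapses the link to $K_{2,N}$; as its two-element part $\{u_K,u_e\}$ consists of antipodal rays, the cone over $K_{2,N}$ is $\R\times L$ with $L=\trp(U_{2,N})$ a fan tropical line, and separately the non-basepoint element $e$ is a coloop (deleting it drops the rank to $2$), giving $M=(M\setminus e)\oplus\{e\}=U_{2,N}\oplus\{e\}$. When $k=l=1$ (so $N=2$) the parallel connection of two free rank-$2$ matroids is $U_{3,3}$, the fine link is a hexagon all of whose rays are smoothable, and the coarse structure is the undivided plane $\R^2$.

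The step I expect to be the main obstacle is justifying that deleting the valence-$2$ coplanar rays really yields the coarse structure, and that no further rays must be removed. The literal criterion quoted in Construction \ref{cons:matfan} (delete size-$2$ coatoms and atoms lying in exactly two size-$2$ coatoms) does not by itself cover the smoothing of $u_i$ in the generic case, where $i$ lies in the two large coatoms $K,L$ rather than in size-$2$ coatoms; I therefore want to argue the smoothing directly from the coplanarity identities above and confirm via the valence count that the process terminates exactly at $K_{k+1,l+1}$. I would also check that the successive smoothings do not interfere — each of the $u_{\{a,b\}}$ and $u_i$ stays valence $2$ throughout — which is immediate since these rays are pairwise non-adjacent in the link.
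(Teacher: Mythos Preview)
Your proposal is correct and follows the same approach as the paper: invoke Lemma~\ref{lem:missingline} to identify $M$ as the parallel connection of $U_{2,k+1}$ and $U_{2,l+1}$, then split according to whether $k,l$ equal $1$. The paper's proof is extremely terse --- it literally just states the three-way case split and stops --- while you actually carry out the verification of the coarse fan in each case, so your argument is strictly more detailed rather than different in spirit.

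Your explicit check via the coplanarity identities $u_{\{a,b\}}=u_a+u_b$ and $u_K+u_L=u_i$ is a genuine addition: you are right that the deletion rule quoted in Construction~\ref{cons:matfan} (size-$2$ coatoms and atoms in exactly two size-$2$ coatoms) does not literally cover the smoothing of $u_i$ when $k,l\ge 2$, and arguing directly from the support is the clean way to handle it. The valence-$3$ termination check and the bipartition $A,B$ exhibiting the link as $K_{k+1,l+1}$ are both correct, as is the coloop argument giving $M=U_{2,N}\oplus\{e\}$ in the $l=1$ case.
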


\begin{proof}
Using $k, l$ from Lemma \ref{lem:missingline}. When both $k, l = 1$ then we are in Case $(1)$. If exactly one of $k, l$ is $1$ we are in Case $(2)$, otherwise we are in Case $(3)$. 
This completes the proof of the lemma. 
\end{proof}

Notice that the parallel connection of the uniform matroids $U_{2, k+1}$ and $U_{2, l+1}$  is realizable over characteristic $0$, see Figure \ref{fig:linearrangements}.  Case $(2)$ of Corollary \ref{cor:missinglines} 
consists of a $k$ lines through a point and $1$ line not through the point.  The line arrangement   in case $(3)$,  consists of $k+1$ lines containing a point 
 $p$ and $l+1$ lines containing a distinct point $q$, and a single line containing only the points $p$ and $q$, see Figure \ref{fig:linearrangements}. The line containing only $2$
 points of the arrangement corresponds to the direction $u_i$ which is not a ray in the coarse fan structure of $\trp_{\Delta}(M)$.

Given a Bergman fan of a matroid $L = \trp_{\Delta}(M)$, using $\Delta$ we may take the  closure of $L$ in $\T^N_{\Delta}$ or even compactify it in $\TP^N_{\Delta}$. 

\begin{definition}
A  $k$ dimensional fan tropical linear space  $L \subset \R^N$ is a tropical cycle equipped with weight $1$ on all of its facets such that  $L = \trp_{\Delta}(M)$ for 
some $\Z^N$-basis $\Delta$ and rank $k+1$ matroid $M$. 

A  $k$ dimensional  fan tropical linear space $L \subset \T_{\Delta}^N$,  (respectively $L \subset \TP_{\Delta}^N$),  of sedentarity $\emptyset$ is a tropical cycle  equipped with weight $1$ on all of its facets such that it is the closure of $L^o = \trp_{\Delta}(M) \subset \R^N$  in $\T_{\Delta}^N$ 
(respectively $\TP^N$) where $\Delta = \{-e_1, \dots , -e_N\}$ and $M$ is a rank $k+1$ matroid $M$. 
 
\end{definition}

A fan tropical linear space is non-degenerate if it is not contained in a subspace of $\R^N$.
Equivalently a fan tropical linear space is non-degenerate if it is the Bergman fan of a matroid without double points.

Set $\Delta = \{-e_1, \dots , -e_N\}$, and let $\bar{L}$ denote the compactification of $L =\trp_{\Delta}(M) \subset \R^N$ in  $\TP^N$. 
Then $\bar{L}$ defines an arrangement of $k-1$-dimensional fan tropical linear spaces located at the boundary of $\bar{L}$ which we call $\A_M$, namely
 $H_i = \bar{L} \cap \TP^N_{\{i\}}$ for all $i = 0 , \dots N$. 
If $L = \trp_{\Delta}(M)$ then  the rank function of the matroid $M$ coincides with the one defined by
$$\text{rank}(I ) =  k  - \text{dim}( \cap_{i \in I }(\bar{L} \cap \TP^N_{\{i\}})).$$
Because of this geometric 
realization of a matroid $M$ as a tropical hyperplane arrangement $\A_M$,  flats of rank $k$ of a rank $k+1$ matroid  will be referred to as points, and flats of rank $k-1$ as lines, and so on, even when the matroid is  not representable  over a field.

\vspace{0.5cm}
Tropical manifolds introduced in Section \ref{sec:defsuf} are locally modeled on fan tropical linear spaces and 
coordinate changes are integer affine maps. Because of this we are interested in integer linear maps which act as automorphisms of fan tropical linear 
spaces. 

\begin{definition}
An automorphism of a 
fan tropical $k$-plane 
$L \subset \R^N$ is a map $\phi \in \GL_N(\Z)$ which preserves $L$ as a set in $\R^N$. 

An automorphism is trivial if it corresponds to a permutation of the elements of the basis $\Delta$. 
\end{definition}

If a fan tropical plane has a trivial automorphism it implies that the underlying matroid $M$ has automorphisms itself; these are permutations of the ground set preserving the matroid.  For example, the whole symmetric group $S_n$ acts on the uniform matroid $U_{k, n}$. 
A non-trivial automorphism of a fan tropical linear space  $L \subset \R^N$ is equivalent to the existence of another choice of $\Z^N$-basis $\Delta^{\prime}$  for which $L = \trp_{\Delta^{\prime}}(M')$ for a matroid  $M'$. Here $M'$ and $M$ may or may not be distinct. 
Some examples of tropical planes in $\R^N$ having non-trivial automorphisms were previously presented in {\cite[Example 2.23] {Br17}}\label{ex:2comp}.

\begin{example}\label{ex:delPezzo5}
Consider the  arrangement of $6$ lines  known as the braid arrangement, drawn on the left of Figure \ref{fig:linearrangements}.
Up to automorphism of $\CP^2$ there is only one such arrangement.  We may suppose that the lines can be given by the linear forms  
$z_i = 0$ for $i = 0, 1, 2$ and $z_i - z_j =0$ for $0 \leq i < j \leq 2$. The tropicalization of a linear embedding of the complement  $\CP^2 \backslash \A \to (\CC^*)^5$ is a  fan tropical plane  $P \subset \R^5$ which 
 is the cone over the Petersen graph \cite{Ard}. 

The complement $\CP^2 \backslash \A$ can be identified with the moduli space $\M_{0,5}$ of $5$-marked rational curves up to automorphism. 
Similarly, the fan $P$ is  isomorphic to the moduli space of $5$-marked complex rational tropical curves, $M_{0,5}$, see \cite{Ard}, \cite{Mik7}. There is also an action of  $S_5$  
 on $P \cong M_{0,5}$ induced by permuting the markings of the $5$-marked rational tropical curves. This gives the entire group of non-trivial automorphisms of $P$ which is the entire symmetric group $S_5 $, see \cite{ReShSt}.  

\end{example}

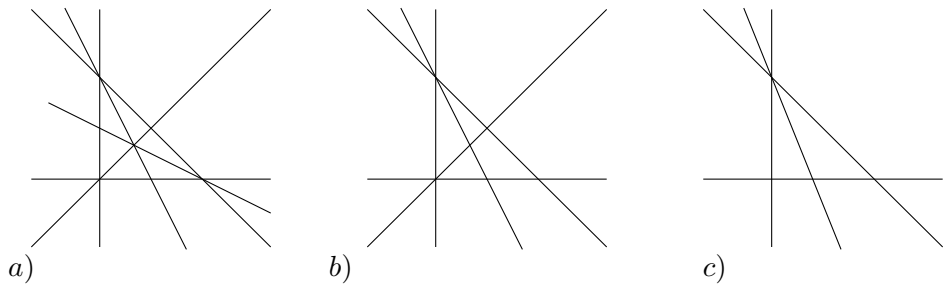
\begin{figure}\label{fig:linearrangements}
\begin{tikzpicture}[scale=0.9]
    \draw (0,-1) coordinate(c_1) -- coordinate(c_2) (0, 2.5); 
     \draw (-1,0)coordinate(d_1) -- coordinate(d_2) (2.5, 0); 
    \draw (-1,-1) coordinate (a_1) -- (2.5,2.5) coordinate (a_2); 
        \draw (-1,2.5) coordinate (b_1) -- (2.5, -1) coordinate (b_2); 
    \coordinate (ab) at (intersection of a_1--a_2 and b_1--b_2);
        \coordinate (cb) at (intersection of c_1--c_2 and b_1--b_2);
        \draw [add=.68 and .69] (cb) coordinate(e_1) to  (0.75,0) coordinate(e_2);
        \coordinate (eb) at (intersection of e_1--e_2 and a_1--a_2);
        \coordinate (db) at (intersection of d_1--d_2 and b_1--b_2);
          \draw [add=1 and 1.25] (db) to (eb);
\end{tikzpicture}
\hspace{1cm}
\begin{tikzpicture}[scale=0.9]
    \draw (0,-1) coordinate(c_1) -- coordinate(c_2) (0, 2.5); 
     \draw (-1,0)coordinate(d_1) -- coordinate(d_2) (2.5, 0); 
    \draw (-1,-1) coordinate (a_1) -- (2.5,2.5) coordinate (a_2); 
        \draw (-1,2.5) coordinate (b_1) -- (2.5, -1) coordinate (b_2); 
    \coordinate (ab) at (intersection of a_1--a_2 and b_1--b_2);
        \coordinate (cb) at (intersection of c_1--c_2 and b_1--b_2);
        \draw [add=.68 and .69] (cb) coordinate(e_1) to  (0.75,0) coordinate(e_2);
        \coordinate (eb) at (intersection of e_1--e_2 and a_1--a_2);
        \coordinate (db) at (intersection of d_1--d_2 and b_1--b_2);
        
\end{tikzpicture}
\hspace{1cm}
\begin{tikzpicture}[scale=0.9]
     \draw (0,-1) coordinate(c_1) -- coordinate(c_2) (0, 2.5); 
     \draw (-1,0)coordinate(d_1) -- coordinate(d_2) (2.5, 0); 
     \draw (-1,2.5) coordinate (b_1) -- (2.5, -1) coordinate (b_2); 
        \coordinate (ab) at (intersection of a_1--a_2 and b_1--b_2);
        \coordinate (cb) at (intersection of c_1--c_2 and b_1--b_2);
	\draw [add=.68 and .69] (cb) coordinate(d_1) to  (0.6,0) coordinate(d_2);
\end{tikzpicture}
\put(-230, -10){$b)$}
\put(-350, -10){$a)$}
\put(-90, -10){$c)$}

\caption{The braid arrangement from Example \ref{ex:delPezzo5} and examples of  arrangements from case $(2)$ and $(3)$ of Corollary \ref{cor:missinglines}.}
\label{fig:braidarrangement}

\end{figure}

In the example above, the automorphism group of the fan is non-trivial, but it turns out the underlying matroid $M$ is still determined  by the fan $P \subset \R^N$.
If a fan tropical linear space $L \subset \R^N$ of dimension $k$ is equal to $\trp_{\Delta}(M)$, %
it may be the case that  $L = \trp_{\Delta^{\prime}}(M^{\prime})$ for a distinct matroid $M^{\prime}$. 
The next proposition shows that, when restricting to rank $3$ matroids on the same number of elements, this is not possible.   In this case, given a $2$ dimensional matroidal fan in $\R^N$ the underlying rank $3$ matroid is unique, despite the possible existence of non-trivial automorphisms of the fan.

\begin{thm}\label{thm:MatEquiv}
Let $M_1, M_2$ be  rank $3$  simple (containing no loops or double points) matroids  on $N+1$ elements  and suppose that  $$\trp_{\Delta_1}(M_1)=  \trp_{\Delta_2}(M_2) \subset \R^N$$for some   $\Z^N$-bases, $\Delta_1$ and $\Delta_2$, then $M_1$, $M_2$ are isomorphic matroids.
\end{thm}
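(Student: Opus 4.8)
The plan is to show that the point--line geometry of a rank $3$ simple matroid can be read off intrinsically from the support $|L| = \trp_{\Delta}(M)\subset\R^N$, independently of the basis $\Delta$. A rank $3$ simple matroid is determined up to isomorphism by its lattice of flats, equivalently by the incidence relation between its \emph{points} (the rank $1$ flats, which are the singletons) and its \emph{lines} (the rank $2$ flats, each a subset of $\geq 2$ points, with every pair of points on a unique line). So it suffices to recover this incidence data from $|L|$ up to relabeling; applying the recovery to $\trp_{\Delta_1}(M_1)$ and $\trp_{\Delta_2}(M_2)$, which are equal as sets, then yields $M_1\cong M_2$.

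First I would record, from Construction \ref{cons:matfan}, that $\trp_{\Delta}(M)$ is the support of the cone over the incidence (Levi) graph of $M$: point $i$ gives the ray through $u_i$, line $I$ the ray through $u_I=\sum_{i\in I}u_i$, and each incidence $i\in I$ the $2$-cone $\text{cone}(u_i,u_I)$. Using $\sum_i u_i = 0$, the rays suppressed in the coarse structure are exactly the degree-$2$ vertices: a $2$-point line $\{a,b\}$ has $u_{\{a,b\}}=u_a+u_b$ in the relative interior of $\text{cone}(u_a,u_b)$, and a point $i$ on exactly two lines $K,L$ (which partition $E\setminus\{i\}$) has $u_i=u_K+u_L$ in the interior of $\text{cone}(u_K,u_L)$; conversely these are the only subdividing rays. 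I would then check that the coarse fan structure is canonical: its rays are precisely the directions along which $|L|$ fails to be locally a linear $\R^2$ (a ray meeting $\geq 3$ maximal cones, or exactly two that span distinct planes), and its maximal cones are the closures of the complementary smooth locus. Hence the link graph $\Gamma$ of essential rays and maximal $2$-cones, together with the degree of each essential ray, is an invariant of $|L|$ alone.

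Next I would split into cases according to whether every basis direction survives. If some ray $u_i$ is absent, Lemma \ref{lem:missingline} and Corollary \ref{cor:missinglines} force $M$ to be a parallel connection of two rank $2$ uniform matroids and pin $|L|$ down to one of the three models: $\R^2$, $\R\times(\text{tropical line})$, or the cone over a complete bipartite graph $K_{k+1,\,l+1}$. In each model the isomorphism type of $M$ is read directly off $|L|$ — for instance $\{k+1,l+1\}$ is recovered as the two vertex-class sizes of the bipartite link — so $M_1$ and $M_2$ coincide. In the remaining case all $N+1$ points appear as essential rays; here I would recover the point-rays as the distinguished generators $\{u_i\}$, characterized intrinsically as a set of $N+1$ primitive directions, any $N$ of which are linearly independent and with $\sum_i u_i=0$, such that every essential ray direction is a nonnegative integer combination of a subset of them. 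Once the point-rays are identified the lines follow combinatorially: two points lie on a common $\geq 3$-point line iff their rays are joined in $\Gamma$ through a line-ray, and on a $2$-point line iff their rays span a maximal cone directly, so the lines are the maximal sets of pairwise-collinear points.

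The main obstacle is this last step: distinguishing point-rays from line-rays, equivalently proving that the distinguished spanning set $\{u_i\}$ is intrinsic to $|L|$ and cannot be replaced by a genuinely different configuration arising from a second matroid and basis. A naive local criterion fails — a point lying on three $\geq 3$-point lines has the same anticlique neighborhood in $\Gamma$ as a line-ray — so the argument must use the global rigidity coming from the constraints that $M$ has exactly $N+1$ elements, is simple, and has rank exactly $3$; the relation $\sum_i u_i=0$ together with the general-position property of the $N+1$ point-directions is what should force uniqueness. This is precisely the feature that breaks in higher dimension, where non-isomorphic matroids are known to share a Bergman fan support, so the proof must exploit rank $3$ in an essential way.
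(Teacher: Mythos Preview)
Your setup and the degenerate cases are fine, and you have correctly located the crux. But the approach you propose for the main case cannot work as stated, and the gap you flag is real rather than a matter of filling in details. You want to characterize the point-rays intrinsically as the unique system of $N+1$ primitive directions in general position summing to zero and generating all essential rays as subset sums. This system is \emph{not} unique: the paper's own corollary immediately following the theorem shows that a fan tropical plane has nontrivial $\GL_N(\Z)$-automorphisms exactly when the underlying arrangement contains a ``saturated triangle'', and in that situation a second $\Z^N$-basis $\Delta'$ produces a genuinely different set of point-rays (three point-rays of $\Delta$ become line-rays for $\Delta'$ and vice versa). So no intrinsic recovery of ``the'' point set is possible; what must be shown is only that any two such systems yield isomorphic matroids.

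The paper does this by a direct comparison rather than an intrinsic characterization. Given the two labelings of the link graph $G$ by $E_1$ and $E_2$, it isolates the discrepancy set $\tilde V$ of vertices that are elements (lines) in one matroid but points (rank-$2$ flats) in the other. Using only the matroid axioms on both sides, it shows that the induced subgraph on $\tilde V$ is either the incidence graph of a finite projective plane or a hexagon. The projective-plane case is eliminated by a determinant argument: the change-of-basis matrix $\Delta_1\to\Delta_2$ would factor through an incidence matrix of a finite projective plane, whose determinant exceeds $1$, contradicting membership in $\GL_N(\Z)$. In the hexagon case one checks that the three point-vertices $i,j,k$ and the three opposite line-vertices $I,J,K$ satisfy $I\cup J\cup K=E_1$, and the obvious bijection $E_1\to E_2$ sending each vertex to the corresponding one in the other labeling is a matroid isomorphism. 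This is exactly the Cremona-type swap your local criterion cannot see; the paper's point is that it is the \emph{only} possible discrepancy, and it preserves the isomorphism type.
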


\begin{proof}
Denote the ground sets $E_1 = \{0, \dots , N\}$ and $E_2 = \{0', \dots , N'\}$. We will show that there is a bijection $f: E_1 \to E_2$ inducing an isomorphism of matroids. For this we just need to see that $f$ induces a bijection on the rank $2$ flats of $M_1$ and $M_2$, in other words of the points of the tropical line arrangements. 

Let $P = \trp_{\Delta_i}(M_i)$ and let the graph $G$ be the link of $P$. Notice that $G$ has no $2$ valent vertices. The identification of $P$ as the fan of the 
matroid $M_1$ gives a partial labelling of the vertices of $G$ by the elements of the ground set $E_1$.  Any unlabeled vertices 
correspond to points of the tropical line arrangement (flats of rank $2$). 
Similarly, there is another partial labelling of $G$ by the ground set of $M_2$.  Figure \ref{fig:graphsG} shows the graphs $G$ for the line arrangements from Figure \ref{fig:linearrangements}. 

Assume that all elements of $E_i$ label vertices of $G$, otherwise we are in the situation of Lemma \ref{lem:missingline} and the topology of $G$ is unique to the matroids listed in Corollary \ref{cor:missinglines}.  
In the case when  all elements of $E_i$ label the vertices of $G$, it is important to 
remark that  $2$ vertices of $G$ corresponding to points of $M_i$ cannot be adjacent in the graph $G$. This is not the case for the graphs $G$ in Figure \ref{fig:graphsG} $b)$ and $c)$ since not every line corresponds to a vertex of $G$.

Let $\tilde{V}$ denote the collection of vertices of $G$ which are labeled by exactly one of  $E_1$ or $E_2$ but 
not both. In other words a vertex is in  $\tilde{V}$ if it corresponds to a line in $M_1$ and a point in $M_2$ or vice versa. 
Suppose, without loss of generality, that a vertex in $V \backslash \tilde{V}$ is labeled by $m$ in $E_1$ and $m'$ in $E_2$.

We claim that either $|\tilde{V}| = 6$ or that the subgraph $\tilde{G}$ on the vertices of $\tilde{V}$ corresponds to the adjacency graph of a finite projective plane.  
If  $\tilde{V}  = 2$,  then we are in Case $(2)$ of Corollary \ref{cor:missinglines}, which was already excluded, so $\tilde{V} >2$. 
Suppose $v_1, v_2 \in \tilde{V}$ are labeled by elements $E_1$. Then  $v_1, v_2$ must either be connected by a unique edge of $G$ or they are both adjacent to a unique vertex $v$ of $G$ corresponding to a point of the tropical line arrangement of  $M_1$. 
We claim that only the second case is possible and that moreover,  $v \in \tilde{V}$ as well. 

Notice that $v_1, v_2$ are points in $M_2$, therefore they cannot be connected by an edge, and there must be a 
unique vertex  $v$  incident to $v_1, v_2$ in $G$. This vertex must be labeled by a line in $M_2$. The argument above says that $v$ is labeled by a point of $M_1$, so $v \in \tilde{V}$.

 On the other hand $2$ vertices $v_1, v_2 \in \tilde{V}$ corresponding to points in $M_1$ are labeled by elements of  $E_2$ and thus they must both be incident to a unique vertex which is labeled by  $E_1$.
 We can conclude that the subgraph of $G$ with vertices of $\tilde{V}$ is the incidence graph of a finite projective plane, or $|\tilde{V}| = 6$ and $\tilde{G}$ is a hexagon.

\begin{figure}
\begin{center}
\begin{tikzpicture}[scale=0.3, vertex_style/.style={circle,fill=black,draw, scale=0.5}, vertex_stylea/.style={circle, fill=white ,draw, scale=0.5} = {circle, color = white}, 
edge_style/.style={ black, scale=0.5}]
 
\useasboundingbox (-5.05,-4.4) rectangle (5.1,5.25);
 
\begin{scope}[rotate=90]
 \foreach \x/\y in {0/1,72/2,288/5}{
 \node[vertex_style] (\y) at (canvas polar cs: radius=2.5cm,angle=\x){};
 }
 
 \foreach \x/\y in {144/3,216/4}{
 \node[vertex_stylea] (\y) at (canvas polar cs: radius=2.5cm,angle=\x){};
 }
 
 \foreach \x/\y in {72/7,288/10}{
 \node[vertex_stylea] (\y) at (canvas polar cs: radius=5cm,angle=\x){};
 }
 
  \foreach \x/\y in {0/6,144/8, 216/9}{
 \node[vertex_style] (\y) at (canvas polar cs: radius=5cm,angle=\x){};
 }
\end{scope}
 
\foreach \x/\y in {1/6,2/7,3/8,4/9,5/10}{
 \draw[edge_style] (\x) -- (\y);
}
 
\foreach \x/\y in {1/3,2/4,3/5,4/1,5/2}{
 \draw[edge_style] (\x) -- (\y);
}
 
\foreach \x/\y in {6/7,7/8,8/9,9/10,10/6}{
 \draw[edge_style] (\x) -- (\y);
}
\end{tikzpicture}
\begin{tikzpicture}[
vertex_style/.style={circle,fill=black,draw, scale=0.5}, vertex_stylea/.style={circle, fill=white ,draw, scale=0.5} = {circle, color = white}, edge_style/.style={ black}]

\begin{axis}[height=5cm,width=7cm,hide x axis,hide y axis, ymin=-0.3,ymax=5,xmin=-0.3,xmax=4.3]
\node[vertex_style] (02) at (axis cs:1,4){};
\node[vertex_style] (00) at (axis cs:1,0){};
\node[vertex_style] (12) at (axis cs:2,4){};
\node[vertex_style] (10)  at (axis cs:2,0){};
\node[vertex_stylea] (22) at (axis cs:3,4) {};
\node[vertex_stylea] (20) at (axis cs:3,0){}; 
\draw[edge_style] (02) -- (00); 
\draw[edge_style] (02) -- (10);
\draw[edge_style] (02) -- (20);
\draw[edge_style] (12) -- (00);
\draw[edge_style] (12) -- (10);
\draw[edge_style] (12) -- (20);
\draw[edge_style] (22) -- (00);
\draw[edge_style] (22) -- (10);
\draw[edge_style] (22) -- (20);

 \end{axis}

\end{tikzpicture}
\begin{tikzpicture}[
vertex_style/.style={circle,fill=black,draw, scale=0.5}, vertex_stylea/.style={circle, fill=white ,draw, scale=0.5}, edge_style/.style={ultra thick, black}]

\begin{axis}[height=5cm,width=3.5cm,hide x axis,hide y axis, ymin=-0.3,ymax=5,xmin=-1.5,xmax=5.5]

\node[vertex_stylea] (0) at (axis cs:2,4){};
\node[vertex_style] (1) at (axis cs:2,0){};
\draw (0) to [bend right = 60] (1); 
\draw (0) to [bend left = 60]  (1);
\draw (0) to (1);
\draw (0) [bend right = 10] (1);
(0) edge [bend left=10] node  {} (1)
\end{axis}
\end{tikzpicture}
\put(-310, 0){$a)$}
\put(-190, 0){$b)$}
\put(-60, 0){$c)$}
\end{center}
\caption{The graph $G$ from the proof of Theorem \ref{thm:MatEquiv} for the arrangements from Figure \ref{fig:braidarrangement}. The vertices in black correspond to lines of the arrangement and vertices in white correspond to points.}
\label{fig:graphsG}

\end{figure}
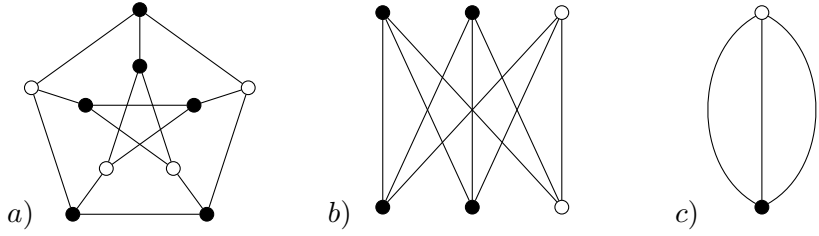

  If it is a finite projective plane, the integer linear map $\phi$ sending  $\Delta_1$ to $\Delta_2$ cannot be in $\text{GL}_N(\Z)$ since it is a composition of a permutation matrix with a linear map given by the incidence matrix of a finite projective plane.  
 The latter has determinant $> 1$ and so $\Delta_1$ and $\Delta_2$ cannot both be $\Z^N$ bases. 
 Therefore, $|\tilde{V}| = 6$  and the subgraph of $G$ on the vertices of $\tilde{V}$ is a hexagon.
 
Denote the points of $M_1$ corresponding to the vertices of $\tilde{V}$ labeled by flats  $I, J, K$, and the vertices of $\tilde{V}$ corresponding to elements of $E_1$ by $i, j, k$. 
 We claim that   $I \cup J \cup K = E_1$. Suppose $m \in E_1$, $m \neq i, j, k$ and $m \not \in I \cup J \cup K$.   Then in the graph $G$  the vertex $v$, labeled by $m$ must be connected to each of the vertices $i, j, k$ either by a single edge or by a path consisting of $2$ edges with an intermediate vertex which is a labeled by a point of the line arrangement of $M_1$. The second case is not possible, since in the labelling of $G$ given  by $M_2$, there would be  $2$ vertices labeled by points adjacent in $G$. If $v$ is adjacent to the vertices labeled by $i, j, k$, we also obtain a contradiction, since $v$ is also labeled by a line $m' \in E_2$, and  $m'$ and $i'$ would be contained in $2$ flats of $M_2$, (similarly for $j'$ and $k'$).   This contradicts the covering axiom for flats of matroids.

 Suppose the vertices of $\tilde{G}$  labeled by elements of $E_1$ are labeled  $i, j, k$. The three other vertices of $\tilde{G}$ are labeled by elements $i', j', k'$ of $E_2$, so that $i$ and $i'$ are opposite vertices of the hexagon and similarly for $j, j'$ and $k, k'$. 
Let  $f: E_1 \to E_2$ be given by  $i \mapsto i'$, then $f$ induces a bijection on the flats of $M_1$ to flats of $M_2$. 
So the matroids $M_1$ and $M_2$ are isomorphic. \end{proof}

From the above proof, a matroidal fan has non-trivial automorphisms in $\text{GL}_N(\Z)$ if and only if there exists a certain configuration in the underlying tropical line arrangement. 
Call a saturated triangle  of an arrangement $\A_M$ a collection of three lines $L_i, L_j, L_k$, and three points $p_I, p_J, p_K$ for $I, J, K \subset \{0, \dots , N\}$  such that, 
$$L_k \in I \cap J,  L_j \in I \cap K, L_i \in J \cap K \quad \text{ and } \quad  I \cup J \cup K = \{0, \dots, N\}.$$

\begin{cor}
A fan tropical plane in $P \subset \R^N$ has non-trivial automorphisms if and only if the underlying tropical line arrangement contains  a saturated triangle. 
\end{cor}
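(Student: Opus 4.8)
The statement is an \emph{iff}, and the forward implication is essentially contained in the proof of Theorem \ref{thm:MatEquiv}. A non-trivial automorphism $\phi \in \GL_N(\Z)$ of $P$ amounts to a second $\Z^N$-basis $\Delta_2 = \phi(\Delta_1)$ with $P = \trp_{\Delta_1}(M) = \trp_{\Delta_2}(M)$, and hence to a second labelling of the link graph $G$ that genuinely differs from the first (otherwise $\phi$ would merely permute $\Delta$, i.e. be trivial). Thus the set $\tilde{V}$ of vertices labelled by exactly one of the two ground sets is non-empty, and the argument of Theorem \ref{thm:MatEquiv} forces the subgraph $\tilde{G}$ to be either the incidence graph of a finite projective plane or a hexagon; the former is impossible, since the corresponding change of basis has determinant $>1$ and cannot lie in $\GL_N(\Z)$. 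So $\tilde{G}$ is a hexagon whose vertices alternate between three elements $i,j,k$ (lines of $M$) and three rank-$2$ flats $I,J,K$ (points of $M$). Reading the six edges of the hexagon off as incidences gives exactly $k \in I\cap J$, $j \in I\cap K$, $i \in J\cap K$, and the inclusion $I\cup J\cup K = \{0,\dots,N\}$ was already established inside that proof. This is precisely a saturated triangle.

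For the converse I would produce the automorphism by hand from the saturated triangle. Since $M$ is simple, distinct rank-$2$ flats meet in at most one element, so $I\cap J=\{k\}$, $I\cap K=\{j\}$, $J\cap K=\{i\}$ and the triple intersection is empty; writing $I=\{j,k\}\sqcup A$, $J=\{i,k\}\sqcup B$, $K=\{i,j\}\sqcup C$, the saturation $I\cup J\cup K=\{0,\dots,N\}$ says that $\{i,j,k\}\sqcup A\sqcup B\sqcup C$ is the whole ground set. Counting each element with the number of flats containing it, together with the defining relation $\sum_{\ell=0}^N u_\ell = 0$ of the basis, then yields the key identity $u_I + u_J + u_K = u_i + u_j + u_k$. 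I would use this to define a linear map $\phi$ by $\phi(u_i)=u_I$, $\phi(u_j)=u_J$, $\phi(u_k)=u_K$ and $\phi(u_m)=u_m$ for every remaining element $m$. The identity guarantees that these images respect the single relation $\sum_{\ell} u_\ell = 0$, so $\phi$ is well defined, and a short computation using the identity shows $\phi^2=\mathrm{id}$; hence $\phi\in\GL_N(\Z)$. It is non-trivial because $\phi(u_i)=u_I$ is a sum of at least two basis directions, not a permuted basis vector.

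The substantive point, and the main obstacle, is to verify that this $\phi$ preserves the support, i.e. that $\phi(P)=P$. Since $\phi$ is an involution it suffices to show $\phi(P)\subseteq P$, which I would check cone by cone on the fine fan structure of $\trp_\Delta(M)$ from Construction \ref{cons:matfan}. The cones spanned by rays disjoint from the triangle are fixed, and the cones incident to $u_i,u_j,u_k,u_I,u_J,u_K$ are permuted among themselves exactly as the hexagon is reflected through its centre. The delicate cones are those of the form $\langle u_\ell, u_F\rangle$ where $F$ is a rank-$2$ flat other than $I,J,K$ meeting $\{i,j,k\}$ (necessarily in a single element, say $i\in F$): here $\phi(u_F)=u_I+\sum_{\ell\in F\setminus\{i\}}u_\ell$ is no longer a ray of the fine structure, and one must show the image cone still lies inside $P$. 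This is where the identity $u_I+u_J+u_K=u_i+u_j+u_k$ and the covering $I\cup J\cup K=\{0,\dots,N\}$ do the real work: they force $\phi(u_F)$ to land in a cone that is subdivided by rays already present in $P$, so that the whole image cone is contained in the union. Conceptually, $\phi$ is the linear part of the tropical Cremona involution centred at the triangle $p_I,p_J,p_K$, which fixes the triangle and exchanges the pencils of lines through opposite vertices; recognising it this way is what makes support-preservation believable, and a careful enumeration of the finitely many cone types completes the argument.
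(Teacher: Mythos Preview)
The paper does not give a separate proof of this corollary; it simply asserts that the statement follows ``from the above proof'' of Theorem~\ref{thm:MatEquiv}.  Your forward direction is exactly what the paper intends, and you correctly note that non-triviality of $\phi$ forces $\tilde V\neq\emptyset$, which is the one point not made explicit there.  For the converse you go well beyond the paper by actually constructing the involution $\phi$ and checking $\phi^2=\mathrm{id}$ via the identity $u_I+u_J+u_K=u_i+u_j+u_k$; this is the right map, and your interpretation of it as the tropical Cremona centred at $p_I,p_J,p_K$ is apt.

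Where your argument is weaker than it needs to be is the verification $\phi(P)\subseteq P$ for the ``delicate'' cones.  You describe $\phi(u_F)=u_I+\sum_{\ell\in F\setminus\{i\}}u_\ell$ as landing ``in a cone that is subdivided by rays already present in $P$'', leaving the enumeration to the reader.  In fact the situation is much simpler once you push your own observation $F\setminus\{i\}\subset A$ one step further: any two elements of $A$ already generate the rank-$2$ flat $I$, so $|F\cap A|\le 1$, and hence $|F|=2$, say $F=\{i,a\}$ with $a\in A\subset I$.  Then $\phi(u_F)=u_I+u_a$ and the entire image cone $\phi(\langle u_i,u_a\rangle)$ equals the single fine cone $\langle u_a,u_I\rangle$ of $P$.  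The cones $\langle u_\ell,u_I\rangle$ are handled the same way: for $\ell\in A$ one gets $\langle u_\ell,u_i\rangle\subset P$ because $\{i,\ell\}$ is itself a (size-$2$) flat; for $\ell\in\{j,k\}$ one gets the fine cone $\langle u_i,u_J\rangle$ or $\langle u_i,u_K\rangle$.  All remaining fine cones have both $\ell\notin\{i,j,k\}$ and $F\cap\{i,j,k\}=\emptyset$ and are fixed by $\phi$.  So no subdivision argument is needed; recording the bound $|F|=2$ turns your sketch into a complete proof.
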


When a matroid is representable over a field of characteristic $0$,  an automorphism of its Bergman fan yields a birational automorphism of $\mathbb{P}^2$ regular on the complement of the line arrangement. In this case the above proposition and corollary can be proved using the structure of the Cremona group in dimension $2$.

\begin{rem}
It is known that Theorem \ref{thm:MatEquiv} does not hold for matroidal fans of higher dimensions. The first counter-example is a $3$-dimensional matroidal  fan in $\R^5$. Topologically the fan is the cone over  the graph $K_{3, 3}$ direct product with $\R$. It can be given by two matroids of rank three on $6$ elements, namely $U_{2, 3} \oplus U_{2, 3} $ and $M   \oplus \{e\}$ where $U_{2, 3}$ is the uniform matroid of rank $2$ on $3$ elements and $M$ is the matroid of the arrangement of $5$ lines drawn in on the left of  Figure \ref{fig:braidarrangement}. There are also examples of pairs of connected matroids exhibiting this property. 
\end{rem}

\subsection{One dimensional fan cycles}\label{sec:fancycles}

We restrict to describing $1$ dimensional tropical cycles, since we are for the most part  interested in cycles in tropical surfaces.  Definitions of general tropical cycles in $\R^N$ can be found in \cite{MIk3}, \cite{St7}. 

\begin{definition}
A fan $1$-cycle $C \subset \R^N$ is a $1$-dimensional rational fan equipped with integer weights on its edges such that 
$$\sum_{e \in \text{Edge}(C)} w_e v_e = 0,$$
where $\text{Edge}(C)$ is the set of edges of the fan, $w_e$ is the integer weight assigned to $e \in \text{Edge}(C)$ and $v_e$ is the primitive integer vector in the direction of $e$. 
\end{definition}

A fan $1$-cycle is called a fan tropical curve if all of the weights $w_e$ are positive integers. Cycles with positive weights are also known as effective.

For $\Delta = \{u_1, \dots, u_N\}$  a choice of $\Z^N$-basis  we recall the definition of the degree of $C$ relative to $\Delta$ from \cite{Br17}. Again let $u_0 = \sum_{i = 1}^N u_N$.   Firstly, the primitive integer vector $v_e$ of an edge $e$ of $C$ can be uniquely expressed as a positive linear combination
$$v_e = \sum_{i = 0}^N r_e(i)u_i, $$
where $r_e(i) \geq 0$ and $r_e(i) > 0 $ for at most $N$ of the $u_i$.

\begin{definition}\label{def:degreeDel}
Let $\Delta = \{u_1, \dots , u_N\}$ be a $\Z^N$-basis of $\R^N$, and  $C \subset \R^N$  a fan
tropical cycle.
Then the degree of $C$ with respect to $\Delta$ is 
$$\deg_{\Delta}(C) = \sum_{e \in \Ed(C)} w_er_e(i),$$
for any choice of  $i \in \{0, \dots, N\}$.  
\end{definition}

The fact that the above definition does not depend on the choice of $i$  follows from the balancing condition. 
The above definition of $\deg_{\Delta}(C)$ is equivalent to the multiplicity of the tropical stable intersection of $C$ with $H_{\Delta} \subset \R^N$, where $H_{\Delta}$ is the standard tropical hyperplane with respect to the basis $\Delta$ {\cite[Lemma 3.5]{Br17}}. In other words, $H_{\Delta} = \trp_{\Delta}(U_{N, N+1})$.

The degree of a fan $1$-cycle is dependent on the  choice of $\Delta$. Even when the fan $1$-cycle is contained in a fan tropical plane $P$, 
and we consider only $\Z^N$-bases $\Delta$ for which $P = \trp_{\Delta}(M)$  {\cite[Examples 3.3 and  3.4]{Br17}}. Define the degree of a $1$-cycle in a tropical plane to be the minimal of these degrees. 

\begin{definition}\label{def:degreeLoc}
The relative degree of a fan tropical $1$-cycle $C$ in a fan tropical plane $P \subset \R^N$ is 
$$\deg_P(C) = \min_{\Delta \in \mathcal{B}} \{ \deg_{\Delta}(C) \},$$
where $\mathcal{B}$ be the collection of $\Z^N$-bases $\Delta$ such that $P = \trp_{\Delta}(M)$. 
\end{definition}

It is also possible to define the degree of a fan  $1$-cycle in $\R^N$, using the minimum of $\Delta$ degrees, 
$$\deg(C) = \min\{ \deg_{\Delta}(C) \ | \ \Delta \mbox{ is a } \Z^N\mbox{-basis  of } \R^N \}.$$
Clearly, $\deg(C) \leq \deg_P(C) \leq \deg_{\Delta}(C)$.

\begin{definition}\label{def:localdegreeone}
A fan tropical  curve $C \subset P \subset \R^N$ for which $\deg_P(C) = 1$  is said to have degree $1$ in $P$. 
\end{definition}

\vspace{0.5cm}
Intersection numbers of tropical cycles in fan tropical planes have been defined in different places using various methods \cite{AlRa1}, \cite{ShawInt}, \cite{Br17}. Here we will recall the definition presented in \cite{Br17} for the sake of completeness. 
Recall from Section \ref{sec:fanplanes}, that for a fan tropical plane $P \subset \R^N$ and a $\Z^N$-basis $\Delta$ such that $\trp_{\Delta}(M) = P$ we can take the  compactification $\overline{P} \subset \TP^N_{\Delta}$,  and obtain a tropical line arrangement  with the same rank function as $M$, we denote this line arrangement $\A_M$. For two fan curves $C_1, C_2 \subset P$ we will define intersection multiplicities of the curves in $\overline{P}$ at points $p_I$ of the arrangement $\A_{M}$. That is to say at flats of rank $2$ of the matroid $M$.

\begin{defn}\cite[Definition 3.1]{Br17}\label{def:cornerInt}
Let $P \subset \R^N$ be a fan tropical plane and $\Delta = \{u_1, \dots, u_N\}$ be a 
$\Z^N$-basis such that $\trp_{\Delta}(M) = P$ for some matroid $M$. 
Given two fan tropical curves 
$C_1, C_2 \subset P$,
let $\overline{C}_i$ denote their compactifications in 
$\overline{P}
\subset \TP^N_{\Delta}$. Let $p_I \in \overline{P}$ be a point  of $\A_{M}$ and 
suppose that $\overline{C}_1$ and $\overline{C}_2$ 
both have exactly one edge containing the point  $p_I$.
The intersection multiplicity 
of $\overline{C}_1$ and $\overline{C}_2$
at the
point  $p_I$ is defined as follows: 
\begin{enumerate}
\item If  $I = \{i,j\}$ choose an affine chart $U_m$ of $\TP^N_{\Delta}$ where $m \not \in I$ described in Section \ref{subsec:affproj}. Let $ \pi_{i,j}: U_m \to \TT^2 $ be the map induced by extending the linear  projection $\R^N \to \R^2$ with kernel $u_{m'}$ for $m' \neq i, j$.
Suppose the ray of $\pi_{i,j} (\overline{C}_1 \cap U_m) \subset \TT^2$
has weight 
$w_1$
 and primitive integer direction 
$(k_1,k_2)$, and
similarly the ray of $\pi_{i,j}(\overline{C}_2 \cap U_m) \subset
\TT^2$ has weight 
$w_2$ 
and primitive integer direction 
$(l_1,l_2)$
 then, 
$$(\overline{C}_1\cdot  \overline{C}_2)_{p_I} = w_1w_2\min \{k_1l_2, k_2l_1\}.$$
 
\item If $|I| > 2$ choose an affine chart, $U_m \ni p_I$ for $m \not
  \in I$, and a  projection $\pi_{i, j}: U_m \to \T^2$
  where $i, j \in I$ such that the rays of $\overline{C}_1$ and 
$ \overline{C}_2$ are contained in the union 
of the closed faces generated by $u_i, u_j$. Then 
$$(\overline{C}_1 \cdot  \overline{C}_2)_{p_I} = (\pi_{i, j}( \overline{C}_1 \cap U_m). \pi_{i, j}( \overline{C}_2 \cap U_m))_{(-\infty, -\infty)}.$$ 
\end{enumerate}

The intersection multiplicity is extended  by distributivity in the case
when $\overline{C}_1$ and $\overline{C}_2$ have more than one  ray containing the point  $p_I$.
\end{defn}

\begin{definition}\label{def:localInt}
Let $ P\subset \R^N$ be a non-degenerate  plane
and $\Delta$ a $\Z^N$-basis such that $P = \trp_{\Delta}(M)$ for some matroid $M$.
The  tropical intersection multiplicity of fan tropical $1$-cycles $C_1, C_2$ in $P$ at the 
vertex of the fan $p \in P$ is
$$(C_1\cdot C_2)_p= \deg_{\Delta}(C_1)  \deg_{\Delta}(C_1) - \sum_{p_I \in  p(\A)}(\overline{C}_1 \cdot  \overline{C}_2)_{p_I}.$$ 
\end{definition}

Although a choice of  $\Z^N$-basis $\Delta$ is  used in the above definition, the multiplicity of $2$ fan curves  at the vertex of the fan plane $P$ is independent of the choice as long as $P = \trp_{\Delta}(M)$ since  the above definition is equivalent to the intersection multiplicities of tropical cycles in matroidal fans given in \cite{ShawInt} and \cite{FrancoisRau}.

Notice that by definition, fan $1$-cycles  $\overline{C}_1, \overline{C}_2$  satisfy B\'ezout's theorem in the compactification of $P$ to  $\overline{P} \subset \TP_{\Delta}^N$. That is if we define the total intersection multiplicity of  fan $1$-cycles in  $\overline{P}$ to be, 
$$\overline{C}_1\cdot \overline{C}_2 = \sum_{x \in (\overline{C}_1 \cap \overline{C}_2)^{(0)}} (\overline{C}_1\cdot \overline{C}_2)_x,$$ 
then we immediately have the following proposition.

\begin{proposition}
Let $C_1, C_2$ be fan $1$-cycles in a  fan tropical plane $P = \trp_{\Delta}(M) \subset \R^N$, then
$$\overline{C}_1\cdot  \overline{C}_2 = \deg_{\Delta}(\overline{C}_1)\deg_{\Delta}(\overline{C}_2),$$
where $\overline{C}_i$ is the closure of $C_i$ in the compactification of $\R^N$ to $\TP^N_{\Delta}$.
\end{proposition}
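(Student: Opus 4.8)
The plan is to obtain the formula as an immediate telescoping consequence of the two definitions already in place: the total intersection multiplicity $\overline{C}_1 \cdot \overline{C}_2 = \sum_{x} (\overline{C}_1 \cdot \overline{C}_2)_x$ and the vertex multiplicity of Definition \ref{def:localInt}, which is built precisely so that the product $\deg_{\Delta}(C_1)\deg_{\Delta}(C_2)$, minus the boundary corrections, records the contribution at the apex. First I would identify the set $(\overline{C}_1 \cap \overline{C}_2)^{(0)}$ of isolated intersection points.

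Since $C_1, C_2$ are fans whose edges are rays from the common apex $p$ (the origin of $\R^N$), inside the open torus $\R^N$ two distinct rays can only meet at $p$, so $p$ is the unique isolated intersection point in the interior. Every other isolated intersection point lies in the boundary $\overline{P} \setminus \R^N$ inside $\TP^N_{\Delta}$, and by the structure of the compactification these are exactly the points $p_I$ of the arrangement $\A_M$ — the rank-$2$ flats of $M$ — through which edges of both $\overline{C}_1$ and $\overline{C}_2$ pass. Thus the index set of the total-multiplicity sum splits as $\{p\} \cup \{p_I\}$, matching the summation range $p(\A)$ appearing in Definition \ref{def:localInt}.

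With this decomposition, I would simply write
$$\overline{C}_1 \cdot \overline{C}_2 = (\overline{C}_1 \cdot \overline{C}_2)_p + \sum_{p_I \in p(\A)} (\overline{C}_1 \cdot \overline{C}_2)_{p_I},$$
substitute the value of $(\overline{C}_1 \cdot \overline{C}_2)_p$ from Definition \ref{def:localInt}, and observe that the two boundary sums cancel, leaving $\deg_{\Delta}(C_1)\deg_{\Delta}(C_2)$. Finally I note that $\deg_{\Delta}(\overline{C}_i) = \deg_{\Delta}(C_i)$, since the degree depends only on the weighted ray directions, which are unchanged under compactification; this matches the statement.

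The only point requiring genuine verification — and hence the main, if modest, obstacle — is the intersection-locus claim of the second step: that in $\TP^N_{\Delta}$ no isolated intersection point of the two fans can occur away from $p$ and the arrangement points $p_I$. This amounts to checking that an edge of a fan curve, being a ray in a nonnegative integer combination of the $u_i$, limits in the boundary strata only to points $p_I$ associated to flats of $M$, so that two such edges meet on the boundary precisely at these arrangement points; the case of several edges through one $p_I$, and of shared rays, is already subsumed by the distributivity clause and the stable-intersection flavour of Definitions \ref{def:cornerInt} and \ref{def:localInt}. Everything else is pure bookkeeping.
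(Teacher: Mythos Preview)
Your proposal is correct and is exactly the argument the paper has in mind: the proposition is stated immediately after the displayed definition of $\overline{C}_1\cdot\overline{C}_2$ with the remark that it follows ``by definition,'' since the vertex multiplicity in Definition~\ref{def:localInt} is \emph{defined} as $\deg_{\Delta}(C_1)\deg_{\Delta}(C_2)$ minus the boundary contributions, so the telescoping you write out is precisely the intended content. Your worry about the intersection locus is slightly overcautious: the sum in Definition~\ref{def:localInt} runs over all $p_I\in p(\A)$, and at arrangement points not lying on both closures the distributive multiplicity from Definition~\ref{def:cornerInt} is simply $0$, so no separate identification of $(\overline{C}_1\cap\overline{C}_2)^{(0)}$ is needed.
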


Moreover, the above definition of local intersection multiplicity of $2$ fan tropical  curves   at the vertex of the fan plane $P$ reflects the complex  intersection multiplicities in the case when the  curves arise as tropicalizations of complex curves $\C_1, \C_2 \subset \P \subset (\CC^*)^N$, see {\cite[Theorem 3.8]{Br17}}.

\subsection{Fan modifications}\label{sec:mod}

General tropical modifications were introduced by Mikhalkin in \cite{MIk3}. 
 Here we recall the definitions of this  operation for fans in $\T^N$.
Eventually, the restriction will be to so-called degree $1$ modifications, which produce a new  $k$-dimensional fan tropical linear space $\tilde{L} \subset \T^{N+1}$ from a pair of fan tropical linear spaces $D \subset L \subset \T^N$, of dimensions $k-1$ and $k$ respectively. A more thorough treatment of degree $1$ modifications can be found in   \cite[Section 2.4]{ShawInt}.   
A general introduction to tropical modifications, and tropical divisors of regular and rational functions can be found in  \cite{BIMS}, \cite{MikRau}. 

Given a tropical variety  $V \subset \T^N$ of dimension $k$  and a tropical regular function $f: \T^N \to \T$ its graph $\Gamma_f(V) \subset \T^{N+1}$ is a rational polyhedral complex of dimension $k$ and it inherits weights from the top dimensional facets of $V$. However, since $f$ is only piecewise linear, $\Gamma_f(V)$ may not be balanced. 
There is a canonical way to add  weighted facets to $\Gamma_f(V)$ to produce a tropical cycle $\tilde{V}$. At each codimension $1$ face $E$ of $\Gamma_f(V)$ which fails the balancing condition attach the facet, 
$$F_{E} = \{(x- t e_{N+1} \ | \ x \in E \text{ and } t \in [0, \infty]\}.$$ 
Assign to $F_E$ the unique positive integer weight so that the union $\Gamma_f(V) \cup F_E$ is balanced at $E$. After carrying out this procedure for all unbalanced faces $E$ of $\Gamma_f(E)$ call the resulting polyhedral complex $\tilde{V}$. 

Let $\delta: \T^{N+1} \to \T^N$ denote the  linear projection with kernel $e_{N+1}$. Then the restriction to $\tilde{V}$, is  $\delta: \tilde{V} \to V$ is the tropical modification of $V$ along $f$. If $f(x) \neq -\infty$ for all $x \in \T^N$ then the  divisor of $f$,  $\text{div}_f(V)$ is a tropical cycle, with support 
$\{ x \in V \ | \ |\delta^{-1}(x)| >1\}$. The weight of a top dimensional face $E \subset \text{div}_f(V)$ is assigned the same weight as $F_E$  in $\tilde{V}$ where $\delta^{-1}(E) = F_E$. If $f(x) = -\infty$ for some $x \in V$ the divisor of $f$ may have additional components in the boundary strata of $\T^N$, here we ignore this case for simplicity, it is treated in \cite{ShawInt} and \cite{BIMS}. 

A tropical rational function is  $f= ``g/h"$ where $g, h : \T^N \to \T$ are both tropical polynomial functions. For simplicity, again assume that $g(x) \neq -\infty$ for all points $x \in V \subset \T^N$ and similarly for $h$. Define the divisor of $f$ restricted to $V$ by  
$$\text{div}_f(V) = \text{div}_g(V) - \text{div}_h(V),$$
If the divisor of $f$ is  an effective cycle,   we can once again take the graph of $V$ along $f$ and complete it to a tropical variety $\tilde{V}$ as above.

\begin{definition}
Let $V \subset \T^N$ be a tropical variety and $f = ``g/h"$ a tropical rational function such that $g$, $h$ do not both attain $-\infty $ at any point $x \in V$ and that $\text{div}_f(V)$ is effective. 
The tropical modification of $V$ along $f$  is $\delta:\tilde{V} \to V$ where $\tilde{V}$ is described above. 
\end{definition}

\begin{definition}\label{def:modT}
Suppose $L \subset \T^N$ is a fan tropical linear space and let $f$ be a tropical rational function on $\T^N$ such that $\div_L(f)$ is also a  fan tropical linear space in $\T^N$. Then the tropical modification $\delta: \tilde{L} \to L$ along $f$ is said to be a degree $1$ modification of $L \subset \T^N$. 
\end{definition}

Given a degree $1$ fan tropical modification, $\delta: \tilde{L} \to L$,  the tropical cycle $\tilde{L} \subset \T^{N+1}$ is also a fan tropical plane.  
 In particular,
a degree $1$ modification of a fan linear space $L \subset \T^N$ corresponds to a so-called matroidal extension on the underlying matroids {\cite[Proposition 2.25]{ShawInt}}.

We can also define open fan tropical modifications of a fan tropical linear space $L \subset \R^N$.   Given an open modification $\delta^o: \tilde{L} \to L$ along an effective divisor $D \subset L$, the space $\tilde{L} \subset \R^{N+1}$ should be thought of as the complement of $D$ in $L$, see Section 2.4 of \cite{ShawInt}  for more details.

One difference between open fan modifications of degree $1$ and the tropical modifications described in Definition \ref{def:modT}
 is that the $\Z^N$-basis $\Delta$ is no longer fixed in the open case; for a fan tropical linear space $L \subset \R^N$ an open degree $1$ modification may be along any effective divisor 
 $D \subset L$ as long as there exist a $\Z^{n}$-basis $\Delta$ and matroids $M, N$  such that $\trp_{\Delta}(M) = L$ and $\trp_{\Delta}(M) = D$.

\begin{definition}
Let  $L = \trp_{\Delta}(M)   \subset \R^N$ be a fan tropical linear space and let $f$ be a tropical rational function on $\R^N$ such that 
$\div_L(f) = \trp_{\Delta}(N)$ for a matroid $N$, denote $\tilde{L} \subset \R^{N+1}$ the tropical cycle obtained by completing the graph of 
$L$ along $f$. 
Then $\delta^o: \tilde{L} \to L$ is an  open degree $1$ tropical modification along $f$, where $\delta^o$ is induced by the linear projection $\R^{N+1} \to \R^N$ with kernel generated by $e_{N+1}$. 

\end{definition}

Once again the tropical cycle $\tilde{L}$ in the above definition is a fan tropical linear space in $\R^{N+1}$. The $\Z^N$-basis $\tilde{\Delta}$ of $\R^{N+1}$, for which $\deg_{\Delta}(\tilde{L}) = 1$ is obtained  by adding $-e_{N+1}$ to the $\Z^N$-basis $\Delta$ of $\R^N$ for which $\deg_{\Delta}(D) = \deg_{\Delta}(L)=1$.

The next example shows that the local degree of a $1$-cycle in a fan tropical plane from Definition \ref{def:degreeLoc}  is not invariant under open degree $1$  tropical  modifications.

\begin{example}\label{exam:curveDegDecrease}
Let  $P = \trp_{\Delta}(U_{3, 4}) \subset \R^3$ where $\Delta = \{-e_1, -e_2, -e_3\}$ see Figure \ref{fig:planeConic}. Let $C$ be the fan tropical curve with rays of weight $1$ in directions 
$$(-2, -1, 0), \qquad (1, 0, 1), \qquad (1, 1 ,   -1),$$
shown in red in Figure \ref{fig:planeConic}. Then $C \subset P$. 

\begin{figure}
\includegraphics[scale=1]{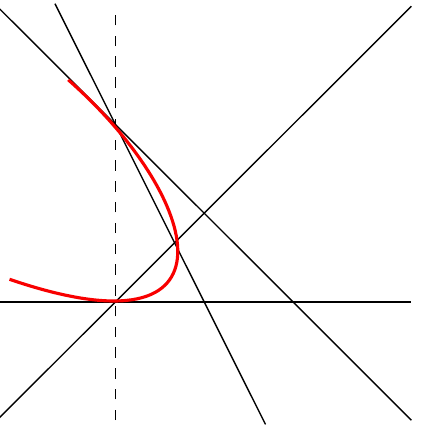}
\put(-85, 30){$p$}
\put(-85, 90){$q$}
\put(-65, 50){$r$}
\put(-130, 40){$\mathcal{C}$}
\hspace{1cm}
\includegraphics[scale=1]{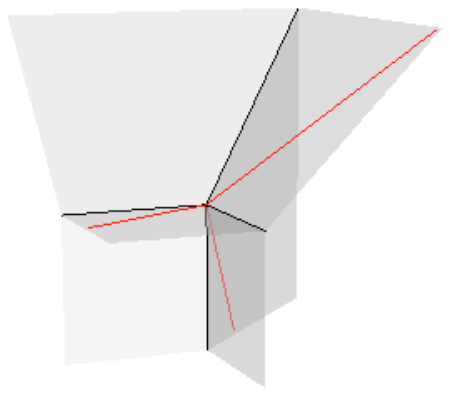}
\put(-15, 90){$\trp(\mathcal{C})$}
\put(-130, 10){$\trp(\mathcal{P})$}

\caption{On the right is the arrangement of lines defined by $\P \subset (\CC^*)^3$ in bold and the additional line defined by the arrangement of $\tilde{P} \subset (\CC^*)^4$ from Example \ref{exam:curveDegDecrease}. The conic $\C$ with respect to the arrangements is drawn in red. On the left is the tropicalization of the pair $\C \subset \P$ in $\R^3$.}
\label{fig:planeConic}
\end{figure}
There is a unique $\Z^3$-basis $\Delta$ for which $P = \trp_{\Delta}(M)$ for some matroid, and $\deg_P(C)  =\deg_{\Delta}(C) = 2$. The tropical $1$-cycle $L$ with support the affine line in direction $(1, 1, 0)$ equipped with weight $1$, is of degree $1$ in $P$. There is a tropical rational function $f$, such that $L = \text{div}_P(f)$. Performing the   modification of $P$ along 
$f$ yields a fan tropical   plane $\tilde{P} \subset \R^4$.
Applying the same modification to the above fan tropical curve $C$ yields the fan tropical curve $\tilde{C} \subset \tilde{P} \subset \R^4$ with directions, 
$$(-2, -1, 0, -1), \qquad (1, 0, 1, 1), \qquad (1, 1 ,   -1, 0),$$
 
The fan $\tilde{P}$ is a matroidal fan with respect to the $\Z^4$-bases $\Delta_1$, $\Delta_2$. The degrees of $\tilde{C}$ with respect to these bases are 
 $\deg_{\Delta_1}(\tilde{C}) = 2$ and $\deg_{\Delta_2}(\tilde{C}) = 1$, therefore  $\deg_{\tilde{P}}(C) =1$.  See  \cite[Examples 2.3 and 3.4]{Br17} for details.

\end{example}

\begin{rem}\label{rem:rectifiable}
Let $\P \subset (\CC^*)^3$ be the plane defined by the equation $z_1 + z_2 + z_3 +1 = 0$, then $\trp(\P) = P$. 
This plane defines the arrangement of $4$ bold lines on the left side of Figure \ref{fig:planeConic} and
The $\Z^4$-bases in the above example give two compactifications of the complex plane $\tilde{\P} \subset (\CC^*)^4$ to $\CP^2$ from  \cite[Example 2.3]{Br17}. The $2$ compactifications are related by performing the Cremona transformation at the $3$ points $p, q, r$ in Figure \ref{fig:planeConic}. The fan tropical curve $C$ is the tropicalization of the conic $\C$  drawn with respect to a line arrangement on the left. The image of $\C$ under the Cremona transformation is a line. 

Given a plane curve $\C \subset \CP^2$, consider a linear embedding  $\phi: \CP^2 \to \CP^N$. Let $\P^o = \phi(\CP^2) \cap (\CC^*)^N$ and analogously for $\C^o$.  There may be another compactification of $\P^o$ to a linear space in $\CP^N$ such that the closure of $\C^o$ in this compactificaiton may have smaller degree than the original plane curve $\trp(\C)$, as is the case in the example above.  
We may ask what is the  minimal  degree of a curve $\C \subset \CP^2$ which can be obtained by such a procedure. 
This minimal degree is  bounded below by the Cremona degree of a curve. Rational curves in $\CP^2$ of Cremona degree $1$ are called rectifiable. There  are known examples of rational curves which are not rectifiable 
see \cite{CalCrem}.  
\end{rem}

\section{Tropical surfaces}\label{sec:SURF}

\subsection{Tropical manifolds}\label{sec:defsuf}

An integer affine map $f: \R^N \to \R^M$ is a composition of an integer linear map and a translation in $\R^M$. 
Such a map can be given by $M$ tropical monomials i.e.~$$(x_1, \dots , x_N) \mapsto (``a_1\cdot x^{\alpha_1}", \cdots , ``a_M\cdot x^{\alpha_M}"),$$ where $(a_1 , \cdots a_M) \in \R^M$ encodes the translation and together the   $\alpha_i \in \Z^N$ form an integer $N \times M$ matrix. An integer affine map $f: \T^N \to \T^M$, is defined to be  the extension of an integer affine map $\R^N \to \R^M$. 

Tropical manifolds are instances of abstract tropical varieties from \cite{MIk3},  or \cite{MikZhaEig} (also called tropical spaces)
which are locally modeled on fan tropical linear spaces.  Just as for tropical spaces, the
coordinate changes are restrictions of, possibly partially defined, integer affine maps $\T^N \to \T^M$.

\begin{definition}\label{Man}

A tropical manifold $X$ of dimension $n$ is a Hausdorff topological space equipped with an atlas of charts $\{ U_\alpha , \phi_\alpha \}$, 
$\phi_\alpha : U_\alpha \rightarrow V_{\alpha} \subset \T^{N_{\alpha}}$, such that the following hold,

\begin{enumerate}

\item {for every $\alpha$ there is an open embedding  $\phi_\alpha : U_\alpha \rightarrow V_{\alpha} \subset \T^{N_\alpha}$, where $V_\alpha$ is a non-degenerate fan tropical  linear space of  dimension $n$;}

\item coordinate changes on overlaps  $$\phi_{\alpha_1} \circ \phi_{\alpha_2}^{-1}:
\phi_{\alpha_2}(U_{\alpha_1} \cap U_{\alpha_2}) \to \phi_{\alpha_1}(U_{\alpha_1} \cap U_{\alpha_2})
$$
are restrictions of (possibly partially defined) integer affine linear maps  $\Phi_{\alpha_1 \alpha_2}: \T^{N_{\alpha_2}} \to \T^{N_{\alpha_1}}$;

\item $X$ is of finite type:  there is a finite collection of open sets $\{W_i\}_{i=1}^s$ such that $\bigcup_{i=1}^s W_i = X$ and  $W_i \subset U_{\alpha}$ for some $\alpha$ and $\overline{\phi_{\alpha}(W_i)} \subset \phi_{\alpha}(U_{\alpha}) \subset \T^{N_{\alpha}}$. 

\end{enumerate}

\end{definition}

\begin{figure}
\includegraphics[scale=0.75]{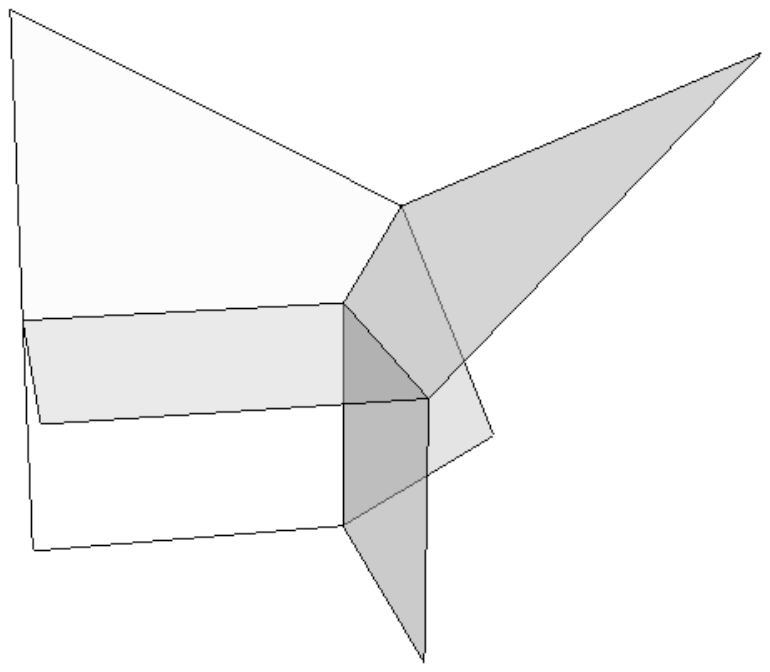}
\includegraphics[scale=1]{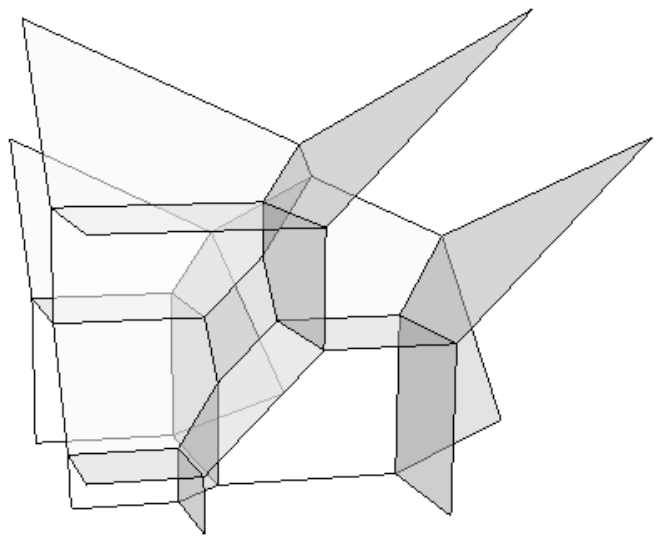}
\caption{A tropical plane in $\TP^3$ on the left and a quadric hypersurface in $\TP^3$ on the right.}
\label{fig:hypersurf}
\end{figure}

Just as with smooth manifolds, we say $2$ atlases $\{U_{\alpha}, \phi_{\alpha}\}$, $\{U^{\prime}_{\beta}, \phi^{\prime}_{\beta}\}$ on $X$ are equivalent if their union is also an atlas.  An equivalence class of atlases on $X$ has a unique maximal atlas. We  often work with this atlas even if a tropical manifold is defined with a more manageable collection of charts. 
See \cite[Example 7.2]{BIMS} for a discussion of the finite type condition.

Call a tropical manifold of dimension  $2$ simply a tropical surface. In this case the charts are  $\phi_{\alpha} : U_{\alpha} \rightarrow P_{\alpha} \subset \T^{N_\alpha}$ where $P_{\alpha}$ is a non-degenerate  
tropical fan plane.

\begin{example}[Integral affine manifolds]
An integer affine manifold satisfying the finite type condition is also a tropical manifold. In dimension $2$, the diffeomorphism   type of a  compact integer  affine manifold is either $S^1 \times S^1$ or the  Klein bottle (see Example \ref{ex:Klein}). 
For the orientable case, a tropical structure on $S^1 \times S^1$ can be given by the quotient  $\R^2 /  \Lambda$ where $\Lambda \subset \R^2$ is a lattice of full rank \cite{MikZha:Jac}. 
\end{example}

\begin{example}[Tropical toric surfaces]\label{ex:toricman}
A tropical toric  variety  $X$ of dimension $n$  is locally modeled on $\T^N$ and so it has an atlas $\{U_{\alpha}, \phi_{\alpha} \}$ where $\phi_{\alpha}: U_{\alpha} \to \T^N$. 
Tropical projective space appeared in the beginning of Section \ref{sec:prel}.
In general, copies of  affine space are glued together along tropical monomial maps, which classically are maps in $\text{GL}_N(\Z)$. Just as in the classical case a tropical toric variety can be  encoded by a 
 simplicial fan $\Sigma \subset \R^N$ and the resulting space is a tropical manifold if and only if  $\Sigma \subset \R^2$ is unimodular fan, see Section 3.2 \cite{MikRau}.  The tropical variety is compact if and only if $\Sigma$ is complete.   
\end{example}

\begin{example}[Non-singular tropical hypersurfaces in toric varieties]\label{ex:hyper}
A tropical hypersurface $X_f \subset \R^N$ is the divisor of a tropical polynomial function  $f:\R^3  \to \R$, see \cite{St2}, \cite{MIk3}. It is a   weighted  polyhedral complex dual to a regular subdivision of the Newton polytope of the defining polynomial.  If the dual subdivision is \emph{primitive}, meaning each polytope in the subdivision has normalized volume equal to $1$, the hypersurface is called non-singular and 
produces a tropical manifold.

 Examples of tropical surfaces  in $\R^3$ defined by  tropical polynomial functions of degrees $1$ and $2$ respectively  are shown in Figure \ref{fig:hypersurf}.
When $X_f$ is $2$ dimensional and non-singular, then locally, up to the action of $\text{GL}_3(\Z)$,   $X_f$  is the standard tropical hyperplane in $\R^3$ shown in Figure \ref{fig:hypersurf}. 

\end{example}

\begin{example}[Products of curves]
A non-singular abstract tropical curve is equivalent to a graph equipped with a complete inner metric,  \cite{MikZha:Jac}, \cite{BIMS}.

Given $2$ tropical curves $C_1$ and $C_2$, their product $X = C_1 \times C_2$  is a tropical surface in the sense of Definition \ref{Man} above. A point of the $0$-skeleton of 
$X$ which is not on the boundary, arises as the product of $2$ non-boundary  vertices of $C_1$ and $C_2$. 
The link of every such  vertex of $X$  is a complete bipartite graph $K_{l_1, l_2}$ where 
$l_i$ is valency of the corresponding vertex of $C_i$. The local model of $X$ at such a vertex is a fan tropical plane from part $(3)$ of Corollary \ref{cor:missinglines}.

Using  examples of  superabundant tropical curves in $\R^N$ for $N \geq 3$ \cite{Spe2}, \cite{Mik08}, it is possible construct examples of tropical surfaces in $\R^M$ for $M\geq 4$ which are products of tropical curves and are not approximable in the sense of Definition 5.16 of \cite{BIMS}.  
\end{example}

\subsection{Boundary arrangements}\label{sec:bdyArr}
The  sedentarity of a point in $\T^N$ from Definition \ref{def:sed} is coordinate dependent and does not translate directly to tropical manifolds. However, the order of sedentarity is still well defined since the  local models of tropical manifolds are non-degenerate fan tropical linear space and the coordinate changes come from extensions of integer linear maps over $\R$. 
For a point $x$  in a surface $X$ choose a chart $\phi_x: U_x \to \T^{N_{\alpha}}$ such that $x \in U_x$ and define the order of sedentarity of the point $x$ by $s(x) := s(\phi_x(x))$.

\begin{definition}\label{def:boundman}
The \textit{boundary} of a tropical manifold $X$ is $$\partial X = \{ x \in X \ | \ s(x) > 0 \}. $$ The \textit{points of sedentarity order $0$}, or interior points,  of $X$ are denoted 
$$X^o = \{ x \in X \ | \ s(x) = 0 \} = X \backslash \partial X.$$ 
An \textit{irreducible boundary divisor} $D$ of a tropical manifold $X$ is $\overline{D}^o$ where $D^o$ is a connected component of the set $\{ x \in X \ | \ s(x) = 1\}$. 
\end{definition}

\begin{example}
The points of a tropical surface can be classified into $3$ types based on their order of sedentarity. There are interior points ($s(x) = 0$), boundary edge points ($s(x) = 1$), and corner points ($s(x) \geq 2$). 
\end{example}

Every irreducible boundary divisor is of codimension $1$ in $X$. The irreducible  boundary divisors of a tropical manifold $X$ form an arrangement  $\A_X$. When $X$ is a surface call them boundary curves. There is a chart independent notion of sedentarity for
points in a tropical manifold in terms of this boundary arrangement.

\begin{definition}\label{def:sedman}
The sedentarity of a point $x$ in a tropical manifold $X$ is 
$$S(x) = \{ D \ | \ x \in D \in \A_X \} \subset \A_X.$$
\end{definition}

Notice that the set of points of  sedentarity $I \subset \A(X)$ of a tropical manifold $X$ may not be connected. 

\begin{definition}
A tropical manifold $X$ of dimension $n$ has simple normal crossing boundary divisors $D_1, \dots , D_k$ if every  connected component of the intersection $\cup_{i \in I} D_i$ is of dimension $n- |I|$. 
\end{definition}

\begin{example}[Boundary arrangements of linear spaces of dimension $2$ in  $\TP^N$]  
In \cite{TropPlanes}, a tropical linear space $L$ of dimension $2$ in $\R^N$ is described  by a so-called tree arrangement. The tropical linear space $L$ can be compactified to  $\bar{L} \subset \TP^N$, just as done for fan tropical linear spaces. The boundary arrangement of $\bar{L}$ is exactly the tree arrangement from \cite{TropPlanes}.

The tree arrangements of  tropicalizations of del Pezzo surfaces from \cite{ReShSt} can also be seen as boundary divisor arrangements on compactifications of these tropical surfaces.   Each such tree is in correspondence with a $(-1)$-curve on the  del Pezzo surface. 

\end{example}

\begin{example} 
A tropical hypersurface $X_f \subset \R^N$ from Example \ref{ex:hyper} can be naturally  compactified in the tropical toric variety $X(\Sigma)$ from Example \ref{ex:toricman},  where $\Sigma$ is the dual fan of the  Newton polytope of $f$. Under the same assumptions for non-singularity as in Example \ref{ex:hyper} the compactification  $\overline{X_f} \subset X(\Delta)$ is a tropical manifold and the boundary divisors of  $\overline{X}_f$ are in correspondence with the facets of $\Delta$ and have simple normal crossings. A collection of boundary divisors intersect if and only if the corresponding facets of the polytope $\Delta$ intersect. 
\end{example}

\subsection{Cycles in tropical surfaces}\label{sec:cycles}

\begin{definition}
A $0$-cycle $Z$ in a tropical surface $X$ is a finite formal sum of points in $X$ with integer coefficients, 
$$Z = \sum_{i = 1}^N m_ix_i.$$
The degree of a $0$-cycle is $deg(Z) = \sum_{i = 1}^N m_i$. 
\end{definition}

\begin{definition}\label{def:cycman}
 A tropical $1$-cycle of sedentarity $\emptyset$ in a tropical surface $X$ is a subset  $C \subset X$ such that in every chart $\phi_{\alpha}: U_{\alpha}  \to P_{\alpha} \subset \T^{N_{\alpha}}$  there exists a $1$-cycle $C_{\alpha} \subset P_{\alpha}$ of sedentarity $\emptyset$ with $$\phi_{\alpha}(C \cap U_{\alpha}) = C_{\alpha} \cap \phi_ {\alpha}(U_{\alpha})$$ and the weights on the edges of $C_{\alpha}$ are consistent on the intersections $U_{\alpha} \cap U_{\beta}$
\end{definition}

\begin{definition}\label{def:boundarycyc}
 A boundary $1$-cycle $C$ in a tropical surface $X$ is a finite linear combination of boundary curves of $X$ with integer coefficients. 
\end{definition}

Given $k$-cycles $A, B$ in a tropical surface  $X$ their sum $A +B$ is  the tropical $k$-cycle supported  on union of the supports $A \cup B$ (with refinements  if necessary) along with addition of weight functions when facets coincide. Two cycles are equivalent if they differ by a tropical cycle of weight $0$. 
Denote the set of $k$-cycles in a surface $X$ up to the above equivalence by $Z_k(X)$. Then $Z_k(X)$ forms a group, see \cite{AlRa1}.  
The group of tropical $1$-cycles in a surface $X$ splits as a direct sum of  sedentarity $\emptyset$ cycles and  $\Z$-multiples of  $D_i$ for every irreducible boundary curve $D_i \in \A_X$, 
\begin{equation}\label{eqn:cycleDec}
Z_1(X) = Z_1(X^o) \oplus_{i=1}^s \ZZ D_i
\end{equation}

As before an effective tropical $1$-cycle in $X$ is also called a tropical curve.  
A tropical curve is  irreducible if it cannot be expressed as a sum of effective tropical cycles. 

\subsection{Chern cycles}\label{sec:Cherncycles}

The $k^{th}$ Chern cycle of a tropical variety $X$ is a cycle supported on its codimension $k$-skeleton \cite{MIk3}. The weights of the top dimensional faces of $c_k$ were defined in the case of the canonical class $K_X = -c_1(X)$.

\begin{definition}\cite{MIk3}
Given a tropical manifold $X$ of dimension $n$, its  canonical cycle $K_X$ is supported on the codimension $1$ skeleton $X^{(n-1)}$ of $X$. The weight of  a top dimensional face $E \subset X^{(n-1)}$  is given by $w_E = val(E) -2$, where $val(E)$ is the number of facets in $X$ adjacent to $E$. 
\end{definition}

For $1$ dimensional tropical manifolds (tropical curves) this is the   canonical class used in \cite{BakerNorine1}, \cite{GatKer}, \cite{MikZha:Jac} in relation to the tropical Riemann-Roch theorem.
In the case of a tropical surface $X$, the  canonical cycle is a $1$-cycle supported on the $1$-skeleton of $X$, the balancing condition is proved here in Proposition \ref{prop:canonicalBal}. 
By the above definition and the direct sum decomposition of the cycle group in Definition \ref{eqn:cycleDec}, the canonical class of a surface $X$ splits into a cycle supported on the boundary of $X$ and a cycle $K^{0}_X$ supported on the closure of the $1$-skeleton of the points of sedentarity $0$ of $X$. Therefore,   
$$K_X = K^o_X - \partial X,$$ since an edge $E$ of the surface located at the boundary has valency $1$, and is equipped with weight $-1$ in $K_X$.

\begin{proposition}\label{prop:canonicalBal}
The canonical cycle $K_X$ of  a tropical surface satisfies the balancing condition. 
\end{proposition}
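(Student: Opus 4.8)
The plan is to verify the balancing condition locally: balancing of the $1$-cycle $K_X$ is a condition at each vertex $v$ of the $1$-skeleton $X^{(1)}$, and it suffices to check it in a chart. First I treat an interior vertex $v$ (sedentarity $0$), where the local model is a non-degenerate fan tropical plane $P = \trp_\Delta(M) \subset \R^N$ with $M$ a simple (loopless, no double points) matroid of rank $3$ on $\{0,\dots,N\}$ and $\Delta = \{u_1,\dots,u_N\}$, $u_0 = -\sum_{i=1}^N u_i$. By Construction \ref{cons:matfan} the rays of $P$ are the directions $u_I$ attached to the rank-$1$ flats (the elements $i$, with $u_{\{i\}} = u_i$) and the rank-$2$ flats (the points $I$, with $u_I = \sum_{i\in I} u_i$), while its $2$-cones are exactly the incident pairs (line $i$, point $I$) with $i \in I$. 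Thus the link of $v$ is the bipartite incidence graph of $M$, so $\val(u_i)$ equals the number $p_i$ of rank-$2$ flats through $i$, whereas $\val(u_I) = |I|$. Note that the valence-$2$ rays, which are exactly the ones deleted in passing to the coarse structure, receive weight $\val - 2 = 0$, so the local expression of $K_X$ at $v$ is unchanged whether computed in the fine or coarse structure.

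With this dictionary, the contribution of $v$ to $K_X$ is
$$K_X|_v = \sum_{i=0}^{N} (p_i - 2)\,u_i + \sum_{I}(|I| - 2)\,u_I = \sum_{i=0}^{N}\Big[(p_i-2) + \sum_{I \ni i}(|I|-2)\Big] u_i,$$
where the second sum runs over rank-$2$ flats and I have expanded $u_I = \sum_{i \in I} u_i$ and collected the coefficient of each $u_i$.

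The key step is that, since $M$ is simple of rank $3$, every pair of distinct elements $\{i,k\}$ lies in a \emph{unique} rank-$2$ flat. Counting the $N$ elements $k \neq i$ according to the flat through $i$ that contains them gives $\sum_{I \ni i}(|I| - 1) = N$, that is $\sum_{I\ni i}|I| = N + p_i$. Substituting, the coefficient of $u_i$ becomes $(p_i - 2) + (N + p_i) - 2p_i = N - 2$, which is independent of $i$. Hence $K_X|_v = (N-2)\sum_{i=0}^N u_i = 0$ because $\sum_{i=0}^N u_i = 0$, and $K_X$ is balanced at every interior vertex.

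It remains to handle a vertex $v$ lying on $\partial X$. There the local model is the closure in a boundary stratum of a fan tropical plane of a simple matroid, and the same incidence dictionary applies: the edges of $K_X$ pointing into the boundary have valence $1$ and hence weight $-1$, and these are precisely the $-\partial X$ summand in the decomposition $K_X = K^o_X - \partial X$. Running the same flat-counting identity inside the relevant stratum again produces a constant coefficient against the corresponding linear relation among the ray directions, giving balancing relative to the stratum. I expect this boundary bookkeeping — tracking which rays become sedentary and verifying that the contribution of the boundary edges is exactly absorbed by $-\partial X$ — to be the main technical obstacle; the interior identity above is the mathematical heart and is entirely combinatorial.
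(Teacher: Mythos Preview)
Your interior argument is correct and essentially identical to the paper's: both reduce to the identity $\sum_{I \ni i}(|I|-1)=N$ for a simple rank-$3$ matroid (you phrase it as ``each pair lies in a unique rank-$2$ flat'', the paper as the covering axiom of flats), and both conclude that the coefficient of every $u_i$ equals $N-2$, so $K_X|_v=(N-2)\sum_i u_i=0$.

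The one place you diverge from the paper is the boundary discussion, and there you are making your life harder than necessary. The paper simply observes that the balancing condition is \emph{non-trivial only at vertices of sedentarity $0$}: by the direct-sum decomposition $Z_1(X)=Z_1(X^o)\oplus\bigoplus_i \Z D_i$, a sedentarity-$\emptyset$ $1$-cycle has its balancing checked only at interior vertices (its rays merely tend to the boundary, they do not create a balancing condition there), and each boundary curve $D_i$ is itself a tropical curve and hence automatically balanced. Your attempt to rerun a flat-counting argument inside a boundary stratum is not needed, and your description of the weight-$(-1)$ edges as ``pointing into the boundary'' is slightly off: those edges \emph{are} the boundary curves, not rays approaching them. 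Drop that paragraph and simply note, as the paper does, that the sedentarity-$0$ case is the only one requiring verification.
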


\begin{proof}
The condition is non-trivial to check only at points $x$ in the $0$-skeleton $X^{(0)}$ of $X$ of sedentarity $0$. A neighborhood of such a point has a chart to some tropical plane $P_x \subset \R^{N_x}$ where $P_x = \trp_{\Delta}(M)$ for a matroid $M$ and  $\Z^{N_x}$-basis $\Delta = \{ u_1, \dots , u_{N_x}\}$. Let $w_u$ denote the weight in $K_X$ of the edge in $X^{(0)}$ in the direction $u$.   
 Construction \ref{cons:matfan}  determines the number of neighboring facets of an edge in direction $u_i$, so that 
$ w_{u_i} =  |\{p_I \ | \ i \in I \}| - 2 $ and $w_{u_I}  = |I| -2$. Therefore, 
 
$$
w_{u_i} + \sum_{i \in I}w_{u_I} =  (|\{p_I \ | \ i \in I \}| - 2 )+ \sum_{p_I \ | \ i \in I}(|I| -2)  = - 2 + \sum_{p_I \ | \ i \in I} (|I| - 1).
$$

Since $P_x$ is non-degenerate, $M$ has no double points, and by the covering axiom of flats of a matroid,  $\sum_{p_I \ | \ i \in I} (|I| - 1) = N_x$, since $N_x + 1$ is the number of elements of the ground set of $M$. Let $\Lambda(M)$ be the lattice of flats of $M$. 
Then the  balancing of $K_{P_x}$ at $x$ follows since 
$$\sum_{ I \in \Lambda(M) } w_{u_I} u_I = (N_x - 2) \sum_{i = 0}^{N_x} u_i = 0.$$
This proves the proposition. 
\end{proof}

It is enough to check that $K_X$ is balanced  for a  tropical surface $X$ to deduce that $K_Y$ is balanced for any tropical manifold $Y$.

\begin{corollary}
The canonical cycle $K_X$ of an $n$-dimensional tropical  manifold  $X$ is balanced. 
\end{corollary}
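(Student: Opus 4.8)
The plan is to reduce the statement to the two-dimensional case already handled in Proposition \ref{prop:canonicalBal}, using that the canonical cycle is defined purely locally and that balancing of the $(n-1)$-dimensional cycle $K_X$ is a condition imposed at its codimension-one faces, i.e.\ at the codimension-two faces of $X$. First I would observe, exactly as in the surface case, that the balancing condition is non-trivial only at points $x$ lying in the relative interior of a codimension-two face of sedentarity $0$: away from $X^{(n-2)}$ the cycle $K_X$ is constant along a facet and there is nothing to check, and the boundary contributions behave as in the decomposition $K_X = K^o_X - \partial X$. So I fix such a point $x$ and work in a chart.

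Next I would analyze the local model at $x$. Since $X$ is locally modeled on a non-degenerate fan tropical linear space $\trp_{\Delta}(M)$ of dimension $n$ with $M$ a rank $n+1$ matroid, and $x$ lies in the relative interior of a codimension-two cone $\sigma$, the star of $X$ at $x$ splits as a product $\R^{n-2} \times P$, where $P$ is a two-dimensional fan tropical plane $\trp_{\Delta'}(M')$ for a rank-three matroid $M'$. Concretely $M'$ is the contraction of $M$ along the flag of flats defining $\sigma$, and $P$ is the transverse fan to $\sigma$. This is the step I expect to be the main obstacle: one must verify from Construction \ref{cons:matfan} and the structure of the lattice of flats that the transverse fan to a codimension-two cone is again matroidal of rank three, so that Proposition \ref{prop:canonicalBal} genuinely applies to it.

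I would then check that the canonical cycle respects this product structure. Writing a codimension-one face $E$ of $X$ meeting $x$ as $\R^{n-2} \times e$ for an edge $e$ of $P$, the facets of $X$ adjacent to $E$ are exactly the products $\R^{n-2} \times f$ over the two-dimensional faces $f$ of $P$ adjacent to $e$. Hence $\val(E) = \val(e)$, and the weight $w_E = \val(E) - 2$ equals the weight of $e$ in the canonical cycle $K_P$. Consequently $K_X$ agrees near $x$ with $\R^{n-2} \times K_P$.

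Finally I would deduce balancing by projecting along the $\R^{n-2}$ factor. The primitive directions of the faces $E$ project to the primitive directions $v_e$ of the edges $e$ of $P$, so that, modulo the linear span of the codimension-two face, the balancing sum $\sum_E w_E v_E$ reduces to $\sum_e w_e v_e$, which is precisely the balancing of $K_P$ at the cone point of the two-dimensional plane $P$. By Proposition \ref{prop:canonicalBal} this sum vanishes, so $K_X$ is balanced at $x$; as $x$ was an arbitrary codimension-two point, the corollary follows.
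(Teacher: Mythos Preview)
Your proposal is correct and follows essentially the same approach as the paper: reduce balancing at a codimension-two face of sedentarity $\emptyset$ to the two-dimensional situation by using a chart of the form $\R^{n-2}\times P_x$ with $P_x$ a fan tropical plane, then invoke Proposition~\ref{prop:canonicalBal}. You supply considerably more detail than the paper's terse proof (in particular identifying the transverse fan as the Bergman fan of a matroid contraction and verifying $\val(E)=\val(e)$), but the strategy is identical.
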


\begin{proof}The balancing condition for $K_Y$ is a local condition on codimension $2$ faces of $Y$ of sedentarity $\emptyset$. Let $x$ be a point in a codimension $2$ face $E$ and consider a chart $\phi_x: U_x \to V_x$ such that  $V_x \cong \R^{n-2} \times P_x$ where $P_x$ is a fan tropical plane. Then $K_X$ is balanced along $E$ if and only if 
$K_{P_x}$ is balanced at $x$. 
\end{proof}

The  proof of Proposition \ref{prop:canonicalBal} also shows that for a non-degenerate fan plane $P \subset \R^N$ the degree of $K_P$ in $P$ is $N-2$.  
If $P \subset \R^N$ is a fan tropical plane, the self-intersection of  $K_P^2$ in $P$ by Definition \ref{def:localInt} is, 
\begin{equation}\label{eqn:KP2}
K_P^2 = (N-2)^2 - \sum_{p_I}(|I|-2)^2, 
\end{equation}
where $p_I$ are the points of the arrangement of the matroid associated to $P$. 
The next  Proposition gives another expression for $K_P^2$. 

\begin{prop}\label{prop:localdeg}

Let $P \subset \R^N$ be a fan tropical plane and let $\text{Edge}(P)$ and $\text{Edge}(P)$ denote the set of $1$ dimensional and $2$ dimensional  faces of $P$ respectively, 

\begin{enumerate} 
\item if $P$ contains a lineality space then $K_P^2 = 0$;
\item if $P$ is a product of tropical lines from part $(3)$ of Corollary \ref{cor:missinglines} then 
$$K_P^2 
= 8 -  4|\text{Edge}(P)|  + 2|\text{Face}(P)|  ;$$ 
\item otherwise 
$$K_P^2 =  10 +N - 5|\text{Edge}(P)| + 2|\text{Face}(P)| - \sum_{E \in \text{Edge}(P)} \sigma(E),$$
where  $\sigma(E)$ is defined by $$\sigma(E)v_E = - \sum_{E, E^{\prime} \text{ span a face}} v_{E'},$$
and $v_E$ is the primitive integer vector in the direction of an edge $E \in \text{Edge}(P)$.   
\end{enumerate}
\end{prop}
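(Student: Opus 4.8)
The backbone of the argument is the matroid formula \eqref{eqn:KP2}, namely $K_P^2 = (N-2)^2 - \sum_{p_I}(|I|-2)^2$, which I would first rewrite using a single combinatorial identity. Since $M$ is simple of rank $3$, every pair of ground-set elements spans a unique rank-$2$ flat, so $\sum_I \binom{|I|}{2} = \binom{N+1}{2}$. Substituting $|I|^2 = 2\binom{|I|}{2}+|I|$ into $\sum_I (|I|-2)^2 = \sum_I |I|^2 - 4\sum_I |I| + 4p$, with $p$ the number of rank-$2$ flats, collapses \eqref{eqn:KP2} to the clean expression
$$K_P^2 = -5N + 4 + 3\sum_I |I| - 4p. \quad (\dagger)$$
Every case of the proposition will then be obtained from $(\dagger)$ by expressing $N$, $\sum_I |I|$ and $p$ through the coarse face data.

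For parts (1) and (2) I would argue directly. In the lineality case $M$ has a coloop, so $M = U_{2,N}\oplus\{e\}$ and the rank-$2$ flats consist of one flat of size $N$ together with $N$ flats of size $2$; plugging these into \eqref{eqn:KP2} gives $K_P^2=0$ at once. For a product of lines (case (3) of Corollary \ref{cor:missinglines}) the parallel connection of $U_{2,k+1}$ and $U_{2,l+1}$ has exactly two flats of sizes $k+1$ and $l+1$ and $kl$ flats of size $2$, so \eqref{eqn:KP2} evaluates to $2(k-1)(l-1)$. I would then count the coarse rays and faces of the cone over $K_{k+1,l+1}$, namely $|\text{Edge}(P)| = k+l+2$ and $|\text{Face}(P)| = (k+1)(l+1)$, and check that $8 - 4|\text{Edge}(P)| + 2|\text{Face}(P)|$ reproduces $2(k-1)(l-1)$.

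The real work is part (3), where the plan is to match $(\dagger)$ against the stated formula by translating the matroid data into coarse face data. From Construction \ref{cons:matfan} the line ray $u_i$ survives iff $d_i\geq 3$ and the point ray $u_I$ survives iff $|I|\geq 3$, a deleted ray being exactly a degree-$2$ vertex of the link that subdivides a face: indeed $u_{\{a,b\}} = u_a+u_b$, and when $d_i=2$ with $i\in I_1\cap I_2$ one has $u_i = u_{I_1}+u_{I_2}$ using $\sum_j u_j = 0$. The key structural point is that absence of a lineality space rules out coloops, and I would show this forces a deleted line ray and a deleted point ray never to be adjacent (otherwise one produces a size-$2$ flat complementary to a size-$N$ flat, i.e.\ a coloop). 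Consequently each deleted ray merges two faces into one independently of the others, giving
$$|\text{Edge}(P)| = (N+1-\ell_2) + (p - p_2), \qquad |\text{Face}(P)| = \sum_I |I| - \ell_2 - p_2,$$
where $\ell_2$ and $p_2$ count degree-$2$ lines and size-$2$ points.

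Finally I would compute $\sum_{E}\sigma(E)$ directly from $\sigma(E)v_E = -\sum_{E'} v_{E'}$, using $u_I = \sum_{j\in I} u_j$ and $\sum_j u_j = 0$. Tracking how deletions reassign neighbours (deleting $u_{\{i,b\}}$ replaces $u_i$'s neighbour $u_{\{i,b\}}$ by $u_b = u_{\{i,b\}} - u_i$; deleting a degree-$2$ line $u_i$ replaces $u_I$'s neighbour $u_i$ by $u_{I'} = u_i - u_I$) yields $\sigma(u_i) = 1 - d_i + t_i$ for a surviving line and $\sigma(u_I) = s_I - 1$ for a surviving point, where $t_i$ is the number of size-$2$ flats through $i$ and $s_I$ the number of degree-$2$ elements of $I$. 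Summing, and double counting $\sum_i t_i = 2p_2$ and $\sum_I s_I = 2\ell_2$ (licit precisely because of the non-adjacency above, which keeps size-$2$ flats and degree-$2$ lines attached only to surviving rays), I expect
$$\sum_{E}\sigma(E) = 2N + 2 + \ell_2 + p_2 - |\text{Edge}(P)| - |\text{Face}(P)|.$$
Substituting this together with the edge and face counts into $10 + N - 5|\text{Edge}(P)| + 2|\text{Face}(P)| - \sum_E\sigma(E)$ should return exactly $(\dagger)$, proving part (3). The main obstacle is the bookkeeping of the coarse structure — establishing the independence of the face merges and justifying the two neighbour-reassignment rules — since the whole identity hinges on handling these without error; the remainder is routine double counting.
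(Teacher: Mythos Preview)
Your argument is correct and follows essentially the same route as the paper: both hinge on the covering identity $\sum_I\binom{|I|}{2}=\binom{N+1}{2}$ (equivalently $N(N+1)=\sum_I|I|(|I|-1)$) to collapse \eqref{eqn:KP2}, and on an explicit evaluation of $\sigma(E)$ for line rays and point rays. Your intermediate formula $(\dagger)$ is exactly what the paper is manipulating, and your values $\sigma(u_I)=s_I-1$ and $\sigma(u_i)=1-d_i+t_i$ specialize to the paper's Remark~\ref{remark:Lap}.

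The one place you work harder than necessary is in allowing $\ell_2>0$ in part~(3). By Corollary~\ref{cor:missinglines}, if any line ray $u_i$ is absent from the coarse structure then $P$ already falls under parts~(1) or~(2) of the proposition; so in part~(3) every $u_i$ survives and $\ell_2=0$ automatically. With that observation your non-adjacency argument becomes vacuous, $s_I=0$ for every point ray (hence $\sigma(u_I)=-1$), the face merges are trivially independent since two point rays are never adjacent in the fine fan, and the edge/face counts reduce to $|\mathrm{Edge}(P)|=(N+1)+(p-p_2)$ and $|\mathrm{Face}(P)|=\sum_I|I|-p_2$. The paper's shorter proof is exactly yours with this simplification built in.
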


\begin{rem}\label{remark:Lap}
Suppose $P = \trp_{\Delta}(M)$,  if a ray $E \in \text{Edge}(P)$ corresponds to a point of the associated tropical line arrangement $\A_M$ then  $ \sigma(E) = -1$, and if it corresponds to a line, $L_i$ then $\sigma(E)= 1 - ( n - |\{p_I \ | \ i \in I\}). $  
\end{rem}

\begin{proof}
Statement $(1)$ is clear. For statement $(2)$, if $P \subset \R^N$ is a product of tropical lines, then its link is a complete bipartite graph $K_{k  ,l }$ where $N = k + l- 2$ and $k, l \geq 3$. Therefore,  $\text{Faces}(P) = kl$
and  $\text{Edges}(P) = k+l$.  Moreover,  there are exactly $2$ points  $p_I$ of the corresponding tropical line arrangement for which $|I|>2$. 
Now  Equation \ref{eqn:KP2} 
becomes,  
\begin{align*}
K_P^2 & = (N-2)^2 - \sum_{p_I}(|I|-2)^2 \\
& = (k +l -4 )^2 - (k-2)^2 - (l-2)^2  \\
& = 2|\text{Face}(P)| - 4|\text{Edge}(P)| + 8.
\end{align*}

Part $(3)$ uses that $N(N+1) = \sum_{p_I}|I|(|I|-1)$ for the arrangements of non-degenerate tropical planes, this follows again from the covering axiom of
flats of a matroid.  Furthermore, from Remark \ref{remark:Lap} preceding the proof we have $$\sum_{E \in \text{Edge}(P)} \sigma(E) = N+1 -|\{p_I \ | \ |I| \geq 3\}| - \sum_{p_I \ s.t. \ |I| \geq 3} |I|.$$
Applying this equality, we arrive at the statement in Part $(3)$ above. 
\end{proof}

 We will now give the weights for the points in the top Chern cycle $c_n(X)$ of an $n$ dimensional tropical manifold, using the  local  matroidal structure. 
Firstly, for  a matroid $M$, let $\chi_M(t)$ denote its characteristic polynomial. The  \textit{reduced characteristic polynomial} of $M$ is $$\overline{\chi}_M(t) = \frac{\chi_M(t)}{1-t}.$$
If $\A$ is a complex hyperplane arrangement in $\CP^k$ then the Euler characteristic of the complement is given by $\overline{\chi}_M(1)$. 
The absolute value of $\overline{\chi}_M(1)$ is also known as the $\beta$-invariant of a matroid, introduced by Crapo \cite{Crapo}. For real  hyperplane arrangements,  the $\beta$-invariant gives the number of bounded components of the complement \cite{Zaslavsky:Arrangements}.

For a tropical manifold let $X^{(0)}$ denote the $0$-skeleton of $X$; that is points of $X$ which are in the $0$-skeleton of $V_{\alpha}$ for some chart $\phi_{\alpha}:U_{\alpha} \to V_{\alpha} \subset \T^{N_{\alpha}}$. 

\begin{definition}
Let $X$ be an $n$ dimensional tropical manifold with simple normal crossing boundary divisors. Then its top Chern cycle $c_n(X)$ is a $0$-cycle supported on $X^{(0)}$.  For  $x \in X^{(0)}$ let there be a neighborhood $U_x$ and chart $\phi_x: U_x \to V_x \subset \T^{N_x}$ with  sedentarity $S(\phi_x(x)) = I$.
Then, the multiplicity of $x$ in $c_n(X)$ is 
$$m_{c_n(X)}(x) = \overline{\chi}_{M_x}(1)$$ where  $M_x$ is a matroid on $N_x - |I| +1$ elements such that $ \trp_{\Delta}(M_x) = V_x \cap \R^{N_x}_I $.
\end{definition}

For a matroid $M$, $\overline{\chi}_M(1)$ can also be given in terms of the Orlik-Solomon algebra of $M$. From \cite{ZhaOrl} this algebra can be constructed from just the support of the fan $\trp_{\Delta}(M)$, 
see also {\cite[Theorem 2.2.6]{ShawTh}} for an alternative proof. 
Therefore the  multiplicity of a point $x \in X^{(0)}$  in $c_n(X)$ is independent of matroid chosen to represent $V_x$ 
For a tropical surface $X$, the following lemma expresses the weight of a point $x$ in $c_2(X)$  without recalling the underlying matroid.

\begin{lemma}\label{lem:Chern2mult}
Let $x$ be a  point in the $0$-skeleton of a tropical surface $X$, 
 
\begin{enumerate}
\item if $x \in X^{(0)}$ is a point of sedentarity order  $2$ then, $$m_{c_2(X)}(x) = 1;$$

\item if $x \in X^{(0)}$ is a point of sedentarity order $1$ then, 
$$m_{c_2(X)}(x) = 2 - \text{val}(x),$$ where $\text{val}(x)$ is the number edges of sedentarity order $1$ adjacent to $x$ in a chart $\phi_x: U_x \to P_x \subset \T^{N_x}$;

\item if $x \in X^{(0)}$ is a point of sedentarity order $0$, then 
\begin{equation}\label{chernmult}
m_x(c_2(X)) = 2 - N_x + |\text{Face}(x)| -  |\text{Edge}(x)|,
\end{equation}
where $\phi_x : U_x \to P_x \subset \T^{N_x}$ for a non-degenerate fan tropical plane $P_x \subset \T^{N_x}$ and $\phi_x(x)$ is the vertex of $P_x$. The sets $\text{Edge}(x)$,  $\text{Face}(x)$ are the sets of $1$ and  $2$ dimensional faces of $P_x$. 
 
\end{enumerate}

\end{lemma}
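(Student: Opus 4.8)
The plan is to evaluate the defining quantity $m_{c_2(X)}(x) = \overline{\chi}_{M_x}(1)$ separately in each of the three sedentarity cases. For a chart $\phi_x : U_x \to V_x \subset \T^{N_x}$ with $S(\phi_x(x)) = I$, the stratum matroid $M_x$ satisfies $\trp_\Delta(M_x) = V_x \cap \R^{N_x}_I$ and is simple and loopless of rank $3 - |I|$, so the two boundary cases are immediate once $M_x$ is identified, while the interior case reduces to comparing the reduced characteristic polynomial of a rank $3$ matroid with the face and edge counts of its coarse Bergman fan. Throughout I would use the fact recalled before the definition of $c_n(X)$, that $\overline{\chi}_{M_x}(1)$ is the Euler characteristic of the complement of the associated arrangement.

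First I would treat sedentarity $2$. Here $V_x \cap \R^{N_x}_I$ is $0$-dimensional, so $M_x$ has rank $1$, $\chi_{M_x}(t) = t-1$, and $\overline{\chi}_{M_x}(1) = 1$ (the Euler characteristic of a point), which is part $(1)$. For sedentarity $1$, the slice $V_x \cap \R^{N_x}_{\{i\}}$ is a fan tropical line whose rays are exactly the sedentarity $1$ edges of $X$ incident to $x$, so $M_x = U_{2,\text{val}(x)}$; from $\chi_{U_{2,m}}(t) = (t-1)(t-(m-1))$ one reads off $\overline{\chi}_{U_{2,m}}(1) = 2 - m$ (the Euler characteristic of $\CP^1$ with $m$ points removed), giving part $(2)$ with $m = \text{val}(x)$.

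The interior case $s(x) = 0$ is the substantive one, with $V_x = \trp_\Delta(M_x)$ for a simple rank $3$ matroid $M_x$ on $N_x + 1$ elements. Setting $b = \sum_F (|F|-1)$, the sum taken over the rank $2$ flats $F$ (the points of $\A_{M_x}$), a short computation of $\chi_{M_x}(t) = t^3 - (N_x+1)t^2 + b\,t + c$ from the lattice of flats, together with $\chi_{M_x}(1) = 0$, shows that the reduced characteristic polynomial evaluates to $\overline{\chi}_{M_x}(1) = 1 - 2N_x + b$. It then remains to show $|\text{Face}(x)| - |\text{Edge}(x)| = b - (N_x+1)$. For this I would pass to the incidence graph $G$ of $V_x$ used in the proof of Theorem \ref{thm:MatEquiv}, whose vertices are the rays and whose edges are the $2$-dimensional faces of the fan. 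In the fine structure of Construction \ref{cons:matfan} these are the $N_x+1$ rank $1$ flats together with the rank $2$ flats (the vertices) and the flags rank $1 \subset$ rank $2$ (the edges), numbering $\sum_F |F|$. Passing to the coarse structure only suppresses the $2$-valent vertices of $G$, namely the rank $2$ flats of size $2$ and the elements contained in exactly two rank $2$ flats, and each such smoothing drops the vertex and edge counts by one. Hence $|\text{Face}(x)| - |\text{Edge}(x)|$ is unchanged under this passage and equals $\sum_F |F| - \big((N_x+1) + \#\{\text{rank } 2 \text{ flats}\}\big) = b - (N_x+1)$. Combining, $2 - N_x + |\text{Face}(x)| - |\text{Edge}(x)| = 1 - 2N_x + b = \overline{\chi}_{M_x}(1)$, as required.

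The main obstacle is the bookkeeping in the interior case: I must confirm that the two families of suppressed rays in Construction \ref{cons:matfan} really are precisely the $2$-valent vertices of $G$, so that the invariance of $|\text{Face}| - |\text{Edge}|$ under smoothing applies verbatim (this is why only the difference, and not the two counts individually, has a clean closed form). I would also check the degenerate configurations separately — when $V_x$ carries a lineality space or is a product of tropical lines as in Corollary \ref{cor:missinglines}, where two point-vertices of $G$ may be adjacent — to verify that the counts still agree there, cross-checking against the formulas of Proposition \ref{prop:localdeg}.
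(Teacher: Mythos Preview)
Your proposal is correct and follows essentially the same route as the paper: cases (1) and (2) are dispatched by identifying $M_x$ as a rank $1$ or rank $2$ uniform matroid, and case (3) proceeds by computing $\overline{\chi}_{M_x}(1)=1-2N_x+\sum_{F}(|F|-1)$ from the M\"obius function and then matching this with $2-N_x+|\text{Face}(x)|-|\text{Edge}(x)|$. The one bookkeeping difference is that the paper writes out explicit formulas for $|\text{Edge}(x)|$ and $|\text{Face}(x)|$ in the generic case (every element of the ground set gives a ray) and then invokes Corollary~\ref{cor:missinglines} for the remaining cases, whereas you argue via the invariance of $|\text{Face}|-|\text{Edge}|$ under suppressing $2$-valent vertices of $G$ when passing from the fine to the coarse fan; your smoothing argument in fact already covers the degenerate configurations uniformly, so the separate check you flag at the end is not actually needed.
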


\begin{proof}
In the case of points of sedentarity order $1$ or $2$ the statement concerns rank $1$ or $2$ matroids and can be checked directly. 

For part $(3)$, the coefficients of the reduced characteristic polynomial of a matroid $M$ are given by the M\"obius function on $\Lambda_{d \A_M}$, the lattice of flats of the decone $d\A_M$ of the  arrangement  $\A_M$  associated to 
$\trp_{\Delta}(M_x)$, see \cite{Katz:Matroid}. The decone, $d\A_M$, is the  affine arrangement obtained by declaring $1$ of the hyperplanes of $\A_M$ to be the hyperplane at infinity and then removing it. 
Let $\textbf{p}(d\A)$  denote the set of points (flats of rank $2$) of $d\A_M$, and $\textbf{p}(\A)$ denote the set of points of $\A$.  Then, 
\begin{eqnarray}\label{eqn:reducedEulerChar}
\bar{\chi}_{M_x}(t) & =  &\sum_{F \in \Lambda_{d\A_M} } \mu(F) t^{rk(F)}\\ 
\bar{\chi}_{M_x}(1)&  = &1 - N_x + \sum_{\tilde{p}_I \in \textbf{p}(d\A)} (|I| -1)\\
& = &1 - 2N_x + \sum_{p_I \in \textbf{p}(\A)} (|I| -1).
\end{eqnarray}
If all lines of $\A_{M_x}$ correspond to $1$-dimensional rays in the support of the fan $\trp(M_x)$, then we have 
$$|\text{Edge}(x)| = N_x+1 + |\{p_I \in \textbf{p}(\A) \ | \  |I| \geq 3\}|$$
$$|\text{Face}(x)| =  |\{p_I \in \textbf{p}(\A) \ | \  |I| = 2\}| + \sum_{p_I \in \textbf{p}(\A) \text{ s.t. } |I| \geq 3 } |I|.$$
These expressions show that   (\ref{chernmult}) is equal to (3.6) above. 

If there is a line of $\A_{M_x}$ not corresponding to a $1$ dimensional ray of the fan then  the possible  arrangements are described explicitly in Corollary \ref{cor:missinglines} and the statement of this lemma can be checked directly in these cases. This completes the proof. 
\end{proof}

\subsection{Intersections of $1$-cycles in surfaces}\label{sec:int1cyc}
A convenient  feature of tropical intersection theory  is the ability to calculate intersection products locally and on the level of cycles in many cases. 
A first example of this is the stable intersection of tropical cycles in $\R^N$ \cite{St2}, \cite{MIk3}.
It is also the case that there is an intersection product defined on the level of tropical cycles in matroidal fans \cite{ShawInt}, \cite{FrancoisRau}, which extends to tropical manifolds without boundary. 

Here we describe the intersections of $1$-cycles in  tropical surfaces. Such intersections are defined on the cycle level with the exception of self-intersections of boundary divisors.  
By abuse of notation we will  use $A \cdot B$ to sometimes  denote the $0$-cycle of the intersection and  also the total degree of the intersection,  which is the integer
$$\sum_{x \in X}(A \cdot B)_x.$$

\subsubsection{Intersections of $1$-cycles of  sedentarity $0$}

Given  $1$-cycles $A$ and $B$ in a tropical surface $X$ and a point $x \in A \cap B$ we may  choose an open set $U_x \ni x$ and  chart $\phi_x: U_x \to P_x \subset \T^{N_x}$ such that $A_x = \phi_x(A \cap U_x )$
and $B_x = \phi_x(B \cap U_x)$ are fan $1$-cycles. 

\begin{definition}\label{intbddy}
Let $A, B$ be tropical $1$-cycles of sedentarity $\emptyset$ in a tropical surface $X$, then their intersection is the  $0$-cycle
$$A \cdot B = \sum_{x \in (A \cap B)^{(0)}}(A \cdot  B)_xx,$$
where $(A \cdot B)_x$ is the intersection multiplicity from  Definition \ref{def:localInt} of  $A_x$ and $B_x$ in $P_x$ at $\phi_x(x)$ in the chart $\phi_x: U_x  \to P_x \subset \T^{N_x}$. 
\end{definition}

\subsubsection{Intersection of a boundary divisor and $1$-cycle of sedentarity $0$}
 An irreducible  boundary curve $C$ and a non-boundary cycle $A$ in a tropical surface always intersect in a finite collection of points of sedentarity order $1$ or $2$. The intersection product of $A$ and $C$ is a well defined $0$-cycle supported on $A \cap C$. 

\begin{definition}\label{2bddy}
Let $A$ be a cycle of sedentarity $\emptyset$ in a tropical surface $X$  and $C \subset X$ an irreducible boundary curve. We define,  $$A \cdot  C= \sum_{x \in A \cap C} (A \cdot C)_x x,$$ where,

\begin{list}{\labelitemi}{\leftmargin=0em}
\item[] $(1)$  if $x$ is a point of sedentarity order $1$ in $A \cap C$ adjacent to an edge $E$ of $A$ then $(A \cdot C)_x = w_E$, where $w_E$ is the weight of $E$;
\item[] $(2)$ if $x$ is a point of sedentarity order $2$ of $X$, choose a neighborhood $U_x$ of $x$ and take a chart $\phi_x: U_x\to P_x \subset \T^{N_x}$, suppose  $\phi_x (C \cap U_x) \subset \T^{N_x}$ is contained $\{x_i = -\infty\}$. Let $v_1, \dots v_s$ denote the primitive integer directions of all edges $E_1, \dots , E_s$ of $\phi_x(A \cap U_x)$ which contain $\phi_x(x) \in \T^{n_x}$ and $w_1, \dots w_s$ their respective weights.
Then $$(A \cdot C)_x = \sum_{j=1}^s w_j \langle e_i, v_j \rangle,$$
where $e_i$ is $i^{th}$   standard basis vector  of $\R^{N_x}$.
\end{list}
\end{definition}

When $D$ is a boundary divisor, which is not a irreducible boundary curve  then extend the above definition by distributivity.

\subsubsection{Intersection of boundary divisors}\label{sec:bdybdy}

\begin{definition}
Given distinct irreducible boundary curves $C_1, C_2$ in a tropical manifold $X$, then
$$C_1 \cdot C_2 = \sum_{x \in C_1 \cap C_2} x.$$  
\end{definition}

Again, the above definition can be extended by distributivity to reducible boundary divisors $D_1, D_2$ as long as the  components of $D_1$ are distinct from the components of $D_2$. 

\subsubsection{Self-intersections of boundary divisors}\label{sec:selfbdy}
The self-intersection of a boundary divisor  in a tropical surface is  not defined on the cycle level. 
To define self-intersections in this case, we define  normal bundles of irreducible boundary curves and take sections. 
Tropical line bundles on curves were introduced in \cite{MikZha:Jac}, and more general vector bundles  in \cite{AllBun}.

Given an irreducible boundary curve $C$ of a tropical surface $X$ having simple normal crossings with the other boundary divisors of $X$, then a neighborhood of $C$ in $X$ defines a tropical line bundle. When $C$ does not have simple normal crossings it is possible to alter a neighborhood of $C$ in $X$ to produce its normal bundle, for simplicity we do not describe the construction in this case. 

Let $\{U_{\beta}\}$ be a covering of $C$ in $X$, such that $U^{\prime}_{\beta} = C \cap U_{\beta}$ is  simply  connected for all $\beta$. Assume also that the atlas is fine enough so that  $\phi_{\alpha}(U_{\alpha} \cap U_{\beta}) \subset \R \times \T$ for all $U_{\alpha} \cap U_{\beta} \neq \emptyset$. 
Then for each pair $U_{\alpha} \cap U_{\beta}$, the map  $\phi_{\alpha} \circ \phi^{-1}_{\beta}$ is induced by an integer affine map  $\Phi_{\alpha \beta} : \R \times \T \to \R \times \T$.

\begin{definition}\label{def:normalbundle}
Let $C \subset X$ be an irreducible boundary curve of a tropical surface $X$  having simple normal crossings with the other boundary curves of $X$, let $\{U_{\beta}\}$ be a covering of $C$ in $X$, satisfying the conditions above. The normal bundle $\pi: N_X(C) \to C$ is the tropical surface given by
the quotient 
${\bigsqcup (U_{\beta}^{\prime} \times \T )}\backslash { \sim}$
where the equivalence relation is given by identifying points in $U_{\alpha} \times \T$ and $U_{\beta} \times \T$ via the maps $\Phi_{\alpha \beta}: \R \times \T \to \R \times \T$ from above.

\end{definition}

A  section of the normal bundle, $\pi: N_X(C) \to C$, is a continuous function $\sigma: C \to N_X(C)$ such that $\pi \circ \sigma = id$, with the additional requirement that   the restriction $\sigma|_{U_{\beta}}: U_{\beta} \to U_{\beta} \times \T$ is induced by a piecewise integer affine function  $\T^{N_{\beta}} \to \T$ in each chart $\phi_{\beta}: U_{\beta} \to \T^{N_{\beta}}$. In other words, $\sigma \circ \phi^{-1}_{\beta}$ is given by a tropical rational function. 
By Proposition 4.6 of \cite{MikZha:Jac} a section  of $N_X(C)$ always exists. We can also make the requirement that the section is bounded from below, so that there exists an $M >>0$ such that $\sigma \circ \phi^{-1}_{\beta}  > - M$. 

The graph of a section $\sigma: C \to N_X(C)$ can be made into a balanced tropical $1$-cycle in $N_C(X)$, similar to the  process of tropical  modification from Section \ref{sec:mod}. At each point $x$ of the graph $\sigma(C) \subset N_X(C)$  which is not balanced, in each chart  add an edge in the direction towards $C$,  equipped with an integer weight.  That is, add  the half-line $(x, a)$ for $-\infty \leq a \leq \sigma(x)$ in some trivialisation of the bundle. This edge  can be equipped with a unique $\Z$-weight  $w_x$ so that the resulting $1$ dimensional complex satisfies the balancing condition in each chart. 
Define the  degree of a section to be $$\deg(\sigma) = \sum_{x \in \sigma(C)^{(0)}} w_x.$$

Given two sections $\sigma_1, \sigma_2: N_X(C) \to C$  of the same bundle their degrees are the same by \cite[Lemma 1.20]{AllBun}.  

\begin{definition}\label{def:boundaryintersection}
The self-intersection of an irreducible  boundary curve  $C$ in a  tropical surface $X$  is  $\deg(\sigma)$, where  $\sigma: C \to N_X(C)$ is a section of $N_X(C)$. 
\end{definition}

\subsubsection{Tropical Cartier divisors}\label{sec:Cartierdivisors}

A Cartier divisor on a tropical space is a collection of tropical rational functions $\{f_{\alpha}\}$ defined on $\T^{N_{\alpha}}$ where $\phi:U_{\alpha} \to V_{\alpha} \subset \T^{N_{\alpha}}$, such that on the overlaps the functions agree up to an integer affine function, see \cite{AlRa1} or \cite{MIk3}. 
Every tropical Cartier divisor produces a codimension $1$ tropical cycle. 
The next proposition  says that on a tropical manifold $X$ the converse is also true. 

\begin{proposition}\label{Cartier}
Every codimension $1$ tropical cycle  $D$ in a tropical manifold $X$  is a tropical Cartier divisor. 
\end{proposition}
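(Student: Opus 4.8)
The plan is to prove the statement locally and then patch, imitating the classical proof that on a nonsingular variety every Weil divisor is Cartier. By definition a tropical Cartier divisor is a collection of tropical rational functions $\{f_\alpha\}$, one on each chart $\phi_\alpha\colon U_\alpha\to V_\alpha\subset\T^{N_\alpha}$, agreeing on overlaps up to integer affine functions. Hence it suffices to produce, near each point $p\in X$, a tropical rational function whose divisor is $D$, and to check that any two such choices differ by an integer affine function on a connected overlap. Both tasks reduce to two facts about the local models, which are fan tropical linear spaces $V=\trp_\Delta(M)$: a rigidity statement and a solvability statement. It is convenient to treat the two summands of $Z_1(X)=Z_1(X^o)\oplus\bigoplus_i\ZZ D_i$ separately, the boundary components $D_i$ being handled tautologically by the monomial functions cutting out $\{x_i=-\infty\}$, so that the real content concerns cycles of sedentarity $\emptyset$.

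The rigidity fact (``injectivity of $\div$'') is that on a connected matroidal fan a tropical rational function $g$ with $\div(g)=0$ is the restriction of an integer affine function; this is exactly what makes the patching work, since if $f_\alpha$ and $f_\beta$ both cut out $D$ then $f_\alpha-f_\beta$ has empty divisor and is therefore integer affine, giving the cocycle condition. This is the point where smoothness of matroidal fans enters. At a codimension $1$ face $\tau$ of $V$, the transverse fan $\mathrm{Star}_\tau(V)/\langle\tau\rangle$ is the Bergman fan of a rank $2$ matroid, that is, a tropical line with some $k$ rays spanning a $(k-1)$-dimensional space. On such a circuit a homogeneous piecewise linear function has vanishing divisor if and only if it is linear, because the space of weight-zero functions and the space of restrictions of linear functionals both have dimension $k-1$ and hence coincide. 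Thus $\div(g)=0$ forces $g$ to be affine in a neighborhood of each codimension $1$ face, and connectedness in codimension $1$ of $\trp_\Delta(M)$ upgrades local affineness to global affineness.

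The solvability fact (``surjectivity of $\div$'') is the local heart of the proposition: every fan codimension $1$ cycle in $V=\trp_\Delta(M)$ is the divisor of a tropical rational function. Writing $E_1,\dots,E_m$ for the rays of $V$ with primitive directions $v_{E_i}$, a piecewise linear function homogeneous of degree $1$ is determined by its values on the rays, so such functions modulo linear functions form a space of dimension $m-\dim\langle V\rangle$; the balancing condition shows that codimension $1$ fan cycles form a space of the same dimension. The map $\div$ between these two spaces is the one shown to be injective in the previous paragraph, hence an isomorphism over $\R$, so every fan cycle is realized. Applying this at the star of each point of a chart yields local defining functions, which by the rigidity fact glue up to integer affine functions into a single rational function $f_\alpha$ on $V_\alpha$; the cocycle condition across charts then holds for the same reason, and this is precisely a Cartier divisor cutting out $D$.

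The main obstacle is the \emph{integral} refinement of the solvability statement. The dimension count only gives surjectivity of $\div$ over $\R$, whereas a tropical rational function must have integer slopes, so one must verify that the lattice of integer fan cycles is the image of the lattice of integer piecewise linear functions, i.e.\ that the isomorphism above is unimodular. This is where the smoothness of matroidal fans must be used quantitatively: concretely, I would exhibit for each ray an explicit integral piecewise linear function whose divisor is a standard generator of the cycle lattice, these functions arising from the matroidal extension and modification structure of $\trp_\Delta(M)$ recalled in Section~\ref{sec:mod}, and then check that their divisors generate the full integral lattice of fan cycles. Once this integral local solvability is secured, the reduction and patching above complete the proof, and the argument is uniform in the dimension $n$ since the transverse fan at any codimension $1$ face of an $n$-dimensional matroidal fan is again a rank $2$ Bergman fan.
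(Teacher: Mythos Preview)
Your overall architecture matches the paper exactly: reduce to the local claim that every fan codimension $1$ cycle in a matroidal fan is the divisor of a tropical rational function, then patch using the rigidity statement that $\div(g)=0$ forces $g$ to be integer affine. The paper simply invokes \cite[Lemma~2.23]{ShawInt} for the local solvability and observes that on overlaps $``f_\alpha/f_\beta"$ is invertible; your rigidity paragraph is a correct unpacking of the second step.

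Where your proposal diverges is in attempting an independent proof of the local solvability via a dimension count, and here there is a genuine gap. The identification of piecewise linear functions modulo linear ones with $\R^{m-N}$ (values on rays modulo restrictions of linear forms) is fine on a simplicial fan, but your matching claim that ``the balancing condition shows that codimension $1$ fan cycles form a space of the same dimension'' is only straightforward when $n=2$: then codimension $1$ cycles are weightings of the $m$ rays subject to the single balancing relation $\sum w_i v_{E_i}=0$, which is $N$ independent conditions. For $n>2$, codimension $1$ cycles are weightings of the $(n-1)$-dimensional cones subject to balancing along $(n-2)$-cones, and neither the number of generators nor the number of relations is $m$ or $N$; the equality of dimensions is not at all obvious and is in fact equivalent to the statement you are trying to prove. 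Compounding this, you correctly flag but do not resolve the integrality issue. The actual proof of the cited lemma proceeds not by a dimension count but by an explicit inductive construction via matroid quotients (single-element extensions), which simultaneously handles integrality; your closing paragraph gestures at exactly this but stops short of carrying it out. If you want a self-contained argument, abandon the dimension count and build the function directly by induction on the number of elements of $M$, using the modification description from Section~\ref{sec:mod}.
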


\begin{proof}
Choose a collection of charts $U_{\alpha}$ of $X$ so that $D_{\alpha} = D \cap U_{\alpha}$ is a fan cycle  in a fan linear space $V_{\alpha}$. 
In every chart the fan cycle is the divisor of a tropical rational  function $f_{\alpha}$ restricted to the fan linear space  \cite[Lemma 2.23]{ShawInt}. 
On the overlap $U_{\alpha} \cap U_{\beta}$, the cycles $D_{\alpha}$ and $D_{\beta}$ agree, therefore $``f_{\alpha}/f_{\beta}"$ is an invertible tropical function on the overlap. So $f = \{f_{\alpha}\}$ is a Cartier divisor on $X$ and $\div_X(f) = D$. 
\end{proof}

The intersections of  $1$-cycles in a surface can also be given in terms of Cartier divisors \cite{AlRa1} at least when the divisors are of sedentarity $\emptyset$.  For a Cartier divisor $f = \{f_{\alpha}\}$ on $X$ such that $\div_X(f) = D$ and a $1$-cycle $C$, then
$$C \cdot D = \deg(f|_C).$$
 The intersection product in terms of Cartier divisors is compatible with the product  described above. 
However, this approach requires first expressing a $1$-cycle $D$ as a Cartier divisor.

\subsection{Rational equivalence}

Tropical rational equivalence was introduced in   \cite{MIk3} by way of families. 
There is another  version of tropical rational equivalence coming from bounded tropical rational functions in \cite{AlRa1}, \cite{AllHamRau:RatEq}.
That equivalence relation is finer than the one defined here, see Remark \ref{rem:bddRatEq} for a comparison.

\begin{definition}
Let $A, B$ be tropical cycles in a tropical manifold $X$, then $A$ and $B$ are rationally equivalent if there exists a tropical cycle $Z \subset X \times \TP^1$ such that $$\pi_{\ast}((X \times \{-\infty\} ) \cdot Z - (X \times \{\infty\}) \cdot Z) ) = A-B, $$ where $\pi$ is the projection $X \times \TP^1 \to X$.  
\end{definition}

For brevity we refer the reader to \cite{MIk3}  and \cite{AlRa1} for the definition of pushforwards $\pi_{\ast}$ of tropical cycles. Also the intersections of sedentarity $0$ cycles  with boundary divisors is locally determined as for  the intersection with boundary divisors  in $\T^N$ from   \cite{ShawInt}.

\begin{rem}\label{rem:bddRatEq}
The above definition of rational equivalence is  equivalent to taking the equivalence relation given by $A \sim B$ if  $$\pi_{\ast}((X \times \{t_1\} ). Z - (X \times \{t_2\}) . Z) ) = A-B, $$ for a cycle $Z$ and any points $t_1, t_2 \in \TP^1$
\cite[Proposition 2.1.23]{ShawTh}.
The tropical rational equivalence from \cite{AllHamRau:RatEq} restricts  $t_1, t_2$ to be finite, i.e.~$t_i \neq \pm \infty$ \cite[Proposition 3.5]{AllHamRau:RatEq}. For example,  Theorem \ref{ChowMod} to be proved in Section \ref{sec:modGlobal} does not hold for this bounded version of rational equivalence. 
\end{rem}

As in classical algebraic geometry we can define the tropical Chow groups as the quotients of the cycle groups by rational equivalence. Denote the set of $k$-cycles rationally 
equivalent to $0$ by $R_k(X)$. Then $R_k(X)$ forms a subgroup of $Z_k(X)$.

\begin{definition} 
The $k^{th}$ Chow group of a tropical manifold $X$ is 
$$\text{CH}_k(X) = \frac{Z_k(X)}{R_k(X)}.$$
\end{definition}

\subsection{Tropical $(p,q)$-homology}
Tropical $(p, q)$-homology from  \cite{IKMZ}  is  homology of tropical varieties with respect to a coefficient system denoted by $\mathcal{F}_p$  defined by the local structure of the variety.  Other references on the subject include \cite{ShawHomo}, 
\cite{MikZhaEig} and the more introductory \cite{BIMS}.

Here we outline the definitions of this homology theory for tropical manifolds which are \textit{polyhedral}. 
A tropical manifold $X$ comes with  a combinatorial stratification, see Section 1.5 of \cite{MikZhaEig}.  Here we assume that $X$ has a stratification (perhaps a refinement of the combinatorial stratification)
which is polyhedral in the following  sense. 
\begin{definition}\cite[Definition 1.10]{MikZhaEig}
A tropical manifold $X$ of dimension $n$ is polyhedral if there are finitely many closed sets $F_j \subset X$, called facets, such that
\begin{itemize}
\item $X = \cup F_j$;
\item for each $F_j$ there is a chart $\phi_{\alpha}:U_{\alpha} \to V_{\alpha} \subset \T^{N_{\alpha}}$ such that $F_j \subset U_{\alpha}$ and $\phi_{\alpha}(F_j)$ is a
polyhedron of dimension $n$ in $\T^{N_{\alpha}}$;

\item for each collection $\{ F_i\}_I$ of facets of $X$, the intersection $\cap_{i \in I} F_i$ must be a face of every $F_j$.  
\end{itemize}

\end{definition}

Tropical $(p, q)$-homology can still be defined when  $X$ is not polyhedral, see for example \cite[Section 7]{BIMS}.

For a face $E$ of $X$ let $\text{int}(E)$ denote its relative interior. Let $x \in \text{int}(E)$, and suppose $\phi_{\alpha}(x)$ has sedentarity $I$ in $\T^{N_{\alpha}}$. 
Suppose $U_{\alpha}$ is a neighborhood of a face $E$ and $\phi_{\alpha}: U_{\alpha} \to V_{\alpha} \subset \T^{N_{\alpha}}$. 
The $p^{th}$ integral multi-tangent module at $E$ is, 
$$\mathcal{F}_{p}^{\Z}(E)  = \{ v_1 \wedge \dots \wedge v_p \ | \ v_1, \dots, v_p \in \Z_I^{N_{\alpha}}  \text{ and } v_1, \dots, v_p \in \sigma \subset T_x(X)\} \subset \Lambda^k (\Z_I^{N_{\alpha}}),$$
and the $p^{th}$ multi-tangent space is 
$\mathcal{F}_p(E) : = \mathcal{F}_p(E)^{\Z} \otimes \R$. 

For a pair of  faces satisfying $E \subset E^{\prime}$ there are maps $\iota_p : \mathcal{F}^{\Z}_p(E^{\prime}) \to \mathcal{F}^{\Z}_p(E)$ which are inclusions when $E$ and $E^{\prime}$ have the same sedentarity, and compositions of quotients and inclusions otherwise. 
 The fact that $\mathcal{F}_{p}^{\Z}(E)$ and $\iota_p$  are uniquely defined follows from the assumption that $X$ is polyhedral. 

A tropical $(p,q)$-cell is a  singular $q$-cell, respecting the polyhedral structure of $X$, i.e.~$\sigma: \Delta_q \to E  \subset X$, such that for each face $\Delta'_q$ of $\Delta_q$, $\sigma(\text{int}(\Delta'_q))$ is contained in the interior of a single  face of  $E$. Throughout we will abuse notation and use $\sigma$ to also denote the image of the map in $X$.  A singular $q$-chain $\sigma$ is equipped with a  coefficient $\beta_{\sigma} \in \mathcal{F}^{\Z}_p(E)$ to produce a $(p, q)$-cell, $\beta_{\sigma}\sigma$. Singular tropical $(p, q)$-chains are finite formal sums of $(p,q)$-cells. Denote the group of $(p,q)$-chains by $C_{p,q}(X; \Z)$. For a $(p,q)$-chain $\gamma$ denote its supporting $q$-chain  by $|\gamma|$.

The 
boundary map is the usual  singular boundary map 
$$\partial: C_{p, q}(X; \Z) \to C_{p, q-1}(X; \Z)$$
composed with given  maps on the coefficients $$\iota_p: \mathcal{F}_p^{\Z}(E) \to \mathcal{F}_p^{\Z}(E^{\prime}) \quad \text{ for } \quad  E^{\prime} \subset E,$$  when the boundary of a supporting $q$-cell $\sigma$ is in a face of $X$ different from  $\text{int}(\sigma)$. 
Just as for singular chains with constant coefficients the boundary operator satisfies $\partial^2 = 0$. The tropical integral $(p,q)$-homology groups are the homology groups of this complex, and are denoted $H_{p,q}(X; \Z)$. 

The $p^{th}$ multi-tangent spaces over $\R$, denoted  $\mathcal{F}_p := \mathcal{F}^{\Z}_p \otimes \R$ can also be used as coefficients for tropical 
homology. Denote these groups by $H_{p, q}(X;\R)$. 

When $X$ is a polyhedral, the tropical homology groups over $\R$ and the cellular cosheaf  homology (see \cite{Curry}) are equal \cite[Proposition 2.2]{MikZhaEig}.  
The cellular $(p,q)$-chain  groups of a tropical manifold have the advantage of being finitely generated since $X$ satisfies the finite type condition.

\begin{figure}
\includegraphics[scale = 0.3]{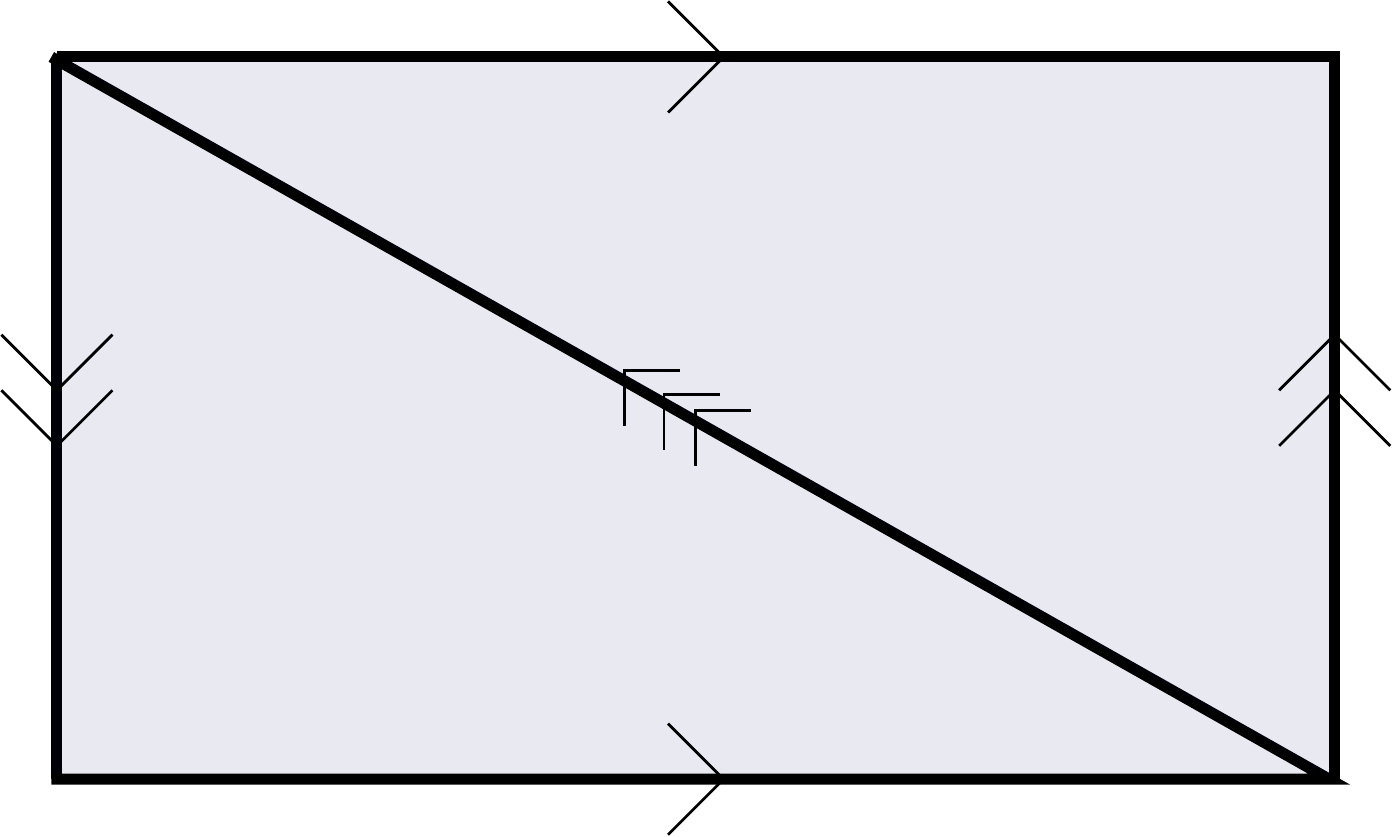}
\put(-100, 20){$F_2$}
\put(-30, 50){$F_1$}
\put(-130, 20){$E_3$}
\put(-30, 73){$E_1$}
\put(-60, 40){$E_2$}
\put(-125, 70){$x$}
\hspace{2cm}
\includegraphics[scale = 0.3]{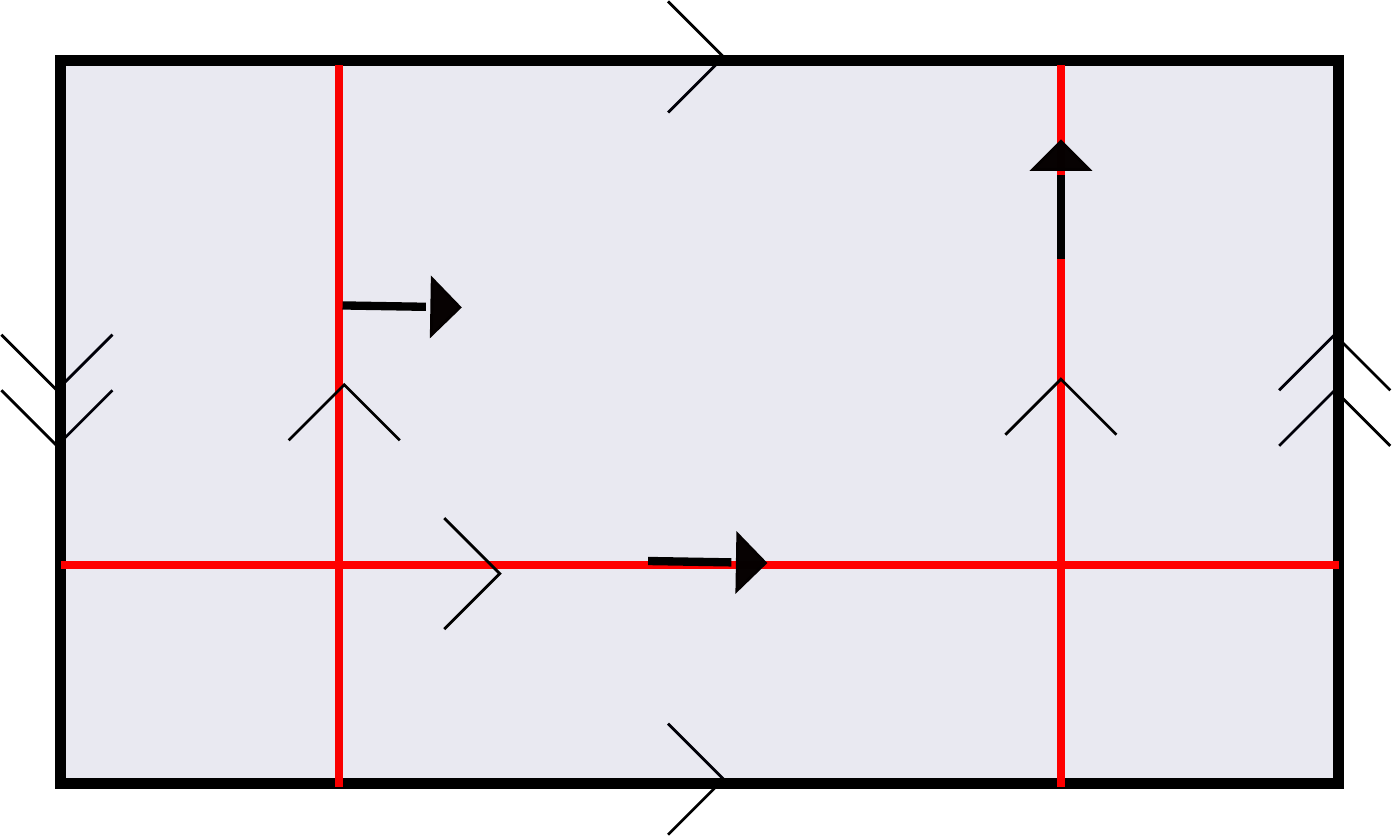}
\put(-63, 30){$\gamma$}
\put(-25, 50){$\gamma_2$}
\put(-103, 50){$\gamma_1$}
\caption{A Klein bottle $K$ with the cellular structure from Example \ref{ex:Klein}. On the right, the tropical $(1, 1)$-cycles in $K$ from Example \ref{ex:KleinInt}}
\label{fig:Klein}
\end{figure}

\begin{example}\label{ex:Klein}
Let $K$ be a Klein bottle  obtained from the  product of intervals $[-1, 1] \times [-1, 1]$ by 
 identifying $[-1, 1] \times \{-1\}$ and $[-1, 1] \times \{1\}$ with the same orientation and $\{-1\} \times [-1, 1] $ with $\{1\} \times [-1, 1] $ with the opposite orientation, 
as drawn in Figure \ref{fig:Klein}. Then $K$ comes with the structure of an integral affine manifold, and so it is a compact non-singular tropical manifold with charts to $\R^2$. 
It can also be made polyhedral by choosing an appropriate subdivision of  $K$. However, the computations are reduced and the  same integral  $(p, q)$-homology groups are obtained  by using the simplicial subdivision of $K$ from Figure \ref{fig:Klein} along with the groups $\mathcal{F}^{\Z}_p$ and maps $\iota_p$ specified below. 
In the subdivision of $K$ shown  in Figure \ref{fig:Klein}, the  edges are labeled by $E_i$, the faces by $F_i$, and the single point by $x$.

For every face $F$ of the subdivision of $K$ in Figure \ref{fig:Klein}, the $p^{th}$ multi-tangent space is 
$\mathcal{F}_p^{\Z}(F)  = \Lambda^p (\Z^2)$.  Let $e_1, e_2$ denote the standard basis vectors of $\Z^2$. 
The maps $\iota_p: \mathcal{F}^{\Z}_p(F_i)  \to  \mathcal{F}^{\Z}_p(E_j)$ are the identity for $j = 1, 2$ and $i = 1, 2$ and all $p$. For $E_3$ we have,

$$ \begin{array}{cccc}
\iota_1: & \mathcal{F}^{\Z}_1(F_i) &  \to & \mathcal{F}^{\Z}_1(E_3)
\\& e_1 &\longmapsto &  e_1
\\& e_2 & \longmapsto & (-1)^i e_2 
\end{array}
\quad \quad \quad 
\begin{array}{cccc}
\iota_2: & \mathcal{F}^{\Z}_2(F_i) &  \to & \mathcal{F}^{\Z}_2(E_3)
\\& e_1 \wedge e_2 &\longmapsto & (-1)^i e_1 \wedge e_2
\end{array}.$$
Lastly, $\iota_p: \mathcal{F}^{\Z}_p(E_i)  \to  \mathcal{F}^{\Z}_p(x)$ are the identity maps for all $i$ and $p$. 

The groups $H_{0, q}(K;\Z)$ are just the usual homology groups of $K$ with integral coefficients. 
The complex of cellular $(2, q)$-chains is, 
$$0 \to C^{cell}_{2, 2}(K; \Z) \to C^{cell}_{2, 1}(K; \Z) \to C^{cell}_{2, 0}(K; \Z) \to 0 \quad \text{ is } \quad  0 \to \Z^2 \to \Z^3 \to \Z \to 0.$$
The chain groups coincide with the constant coefficient case since $\mathcal{F}_2  = \Lambda^2(\Z^2) \cong \Z$. However, the homology groups  differ due to the maps $\iota_p$. The map $C^{cell}_{2, 2}(K; \Z)  \to C^{cell}_{2, 1}(K; \Z)$ has kernel 
 $$\langle (e_1 \wedge e_2)F_1 +  (e_1 \wedge e_2) F_2 \rangle,$$ 
so  $H_{2, 2}(K; \Z) \cong \Z$.  
Also, $C^{cell}_{2, 1}(K; \Z) \to C^{cell}_{2, 0}(K; \Z)$ has kernel $$\langle (e_1 \wedge e_2)E_3, 
(e_1 \wedge e_2)E_1 + (e_1 \wedge e_2) E_2 \rangle.$$ Quotienting by the image $$\text{Im}(C^{cell}_{2, 2}(K; \Z)  \to C^{cell}_{2, 1}(K; \Z)) = \langle (e_1 \wedge e_2)E_1 +  
(e_1 \wedge e_2)E_2 - (e_1 \wedge e_2)E_3 \rangle,$$ gives
$H_{2, 1}(K; \Z) = \Z$.
Finally, the image of $C^{cell}_{2, 1}(K; \Z) \to C^{cell}_{2, 0}(K; \Z)$ is $\langle 2(e_1 \wedge e_2) \rangle$. 
Therefore, $H_{2,0}(K; \Z) \cong  \Z_2$. 

To compute the groups $H_{1, q}(K; \Z)$ notice that the cellular chain complex splits as the direct sum of $2$ chain complexes, one consisting of cells with coefficients $e_1$ and the other with coefficients $e_2$. The complex with coefficients $e_1$ behaves exactly as the constant coefficient case, whereas the complex with coefficients $e_2$ behaves exactly as the $(2, q)$-complex. Therefore $H_{1, q}(K; \Z) = H_{0, q}(K; \Z) \oplus H_{2, q}(K; \Z)$. This gives all the integral tropical $(p, q)$-homology groups of $K$.

The tropical $(p,q)$-homology groups over $\Z$ can be arranged  in a Hodge like diamond, where the row at height $n$ has entries $H_{p, q}(X;\Z)$ for $n = p+q$ with $p$ increasing from left to right. The diamond for $K$ is 
\begin{center}
\begin{tabular}{cccccc} 
& & & $\Z$ &  & \\
& &$\Z$& & $\Z$ &  \\
&$ 0 $& & $\Z_2 \oplus \Z^2$ && $\Z_2$   \\
& & $\Z_2 \oplus \Z $ && $\Z_2 \oplus \Z$ & \\
& & & $\Z$ & & 
\end{tabular}
\end{center} 
 
\end{example}

\bigskip 

The $(1,1)$-homology group of a tropical surface carries an intersection form \cite{ShawTh}. More  generally there are intersection pairings between $(p, q)$-homology groups of tropical manifolds  \cite{MikZhaEig}.

A pair of  $(1, 1)$-cycles $\gamma_1, \gamma_2$ in a tropical surface $X$ are said to intersect transversally if $|\gamma_1 | \cap |\gamma_2| $ consists of  a finite number of points in the interior of facets of $X$, and each such point $x \in |\gamma_1| \cap |\gamma_2|$  is in the relative interior of just $2$ supporting $1$-cells which  intersect transversally in the usual sense. 
Suppose $\gamma_1$ and $\gamma_2$ intersect transversally in $X$, then each point $x \in |\gamma_1 | \cap |\gamma_2|$ is contained  in a facet $F_x$ of $X$. 
Suppose that $\beta_1(x)$ and $\beta_2(x)$ are the coefficients of $\gamma_1$ and $\gamma_2$ respectively of the supporting $1$-cells $\sigma_1$ and $\sigma_2$ containing $x$. 
Then consider the volume form $\Omega_{F_x} $ evaluated at $\beta_1(x) \wedge \beta_2(x)$. 
Define, 
$$\gamma_1 \cdot \gamma_2 = \sum_{x \in |\gamma_1| \cap |\gamma_2|} \epsilon  \Omega_{F_x}( \beta_{1}(x) \wedge \beta_{2}(x)) x \in C_{0}(X, \Z).$$

The coefficient $\epsilon$ is $1$ if the orientations of $\sigma_1, \sigma_2$ at $x$ induce the same orientation on $F$ as the ordered vectors $\beta_1(x) , \beta_2(x)$ and $\epsilon = -1$ if the orientations are opposite. 
Alternatively, for $\gamma_1$ and $\gamma_2$ intersecting transversally in $X$, the contribution to $ \gamma_1 \cdot  \gamma_2 $ at $x$ is 
$$\epsilon [\Lambda_F : \Lambda_{\beta_1 (x) } \oplus \Lambda_{\beta_2(x)}],$$
where $\Lambda_F$ is the integer lattice parallel to $\phi_i(F)$ for some chart and $\Lambda_{\beta_1(x)} \oplus \Lambda_{\beta_1(x)}$ is the sublattice generated by the vectors $\beta_1(x), \beta_1(x)\in \F_1(F)$. 
Notice that  $\gamma_1 \cdot \gamma_2 =  \gamma_2 \cdot \gamma_1$ and  also that the sign of the intersection multiplicity does not rely on choosing an orientation of facets of $X$. 

When $X$ is a compact tropical surface the above product on transversally intersecting $(1, 1)$-cycles descends to homology, 
$$\cdot : H_{1, 1}(X; \Z) \times H_{1, 1}(X; \Z) \to H_{0}(X, \Z) \cong \Z.$$
See 
\cite[Section 3.1.4]{ShawTh}, or the more general \cite[Theorem 6.14]{MikZhaEig}.

\begin{example}\label{ex:KleinInt}
Returning to Example \ref{ex:Klein}, let $\gamma$ be the cycle in  $C_{1, 1}(K; \Z)$ supported on $\sigma$ in Figure \ref{fig:Klein} and equipped with coefficient $e_1$. Let $\gamma_1, \gamma_2$ be the $(1, 1)$-cycles supported on the $1$-cycle $\tau$ in Figure \ref{fig:Klein} and equipped with coefficients $e_1$ and $e_2$ respectively. Then $\gamma$ intersects both $\gamma_1$ and $\gamma_2$ transversally in $K$ in a single point, $\gamma_1$ and $\gamma_2$ are disjoint.
The intersection product on $H_{1, 1}(K; \Z)$ is 
$$\gamma \cdot \gamma = 0 \quad \quad \gamma \cdot \gamma_1 = 0 \quad \quad \gamma \cdot \gamma_2 = 1 \quad \quad \gamma_i \cdot \gamma_j = 0.$$
\end{example}

Given a tropical $1$-cycle $A$ in a compact tropical surface $X$, there is a cycle map which produces a  tropical $(1,1)$-cycle  $[A]$, 
$$[  \ ]: Z_1(X) \to Z_{1, 1}(X).$$
Firstly, equip each edge $E$ of $A$ with an orientation to obtain a $1$-cell 
$\sigma_E$. These cells form the underlying $1$-chain of $[A]$. The coefficient of $\sigma_E$ in $\mathcal{F}_1$ is 
the integer vector parallel to $\sigma_E$ multiplied by the weight of $E$ in $A$. The fact that $[A]$ is a closed chain follows immediately from the 
balancing condition on $A$ \cite{MikZhaEig}, \cite{ShawTh}. 
If $X$ is not compact, then we can construct a cycle $[A]$ in the Borel-Moore version of tropical $(p, q)$-homology of $X$. 
As usual the Borel-Moore homology groups have the same definitions as the singular $(p,q)$-tropical homology groups except that chains consist of possibly infinite sums of cells satisfying a locally finite condition \cite{BMhomology}. 

Say that a $(1, 1)$-cycle $\gamma \in \T^N$ is parallel if each $(1,1)$-cell $\beta_{\sigma} \sigma$ has support contained in an affine line of direction $\beta_{\sigma}$.  
A $(1, 1)$-cycle in $X$ is parallel if it is parallel in each chart. For example if $A$ is a tropical cycle then $[A]$ is a parallel integral $1$-cycle. 
 
The next proposition shows that the intersection product on $1$-cycles in a compact surface $X$ from Section \ref{sec:int1cyc} is numerically equivalent to their product considered as tropical $(1, 1)$-cycles.

\begin{proposition}\label{prop:11equivInt}
Let $A, B$ be  $1$-cycles in a compact tropical surface $X$, then the total degree of the intersection of $1$-cycles $A \cdot B$ is equal to the intersection multiplicity $[A] \cdot [B]$ as $(1, 1)$-cycles. 
\end{proposition}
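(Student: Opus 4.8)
The plan is to reduce the asserted equality to a purely local comparison and there to identify both quantities with the tropical stable intersection. Both pairings are bilinear in $A$ and $B$, and the statement only concerns the \emph{total degrees} of the $0$-cycles $A\cdot B$ and $[A]\cdot[B]$, so it suffices to match these two integers. Recall that $A\cdot B$ from Definition \ref{intbddy} is a sum of local contributions $(A\cdot B)_x$, each being the fan intersection multiplicity of Definition \ref{def:localInt} computed in a chart $\phi_x\colon U_x\to P_x$; as recalled in Section \ref{sec:fancycles}, this fan multiplicity coincides with the tropical stable intersection of the fan $1$-cycles $A_x,B_x$ in the matroidal fan $P_x$. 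On the other side, for compact $X$ the pairing on $(1,1)$-cycles descends to $H_{1,1}(X;\Z)$, so I may compute $[A]\cdot[B]$ after replacing $[A]$ by any homologous $(1,1)$-cycle meeting $[B]$ transversally.

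The heart of the argument is the following local identification. Fix a fan tropical plane $P$ with vertex $0$ and fan $1$-cycles $C_1,C_2\subset P$. After a generic perturbation that puts the cycles in transverse position, $C_1$ and the perturbed $C_2$ meet in finitely many points lying in the relative interiors of $2$-dimensional faces $F$ of $P$, and this count computes the stable intersection, hence $(C_1\cdot C_2)_0$. At such a transverse point $p\in F$, with $C_1$ carrying weighted direction $\beta_1=w_1v_1$ and $C_2$ carrying $\beta_2=w_2v_2$, the stable intersection multiplicity is the lattice index $[\Lambda_F:\Lambda_{\beta_1}\oplus\Lambda_{\beta_2}]$, where $\Lambda_F$ is the integer lattice of $F$. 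This is exactly the signed number $\epsilon\,\Omega_{F}(\beta_1\wedge\beta_2)$ appearing in the definition of the $(1,1)$-pairing, the sign $\epsilon$ matching because the coefficient vectors $\beta_i$ are parallel to the oriented supporting cells of $[C_i]$. Summing over $p$ gives $(C_1\cdot C_2)_0=\sum_p \epsilon\,\Omega_{F}(\beta_1\wedge\beta_2)$, whose right-hand side is precisely the local contribution of $[C_1]$ and $[C_2]$ to the $(1,1)$-intersection once they are in transverse position.

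Finally I would globalize: the perturbations used chart by chart should assemble into a single perturbation of the $(1,1)$-cycle $[A]$ to a homologous cycle transverse to $[B]$, so that both $A\cdot B$ and $[A]\cdot[B]$ are computed as the same sum of transverse local contributions, each a lattice index, whence their total degrees agree. Equivalently, one may invoke the numerical invariance of the $1$-cycle intersection to move $A$ into general position with respect to $B$ and run the comparison directly. I expect the main obstacle to be exactly this globalization: verifying that a homology-preserving perturbation of $[A]$ realizes, simultaneously at every vertex of $X$, a displacement generic enough to compute the stable intersection there, and keeping track of the orientation signs $\epsilon$ so that the local equalities of the previous paragraph glue into the claimed global identity of total degrees.
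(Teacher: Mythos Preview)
Your proposal has a genuine gap at the local step. You write that ``after a generic perturbation that puts the cycles in transverse position, $C_1$ and the perturbed $C_2$ meet in finitely many points lying in the relative interiors of $2$-dimensional faces of $P$, and this count computes the stable intersection, hence $(C_1\cdot C_2)_0$.'' But inside a fan tropical plane $P\subset\R^N$ that is not $\R^2$, a fan $1$-cycle is rigid: it must pass through the vertex, and there is no way to translate it to a nearby tropical cycle in $P$. So the stable-intersection picture you invoke (translate and count positive lattice indices) simply does not exist in $P$. Worse, the multiplicity $(C_1\cdot C_2)_0$ of Definition \ref{def:localInt} can be \emph{negative}, so it cannot possibly be realized as a sum of lattice indices from transverse meetings of two tropical curves. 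Your sign argument (``$\epsilon=+1$ because the coefficients are parallel to the supporting cells'') only applies to the unperturbed parallel cycles $[C_i]$; once you replace one of them by a homologous $(1,1)$-cycle, that cycle is no longer parallel and negative contributions must appear.

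This is exactly what the paper does in Lemma \ref{lem:homol}. One does not move the tropical cycle $A$, but rather the $(1,1)$-cycle $[A]$, via an explicit $(1,2)$-chain, first onto the $1$-skeleton of $P$ and then onto a translate of the standard line $L$ near the vertex. The resulting cycle $a$ is not of the form $[A']$ for any tropical $1$-cycle. Its transverse intersection with $[B]$ then splits into two kinds of points: those on the rays of $L$, contributing $+\,d_1d_2$, and those on the auxiliary segments joining the chosen points $y_i,y_j$, which contribute $-(\overline{A}\cdot\overline{B})_{p_I}$ with a genuine minus sign coming from the mismatch between cell orientation and coefficient direction. Summing recovers precisely the formula $d_1d_2-\sum_{p_I}(\overline{A}\cdot\overline{B})_{p_I}$ of Definition \ref{def:localInt}. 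The globalization also needs the specific feature built into condition~(3) of that lemma: outside a small neighborhood of each vertex the perturbed cycle is again parallel, so the patching on overlaps creates only intersection points with parallel coefficients, hence multiplicity zero. Your ``perturbations should assemble'' misses this mechanism, and your alternative of moving $A$ by rational/numerical equivalence is circular, since Corollary \ref{cor:equivInt} is deduced from the present proposition.
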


The next lemma considers a local situation for fan tropical cycles in a fan tropical plane $P \subset \T^N$. Since $P$ is not compact we are using Borel-Moore homology. 

\begin{lemma}\label{lem:homol}
Let $A, B$ be  fan $1$-cycles of sedentarity $\emptyset$ in a tropical  fan plane $P \subset \T^N$.  There exist arbitrarily small neighborhoods $U_A$, $U_B$  of $A, B$ respectively,  and $(1, 1)$ Borel-Moore cycles $a, b$ homologous to 
$[A], [B]$ in $P$ respectively, such that
\begin{enumerate}
\item  $|a| \subset U_A$ and $|b| \subset U_B$ and $a$ and $b$ intersect transversally in $P$; 
\item for each $x_i \in (A \cap B)^{(0)}$ there is a neighborhood $U_{x_i}$ of $x_i$ such that $U_{x_i} \subset U_A \cap U_B$,  $$|a| \cap |b| \subset \bigcup_i U_{x_i}, \quad \text{and} \quad 
m_{x_i}(A\cdot B) = \sum_{x_i' \in |a| \cap |b| \cap U_{x_i}} m_{x_i'}(a \cdot b). $$
\item outside of  $\cup_{x_i \in (A \cap B)^{(0)}} U_{x_i}$, the support  $|a|$ is contained in the union of a collection of affine lines which are in bijection with the rays of $A$, and 
 each  line in the collection is parallel to the corresponding ray of $A$. Moreover,  outside of  $\cup_{x_i \in (A \cap B)^{(0)}} U_{x_i}$ the $(1, 1)$-cycle $a$ is parallel. 
\end{enumerate}
\end{lemma}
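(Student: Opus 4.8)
The plan is to prove the lemma by producing explicit Borel--Moore representatives $a,b$ of $[A],[B]$ obtained by \emph{pushing the rays of $A$ and $B$ off the cone point} of $P$, and then matching the resulting finite transversal count with the fan multiplicity of Definition \ref{def:localInt}. First I would fix a fan structure on $P$ refining those of $A$ and $B$, so that every ray of $A$ and of $B$ is a ray of $P$ and $[A],[B]$ are supported on these rays with coefficients $w_\rho v_\rho$, where $w_\rho$ and $v_\rho$ are the weight and primitive direction of a ray $\rho$. Since $A$ and $B$ are fans, the only vertex at which an intersection multiplicity is defined is the cone point $0$, so $(A\cap B)^{(0)}=\{0\}$ and it suffices to construct a single neighbourhood $U_0$.

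The core construction is the push-off. For each ray $\rho$ of $A$ I would replace $\rho$ by an affine line $\ell_\rho$ parallel to $v_\rho$, displaced a small generic amount into an adjacent $2$-dimensional face of $P$, and near $0$ reconnect the inner ends of the displaced lines by a tree of $1$-cells carrying the appropriate $\F_1$-coefficients, so that the resulting chain $a$ is closed. The difference $[A]-a$ is then the boundary of a compactly supported $(1,1)$ $2$-chain concentrated in an arbitrarily small neighbourhood of $0$, so $a\sim[A]$; by construction $a$ is parallel and supported on the lines $\ell_\rho$, one per ray, outside $U_0$, which gives (3), and taking the displacements of $A$ and $B$ generic and small yields transversality together with $|a|\subset U_A$ and $|b|\subset U_B$, giving (1). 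For the containment in (2), note that two lines $\ell_\rho,\ell_{\rho'}$ whose rays point towards the same boundary point $p_I$ (that is, $v_\rho,v_{\rho'}$ are positively proportional) are mutually parallel and hence disjoint after a generic displacement, whereas lines in distinct directions cross within distance $O(\varepsilon)$ of $0$; thus for a sufficiently small displacement $|a|\cap|b|\subset U_0$.

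It remains to identify $\sum_{x'\in U_0} m_{x'}(a\cdot b)$ with $(A\cdot B)_0$. The homologies $[A]-a$ and $[B]-b$ are compactly supported, so the finite transversal count $\sum_{x'} m_{x'}(a\cdot b)$ is unchanged under them and therefore equals the intrinsic Borel--Moore intersection number of $[A]$ and $[B]$ in the matroidal fan $P$. By the equivalence of Definition \ref{def:localInt} with the matroidal stable intersection product of \cite{ShawInt} and \cite{FrancoisRau} recorded after that definition, and since that product is computed precisely by such a generic perturbation to transversal position, this number is $(A\cdot B)_0$; concretely the weight of a transversal crossing of $\ell_\rho$ and $\ell_{\rho'}$ in a face $F$ is the lattice index $[\Lambda_F:\Lambda_{w_\rho v_\rho}\oplus\Lambda_{w_{\rho'}v_{\rho'}}]$, which is exactly the $(1,1)$-multiplicity $m_{x'}(a\cdot b)$ defined via the volume form $\Omega_F$. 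An alternative verification compactifies to $\overline P\subset\TP^N_\Delta$, uses the B\'ezout identity $\overline A\cdot\overline B=\deg_\Delta(A)\deg_\Delta(B)$, and matches, at each boundary point $p_I$, the contribution of the push-off with the corner multiplicity $(\overline A\cdot\overline B)_{p_I}$ of Definition \ref{def:cornerInt}, so that subtracting the boundary terms leaves the central multiplicity.

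The hard part will be making the push-off honest inside the \emph{non-linear} space $P$: one cannot translate globally in a fan, so the reconnecting $1$-cells near $0$ must be chosen to lie in $P$ and to restore closedness in Borel--Moore homology, and one must check that the homology $[A]-a$ really is a compactly supported boundary. A second, more bookkeeping, obstacle is reconciling the two multiplicity conventions: verifying that the volume-form weight $\Omega_F(\beta_1(x')\wedge\beta_2(x'))$ at a transversal $(1,1)$-crossing agrees with both the projection-based corner formula of Definition \ref{def:cornerInt} and the matroidal stable-intersection weight. Once this local match is established, bilinearity of both sides in $A$ and $B$ (additivity of $\deg_\Delta$ and distributivity of the corner multiplicities) permits reducing to a few representative configurations should a direct computation be preferred.
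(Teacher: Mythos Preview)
Your plan has a circularity at the key step.  You write that the transversal count equals $(A\cdot B)_0$ ``by the equivalence of Definition \ref{def:localInt} with the matroidal stable intersection product \dots\ since that product is computed precisely by such a generic perturbation to transversal position.''  But neither \cite{ShawInt} nor \cite{FrancoisRau} defines the product as a $(1,1)$-homology perturbation count; one uses Cartier divisors, the other a recursive modification argument.  The whole point of this lemma (and the proposition it feeds) is to \emph{establish} that the $(1,1)$-pairing agrees with those products.  So the appeal is not available, and your main argument reduces to the ``alternative verification'' you only sketch at the end.

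That alternative is in fact what the paper does, but the geometry is different from your symmetric push-off.  The paper does \emph{not} displace each ray of $A$ into a neighbouring face.  Instead it uses explicit triangular $(1,2)$-cells to homotope $[A]$, in a small neighbourhood of the cone point, first onto the $1$-skeleton of $P$ and then onto the rays $u_0,\dots,u_N$, where the common coefficient turns out to be $d_1=\deg_\Delta(A)$; a further translation makes $a$ coincide with $[d_1 L]$ (for $L$ the standard line) inside $U_0$, while outside it still runs along the rays of $A$.  Only $b$ is perturbed in your sense, just enough to be transversal to $a$.  The intersection then splits cleanly: crossings of $b$ with the translated $d_1 L$ contribute exactly $d_1 d_2$, and crossings with the connector segments on the edges $[y_i,y_j]$ of the $1$-skeleton contribute $-w_Aw_B\min\{k_il_j,k_jl_i\}$, which is precisely $-(\overline A\cdot\overline B)_{p_{ij}}$ from Definition \ref{def:cornerInt} (the minus sign comes from an orientation check).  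Summing reproduces Definition \ref{def:localInt} on the nose.  Your symmetric displacement would not separate these two contributions, and the bookkeeping of which pairs of displaced rays actually meet (and with what sign) inside the fan is exactly the computation you are trying to avoid.
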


\begin{figure}
\includegraphics[scale = 0.75]{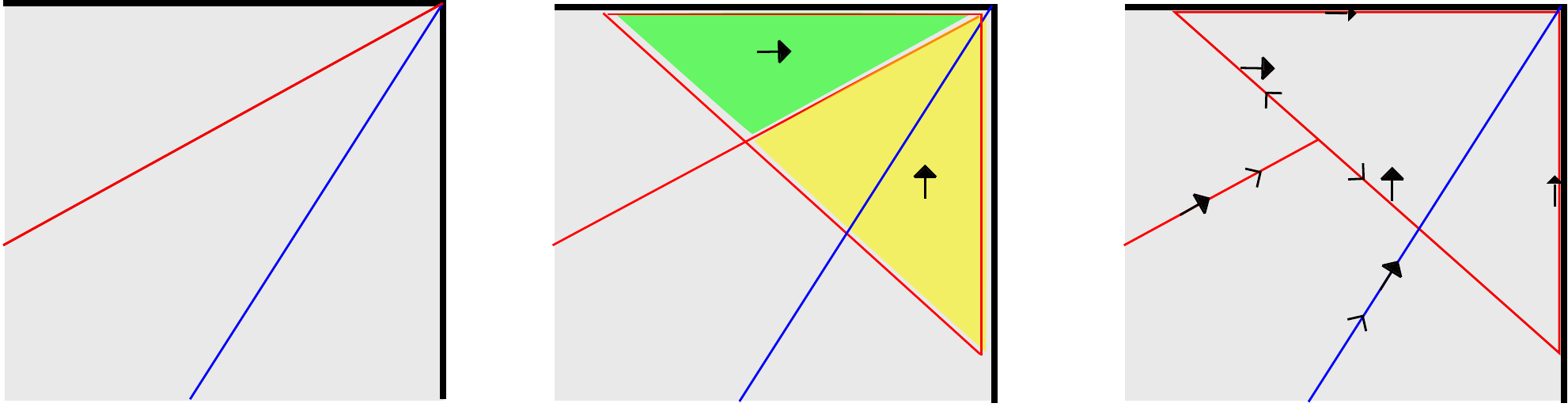}
\put(-420, 35){$A$}
\put(-390, 10){$B$}
\put(-303, 116){$x$}
\put(-150, 12){$y_i$}
\put(-263, 116){$y_j$}
\put(-155, 116){$x$}
\put(-240, 95){$\tau_j$}
\put(-180, 40){$\tau_i$}
\put(-270, 35){$[A]$}
\put(-240, 10){$B$}
\put(-130, 35){\small{$[A] + \partial(\tau_1 + \tau_2)$}}
\put(-50, 10){$[B]$}
\put(-60, 70){$- p_iu_i$}
\put(-125, 90){$- p_ju_j$}
\put(3, 12){$y_i$}
\put(-110, 116){$y_j$}
\put(0, 116){$x$}
\caption{The steps to construct the cycle $a$ from $[A]$ in Lemma \ref{lem:homol}.}
\label{fig:perturb}
\end{figure}

\begin{proof}
Since $A$ and $B$ are fan cycles, the only point of sedentarity $\emptyset $ in  $(A \cap B)^{(0)}$ is the vertex $x$ of the fan $P \subset \T^N$. 
We prove the lemma only in the case of the vertex $x$ of the fan $P$, omitting  the details for points of sedentarity since this case follows in a  similar fashion. 

Enumerate the rays of $P$ from $1, \dots, s$ and let $v_i$ denote the primitive integer vector pointing in the direction of the  $i^{th}$ ray. By Construction  \ref{cons:matfan}, each $v_i$ is in a direction of an indicator vector $u_I$, where $u_i = -e_i$ for $i = 1, \dots N$ and $u_0 = e_0$. 
We assume that in $[A]$ and $[B]$ the supporting $1$-cells are oriented outwards from the vertex $x$ of $P$. First we will find cycles $a, b$ homologous   to $[A], [B]$ respectively, and satisfying conditions $(1)$ and $(3)$ of the lemma, and then  compare the intersection multiplicities. 

The first step is to move $[A]$ to a homologous $(1, 1)$-cycle supported on the $1$-skeleton of $P$ in a neighborhood $U_x$ of the vertex of $P$. 
Let $U_x$ be the intersection of a  convex neighborhood of $x$ with $P$, so that  $U_x \subset U_A \cap U_B$. 
On the $i^{th}$ ray of  $P$ choose a point $y_i \subset U_x$ along the ray and do this for each $i$. 
Denote the face of $P$ spanned by vectors $v_i$ and $v_j$ by $F_{ij}$. Let  $r$ be a ray of $A$ in the interior of $F_{i,j}$ and  suppose that $r$ is equipped with weight $w_r \in \Z$ and is in direction $v_r = k_iv_i + k_jv_j$ for $p_i, p_j$ relatively prime positive integers. 
Let $\gamma_i = w_r k_iv_i \tau_i$ be a   $(1, 2)$-cell where $\tau_i$ is a $2$-simplex contained in $F  \cap U$ which is the convex hull of 
$0$, $y_i$ and the intersection point of $A$ with the segment from $y_i$ to $y_j$, see the middle of Figure \ref{fig:perturb}. 

Orient the simplex $\tau_i$  appropriately, so that  $[A] + \partial(\gamma_1 + \gamma_2)$ no longer has support on $r$ in the neighborhood $U_x$, see right side of Figure \ref{fig:perturb}. 
Apply this procedure to every ray of $A$ contained in the interior of a face of $P$, to obtain a $(1, 1)$-cycle $a^{\prime}$ homologous to $[A]$ which is supported on the $1$-skeleton of $P$ in $U_x$ and is homologous to $[A]$. 

If a   ray of $P$ is in direction $u_I$ for $|I| >1$, then for each $i \in I$, the $2$  dimensional cone spanned by $u_I, u_i$ is also in $P$. Apply the  same procedure as above to construct a cycle $a''$, again homologous to $[A]$, which in $U_x$ is supported on  the rays of $P$ in the directions $u_i$. The union of these rays  is the standard tropical line $L \subset \T^N$, i.e.~$L = \trp(U_{2, N+1})$
 The coefficient in $\mathcal{F}_1$ along the ray spanned by $u_i$ is $c_iu_i$, for some constant $c_i$.   
 The closedness of the cycle $a''$ at the vertex $x$ implies that the $c_i$'s are all equal. Calculating this coefficient for just one $i$ shows that it is $d_1 = \text{deg}_{\Delta}(A)$, where $\Delta = \{-e_1, \dots -e_{N+1}\}$.  
 
Finally, the standard tropical line 
can be translated in $\trp(M)$ for any $M$. Therefore, we can find a homologous cycle $a \sim a'' \sim [A]$ contained in a neighborhood $U_A$  of $A$ and such that in a neighborhood $U'_x \subset U_x$ it coincides with $[d_1L]$.  

The intersection of $a$ and $[B]$ may still be $1$-dimensional or consist of points contained on the $1$-skeleton of $P$ if $B$ has rays in common with $A$ or with the $1$-skeleton of $P$. 
  This can be avoided by choosing another representative $b$ of $[B]$ contained in a neighborhood $U_B$, whose edges are still in bijection with the edges of  $B$, so that corresponding edges are parallel. 
  
The intersection points of $a$ and $b$ of sedentarity $\emptyset$ come in $2$ types, those supported on the cells of $a$ which are segments from the chosen points $y_i$ to $y_j$ and those supported on the segments of $a$ in directions $u_i$. 
The sum of the multiplicities of all  intersection points contained on the rays of $a$ in directions $u_i$ is exactly  $d_1 d_2$. This is because the intersection points of $B$ and the tropical line $L$ coincide with the intersection points of $B$ and $a$ and the total intersection multiplicity of the former is $d_1d_2$, where $d_2 = \text{deg}_{\Delta}(B)$.

The other intersection points of $a$ and $b$ are supported on the segments from $y_i$ and $y_j$. For simplicity, suppose a face $F_{ij}$ of $P$ is generated by rays $u_i$ and $u_j$ and contains exactly $1$ ray from each of $A$ and $B$ in its interior. Let the ray of $A$ be in direction 
$k_iu_i + k_ju_j$ and the ray of $B$ be in direction $l_iu_i + l_ju_j$.
Then the intersection point of $a$ and $b$ contained in the segment 
from $y_i$  to $y_j$ is  $- w_Aw_B \min \{p_1q_2, q_1p_2\}$, where $w_A$, $w_B$ are the weights of the rays of  $A$ and $B$ in the face $F_{ij}$ respectively. 
The right side of Figure \ref{fig:perturb} shows that the orientation induced by the underlying 
$1$-cells of $a$ and $b$ are not consistent with the orientation of the coefficient vectors.
 The above calculated  contribution to the intersection of $a$ and $b$ is equal to the intersection multiplicity,  $- (\bar{A}. \bar{B})_{p_{ij}}$ 
from  Definition \ref{def:cornerInt}. When the rays of $A$ and $B$ are in a face of $P$ spanned by a pair of vectors $u_i$ and $u_I$ the result follows in a similar fashion.  
Extending by distributivity over all the rays of $A$ and $B$ 
and comparing with  the local  intersection multiplicity of $A$ and $B$ at $x$ from Definition \ref{def:localInt} completes the proof for  points in $A \cdot B$ of sedentarity $\emptyset$. 

A similar argument works for points of positive order of sedentarity, which completes the proof.
\end{proof}

\begin{figure}
\includegraphics[scale = 0.7]{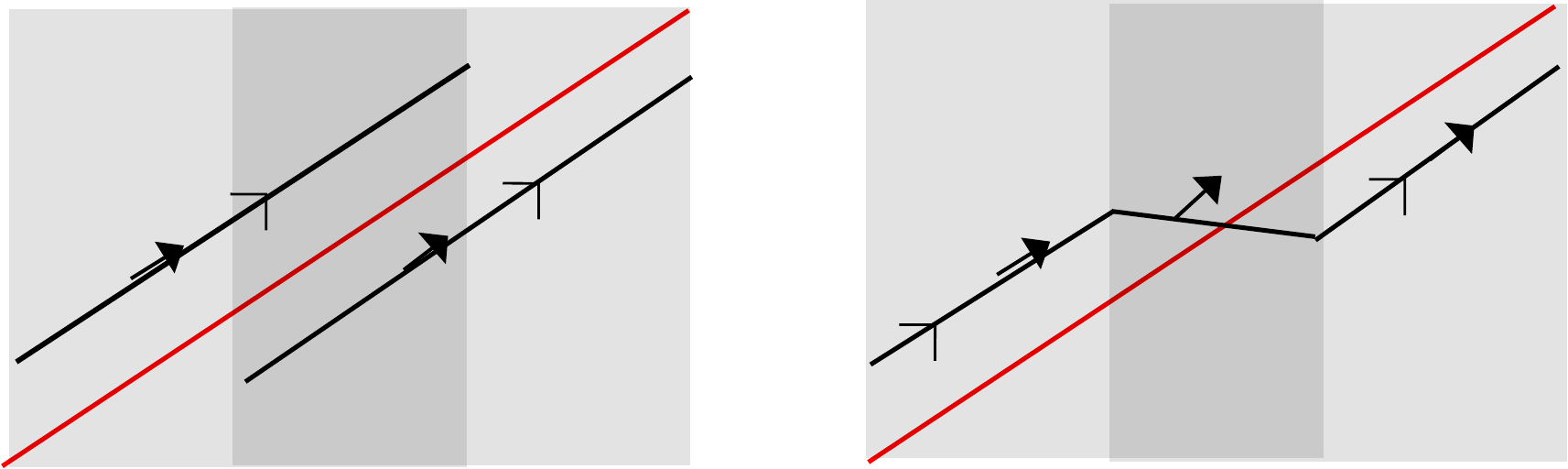}
\put(-360, -4){$A$}
\put(-330, 40){$a_{\alpha}$}
\put(-220, -10){$U_{\beta}$}
\put(-320, -10){$U_{\alpha}$}
\put(-285, 10){$U_{\alpha} \cap U_{\beta}$}
\put(-240, 40){$a_{\beta}$}
\put(-100, 65){$a$}
\put(-100, 10){$U_\alpha \cap U_{\beta}$}
\put(-163, -4){$A$}
\put(-125, -10){$U_{\alpha}$}
\put(-30, -10){$U_{\beta}$}
\caption{The glueing of the cycles $a_{\alpha}, a_{\beta}$ on the overlaps $U_{\alpha} \cap U_{\beta}$ from the proof of Theorem \ref{prop:equivInt}.} 
\label{fig:glueing}
\end{figure}

\begin{proof}[Proof of Proposition \ref{prop:11equivInt}]

If at least one of $A$ or $B$ is a boundary cycle find a  rational section  as in \ref{sec:selfbdy}, so that the $1$-cycles intersect transversally in $X$. Then the statement follows immediately by  applying the cycle map. 

We can choose an open covering $\{ U_{\alpha} \}$ of the union   $A \cup  B$  and charts, $\phi_{\alpha}: U_{\alpha} \to P_{\alpha} \subset \T^{N_{\alpha}}$ such that in each chart $A \cap U_{\alpha}$ is contained in a fan cycle  $A_{\alpha}$ and similarly $B \cap U_{\alpha}$ is contained in a fan cycle $B_{\alpha}$.
 Moreover, on the overlaps $U_{\alpha} \cap U_{\beta}$, we can insist that the image of $A \cap U_{\alpha} \cap U_{\beta} $ is a single affine line, and similarly for $B$. 

In each chart $\phi : U_{\alpha} \to P_{\alpha}$ there are $(1, 1)$-cycles $a_{\alpha}$ and $b_{\alpha}$ satisfying the conditions of Lemma \ref{lem:homol}, however, $a_{\alpha}$ and $a_{\beta}$ may not coincide on the intersections $U_{\alpha} \cap U_{\beta}$. On the overlap $U_{\alpha} \cap U_{\beta}$ the cycle $A$ is supported on an affine line.  
By choosing an appropriate covering $\{U_{\alpha}\}$, we can assume that $a_{\alpha}$ and $a_{\beta}$ are supported on $1$-chains parallel to the affine line supporting $A$ in $U_{\alpha} \cap U_{\beta}$. By Lemma \ref{lem:homol} the coefficient of $a_{\alpha}$ in this intersection is also parallel to $A$ and similarly for $a_{\beta}$. 
Now $a_{\alpha}$ and $a_{\beta}$ can be patched together on the overlap $U_{\alpha} \cap U_{\beta}$ by patching together the underlying $1$-chains $a_{\alpha}$ and $a_{\beta}$ with  $1$-chains in the usual way and equipping them with the same coefficient in $\mathcal{F}_1$ as the edge of $A$ to which it is parallel. This can also be done for $B$. Denote the resulting cycles in $X$ by  $a$ and $b$. 

We can assume that $a $ and $b$ intersect transversally in $X$. Moreover, intersection points in the overlaps $U_{\alpha} \cap U_{\beta}$ produced from the patching do not contribute to the intersection  multiplicity of $a$ and $b$. This is because the  coefficients in $\mathcal{F}_1$ of the underlying $(1, 1)$-cells at these new intersection points are parallel, hence the multiplicity of intersection is $0$. 
Therefore, the comparison of the intersection multiplicities $A \cdot B$ with $a \cdot b$ follows from the local comparison in the  previous lemma. 
\end{proof}

The following proposition is a direct extension of \cite[Lemma 4]{ZhaJac}. 

\begin{prop}\label{prop:equivInt}
Suppose $A, A^{\prime}$ are rationally equivalent $1$-cycles in a tropical surface $X$, then $[A], [A^{\prime}]$ are homologous $(1, 1)$-cycles. 
\end{prop}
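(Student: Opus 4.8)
The plan is to lift the family $Z\subset X\times\TP^1$ defining the rational equivalence to a homology between the $(1,1)$-cycles $[A]$ and $[A']$, by slicing $Z$ along the fibres of the second projection and applying the cycle map. Write $\pi\colon X\times\TP^1\to X$ and $\rho\colon X\times\TP^1\to\TP^1$ for the two projections, and let $Z$ be a balanced weighted polyhedral $2$-cycle with $\pi_*\big((X\times\{-\infty\})\cdot Z-(X\times\{\infty\})\cdot Z\big)=A-A'$. After refining $Z$ I would first arrange that $\rho|_Z$ is a polyhedral map and that $Z$ meets the boundary fibres $X\times\{\pm\infty\}$ in the controlled way recorded in \cite{ShawInt}, so that the fibrewise intersections $Z_t:=Z\cap\rho^{-1}(t)$ vary in a piecewise-linear family of $1$-cycles whose pushforwards interpolate between $A$ (at $t=-\infty$) and $A'$ (at $t=\infty$).

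Next I would apply the cycle map to $Z$ to obtain the $(2,2)$-cycle $[Z]$ in $X\times\TP^1$ and build from it a Borel--Moore $(1,2)$-chain $\Gamma$ in $X$ that sweeps out the slices. Concretely, for each facet $F$ of $Z$ on which $\rho$ is non-constant, the fibre slices $F\cap\rho^{-1}(t)$ are parallel segments with a common primitive direction $v_F$; I assign to the projected $2$-cell $\pi(F)$ the coefficient $w_F\,\pi_*(v_F)\in\mathcal{F}_1$, where $w_F$ is the weight of $F$, and set $\Gamma$ to be the resulting sum of $(1,2)$-cells. This is precisely the contraction of $[Z]$ in the $\TP^1$-direction: it is the tropical prism chain interpolating between the two end slices, and by construction its restriction over the two end fibres recovers $[\pi_*(Z_{-\infty})]$ and $[\pi_*(Z_{\infty})]$, whose difference is $[A]-[A']$ by additivity of the cycle map and the relation $\pi_*(Z_{-\infty})-\pi_*(Z_{\infty})=A-A'$.

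The heart of the argument is the computation $\partial\Gamma=[A]-[A']$. The boundary of $\Gamma$ has three kinds of contributions: the two end-fibre terms at $t=\pm\infty$, the codimension-one faces internal to the interval $(-\infty,\infty)$, and the faces coming from facets of $Z$ contained in a single fibre. The end-fibre terms produce $[A]-[A']$; matching the cycle-theoretic fibre intersection $(X\times\{\pm\infty\})\cdot Z$ with the homological slice of $[Z]$ here is exactly the local compatibility of the cycle map with intersection established in Lemma \ref{lem:homol} and Proposition \ref{prop:11equivInt}. The internal faces cancel in pairs: around each codimension-one face the balancing of $Z$ (equivalently, the closedness $\partial[Z]=0$) forces the incident slice-directions to sum to zero, which is the identity making fibrewise slicing a chain map away from the endpoints. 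Finally, facets of $Z$ lying in a single fibre project to sets of dimension at most one and contribute no $2$-dimensional support to $\Gamma$; one only checks they create no spurious boundary, which again follows from balancing along that fibre. Assembling these, $\partial\Gamma=[A]-[A']$, so $[A]$ and $[A']$ are homologous; when $X$ is non-compact this is an identity of Borel--Moore $(1,1)$-classes, consistent with the definition of the cycle map given above.

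I expect the main obstacle to be the cancellation of the internal boundary terms together with the analysis at the boundary fibres $t=\pm\infty$. Making the ``slicing is a chain map'' identity precise requires care: one must ensure $Z$ is transverse enough to the generic fibre, control the finitely many special values of $t$ where the combinatorial type of $Z_t$ jumps, and treat the degeneration of the fibre as $t\to\pm\infty$ into the boundary strata of $X\times\TP^1$ using the boundary-intersection conventions of \cite{ShawInt}, \cite{MIk3}. This is exactly the point at which the one-dimensional argument of \cite[Lemma 4]{ZhaJac} must be upgraded, and once the chain-map identity is in place the statement follows formally.
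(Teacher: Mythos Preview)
The paper does not actually prove this proposition: it merely states that it ``is a direct extension of \cite[Lemma 4]{ZhaJac}'' and moves on. Your proposal is precisely the natural way to carry out that extension---build a $(1,2)$-prism chain from the family $Z$ by slicing fibrewise and pushing forward, then verify $\partial\Gamma=[A]-[A']$ using balancing of $Z$---and this is the argument one would expect upon unpacking the cited lemma in dimension two.

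A couple of small remarks on the construction. First, the coefficient you assign to $\pi(F)$ should simply be $w_F v_F$ where $v_F$ is the primitive integer direction of the slice $F\cap\rho^{-1}(t)$ inside $X$; the notation $\pi_*(v_F)$ is redundant since $v_F$ already lives in the $X$-factor. Second, facets of $Z$ on which $\rho$ is constant (``horizontal'' facets) do contribute boundary terms to $\Gamma$ along their edges, and these must cancel against the non-cancelling pieces of the internal boundary coming from adjacent non-horizontal facets---this is the one place where your accounting is slightly too quick, though the cancellation does follow from balancing of $Z$ once written out. The obstacles you flag at the end (transversality, the finitely many jump values of $t$, and behaviour at $t=\pm\infty$) are the genuine content, and your identification of them as the point where the one-dimensional argument must be upgraded is exactly right.
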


\begin{corollary}\label{cor:equivInt}
Suppose $A, A^{\prime}, B$ are $1$-cycles in $X$ such that $A $ and $A^{\prime}$ are rationally equivalent, then $A.B = A^{\prime}.B$.
\end{corollary}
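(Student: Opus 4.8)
The plan is to chain together the two preceding results, Proposition \ref{prop:equivInt} and Proposition \ref{prop:11equivInt}, with the fact that the intersection pairing on $(1,1)$-cycles descends to $H_{1,1}(X;\Z)$. Throughout, $A\cdot B$ denotes the total degree of the intersection $0$-cycle, i.e.\ the integer $\sum_{x}(A\cdot B)_x$, in keeping with the abuse of notation fixed in Section \ref{sec:int1cyc}. Since the descent of the pairing to homology and Proposition \ref{prop:11equivInt} both require $X$ to be compact, I would carry out the argument in that setting, the compactness hypothesis being inherited from those statements.

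First I would apply the cycle map $[\ ]\colon Z_1(X)\to Z_{1,1}(X)$ to $A$, $A'$ and $B$. By hypothesis $A$ and $A'$ are rationally equivalent, so Proposition \ref{prop:equivInt} gives that $[A]$ and $[A']$ are homologous $(1,1)$-cycles; that is, they represent the same class in $H_{1,1}(X;\Z)$. Next I would invoke that, on a compact surface, the intersection product on transversally intersecting $(1,1)$-cycles descends to a well-defined pairing $\cdot\colon H_{1,1}(X;\Z)\times H_{1,1}(X;\Z)\to H_0(X;\Z)\cong\Z$, as recalled before Example \ref{ex:KleinInt}. Because $[A]$ and $[A']$ coincide in homology, this yields the equality of intersection numbers $[A]\cdot[B]=[A']\cdot[B]$.

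Finally, Proposition \ref{prop:11equivInt} identifies the total degree of the $1$-cycle intersection with the $(1,1)$-homology pairing, so $A\cdot B=[A]\cdot[B]$ and $A'\cdot B=[A']\cdot[B]$. Combining these with the previous equality gives $A\cdot B=A'\cdot B$, which is the claim.

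The argument is essentially formal once the two propositions are in hand, so I do not expect a genuine difficulty in the derivation itself. The only point demanding care is the domain of validity: both the descent of the pairing to homology and Proposition \ref{prop:11equivInt} are stated for compact $X$, so the main (minor) obstacle is making explicit that the corollary is invoked in the compact setting. If one instead wanted the statement for non-compact $X$, I would replace singular $(1,1)$-homology by its Borel--Moore variant and check that Proposition \ref{prop:equivInt} and the intersection pairing continue to behave as required there.
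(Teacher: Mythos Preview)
Your proposal is correct and follows essentially the same approach as the paper's proof: both invoke Proposition \ref{prop:equivInt} to pass from rational equivalence to homologous $(1,1)$-cycles, use that the intersection pairing descends to $H_{1,1}$, and then apply Proposition \ref{prop:11equivInt} to identify the $1$-cycle intersection number with the $(1,1)$-pairing. Your additional remark about the compactness hypothesis is a fair observation, since the ingredients are stated for compact $X$ even though the corollary itself does not make this explicit.
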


\begin{proof}
If $A$ and $A^{\prime}$ are rationally equivalent then $[A], [A^{\prime}]$ are homologous. 
By Proposition \ref{prop:equivInt} $A . B = [A] .  [B] = [A^{\prime}].[B] = A^{\prime}. B$. 
\end{proof}

\section{Constructing tropical surfaces}\label{sec:construction}

In this section we describe $2$ operations for constructing tropical surfaces; modifications and summations. 

\subsection{Tropical Modifications}\label{sec:modGlobal}

\begin{definition}[\cite{MIk3}]\label{def:LinTrop}
Let $X$ and $Y$ be tropical manifolds. A map $f: X \to Y$ is a tropical linear morphism if for every point $x \in X$ there is a neighborhood $U_x$ of $x$ and $U_y$ of $y=f(x)$ with charts $\phi_x:U_x \to V_x \subset \T^{N_x}$, $\phi_y: U_y \to V_y \subset \T^{N_y}$ such that  $\phi_y \circ f \circ \phi_x^{-1}: V_x \subset \T^{N_x} \to V_y \subset \T^{N_y}$  is an integer affine map  $\T^{N_{x}} \to \T^{N_{y}}$. 
\end{definition}

The following is a restriction of modifications of tropical varieties from \cite{MIk3} to tropical manifolds.  

\begin{definition}\label{def:mod}
Let $\tilde{X}$ and $X$ be a pair of tropical manifolds. A tropical modification is a tropical morphism  $\delta: \tilde{X} \to X$ such that, if in borrowing from notation of Definition \ref{def:LinTrop}, at every point $x \in \tilde{X}$ the composition $\phi_x \circ \delta \circ \phi_{\tilde{x}}^{-1}: \tilde{V}_{\tilde{x}}  \subset \T^{N_x+1} \to V_x \subset \T^{N_x}$ is a degree $1$ tropical modification. 
\end{definition}

\begin{definition}
A divisor $D \subset X$ is locally degree $1$ if all of its facets are weight $1$ and there exists an atlas $\{U_{\alpha}, \phi_{\alpha}\}$ with  $\phi_{\alpha}: U_{\alpha} \to V_{\alpha}$ and $\phi_{\alpha}: D \cap U_{\alpha} \to D_{\alpha}$ is an open embedding such that $D_{\alpha} = \trp_{\Delta_{\alpha}}(N_{\alpha})$ and $X_{\alpha} = \trp_{\Delta_{\alpha}}(M_{\alpha})$ for a pair of matroids $N_{\alpha}, M_{\alpha}$. 
\end{definition}

Recall from  Lemma \ref{Cartier}  that every codimension  $1$ tropical cycle in a tropical manifold is a Cartier divisor.

\begin{proposition}\label{prop:modEx}
If $D$ is a locally degree $1$ divisor in a   tropical manifold $X$, then there exists a  tropical manifold $\tilde{X}$ and  a modification $\delta: \tilde{X} \to X$ along  $D$. 
\end{proposition}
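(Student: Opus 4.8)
The plan is to build $\tilde{X}$ by performing the degree $1$ modification of Definition \ref{def:modT} chart-by-chart and then gluing the results along compatible integer affine transition maps. First I would invoke Proposition \ref{Cartier} to realize $D$ as a Cartier divisor: there is a collection of tropical rational functions $\{f_\alpha\}$, with $f_\alpha$ defined on $V_\alpha \subset \T^{N_\alpha}$, such that $\div_{V_\alpha}(f_\alpha) = D_\alpha$ and, on every overlap $U_\alpha \cap U_\beta$, the functions agree up to an integer affine function. Because $D$ is locally degree $1$, in each chart both $V_\alpha = \trp_{\Delta_\alpha}(M_\alpha)$ and $D_\alpha = \trp_{\Delta_\alpha}(N_\alpha)$ are fan tropical linear spaces with respect to the same $\Z^{N_\alpha}$-basis $\Delta_\alpha$; this is exactly the hypothesis of Definition \ref{def:modT}, so completing the graph of $f_\alpha$ produces a degree $1$ modification $\delta_\alpha: \tilde{V}_\alpha \to V_\alpha$ with $\tilde{V}_\alpha \subset \T^{N_\alpha+1}$ again a fan tropical linear space of dimension $n$. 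In charts that meet $D$ trivially the modification is trivial, and I would simply retain the original chart $\phi_\alpha$, so that the ambient dimension may vary from chart to chart while each chart target stays non-degenerate.

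The heart of the argument is to glue the local spaces $\tilde{V}_\alpha$. Writing $\Phi_{\alpha\beta} = \phi_\alpha \circ \phi_\beta^{-1}$ for the transition maps of $X$ and $h_{\alpha\beta}$ for the integer affine function measuring the discrepancy between $f_\beta$ and $f_\alpha \circ \Phi_{\alpha\beta}$ on the overlap, I would define lifted transition maps
$$\tilde{\Phi}_{\alpha\beta}: \T^{N_\beta+1} \to \T^{N_\alpha+1}, \qquad (x, t) \mapsto (\Phi_{\alpha\beta}(x),\, t - h_{\alpha\beta}(x)).$$
Each $\tilde{\Phi}_{\alpha\beta}$ is integer affine because $\Phi_{\alpha\beta}$ is and $h_{\alpha\beta}$ is integer affine, and the sign is chosen so that a graph point $(x, f_\beta(x)) \in \tilde{V}_\beta$ is carried to $(\Phi_{\alpha\beta}(x), f_\alpha(\Phi_{\alpha\beta}(x))) \in \tilde{V}_\alpha$; the downward facets added during graph completion are preserved because $h_{\alpha\beta}$ is affine and so acts trivially on the directions going to $-\infty$ in the new coordinate. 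One then checks that the lifted maps satisfy the cocycle condition on triple overlaps, which reduces to additivity of the affine discrepancies $h_{\alpha\beta}$ — exactly the coherence built into the notion of a Cartier divisor. Gluing the $\tilde{V}_\alpha$ along these maps yields a space $\tilde{X}$ with an atlas of charts into non-degenerate fan tropical linear spaces and integer affine coordinate changes, i.e.\ a tropical manifold.

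Finally I would define the morphism $\delta: \tilde{X} \to X$ using the coordinate projections $\T^{N_\alpha+1} \to \T^{N_\alpha}$ that forget the last coordinate. These local projections are compatible with the gluing, since by construction $\tilde{\Phi}_{\alpha\beta}$ covers $\Phi_{\alpha\beta}$ on the first $N_\beta$ coordinates, so $\delta$ is a well-defined tropical morphism whose local model $\phi_x \circ \delta \circ \phi_{\tilde{x}}^{-1}$ is precisely the degree $1$ modification $\delta_\alpha$, which is what Definition \ref{def:mod} demands. It remains to verify that $\tilde{X}$ is Hausdorff and of finite type; both are inherited from $X$ because $\delta$ is proper with fibers that are points or closed intervals, so a finite cover of $X$ pulls back to a finite cover of $\tilde{X}$. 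I expect the main obstacle to be the gluing step: confirming that the affine discrepancies $h_{\alpha\beta}$ assemble into a genuine cocycle so the lifted transitions are globally consistent, and that these maps send the added boundary facets of $\tilde{V}_\beta$ to those of $\tilde{V}_\alpha$ without disturbing the balancing. Both points rest on $D$ being Cartier rather than merely Weil, which is exactly the content supplied by Proposition \ref{Cartier}.
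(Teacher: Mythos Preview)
Your proposal is correct and follows essentially the same route as the paper: realize $D$ as a Cartier divisor via Proposition~\ref{Cartier}, perform the degree~$1$ modification in each chart, and glue using lifted transition maps whose last coordinate is shifted by the integer affine discrepancy $h_{\alpha\beta}$---this is exactly the paper's formula $``x_{N_\beta+1}\,g_\alpha/g_\beta"$ written additively. The paper's argument is terser about Hausdorffness and finite type, while you spell out the cocycle verification and the compatibility of $\delta$ with the gluing a bit more; one small point to align with Definition~\ref{def:mod} is that even in charts missing $D$ you should record the (trivial) modification into $\T^{N_\alpha+1}$ rather than drop the extra coordinate, so that the local model of $\delta$ is visibly a degree~$1$ modification at every point.
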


\begin{proof}
Let $g = \{g_{\alpha}\}$ be the tropical Cartier divisor such that $\text{div}_X(g) = D$. 
In  each chart $\phi_{\alpha}:U_{\alpha} \to V_{\alpha} \subset \T^{N_{\alpha}}$ there is a degree $1$ modification along $D_{\alpha}$,  given by $\delta_{\alpha}: \tilde{V}_{\alpha} \to V_{\alpha}$ given by the tropical rational  function $g_{\alpha}$. To construct the surface $\tilde{X}$ from the atlas  $\{U_{\alpha}, \phi_{\alpha}\}$ for $X$, first set $\tilde{U}_{\alpha} = \delta^{-1}_{\alpha}(\phi_{\alpha}(U_{\alpha}))$. The sets $\tilde{U}_{\alpha}$ come with charts $\tilde{\phi}_{\alpha}: \tilde{U}_{\alpha} \to \tilde{V}_{\alpha}$, and maps $\tilde{\Phi}_{\alpha \beta} : \phi_{\beta}(\tilde{U}_{\alpha} \cap \tilde{U}_{\beta}) \to \phi_{\alpha}(\tilde{U}_{\alpha} \cap \tilde{U}_{\beta})$.
 The maps $\tilde{\Phi}_{\alpha \beta}$ are restrictions of integer affine functions $\T^{N_{\beta}+1} \to \T^{N_{ \alpha}+1}$ given by

$$(x_1, \dots, x_{N_{\beta}+1}) \mapsto ((\Phi_{\alpha \beta })_1(x_1, \dots, x_{N_{\beta}}), \dots, ( \Phi_{\alpha \beta})_{N_{\beta}}(x_1, \dots , x_{N_{\beta}}), ``x_{N_{\beta}+1} \frac{g_{\alpha}}{g_{\beta}}").$$

The topological space  $\tilde{X}$ is defined by quotienting  the disjoint union $\sqcup_{\alpha} \tilde{U}_{\alpha}$ by the relation $x \sim y$ if $\tilde{\Phi}_{\alpha \beta}  \circ \tilde{\phi}_{\alpha} (x) = \tilde{\phi}_{\beta}(y)$. The above relation is indeed an equivalence relation since the functions $\tilde{\Phi}_{\alpha \beta}$ satisfy:
$$\tilde{\Phi}_{\alpha \beta} \circ \tilde{\Phi}_{\beta \alpha} = Id \quad \text{and} \quad \tilde{\Phi}_{\gamma \alpha} \circ \tilde{\Phi}_{\alpha \beta} = \tilde{\Phi}_{\gamma \beta}.$$
The above relations hold since they hold for $\Phi_{\alpha \beta}$ and the functions $g_{\alpha}$ form a tropical Cartier divisor.
The quotient  $\tilde{X}$ is Hausdorff  and  is equipped with  charts to fan tropical linear spaces. Moreover it satisfies the finite type condition since so does $X$, thus $\tilde{X}$ is a tropical manifold.
\end{proof}

\begin{definition}
Given a tropical modification of manifolds $\delta: \tilde{X} \to X$ there is the pushforward map  $\delta_{\ast}:Z_{\ast}(\tilde{X}) \to Z_{\ast}(X)$ and pullback map $\delta_{\ast}:Z_{\ast}(X) \to Z_{\ast}(\tilde{X})$ on the cycle groups. 
\end{definition}

The definition of these maps in a local chart are given in  \cite[Definition 2.15]{ShawTh}. 
To summarise, the pushforward  of a cycle $\delta_{\ast}A$ is supported on the image $\delta(A)$. The weights of a facet $F$ of $\delta_{\ast}A$ are defined by:
$$w_{\delta_{\ast}A}(F) = \sum_{F_i \subset A,  \delta(F_i) = F} w_A(F_i)[\bar{\Lambda}_{F_i} : \Lambda_{F}],$$ where $\bar{\Lambda}_{F_i}$ is the image under $\delta$ of the integer lattice parallel to  $F_i$ and  $\Lambda_{F}$ is the integer lattice parallel to $F$. 
If the modification $\delta:\tilde{X} \to X$ is along the Cartier divisor $g = \{g_{\alpha}\}$ then the pullback of a cycle $A$  is given by restricting the modification along $g$ to $A$.  
 As in the local case, $\delta_{\ast}\delta^{\ast}$ is the identity map on $Z_{\ast}(X)$. 

\begin{definition}
Let $\delta: \tilde{X} \to X$ be a modification of tropical manifolds, for a cycle $A \subset X$ call $\tilde{A} \subset \tilde{X}$ a lift  of $A$  to $\tilde{X}$ if $\delta_{\ast}\tilde{A} = A$. 
\end{definition}

\begin{thm}\label{ChowMod}
Given a tropical modification $\delta: \tilde{X} \to X$ of tropical manifolds the pushforward and pullback maps induce isomorphisms of the Chow groups. 
In other words, 
$$\delta_{\ast}: {\rm CH}_{k}(\tilde{X}) \to {\rm CH}_{k}(X) \qquad \text{and} \qquad \delta^{\ast}: {\rm CH}_{k}(X) \to {\rm CH}_{k}(\tilde{X}),$$ 
are isomorphisms.  
\end{thm}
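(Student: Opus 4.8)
The plan is to show that $\delta_{\ast}$ and $\delta^{\ast}$ descend to mutually inverse homomorphisms on Chow groups. First I would check that both maps preserve rational equivalence, so that they are defined on the quotients ${\rm CH}_k$. The key point is that $\delta \times \mathrm{id}_{\TP^1}\colon \tilde{X}\times\TP^1 \to X\times\TP^1$ is again a tropical modification, now along $D\times\TP^1$, and that both $\delta_{\ast}$ and $\delta^{\ast}$ commute with the projection $\pi$ to the first factor and with intersection against the fibres $X\times\{\pm\infty\}$ (a projection-formula computation, using that $X\times\{\pm\infty\}$ pulls back to $\tilde{X}\times\{\pm\infty\}$). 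Thus a family $Z\subset X\times\TP^1$ realizing $A\sim A'$ pulls back to a family realizing $\delta^{\ast}A\sim\delta^{\ast}A'$, and pushing a family forward shows $\delta_{\ast}$ respects rational equivalence.

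Granting this, the cycle-level identity $\delta_{\ast}\delta^{\ast}=\mathrm{id}_{Z_{\ast}(X)}$ recorded above descends to $\delta_{\ast}\delta^{\ast}=\mathrm{id}$ on ${\rm CH}_k(X)$; hence $\delta_{\ast}$ is surjective and $\delta^{\ast}$ injective on Chow groups. It then remains to prove $\delta^{\ast}\delta_{\ast}=\mathrm{id}$ on ${\rm CH}_k(\tilde{X})$. Given a cycle $\tilde{A}$, set $\tilde{B}=\tilde{A}-\delta^{\ast}\delta_{\ast}\tilde{A}$; then $\delta_{\ast}\tilde{B}=0$, and since $\delta$ restricts to an isomorphism $\tilde{X}\setminus\tilde{D}\to X\setminus D$, where $\tilde{D}=\delta^{-1}(D)$ is the union of the facets $F_E$ added in the modification, the cycle $\tilde{B}$ is supported on $\tilde{D}$. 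So the whole theorem reduces to the claim that every cycle $\tilde{B}$ supported on $\tilde{D}$ with $\delta_{\ast}\tilde{B}=0$ is rationally equivalent to zero.

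To prove this claim I would exploit that $\tilde{D}$ is swept out, over each point of $D$, by a ray in the modification direction $-e_{N+1}$ ending in the boundary stratum $\{x_{N+1}=-\infty\}$, and that $\tilde{D}$ is invariant under the downward flow $s\mapsto \,\cdot\,-s\,e_{N+1}$. Translating $\tilde{B}$ along this flow and recording its trace produces a cycle $Z\subset\tilde{X}\times\TP^1$; the chartwise flows glue because the transition maps of $\tilde{X}$ multiply the last coordinate by $``g_{\alpha}/g_{\beta}"$ (Proposition \ref{prop:modEx}), which is compatible with translation in $e_{N+1}$. The slice over one end of $\TP^1$ is $\tilde{B}$ itself, while the slice over the other end is the limit of $\tilde{B}$ as $s\to\infty$, a cycle lying in the boundary stratum $\{x_{N+1}=-\infty\}\cong D$. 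Because sliding to $-\infty$ collapses the fibre direction exactly as $\delta$ does, this limit cycle is precisely $\delta_{\ast}\tilde{B}=0$, and therefore $\tilde{B}\sim 0$.

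The main obstacle is this last step: producing $Z$ as a genuine balanced tropical cycle in $\tilde{X}\times\TP^1$, gluing the chartwise flows consistently through the Cartier data, and checking carefully that the $s\to\infty$ end contributes exactly $\delta_{\ast}\tilde{B}$ in the boundary stratum. The sedentarity bookkeeping — the flow carries sedentarity $\emptyset$ cycles into higher-sedentarity strata, and one must track weights and lattice indices through this degeneration — is the technical heart of the argument, whereas the descent and the identity $\delta_{\ast}\delta^{\ast}=\mathrm{id}$ are formal.
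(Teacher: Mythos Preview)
Your proposal is correct and follows essentially the same route as the paper: both extend $\delta$ to $\tilde{X}\times\TP^1\to X\times\TP^1$ to get descent to Chow groups, use the cycle-level identity $\delta_{\ast}\delta^{\ast}=\mathrm{id}$, reduce to showing that $\Delta_A=A-\delta^{\ast}\delta_{\ast}A$ (supported on the contracted facets) is rationally equivalent to $0$, and then build the witnessing family by flowing $\Delta_A$ in the $-e_{N_{\alpha}+1}$ direction so that the $t\to\infty$ slice lands in the boundary stratum and computes $\delta_{\ast}\Delta_A=0$. The paper makes the family explicit --- product $\Delta_A\times[-\infty,0]$ over one half of $\TP^1$, the translates $B_t$ over the other half, and weight $-1$ ``balancing'' facets at $t=0$ reaching down to the boundary --- which is exactly the construction you should carry out to resolve the balancing and sedentarity bookkeeping you flagged as the technical heart.
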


\begin{proof}
The modification $\delta: \tilde{X } \to X$ can be  extended to a modification on the products $\delta: \tilde{X} \times \TP^1 \to X \times \TP^1$. For a family $B \subset X \times \TP^1$ we may pullback the cycle $\delta^{\ast}B$ to obtain a family in $\tilde{X} \times \TP^1$. Similarly, the pushforward map can be applied to any family $\tilde{B} \subset  \tilde{X} \times \TP^1$. Therefore, the pushforward and pullback, $\delta_{\ast}$ and $\delta^{\ast}$ descend to homomorphisms on the Chow groups. 

The composition $\delta_{\ast}\delta^{\ast}$ is already the  identity  map on the cycle level.  To establish the isomorphism we must show that for a cycle $A \in Z_k(\tilde{X})$, the cycle $\delta^{\ast}\delta_{\ast}A$ is rationally equivalent to $A$. In other words,  any two \emph{lifts} of a cycle in $X$ to $\tilde{X}$ are rationally equivalent. 
Notice that the difference $ \Delta_{A} = A - \delta^{\ast}\delta_{\ast}A$ is contained in the closure of the faces contracted by the modification $\delta$, so this cycle can be ``translated" in $\tilde{X}$. More precisely, for $t \geq 0 $,  identify $\Delta_A$ with its image in a chart $\phi_{\alpha}: U_{\alpha} \to V_{\alpha} \subset \T^{N_{\alpha}+1}$  and define for $t \geq 0$ a cycle in $\tilde{X}$ given by, 
$$B_t \cap \tilde{V}_{\alpha}  = \{ x  -  t e_{N_{\alpha}+1} \ | \ x \in  \Delta_A \cap \tilde{V}_{\alpha} \}.$$ 

To construct a family  $B$, first over  $[-\infty, \infty]$ take the product $ B_0 \times [-\infty, 0]$, and let the fiber over   $ t \in [0, \infty]$ be $B_t$ from above. Let $\tilde{B}$ be the union of all of these fibers. 
In other words $\tilde{B}$ is the image of a map $\Delta_A \times \TP^1 \to X \times \TP^1$ which is piecewise linear in each chart and the locus of non-linearity is exactly at $\Delta_A \times \{0\} \subset \Delta_A \times \TP^1$. 
Therefore, $\tilde{B}$ violates the balancing condition exactly at its codimension $1$ faces which are facets of $\Delta_A \times \{0\}$.  
For every top dimensional face of $\Delta_A$, add to $\tilde{B}$ the set, 
$$\{ ( x + te_{N_{\alpha}+1} , 0) \ | \ x \in \Delta_A \cap \tilde{V}_{\alpha}
 \text{ and }  t \in [-\infty, 0] \} \subset X \times \{0\}$$  in each chart. Let $B$ denote this union. Equip all facets $B \backslash \tilde{B}$ with weight $-1$ 
then $B$ is  balanced in every chart and is a tropical cycle in $X \times \TP^1$. 
 
By construction,  $B(-\infty) = A  - \delta^{\ast}\delta_{\ast}A$ and since $\delta_{\ast} (A - \delta^{\ast}\delta_{\ast}A) = 0$ we have $B(\infty) = 0$. Therefore, $A$ and $\delta^{\ast}\delta_{\ast}A$ are rationally equivalent and the Chow groups of $\tilde{X}$ and $X$ are isomorphic. 
\end{proof}

\begin{corollary}\label{cor:IntModNum}
Let $\delta:\tilde{X} \to X$ be a tropical modification of  tropical surfaces. Suppose $C_1, C_2$ are  $1$-cycles in $X$, then numerically we have, 
$$ C_1 \cdot C_2 = \delta^{\ast}C_1 \cdot  \delta^{\ast}C_2. $$
Furthermore, if $\tilde{C}_1, \tilde{C}_2$ are $1$-cycles in $\tilde{X}$ then numerically we have, 
$$\tilde{C}_1 \cdot \tilde{C}_2 = \delta_{\ast}\tilde{C}_1 \cdot \delta_{\ast}\tilde{C}_2.$$
\end{corollary}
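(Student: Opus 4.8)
The plan is to reduce both identities to a single projection formula for the modification $\delta$, exploiting two facts already in hand. First, $\delta_\ast$ preserves the total degree of a $0$-cycle; this is immediate from the pushforward weight formula, since the lattice index attached to a $0$-dimensional face is $1$, and pushing a $0$-cycle forward merely relabels its points without altering the sum of weights. Second, $\delta^\ast\delta_\ast\tilde C$ is rationally equivalent to $\tilde C$ for every cycle $\tilde C \subset \tilde X$, which was established in the course of proving Theorem \ref{ChowMod}. Combined with Corollary \ref{cor:equivInt}, which says rational equivalence does not change intersection numbers, the whole statement will follow once we know the projection formula
$$\delta_\ast\big(\delta^\ast C \cdot \tilde B\big) = C \cdot \delta_\ast \tilde B$$
for a $1$-cycle $C \subset X$ and a $1$-cycle $\tilde B \subset \tilde X$.

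Granting this formula, the first identity is the special case $\tilde B = \delta^\ast C_2$: taking total degrees and using $\delta_\ast\delta^\ast = \mathrm{id}$ (established in the text after the definition of the pullback) gives
$$\delta^\ast C_1 \cdot \delta^\ast C_2 = \delta_\ast\big(\delta^\ast C_1 \cdot \delta^\ast C_2\big) = C_1 \cdot \delta_\ast\delta^\ast C_2 = C_1 \cdot C_2.$$
For the second identity I would first replace $\tilde C_1$ by the rationally equivalent cycle $\delta^\ast\delta_\ast \tilde C_1$ via Corollary \ref{cor:equivInt}, then apply the projection formula with $C = \delta_\ast\tilde C_1$ and $\tilde B = \tilde C_2$, taking degrees at the middle step:
$$\tilde C_1 \cdot \tilde C_2 = \delta^\ast\delta_\ast\tilde C_1 \cdot \tilde C_2 = \delta_\ast\big(\delta^\ast(\delta_\ast \tilde C_1)\cdot \tilde C_2\big) = \delta_\ast\tilde C_1 \cdot \delta_\ast\tilde C_2.$$
Thus both equations are formal consequences of the projection formula together with the two auxiliary facts.

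The main obstacle is therefore the projection formula itself, which is a purely local statement that I would verify in a single chart, where $\delta$ is a degree $1$ modification given by the projection $\T^{N+1}\to\T^N$ with kernel $e_{N+1}$. The delicate point is that $\delta^\ast C$ consists of the strict transform of $C$ together with ``vertical'' facets lying over $C\cap D$ in the contracted direction, and one must check that these exceptional facets create no spurious contribution to $\delta^\ast C \cdot \tilde B$ beyond what is recovered from $C \cdot \delta_\ast \tilde B$ after pushing forward. I would carry this out by perturbing to transverse representatives: by Lemma \ref{lem:homol} and Proposition \ref{prop:11equivInt} the local intersection numbers can be computed from transversally intersecting $(1,1)$-cycles whose supports may be taken parallel to the rays away from the vertices, so that $\delta_\ast$ matches the intersection points lying over $X\setminus D$ bijectively, with multiplicities preserved by the degree-$1$ (matroidal extension) structure of the modification recorded in \cite{ShawInt}. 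The same local computation handles the cases in which $C$ or $\tilde B$ has boundary components, using the boundary intersection products of Section \ref{sec:int1cyc} and the self-intersection of Section \ref{sec:selfbdy}; alternatively, the projection formula can be invoked directly from the tropical intersection theory of \cite{AlRa1}.
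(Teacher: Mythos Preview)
Your argument is correct and shares the same two ingredients as the paper's proof---the rational equivalence $\tilde C \sim \delta^\ast\delta_\ast\tilde C$ from Theorem~\ref{ChowMod}, and invariance of intersection numbers under rational equivalence from Corollary~\ref{cor:equivInt}---but you route everything through a projection formula, whereas the paper is more direct.

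For the first identity the paper does not prove a projection formula at all: it simply cites that the local intersection product in matroidal fans (Definition~3.6 of \cite{ShawInt}) is, by its very construction, invariant under degree~$1$ modifications, so $C_1\cdot C_2 = \delta^\ast C_1\cdot\delta^\ast C_2$ holds chartwise with nothing to check; the boundary cases are handled by the explicit formulas in Section~\ref{sec:int1cyc}. For the second identity the paper replaces \emph{both} $\tilde C_i$ by the rationally equivalent $\delta^\ast\delta_\ast\tilde C_i$ and then applies the first identity directly, giving $\tilde C_1\cdot\tilde C_2 = \delta^\ast(\delta_\ast\tilde C_1)\cdot\delta^\ast(\delta_\ast\tilde C_2) = \delta_\ast\tilde C_1\cdot\delta_\ast\tilde C_2$ in one line. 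Your version replaces only one factor and uses the projection formula for the other, which is equally valid but leaves the projection formula itself as the only substantive step to justify. The paper's approach avoids that step entirely; your approach buys a cleaner cycle-level statement but at the cost of the local verification you sketch in your final paragraph.
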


\begin{proof}
For cycles of sedentarity $0$, the first statement follows immediately from Definition 3.6 of \cite{ShawInt} of the local intersection products in matroidal fans. If one or both of $C_1$, $C_2$ are boundary curves, the statement follows also from the definitions of intersections given in \ref{sec:int1cyc}. 
Now given  lifts $\tilde{C}_1$, $\tilde{C}_2$ of  cycles $C_1$, $C_2$ respectively, then  $\delta^{\ast}C_i$ and $\tilde{C}_i$  are rationally equivalent by the proof of Theorem  \ref{ChowMod}.  Corollary \ref{cor:equivInt}  combined with the previous statement gives
 $\tilde{C}_1\cdot  \tilde{C_2} = \delta^{\ast}C_1 \cdot \delta^{\ast}C_2 = C_1\cdot C_2$. 
\end{proof}

\begin{corollary}\label{cor:canMod}
Let $\delta: \tilde{X} \to X$ be a modification of tropical manifolds, then $\delta_{\ast}K_{\tilde{X}} = K_{X}$, moreover, $K_{\tilde{X}}$ is rationally equivalent to  $\delta^{\ast}K_{X}$. If $\tilde{X}$ and $X$ are surfaces we have equality of intersection numbers  $K_{\tilde{X}}^2 = K_X^2$.
\end{corollary}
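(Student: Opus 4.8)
The plan is to prove the three assertions in sequence, deducing the last two formally from the first together with Theorem \ref{ChowMod} and Corollaries \ref{cor:IntModNum} and \ref{cor:equivInt}. The genuine work is the cycle-level identity $\delta_\ast K_{\tilde X} = K_X$, which I would establish by a local computation inside a single degree $1$ modification chart.

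First, to prove $\delta_\ast K_{\tilde X}=K_X$. Since both the pushforward (through its facet-weight formula) and the canonical cycle (through the valence rule $w_E = \mathrm{val}(E)-2$) are computed chart by chart, I would reduce to the local model $\delta\colon \tilde V \subset \T^{N+1}\to V\subset \T^N$ of a degree $1$ modification along $D = \mathrm{div}_V(g)$, and then classify the codimension $1$ faces of $\tilde V$ by their image, writing $K_{\tilde X} = K^o_{\tilde X} - \partial \tilde X$ as in Section \ref{sec:Cherncycles}. The sedentarity $0$ faces of $\tilde V$ split into: (a) the lifts of codimension $1$ faces $E$ of $V$ sitting in the graph $\Gamma_g(V)$, and (b) the ``vertical'' faces lying over lower dimensional strata of $D$, which $\delta$ contracts and which therefore contribute nothing to the pushforward. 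For a lift of type (a), if $E\not\subset D$ the modification is a local isomorphism, so the valence and hence the canonical weight are unchanged; if $E\subset D$ then exactly one vertical facet of weight $1$ is attached along the lift to restore balancing (here I use that $D$ is locally degree $1$, so all its facets carry weight $1$), so the valence and hence the $K^o$-weight increase by exactly $1$. Consequently $\delta_\ast K^o_{\tilde X} = K^o_X + D$.

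The key point, and the step I expect to be the main obstacle, is that the modification simultaneously produces a new boundary divisor. Along the new coordinate the exceptional locus $\delta^{-1}(D)$ runs off to $x_{N+1}=-\infty$, creating an irreducible boundary divisor $\tilde D^{\infty}=\delta^{-1}(D)\cap\partial\tilde X$ isomorphic to $D$; being a boundary divisor, it enters $K_{\tilde X}$ with weight $-1$ through the term $-\partial\tilde X$. Crucially $\tilde D^{\infty}$ is \emph{not} contracted: it maps with degree $1$ onto $D$, so $\delta_\ast \tilde D^{\infty} = D$, whereas the lifts of the old boundary satisfy $\delta_\ast \partial_{\mathrm{old}} = \partial X$. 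Combining the pieces,
$$\delta_\ast K_{\tilde X} = \delta_\ast K^o_{\tilde X} - \delta_\ast \partial \tilde X = (K^o_X + D) - (\partial X + D) = K^o_X - \partial X = K_X,$$
so the extra $+D$ coming from the raised valences over $D$ is cancelled exactly by the $-D$ coming from the new boundary divisor. The care needed to pin down $\tilde D^{\infty}$ and to verify both the valence increment and this cancellation, using Construction \ref{cons:matfan} and the matroidal-extension description of degree $1$ modifications, is the heart of the argument.

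Granting this identity, the remaining statements are immediate. For rational equivalence, the proof of Theorem \ref{ChowMod} shows that any cycle is rationally equivalent to each of its lifts, i.e. $\delta^\ast \delta_\ast A \sim A$ for all $A\in Z_\ast(\tilde X)$; taking $A = K_{\tilde X}$ and inserting $\delta_\ast K_{\tilde X}=K_X$ yields $K_{\tilde X}\sim \delta^\ast K_X$. Finally, for surfaces, $K_{\tilde X}\sim \delta^\ast K_X$ together with Corollary \ref{cor:equivInt} gives $K_{\tilde X}^2 = \delta^\ast K_X\cdot \delta^\ast K_X$ as intersection numbers, and Corollary \ref{cor:IntModNum} identifies the right-hand side with $K_X^2$. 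Hence $K_{\tilde X}^2 = K_X^2$.
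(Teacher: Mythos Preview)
Your proof is correct and follows the approach the paper implicitly intends. The paper states this result as a corollary with no proof, taking for granted that $\delta_\ast K_{\tilde X}=K_X$ is clear from the local description of a degree $1$ modification and that the remaining claims follow formally from Theorem \ref{ChowMod} and Corollary \ref{cor:IntModNum}. You have simply supplied the omitted local computation: the valence bump of $+1$ over facets of $D$ giving $\delta_\ast K^o_{\tilde X}=K^o_X+D$, the new boundary divisor $\tilde D^\infty$ pushing forward to $D$, and the resulting cancellation. Your deduction of the second and third statements via $\delta^\ast\delta_\ast K_{\tilde X}\sim K_{\tilde X}$ and Corollaries \ref{cor:equivInt}, \ref{cor:IntModNum} is exactly what the placement of the corollary suggests.
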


\begin{lemma}\label{cor:chern2mod}
Let $\delta: \tilde{X} \to X$ be a modification of tropical surfaces then, 
 $\delta_{\ast}c_2(\tilde{X}) = c_2(X)$,  in particular $\deg({c_2(X)}) = \deg(c_2(\tilde{X}))$. 
\end{lemma}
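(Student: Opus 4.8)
The plan is to prove the stronger cycle-level identity $\delta_{\ast}c_2(\tilde X)=c_2(X)$ directly and pointwise, after which the equality of degrees is immediate. First I would exploit that $\delta$ is an isomorphism over $X\setminus D$, where $D$ is the modification locus: away from the preimage of $D$ the two surfaces agree chart-by-chart, so the local matroids at corresponding $0$-skeleton points coincide and hence $m_{\tilde x}(c_2(\tilde X))=m_{\delta(\tilde x)}(c_2(X))$ there. Since $c_2$ is a $0$-cycle, and the pushforward of a $0$-cycle weights each image point by a lattice index that is trivially $1$, the cycles $\delta_{\ast}c_2(\tilde X)$ and $c_2(X)$ automatically agree over $X\setminus D$. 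The entire content is therefore the \emph{local} statement that for every $x\in D$,
$$\sum_{\tilde x\in\delta^{-1}(x)\cap\tilde X^{(0)}} m_{\tilde x}(c_2(\tilde X))=m_x(c_2(X)).$$

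Next I would reduce this to a matroidal computation. By definition $m_x(c_2(X))=\overline{\chi}_{M_x}(1)$, and by \cite[Proposition 2.25]{ShawInt} (recalled before Definition \ref{def:modT}) the local model of a degree $1$ modification is a single-element matroidal extension $M_x\rightsquigarrow\tilde M$, the new element $e=N_x+1$ being the contracted coordinate. The tool I would use is deletion--contraction for the characteristic polynomial,
$$\chi_{\tilde M}(t)=\chi_{\tilde M\setminus e}(t)-\chi_{\tilde M/e}(t)=\chi_{M_x}(t)-\chi_{\tilde M/e}(t),$$
which upon dividing by $(1-t)$ and evaluating at $t=1$ expresses the multiplicity of the lifted vertex $\tilde x$ as $\overline{\chi}_{M_x}(1)-\overline{\chi}_{\tilde M/e}(1)=m_x(c_2(X))-\overline{\chi}_{\tilde M/e}(1)$. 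Here $\tilde M/e$ is a rank $2$ matroid whose reduced characteristic value at $1$ is read off from the divisor matroid $N_x$ of $D$ in the chart. Thus the lifted vertex alone carries a \emph{defect} $-\overline{\chi}_{\tilde M/e}(1)$ relative to $m_x(c_2(X))$, and the identity reduces to showing this defect is absorbed by the remaining $0$-skeleton points of the fiber.

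The main obstacle is exactly this bookkeeping: I must show that the local defect $-\overline{\chi}_{\tilde M/e}(1)$ is cancelled by the multiplicities at the other $0$-skeleton points of $\tilde X$ created over $D$ (including those of positive sedentarity order, governed by parts $(1)$ and $(2)$ of Lemma \ref{lem:Chern2mult}), so that after pushforward the fiber sum telescopes back to $m_x(c_2(X))$. I would enumerate these new cells from Construction \ref{cons:matfan} applied to $\tilde M$ and verify the cancellation using the lattice of flats of $\tilde M$ and the covering axiom, in direct parallel with the telescoping argument in the proof of Proposition \ref{prop:canonicalBal}. Finally, summing over all $x\in D$ and combining with the trivial agreement off $D$ gives $\delta_{\ast}c_2(\tilde X)=c_2(X)$, and taking total degree yields $\deg(c_2(X))=\deg(c_2(\tilde X))$. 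As both a guide for the cancellation and an independent derivation of the degree statement, I would cross-check against the fact that modifications preserve tropical $(p,q)$-homology — hence the topological Euler characteristic — together with Corollary \ref{cor:canMod}; reconciling this global invariance with the local redistribution of multiplicities is where I expect the computation to demand the most care.
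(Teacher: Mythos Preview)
Your deletion--contraction approach is correct and is genuinely different from the paper's. The paper's proof is a two-line direct check: it first observes that over any $x\in X$ there are \emph{at most two} points of $\tilde X^{(0)}$, and then uses the explicit combinatorial formulas of Lemma~\ref{lem:Chern2mult} (namely $m_x=2-N_x+|\text{Face}(x)|-|\text{Edge}(x)|$ at sedentarity $0$, $m_x=2-\text{val}(x)$ at sedentarity $1$, and $m_x=1$ at sedentarity $2$) to verify $m_{x_1}+m_{x_2}=m_x$ by hand. Your route via $\overline\chi_{\tilde M}(1)=\overline\chi_{M_x}(1)-\overline\chi_{\tilde M/e}(1)$ is more conceptual and generalizes beyond dimension $2$, whereas the paper's bare-hands count avoids any matroid machinery but is tied to the surface case.

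Where you overcomplicate is the ``bookkeeping'' step. There is no need to enumerate cells from Construction~\ref{cons:matfan} or run a telescoping over the lattice of flats: the only other $0$-skeleton point in $\delta^{-1}(x)$ besides $\tilde x$ is the single boundary point $x'$ on the new divisor $\tilde D$ at $x_{N_x+1}=-\infty$. By the very definition of $c_2$ at a point of higher sedentarity, $m_{x'}(c_2(\tilde X))=\overline\chi_{M_{x'}}(1)$ where $M_{x'}$ is the matroid of $\tilde V_x\cap\R^{N_x+1}_{\{N_x+1\}}$; this intersection is precisely the local model of $D$, whose matroid is $\tilde M/e$. So your ``defect'' is cancelled on the nose by $m_{x'}$, and the identity closes immediately. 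Finally, drop the cross-check via Noether's formula: Theorem~\ref{prop:Noether} is \emph{proved using} this lemma, so invoking it here would be circular.
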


\begin{proof}
 Notice that for a point $x \in X$ there are at most $2$ points $x_1, x_2 \in \tilde{X}^{(0)}$ such that $\delta(x_i) = x$.  Using Lemma \ref{lem:Chern2mult}, it can  be directly checked that the multiplicity in $c_2(X)$ of a point $x \in X$ is equal to the sum of  the multiplicities 
$m_{x_1}(c_2(\tilde{X}))+ m_{x_2}(c_2(\tilde{X}))$. This proves the lemma. 
\end{proof}

Tropical modifications can be used to  show that any locally degree $1$ curve $C$ in a compact tropical surface satisfies the adjunction formula.

\begin{thm}[Tropical Adjunction Formula]\label{thm:adjunction}
An locally degree $1$ tropical curve $C$ in a compact tropical surface $X$ satisfies
\begin{equation}\label{matAdj}
b_1(C) = \frac{K_X \cdot C + C^2}{2} +1,
\end{equation}
where $b_1(C)$ is the $1^{st}$ Betti number of $C$. 
\end{thm}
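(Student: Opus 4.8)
The plan is to prove the tropical analogue of the classical identity $(K_X + C)\cdot C = \deg K_C = 2b_1(C) - 2$ by pushing $C$ onto the boundary of a modification, where the computation becomes a combinatorial count on the $1$-skeleton. Since $C$ is locally degree $1$, Proposition \ref{prop:modEx} produces a tropical modification $\delta \colon \tilde X \to X$ along $C$. Geometrically the added downward facets sweep out a wall over $C$, whose base lies at sedentarity $1$ and forms a new irreducible boundary curve $\bar C \subset \partial \tilde X$. As a graph $\bar C$ is combinatorially the same curve as $C$, so $b_1(\bar C) = b_1(C)$; moreover $\delta_\ast \bar C = C$, so $\bar C$ is a lift of $C$.

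First I would reduce the statement to the boundary curve $\bar C$. By Corollary \ref{cor:IntModNum} we have $K_X\cdot C = \delta^\ast K_X\cdot \delta^\ast C$ and $C^2 = \delta^\ast C\cdot \delta^\ast C$ numerically. Corollary \ref{cor:canMod} gives $K_{\tilde X}\sim \delta^\ast K_X$, and since $\delta^\ast C = \delta^\ast\delta_\ast\bar C$, the argument in the proof of Theorem \ref{ChowMod} shows $\bar C \sim \delta^\ast C$. Feeding these rational equivalences into Corollary \ref{cor:equivInt} turns $\delta^\ast K_X\cdot\delta^\ast C$ into $K_{\tilde X}\cdot\bar C$ and $\delta^\ast C\cdot\delta^\ast C$ into $\bar C^2$, so that $K_X\cdot C + C^2 = K_{\tilde X}\cdot \bar C + \bar C^2$. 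It therefore suffices to establish the adjunction formula for the boundary curve $\bar C$.

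For the boundary curve I would exploit the splitting $K_{\tilde X} = K^o_{\tilde X} - \partial\tilde X$ recorded after the definition of the canonical cycle. Writing $\partial \tilde X = \bar C + R$, distributivity gives $K_{\tilde X}\cdot\bar C = K^o_{\tilde X}\cdot\bar C - \bar C^2 - R\cdot\bar C$, where $\bar C^2$ is the normal-bundle self-intersection of Definition \ref{def:boundaryintersection}. The self-intersection term cancels, leaving the clean identity $K_{\tilde X}\cdot \bar C + \bar C^2 = K^o_{\tilde X}\cdot\bar C - R\cdot\bar C$, so in fact no normal bundle needs to be computed. It remains to prove $K^o_{\tilde X}\cdot\bar C - R\cdot\bar C = 2b_1(\bar C) - 2$, which I would do locally at the vertices of $\bar C$, matching each against the intrinsic contribution $\mathrm{val}_{\bar C}(v) - 2$ to $\deg K_{\bar C} = \sum_v(\mathrm{val}_{\bar C}(v)-2) = 2b_1(\bar C) - 2$. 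At a corner point (sedentarity $2$) the crossing with another boundary curve contributes $-1$ through $-R\cdot\bar C$, while at an interior vertex (sedentarity $1$) the value of $K^o_{\tilde X}\cdot\bar C$ is read off from the weights $\mathrm{val}_{\tilde X}(E)-2$ of the interior skeleton edges abutting $\bar C$ via Definition \ref{2bddy}.

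The main obstacle is exactly this last local computation: showing that at each sedentarity-$1$ vertex the weighted count of interior skeleton edges meeting $\bar C$ equals $\mathrm{val}_{\bar C}(v)-2$, and that the corners account precisely for the remaining defect. This requires unwinding the matroidal fan structure of $\tilde X$ transverse to its boundary, relating the valences of the rays of the local fan plane $P_v$ (equivalently the flats through the point of the arrangement corresponding to $v$) to the intrinsic valence of $\bar C$ at $v$, in the same spirit as the flat-counting in the proof of Proposition \ref{prop:canonicalBal}. A secondary point to address is that the stated formula, with its $+1$, presumes $C$ connected; for reducible $C$ one runs the argument componentwise and bookkeeps the pairwise intersections $C_i\cdot C_j$.
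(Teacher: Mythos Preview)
Your approach is essentially the same as the paper's: reduce via a modification to the case of a boundary curve, split $K_{\tilde X}=K_{\tilde X}^o-\partial\tilde X$, observe that the $\bar C^2$ term cancels, and compute $K_{\tilde X}^o\cdot\bar C-R\cdot\bar C$ by a local count at the vertices of $\bar C$. The only difference is the order (the paper does the boundary case first, then reduces), but the content is identical.

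One remark on your ``main obstacle'': it is actually easier than you suggest, and no matroidal flat-counting in the style of Proposition~\ref{prop:canonicalBal} is needed. For the boundary curve $\bar C$ produced by the modification along $C$, a neighbourhood of a sedentarity-$1$ vertex $v\in\bar C$ in $\tilde X$ is exactly the wall $C\times\T$, so the local model is $(\text{fan tropical line with }\mathrm{val}_{\bar C}(v)\text{ rays})\times\T$. Hence there is a \emph{single} edge of the interior $1$-skeleton of $\tilde X$ adjacent to $v$ (the vertical ray), and it is bounded by $\mathrm{val}_{\bar C}(v)$ facets, so carries weight $\mathrm{val}_{\bar C}(v)-2$ in $K_{\tilde X}^o$. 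Definition~\ref{2bddy}(1) then gives $(K_{\tilde X}^o\cdot\bar C)_v=\mathrm{val}_{\bar C}(v)-2$ directly. At the leaves the local model is $\T^2$ (simple normal crossing of $\bar C$ with a lift of a boundary divisor of $X$), so $K_{\tilde X}^o$ contributes $0$ there and $-R\cdot\bar C$ contributes $-1$, matching $\mathrm{val}_{\bar C}(\text{leaf})-2=-1$ as you say. Summing gives $\deg K_{\bar C}=2b_1(\bar C)-2$. The paper packages this same count slightly differently, via an auxiliary graph $G$ obtained from $C$ by attaching one extra edge at each leaf for every other boundary divisor passing through it, but the arithmetic is the same.
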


\begin{proof}
Suppose $C$ is an irreducible boundary curve of $X$.  Then we may write: $$K_X = K_X^o- C - \sum_{i = 1}^d D_i$$ where the irreducible  boundary curves of $X$ are $C, D_1 , \dots , D_d$. Then (\ref{matAdj}) becomes $$b_1(C)=  \frac{K_X^o \cdot  C - \sum_{i = 1}^d D_i  \cdot  C}{2} +1.$$ This equality is verified by a simple Euler characteristic computation as follows. 
For every leaf $l$ ($1$-valent vertex) of the curve $C$, there is a collection of  boundary curves $D_i$ which meet $C$ at this point. We construct a new graph $G$ from $C$ by adding to each leaf $l$ of $C$ an edge for each boundary curve which meets $C$ at $l$.  Let $V(G)$ and $E(G)$ denote the  edge and vertex sets of the graph $G$, and $\text{val}_G(x)$ denote the valency  of $x \in V(G)$ in $G$.  Definitions \ref{intbddy} and  \ref{2bddy} give, 
\begin{equation}\label{C.K^o}
K_X^o \cdot C = \sum_{x \in V(C)} \text{val}_G(x) - 2 \quad \text{ and }  \quad \sum_{i = 1}^d D_i  \cdot  C =  |L(G)|. 
\end{equation}
 So that, 
  $$K_X^0  \cdot  C - \sum_{i = 1}^d D_i  \cdot  C = \sum_{x \in v(G)} \text{val}_G(x) -2 = 2b_1(G) -2, $$
  and $b_1(C) = b_1(G)$. This proves the statement when $C$ is an irreducible boundary curve. 

When $C$ is not a boundary curve there is a tropical modification $\delta: \tilde{X} \to X$ along $C$ by Proposition \ref{prop:modEx}. The tropical  surface $\tilde{X}$ has a boundary curve  $\tilde{C}$ such that $\delta_{\ast}(\tilde{C}) = C$, and moreover $b_1(C) = b_1(\tilde{C})$.  Since $\tilde{C} \subset \tilde{X}$ is a boundary curve it satisfies the adjunction formula by the above argument. 
 Corollaries \ref{cor:IntModNum} and  \ref{cor:canMod} give $\tilde{C}^2 = C^2$  and  $K_{\tilde{X}} \cdot \tilde{C}  =   K_X \cdot C $. 
Thus $C \subset X$ satisfies the tropical adjunction formula  as well. 
\end{proof}

\begin{rem}\label{rem:adjunction}
The  tropical adjunction formula from Theorem \ref{thm:adjunction} does not hold for tropical curves in general.  In fact, the right hand side of the above formula is neither an upper nor lower bound for the $1^{st}$ Betti number of a tropical curve in a surface. 
Already for fan tropical curves in a fan tropical plane the the right hand side of the equation from Theorem \ref{thm:adjunction} can be greater than or less than $0$,  see  \cite{Br17}. Moreover, the condition that $C$ be locally degree $1$ in $X$ is not a  necessary condition to satisfy the tropical adjunction formula. 
For example the fan tropical curve $C$ from Example \ref{exam:curveDegDecrease} is not locally degree $1$, yet it satisfies the tropical adjunction formula in the compactification of the fan plane $P \subset \R^4$ to $\overline{P} \subset \TP^4$. 
\end{rem}

\vspace{0.5cm}
Lastly, given a tropical modification of manifolds, $\delta: \tilde{X} \to X$,  we show that the tropical $(p, q)$-homology groups of $\tilde{X}$ and $X$ are isomorphic. 
Firstly, there is a map on the chain level  $$\delta_{\ast}: C_{p, q}(\tilde{X}; \Z) \to C_{p, q}(X; \Z).$$
Suppose that a $(p, q)$-cell $\beta_{\sigma}\sigma$ in $C_{p, q}(\tilde{X}; \Z)$ is such that $\sigma$ is contained in a single chart $\tilde{U}_{\alpha} \to \tilde{V}_{\alpha}$ of $\tilde{X}$, and $\delta$ is given 
by $\delta_{\alpha}: \tilde{V}_{\alpha} \to V_{\alpha}$ in that chart. Recall that $\delta_{\alpha}: \T^{N_{\alpha+1}} \to \T^{N_{\alpha}}$ is the extension of the linear map $\R^{N_{\alpha+1}} \to \R^{N_{\alpha}}$ 
which has kernel generated by $e_{N_{\alpha + 1}}$. 
Then $\delta_{\alpha}(\beta_{\sigma}\sigma) =  \beta_{\sigma}^{\prime} \sigma^{\prime}$
 where $\sigma^{\prime} = \delta_{\alpha}(\sigma) \in  X$ and $\beta_{\sigma}^{\prime}$ is the image of the coefficient $\beta_{\sigma}$ under the map $\mathcal{F}^{\Z}_p(\tilde{X}) \to \mathcal{F}^{\Z}_p(X)$ 
 induced by the linear projections $\delta_{\alpha}$. 
In fact, in  each chart the  following diagram commutes:

$$
\xymatrix{
\mathcal{F}^{\Z}_p(E)  \ar[d]^{\delta_{\alpha}} \ar[r]^{\iota_p} &\mathcal{F}^{\Z}_p(E^{\prime}) \ar[d]^{\delta_{\alpha}}\\
\mathcal{F}^{\Z}_p(\delta (E))  \ar[r]^{\iota_p}         &\mathcal{F}^{\Z}_p(\delta (E^{\prime})) }
$$
which  implies 
 $\delta_{\ast} \partial = \partial \delta_{\ast}$ and so the map $\delta_{\ast}$ of chain groups descends to a homomorphism on the tropical $(p, q)$-homology groups.  
 
 For $p= 0$, the map $\delta_{\ast}$ gives an isomorphism on homology since  the $(0, q)$ groups  are just the singular homology  and $X$ is a strong deformation retract of $\tilde{X}$.  
When $p>0$, showing that $\delta_{\ast}$ induces an isomorphism of homology groups, follows the standard argument in basic algebraic topology while keeping track of the coefficients. 

\begin{thm}
Let $\delta: \tilde{X} \to X$ be a tropical modification of tropical spaces, then 
$$H_{p, q}(\tilde{X}; \Z) \cong H_{p, q}(X; \Z)$$ for all $p, q$. 
\end{thm}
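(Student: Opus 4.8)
The plan is to exhibit an explicit homotopy inverse to $\delta_{\ast}$ on the chain level, refining the deformation retraction of $\tilde{X}$ onto $X$ so that it also transports the $\mathcal{F}_p^{\Z}$-coefficients. Recall from the construction underlying Proposition \ref{prop:modEx} (the local picture of Section \ref{sec:mod}) that, writing $g=\{g_{\alpha}\}$ for the Cartier divisor with $\div_X(g)=D$, the manifold $\tilde{X}$ is the union of the graph $\Gamma$ of $g$ together with the downward facets; the projection $\delta$ restricts to the identity on $\Gamma$ and collapses the $e_{N+1}$-direction of each added facet onto $D$. Thus the inclusion $i\colon X \hookrightarrow \tilde{X}$ as the graph $\Gamma$ is a section of $\delta$, so $\delta\circ i = \mathrm{id}_X$ and hence $\delta_{\ast}i_{\ast}=\mathrm{id}$ on $H_{p,q}(X;\Z)$. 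It therefore suffices to produce a chain homotopy $P\colon C_{p,q}(\tilde{X};\Z)\to C_{p,q+1}(\tilde{X};\Z)$ with $\partial P + P\partial = \mathrm{id} - i_{\ast}\delta_{\ast}$; this forces $i_{\ast}\delta_{\ast}=\mathrm{id}$ on $H_{p,q}(\tilde{X};\Z)$ and makes $\delta_{\ast}$ an isomorphism with inverse $i_{\ast}$.

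For $p=0$ this is the statement already recorded in the excerpt, that $H_{0,q}$ is ordinary singular homology and $X$ is a strong deformation retract of $\tilde{X}$, the retraction sliding each fibre $\delta^{-1}(x)=\{x\}\times[-\infty,g(x)]$ up to its top point. For general $p$ I would promote exactly this fibrewise retraction to a prism operator. Choosing compatible polyhedral structures in which $D$ is a subcomplex, each facet of $\tilde X$ lying over a cell $c\subset D$ splits into a ``top'' cell in $\Gamma$, a ``side'' cell $c\times I$ running in the $e_{N+1}$-direction, and a ``bottom'' cell on the exceptional divisor $\delta^{-1}(D)\cap\{x_{N+1}=-\infty\}$, while over $X\setminus D$ the map $\delta$ is a cellwise isomorphism. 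The operator $P$ is then the usual prism: on a $(p,q)$-cell supported on a bottom or side cell it returns the $(p,q{+}1)$-cell swept out along the fibre, and it vanishes on cells already lying in $\Gamma$. The relation $\partial P + P\partial = \mathrm{id} - i_{\ast}\delta_{\ast}$ is then the classical prism identity for the straight-line homotopy in the fibre direction.

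The step I expect to be the main obstacle is the coefficient bookkeeping, since $P$ must act on the coefficient in $\mathcal{F}_p^{\Z}$ and not merely on supports. Sweeping along the fibre introduces the direction $e_{N+1}$, which is exactly the direction killed by $\delta$, so in defining $P(\beta_{\sigma}\sigma)$ one must wedge $\beta_{\sigma}$ with the primitive fibre vector and check that the result lands in $\mathcal{F}_p^{\Z}$ of the swept cell, remaining compatible with the inclusion/quotient maps $\iota_p$ as the fibre touches the exceptional divisor, where the order of sedentarity jumps. The commuting square relating $\iota_p$ and $\delta_{\ast}$ displayed just before the theorem is precisely what should guarantee both that $P$ is well defined across this sedentarity jump and that $\partial P + P\partial$ reproduces $\mathrm{id}-i_{\ast}\delta_{\ast}$ on coefficients as well as on supports. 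I anticipate that verifying this compatibility over $D$ and its exceptional divisor, rather than the topology (which is the standard prism argument), will be the crux of the proof.
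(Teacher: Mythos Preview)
Your overall architecture coincides with the paper's: embed $X$ into $\tilde X$ as the graph of the Cartier data, use the fibrewise retraction along the $e_{N_\alpha+1}$-direction, and upgrade the ordinary prism operator to one on $(p,q)$-chains so that $\partial P + P\partial = \mathrm{id} - i_\ast\delta_\ast$. Where you diverge from the paper is exactly at the step you flag as the crux, and your proposed resolution there is incorrect.

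You say that in defining $P(\beta_\sigma\sigma)$ one must \emph{wedge} $\beta_\sigma$ with the primitive fibre vector $e_{N_\alpha+1}$. But $P$ is a map $C_{p,q}\to C_{p,q+1}$: it raises the dimension of the supporting cell while leaving $p$ fixed. Wedging $\beta_\sigma\in\mathcal F_p^{\Z}$ with $e_{N_\alpha+1}$ produces an element of $\Lambda^{p+1}$, so your prescription would land in $C_{p+1,q+1}$, not $C_{p,q+1}$, and the prism identity could not even be stated. The correct assignment is the naive one: equip every face of the swept cell $P(\sigma)$ with the \emph{same} coefficient $\beta_\sigma$. The only thing to verify is that $\beta_\sigma$ makes sense as an element of $\mathcal F_p^{\Z}$ of the face containing $P(\sigma)$. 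If $\sigma$ lies off the exceptional divisor this is immediate, since $\sigma$ and $P(\sigma)$ sit in the same face of $\tilde X$. If $\sigma$ lies on the exceptional divisor, then in a chart $\sigma\subset E\subset \T^{N_\alpha+1}_I$ with $N_\alpha+1\in I$, while $P(\sigma)$ lies in the adjacent face $E'\subset\T^{N_\alpha+1}_{I\setminus\{N_\alpha+1\}}$; here there is an \emph{injection} $\mathcal F_p^{\Z}(E)\hookrightarrow\mathcal F_p^{\Z}(E')$ (going from higher to lower sedentarity), so $\beta_\sigma$ is again a legitimate coefficient for $P(\sigma)$. With this choice the relation $\partial P = \mathrm{id} - i_\ast\delta_\ast - P\partial$ holds verbatim from the constant-coefficient case, because the coefficient is carried along unchanged and the maps $\iota_p$ respect it. The commuting square you cite is used to see that $\delta_\ast$ is a chain map, not to manufacture a new coefficient for $P$.
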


\begin{proof}
The tropical space $X$ can be identified as a subspace of $\tilde{X}$ by taking  $f_{\alpha} (V_{\alpha}) \subset \tilde{V_{\alpha}}$ in each chart. We denote this map by $f: X \to \tilde{X}$.  There is a strong deformation retract
$F : \tilde{X} \times I \to \tilde{X}$ such that 
$F: \tilde{X} \times \{0\} \to \tilde{X}$ is the identity and the image of  
$F:\tilde{X} \times \{1\} \to \tilde{X}$ is in $f(X)$. 
Here each  point in $f(X)$ is fixed by $F$ for all $t \in I$. 
Moreover the homotopy respects the stratification of $\tilde{X}$ up until the end when in each chart $U_{\alpha} \to V_{\alpha} \subset \T^{N_{\alpha}+1}$, the points of sedentarity $N_{\alpha}+1$  are sent to $f(D)$.  

For homology with constant coefficients recall the  prism operator 
$P : C_{q}(\tilde{X}; \Z) \to C_{q+1}(\tilde{X}; \Z)$, see Theorem 2.10 \cite{Hatcher} for details. 
For example, in our particular case, if  $\sigma$ is a singular $q$-cell contained in $\tilde{X} \backslash f(X)$  and contained in a chart $\tilde{U}_{\alpha} \to \tilde{V}_{\alpha} \subset \T^{N_{\alpha}+1}$, then the image of the  
prism operator  applied to $\sigma$ is 
$$P(\sigma) = \{x + t e_{N_i +1} \ | \ t \in [0 ,f_{\alpha}(\delta(x))]  \},$$
where $f_{\alpha}$ is the tropical rational function $f_{\alpha}: \T^{N_{\alpha}} \to \T$ defining the modification in the chart $\phi_{\alpha}: U_{\alpha} \to V_{\alpha} \subset \T^{N_{\alpha}}$. 
In homology with constant coefficients, the prism operator satisfies the relation, 
\begin{eqnarray}\label{eqn:prism}
\partial P = id - f_{\ast} - P \partial.
\end{eqnarray}

To extend this to the situation of $(p, q)$-cells, we need only to assign coefficients  in  $\mathcal{F}_p$   to the cells in $P(\sigma)$ 
 in a way that maintains the relation (\ref{eqn:prism}). 
If $\beta_{\sigma} \sigma$ is a $(p,q)$-cell in $\tilde{X} \backslash f(X)$, assign to each face of $P(\sigma)$ the coefficient $\beta_{\sigma}$. 
This is possible since either $\sigma $ and $P(\sigma)$ are contained in the same face of $\tilde{X}$; 
or in a chart $\sigma$ is contained in a face  $E \subset \T^{N_{\alpha}+1}_I$ where $N_{\alpha}+1 \in I$, and $P(\sigma)$ is contained in an adjacent face $E' \subset \T^{N_{\alpha}+1}_{I'}$ where the $I^{\prime} =I  \backslash \{N_i +1\}.$ 
In this second case, there is still an injection  $\mathcal{F}^{\Z}_p(E) \to \mathcal{F}^{\Z}_p(E^{\prime})$, so $\beta_{\sigma} \in \mathcal{F}^{\Z}_p(E^{\prime})$. 

This assignment of coefficients still satisfies the relation in (\ref{eqn:prism}). So by the usual argument in algebraic topology the map  $\delta_{\ast}$ induces an 
isomorphism of the $(p,q)$-homology groups.  
\end{proof}

\subsection{Summation of tropical surfaces}

Given tropical surfaces $X_1, X_2$ each with boundary curves $C_1, C_2$ satisfying certain conditions we define their tropical sum, which is a new tropical surface obtained  by ``glueing" together $X_1$ and $X_2$. 
This construction is analogous to the fibre sum for manifolds or the symplectic sum in the symplectic category \cite{Gompf}.  

The tropical sum of $X_1$ and $X_2$  depends firstly on a chosen identification of the boundary curves $C_1, C_2$ as well an orientation reversing isomorphism of their normal bundles. This information determines a $1$-parameter family of tropical surfaces, similar to the situation for symplectic sums. 

Recall,  that an abstract tropical curve is a graph with a complete inner  metric defined away from the $1$-valent vertices, see \cite[Section 7.2]{BIMS}.

\begin{definition}
An isomorphism of tropical curves is an isometry of metric graphs $f:C_1 \to C_2$.
\end{definition}

\begin{figure}
\includegraphics[scale=1]{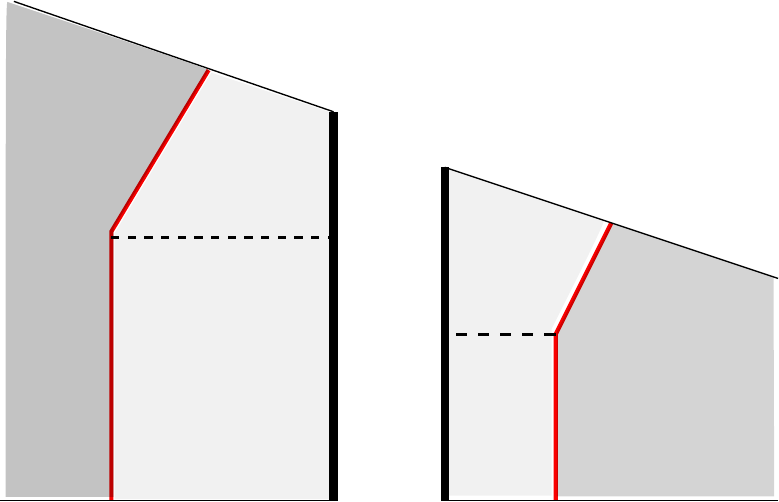}
\put(-170, 60){$V(\sigma_1)$}
\put(-90, 60){$V(\sigma_2)$}
\put(-170, 130){$\sigma_1$}
\put(-55,90 ){$\sigma_2$}
\put(-135,-10 ){$C_1$}
\put(-100,-10 ){$C_2$}
\put(-220,-10 ){$X_1$}
\put(-20,-10 ){$X_2$}
\put(-125,70 ){$a_1$}
\put(-110,47 ){$a_2$}
\hspace{2cm}
\includegraphics[scale=1]{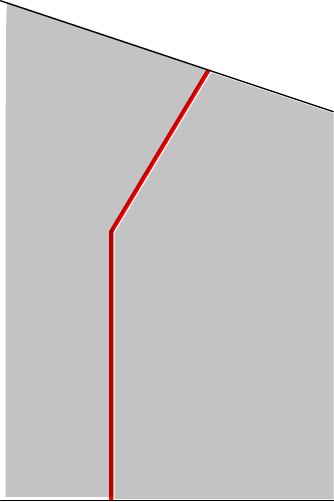}
\put(-80,-10 ){$X_1 \#_{f_1, f_2,  g, \sigma} X_2$}
\caption{A depiction of the sum of $2$ surfaces $X_1, X_2$ along $2$ curves $C_1, C_2$ where $f_i : \TP^1 \to C_i$ sends $0 \mapsto a_i$ and 
$\tilde{g}(\sigma_1)  = \sigma_2$. }
\label{fig:sumNeighborhoods}
\end{figure}

\comment{
\begin{figure}
\begin{center}
\vspace{0.5cm}
\put(-32, -5){$D^o$}
\put(-80, 55){$B(D)$}
\end{center}
\caption{A section $\sigma$ from Lemma \ref{lem:ratsection}  of the normal bundle $N_X(D)$ where  $N_X(D^o)$ is trivial. \label{MoveBddy}} 
\end{figure}}

To define the tropical sum we will use the normal bundle of a tropical boundary curve in a surface from Definition \ref{def:normalbundle}. 
The isomorphism classes of tropical line bundles on a curve form the tropical  Picard group $\text{Pic}(C)$  \cite{MikZha:Jac}. 
Two tropical line bundles on a curve $C$ are inverses to each other if they are inverses in the tropical Picard group. 
In particular, if a tropical line bundle $\pi: L \to C$  is defined by transition functions  $\{ f_{\alpha \alpha'}\}$ on some trivialization  $\{U_{\alpha}\}$ then its inverse bundle $L^{-1}$ can be given by the transition functions $ \{ - f_{\alpha \alpha'}\}$. 
Let $N^o$ denote the complement of the $-\infty$ section in a tropical line bundle $L$. Then $L^o$ is an $\R$ bundle over $C$ and  there is a tropical bundle isomorphism $r: L^o \to (L^{-1})^o$ given in each fiber by $x \mapsto -x$

Recall that a tropical section is  $\sigma: C \to L$, such that in each chart $\sigma$ is given by a  function which is a bounded piecewise affine function. For a section $\sigma$, let $V(\sigma) \subset L$ be the neighborhood of the $-\infty$ section defined by $\{(x, t ) \ | \ t < \sigma(x)\}$ in each trivialization, see the left side of Figure \ref{fig:sumNeighborhoods}. The subset $V(\sigma)$ will play the role of a tubular neighborhood of the $-\infty$ section in what follows.

\begin{definition}[\textbf{Tropical sum}]\label{def:SUM}
Let $X_1$ and $X_2$ be tropical surfaces, and $C$ a  tropical curve. Fix tropical morphisms  $f_i : C \to X_i$ whose images are boundary curves $C_i$ of $X_i$ having simple normal crossings with the other boundary curves for $i = 1, 2$. 
Let $g: N_{X_1}(C) \to N_{X_2}(C)^{-1}$ be an  bundle isomorphism and $\tilde{g}: N^o_{X_1}(C) \to N_{X_2}(C)^o$ be the orientation reversing $\R$ bundle isomorphism given by composing $g$ with $x \mapsto -x$ in each fiber. 

Fix sections $\sigma_i: C\to N_{X_i}(C)$ satisfying $\tilde{g} \circ \sigma_1 = \sigma_2$  and such that there are  neighborhoods $\overline{V(\sigma_i)}$ can be identified with neighborhoods of $C_i$ in $X_i$. 
 The tropical sum is obtained by identifying $X_1 \backslash V(\sigma_1)$ and $X_2 \backslash V(\sigma_2)$ along $\sigma_1$ and $\sigma_2$ by way of the map  $\tilde{g}$, that is  
$$X_1 \#_{ f_1, f_2, g, \sigma_1} X_2 = 
\frac{X_1 \backslash V(\sigma_1) \sqcup X_2 \backslash V(\sigma_2) }{\sigma_1 \sim \tilde{g}\circ \sigma_1}.$$
\end{definition}

\begin{proposition}
The sum $X_1 \#_{f_1, f_2, g, \sigma_1}  X_2$ from Definition \ref{def:SUM} is a tropical surface. 
\end{proposition}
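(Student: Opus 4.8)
The plan is to verify directly that $X_1 \#_{f_1,f_2,g,\sigma_1} X_2$ satisfies the three conditions of Definition \ref{Man}. The Hausdorff property and the finite type condition will be inherited: the sum is the quotient of the disjoint union $(X_1 \setminus V(\sigma_1)) \sqcup (X_2 \setminus V(\sigma_2))$ by the identification of the two sections $\sigma_1 \sim \tilde g \circ \sigma_1 = \sigma_2$, which is a homeomorphism between the neighborhoods $\overline{V(\sigma_i)}$ of the boundary curves. Gluing two Hausdorff spaces along a homeomorphism of open neighborhoods of closed sets yields a Hausdorff space, and a finite cover is obtained by combining the finite covers of $X_1$ and $X_2$. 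Thus the real content is producing an atlas of charts to fan tropical planes with integer affine transitions, and the only charts not directly inherited from $X_1$ or $X_2$ are those meeting the glued curve $\bar C$, the common image of $\sigma_1$ and $\sigma_2$.

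First I would cover $\bar C$ by small open sets and build a chart on each by combining the two half-neighborhoods coming from $X_1$ and $X_2$. Over a simply connected piece of $C$, Definition \ref{def:normalbundle} trivializes $N_{X_i}(C)$ as $\R \times \T$ with fibre coordinate $t_i$, the boundary curve $C_i$ sitting at $t_i = -\infty$ and $X_i \setminus V(\sigma_i)$ occupying $\{t_i \ge \sigma_i\}$. The key point is that $\tilde g$ reverses the orientation of the fibre, so the two half-lines $[\sigma_1,\infty)$ and $[\sigma_2,\infty)$ are glued at their endpoints $t_1=\sigma_1 \leftrightarrow t_2 = \sigma_2$ with slope $-1$; setting $s = t_1$ for $s \ge \sigma_1$ and $s = \sigma_1 + \sigma_2 - t_2$ for $s \le \sigma_1$ assembles the two half fibres into a single affine coordinate $s$ ranging over all of $\R$. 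In other words, the orientation reversing gluing turns the $-\infty$ boundary of each side into an interior line and completes the half-fibre $\T$ into a full $\R$.

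With this coordinate in hand the local models are identified using Corollary \ref{cor:missinglines}. Over an interior edge point of $C$ the base direction is an $\R$, so the combined chart is $\R \times \R = \R^2$, the Bergman fan of $U_{3,3}$ from case $(1)$. Over a $k$-valent vertex $v$ of $C$ with $k \ge 3$ the base is the fan tropical line $L_v$, the cone over the $k$ local directions, and the chart is $\R \times L_v$, which is case $(2)$ of Corollary \ref{cor:missinglines} and is non-degenerate. At the leaves of $C$, which by the simple normal crossings hypothesis are exactly the corner points where $C_i$ meets another boundary curve $D_i$ transversally, the same completion applies in the fibre direction while the remaining boundary curves $D_i$ persist; a short case check shows the glued chart is again a fan tropical plane, now carrying boundary inherited from the $D_i$. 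The transition maps between a glued chart and an adjacent chart of $X_i$ are integer affine: on the $X_i$ part they are the original transitions of $X_i$, on the bundle part they are the integer affine maps $\Phi_{\alpha\beta}\colon \R \times \T \to \R \times \T$ of Definition \ref{def:normalbundle}, and the seam identification is the integer affine map $t_1 \mapsto \sigma_1 + \sigma_2 - t_2$ built from $\tilde g$.

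The main obstacle is the local analysis along $\bar C$, and in particular the compatibility of the gluing across overlaps. The hypothesis that $g$ maps $N_{X_1}(C)$ to the inverse bundle $N_{X_2}(C)^{-1}$ is exactly what makes the negated transition functions on the two sides inverse to one another, so that the reassembled fibre coordinate $s$ is well defined on each overlap and the resulting transition cocycle $\{\tilde\Phi_{\alpha\beta}\}$ satisfies $\tilde\Phi_{\alpha\beta}\tilde\Phi_{\beta\alpha} = \mathrm{id}$ together with the triple overlap relation, just as in the proof of Proposition \ref{prop:modEx}. Checking non-degeneracy of the completed fan planes at the vertices and corners, and verifying that the condition $\tilde g \circ \sigma_1 = \sigma_2$ forces the two sections to match so that the seam is genuinely interior, are the steps requiring the most care; everything else is a routine assembly of charts already available on $X_1$ and $X_2$.
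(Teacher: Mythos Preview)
Your argument is essentially correct, but it takes a different and more laborious route than the paper. The paper avoids building any new charts along the seam: instead it thickens $X_i\setminus V(\sigma_i)$ to a slightly larger open set $U_i\subset X_i$ (extending a collar past $\sigma_i$ into $V(\sigma_i)$) so that the sum is realized as the quotient of $U_1\sqcup U_2$ by the identification $\tilde g$ on the overlapping collars. Every point of the sum then already lies in an open set which is genuinely an open subset of $X_1$ or of $X_2$, so the charts and transitions are inherited wholesale from the $X_i$, with $\tilde g$ supplying the transition between a $U_1$-chart and a $U_2$-chart. Your approach instead manufactures new charts along $\bar C$ by explicitly splicing two half-fibres $\T$ into a full $\R$ and then invoking Corollary~\ref{cor:missinglines} to recognise the resulting fan planes; this is correct and has the virtue of making the local models at vertices and leaves of $C$ visible, but it is more work and requires the extra case analysis you flag.

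One point where you are too quick is the finite type condition. Saying it is ``inherited'' by combining the finite covers of $X_1$ and $X_2$ is not quite enough: if $\{W_i\}$ is a finite-type cover of $X_1$, then $W_i\cap U_1$ need not have compact closure inside the corresponding chart of the sum, because the chart has been truncated at the collar. The paper notes this and fixes it by shrinking each $W_i$ (and $Y_j$) by an arbitrarily small amount so that its closure stays inside the new chart while the collection still covers. This is an easy repair, but it is the one place where a genuine adjustment, rather than direct inheritance, is needed.
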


\begin{proof}
It is possible to choose sufficiently small neighborhoods $U_1, U_2$ of $X_1 \backslash V(\sigma_1), X_2 \backslash V(\sigma_2)$ respectively, so that the tropical sum is also the quotient of the disjoint union $U_1 \sqcup U_2$ given by identifying points by way of  $\tilde{g}$. Then in the standard way, we can construct an open cover $\{U_{\alpha}\}$ of the sum from open coverings of $X_1$ and $X_2$ restricted to $U_1$ and $U_2$. 
This cover comes with charts $\phi_{\alpha}$ and coordinate changes on the overlaps $U_{\alpha} \cap U_{\beta}$ either induced by the coordinate changes from $X_1$ or $X_2$ or from $\tilde{g}$. The quotient is a Hausdorff topological space. 

To verify the finite type condition, let $\{W_i \}$ be the finite open cover of $X_1$ and $\{Y_j\}$ be a finite open cover of $X_2$. Simply intersecting $W_i \cap U_1$ and $Y_j \cap U_2$ may not provide a 
finite open cover satisfying the finite type condition. 
For example, if $\overline{\phi_{\alpha}(W_i \cap U_1)} \not \subset \phi_{\alpha}(U_{\alpha} \cap U_1)$ then $W_i \cap \bar{U}_1 \backslash U_1$.  The image of $\phi_{\alpha}(W_i \cap U_1) \subset \T^{N_{\alpha}}$ can be shrunk 
by an arbitrarily small amount $\epsilon$ so that it no longer intersects the image of $  \bar{U}_1 \backslash U_1$. Pull this set back to obtain a new open set $W_i' \subset U_{\alpha} \cap U_1$. This can be done for each $W_i$ and $Y_j$ by a  sufficiently 
small $\epsilon$ so that we still  obtain an open cover of the sum which now satisfies the finite type condition. Therefore the sum operation produces a new tropical surface. 
\end{proof}

\vspace{0.5cm}

\begin{figure}[b]
\begin{center}
\includegraphics[scale= 0.3]{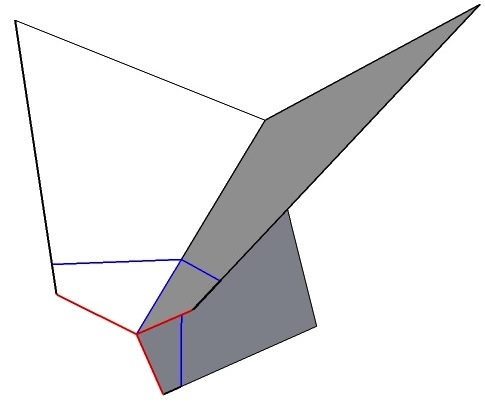}
\hspace{0.5cm}
\includegraphics[scale= 0.28]{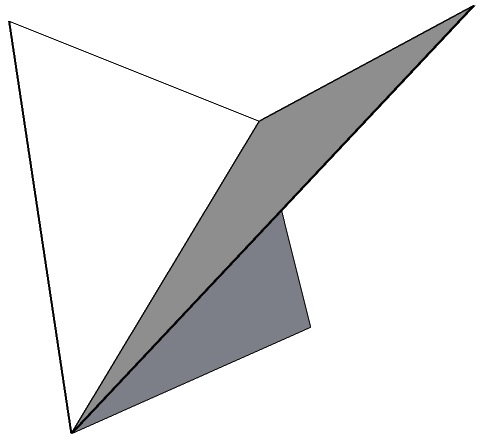}
\put(-30, 0){$X \# \overline{V}$}
\put(-200, 0){$X$}
\put(-280, 10){$E$}
\put(-275, 50){\small{$i(\sigma)$}}
\end{center}
\caption{ A neighborhood of a $-1$-fan curve $E$ in a surface $X$, and the result after summing with $\overline{V}$.  \label{cornerBlowup}} 
\end{figure}

\begin{example}[Contracting rational $(-1)$-curves]\label{ex:-1}

Let $E \subset X$ be an irreducible boundary curve of a surface $X$ with simple normal crossings with the other boundary curves.  Suppose that $E$ has $k$-leaves, $E^2 = -1$ and that $b_1(E) = 0$, so that $E$ is a rational tropical curve.  
Let $E^o$ denote the complement of the points of sedentarity order $2$ in $E$. The normal bundle restricted to $E^o$ is $$N_{X}(E^o)= L^o \times \T \subset \R^{k-1} \times \T.$$ Where $L^o$ is a tropical line  in $\R^{k-1}$. 
The graph of a section $\sigma: E \to N_{X}(E)$ has $k$ unbounded edges %
identifying $N_{X}(E^o) \subset \R^{k-1} \times \T$ then these outgoing rays have primitive integer directions: $-e_1, \dots , -e_{k-1}, \text{ and }  e_1 + \dots + e_{k-1} - e_k$.

To ``contract" $E$, perform a tropical sum with a tropical plane $V \subset \TP^k$ which is the cone over a tropical line $L$ isomorphic to $E$ contained in the hyperplane $\TP^k_{\{0\}}$ of $\TP^k$. Since $V$ is a cone (has no internal vertices) the resulting sum is independent of the choice of section and identification of $E$ with $L$. 
Denote the sum $Y = X \#_{E, L} V$, and by $y$ the point of $Y$ corresponding to the cone point of $V$. There is a tropical morphism $f: X \to Y$ which is an isomorphism restricted to 
$X \backslash E \to Y \backslash y$.

Let $\tilde{\T}^k$ denote the tropical toric variety corresponding to the toric blow up of affine space at the origin. The fan $\Sigma$ producing this toric variety has for example, $1$-dimensional  rays $-e_1, \dots , -e_N$ and $-e_0 = - \sum_{i=1} e_i$.  Let $\tilde{E}$ denote the boundary divisor of $\tilde{\T}^k$ corresponding to the ray in direction $-e_0$. 
There is a tropical linear morphism $\tilde{f}: \tilde{\T}^k \to \T^k$ which is the identity when restricted to $\tilde{\T}^k\backslash \tilde{E}  \to \T^k \backslash -\infty$. A neighborhood $U$ of $E$ in $X$ has an open embedding  $\tilde{L} \subset \tilde{\T}^k$, where $\tilde{L} \cap \tilde{\T}^k\backslash \tilde{E}$ is a tropical linear space in $\T^k \backslash -\infty$. 
The tropical morphism $f: X \to Y$ is just the identity outside of a neighborhood of $E$ and the restriction of $\tilde{f}$ on a neighborhood of $E$. 

If $E \subset X$ is not necessarily a boundary curve but still a locally degree $1$ curve in $X$ with $E^{2} = -1$ and $b_1(E) = 0$, then perform a   non-singular modification,  $\delta: \tilde{X} \to X$  along  $E$. Then $\tilde{X}$ has  a boundary divisor $\tilde{E}$ with $\tilde{E}^2 = -1$ and we may take the  sum  $\tilde{X}$ with a tropical plane $V$ along $\tilde{E}$ and $L$ as above. Therefore, up to tropical modification, we can contract  rational curves of self-intersection $-1$, which are locally degree $1$ in  $X$. 

This gives a  partial tropical version of the Castelnuovo-Enriques criterion for blowing down $(-1)$-curves in classical algebraic geometry. Recall from Example \ref{exam:curveDegDecrease} that the local degree of a tropical curve may be decreased upon performing modifications. However, there are examples of rational $(-1)$-curves in $\CP^2$ which are not rectifiable. The tropicalization of such a $(-1)$-curve could not be contracted in this way. 
\end{example}

\begin{example}\label{Example:SelfSum}
The tropical Hirzebruch surface $X_k$ of degree $k$ can be constructed following Example \ref{ex:toricman}. Then $X_k$ is a compactification of $\R^2$ with four boundary curves all of which are isomorphic to $\TP^1 = [-\infty, \infty]$ and satisfying $C_1^2 = C_3^2 = 0$ and $C_2^2 = -C_4^2 = k$. Moreover,  $C_1 \cap C_3 = C_2 \cap C_4 = \emptyset$. These four divisors correspond to the directions in $\R^2$
$$v_1 = (-1, 0), \qquad v_2 = ( 0, -1), \qquad  v_3 = ( 1, k), \qquad \text{and} \qquad v_4 = (0, 1),$$ 
in $\R^2 \subset X_k$.
The tropical self-sum of $X_k$ can be formed along either pair $C_1, C_3$ or $C_2, C_4$. 

Fix identifications $f_1: \TP^1 \to C_1 \subset X_k$ and $f_3 : \TP^1 \to C_3 \subset X_k$, so the underlying topological space of the sum is a cylinder.  
For any choice of $g$ and $\sigma$, the self-sum $(X_k)\#_{f_1, f_3, g, \sigma}$ produces a tropical projective  line bundle over a tropical elliptic curve $E$, (which is just a circle equipped with a length). The length of $E$ is determined by the choice of $g$ and $\sigma$. The degree of a section of this bundle is equal to $k$.

Precomposing the identification $f_1: \TP^1 \to C_1$ with the map $x \mapsto -x$ to obtain $f_1': \TP^1 \to C_1$, then the sum $(X_k)\#_{f_1', f_3, g, \sigma}$ is diffeomorphic to  a M\"obius band with boundary.  The result is still a tropical projective line bundle over an elliptic curve, $E'$ which is a circle now of length twice the length of $E$. 
The degree of a section of the bundle is $0$. 

We may also form  the self-sum along the tropical divisors $C_2$ and $C_4$ after fixing identifications $f_2: \TP^1 \to C_2 \subset X_k$, $f_4 : \TP^1 \to C_4 \subset X_k$ to obtain a 
tropical surface 
$Y = (X_k)\#_{f_2, f_4, g, \sigma}$. 
Again the underlying topological  space is either a cylinder or M\"obius band 
but depending the choices of $f_2, f_4, g$ and $\sigma$ it may or may not be fibered by tropical elliptic curves. These are examples of tropical models of Hopf surfaces, see \cite{RuggShaw}.
\end{example}

\begin{figure}

\includegraphics[scale=0.01]{}
\put(-35,40){\vector(0,-1){20}}
\put(-60,35){\vector(0,-1){20}}
\put(-88,40){\vector(0,-1){20}}
\put(-38,65){\vector(1,2){10}}
\put(-65,75){\vector(1, 2){10}}
\put(-88,90){\vector(1, 2){10}}
\caption{The self-sum of a tropical Hirzebruch surface along the $-\infty$ and $\infty$ sections.}
\label{fig:sumHirz}
\end{figure}

\begin{example}[Logarithmic transformations]

Consider the tropical surface $Y$ obtained from quotienting  $\T \times \R$ by the extension of a translation  $\R^2 \to \R^2$.  
If the translation is by $(a, b)$, where  $a/b$ is rational then $Y$ is fibered over $\T$ by tropical elliptic curves.
 
The elliptic curves over $\R \subset \TP^1$ are all circles of length $l$ where $(a, b)= l (n, m)$ for $n, m$ relatively prime integers. 
The boundary curve is also a circle but instead it has length $b$, since the restriction of the translation to the boundary of $\T \times \R$ is given by $(-\infty, x) \mapsto (-\infty, x+b)$.  The boundary curve behaves like a fibre of multiplicity $m$, where $b = ml$.

Suppose $X$ is another tropical surface with a  boundary curve $C$, that is again a tropical elliptic curve which is a circle of length $l$. Suppose moreover that the normal bundle $N_X(C)$ is trivial (i.e.~$N_X(C) = C \times \T$). 
Denote by  $X \#_C Y$ the surface obtained by removing a neighborhood of $C$ in $X$ and glueing in a copy of $Y$ by the identification of fibers, as in the tropical sum.  
Then $X \#_C Y$ is an example of a tropical logarithmic transformation of $X$ performed at $C$.

\end{example}

The description  of a tropical surface as a sum of other surfaces provides a means for calculating some invariants. As a first example, there is a Mayer-Vietoris sequence for tropical $(p, q)$-homology groups which can be applied to the sum of tropical surfaces. In the next section Lemma \ref{lem:SumChern} relates the Chern numbers of tropical sums of surfaces.

\begin{proposition}\cite[Proposition 4.2]{ShawHomo}
Let $X$ be a tropical manifold and $A, B \subset X$ open subsets covering $X$, then the following sequence of tropical $(p,q)$-homology groups is exact, 
$$\dots \to H_{p, q}(A\cap B) \to H_{p,q}(A) \oplus H_{p,q}(B) \to H_{p,q}(X) \to H_{p,q-1}(A \cap B) \to \dots $$

\end{proposition}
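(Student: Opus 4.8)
The plan is to run the classical derivation of the Mayer--Vietoris sequence for singular homology (as in Hatcher, Section 2.2) entirely at the level of tropical $(p,q)$-chains, carrying the coefficient system $\mathcal{F}_p$ along at every step. First I would introduce the subgroup $C_{p,q}(A+B) \subset C_{p,q}(X;\Z)$ generated by those $(p,q)$-cells $\beta_{\sigma}\sigma$ whose supporting $q$-cell $|\sigma|$ lies in $A$ or in $B$. Because the tropical boundary operator is the ordinary singular boundary decorated by the maps $\iota_p$ on coefficients, it never enlarges the support of a chain, so $C_{p,q}(A+B)$ is a subcomplex. This yields a short exact sequence of complexes
$$0 \to C_{p,q}(A\cap B) \xrightarrow{\ \gamma \mapsto (\gamma, -\gamma)\ } C_{p,q}(A) \oplus C_{p,q}(B) \xrightarrow{\ (\alpha, \beta)\mapsto \alpha+\beta\ } C_{p,q}(A+B) \to 0,$$
where exactness on the left and in the middle is formal and surjectivity on the right holds by the definition of $C_{p,q}(A+B)$. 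Applying the zig-zag lemma produces the long exact sequence with $H_{p,q}(A+B)$ in place of $H_{p,q}(X)$.

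The second and essential step is to show that the inclusion $C_{p,q}(A+B) \hookrightarrow C_{p,q}(X;\Z)$ is a quasi-isomorphism, which I would establish by adapting barycentric subdivision to the coefficient system. On underlying singular $q$-cells the subdivision operator $S$ and the chain homotopy $T$ satisfying $\partial T + T\partial = \mathrm{id} - S$ are the classical ones; I would lift them to $(p,q)$-chains by assigning to each subdivided piece of $\sigma$, and to each prism piece of $T(\sigma)$, the same coefficient $\beta_{\sigma}$ (transported by the relevant $\iota_p$ whenever a face drops to lower sedentarity). Iterating $S$ a number of times governed by a Lebesgue number for the open cover $\{A, B\}$ carries any class in $H_{p,q}(X)$ into $C_{p,q}(A+B)$, while $T$ shows $S^m$ is chain homotopic to the identity; together these give the isomorphism $H_{p,q}(A+B) \cong H_{p,q}(X)$. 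Substituting this into the long exact sequence completes the proof.

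The main obstacle will be verifying that the decorated operators $S$ and $T$ are genuinely chain maps and chain homotopies of the \emph{tropical} complex, i.e.\ that carrying the single coefficient $\beta_{\sigma}$ to every new piece is consistent with the boundary maps $\iota_p$. The point that makes this work is that barycentric subdivision refines $\Delta_q$ only by inserting barycenters, so the images of all newly created cells remain in the relative interior of the same face $E$ of $X$ that already contains $\sigma(\mathrm{int}(\Delta_q))$; in particular the condition defining a $(p,q)$-cell is preserved, no new $\iota_p$-transitions arise among the internal faces of the subdivision, and $\beta_{\sigma}$ lies in the correct group $\mathcal{F}_p^{\Z}(E)$ for each piece. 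The only $\iota_p$-transitions that occur are on boundary faces mapping into lower faces of $X$, and these are identical to the ones present for the original cell, so the classical identities $\partial S = S\partial$ and $\partial T + T\partial = \mathrm{id} - S$ continue to hold once the coefficients are attached. With this compatibility checked, the argument reduces to the standard one.
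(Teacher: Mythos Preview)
The paper does not supply its own proof of this proposition; it is quoted from \cite{ShawHomo} and used as a black box. So there is no ``paper's proof'' to compare against directly. Your argument is the standard one and is essentially what one expects the cited proof to be: set up the short exact sequence of chain complexes using the subcomplex $C_{p,\bullet}(A+B)$, then show that small chains compute the same homology via barycentric subdivision lifted to the $\mathcal{F}_p$-coefficients.

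One point worth tightening: the $(p,q)$-cells in this paper are not arbitrary singular simplices but are required to respect the polyhedral structure of $X$ (each open face of $\Delta_q$ must land in the interior of a single face of $X$). You should check that barycentric subdivision and the prism operator preserve this constraint. They do, since a subdivided simplex of $\sigma$ has all its vertices at barycenters of faces of $\Delta_q$, and the image of each open face of a subdivided piece is still contained in the interior of some face of $X$ determined by $\sigma$; the same holds for the prism pieces. Once this is noted, your key observation---that no new $\iota_p$-transitions are introduced in the interiors, so carrying $\beta_\sigma$ unchanged to every piece is legitimate---goes through, and the classical identities $\partial S = S\partial$ and $\partial T + T\partial = \mathrm{id} - S$ hold with coefficients. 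With that, the argument is complete.
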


To apply the above Mayer-Vietoris sequence in the case of a sum of tropical surfaces, take $A = X_1 \backslash D$ and $B = X_1 \backslash D$, so that $A \cap B = N^o_{X_1}(D)$.

\section{Noether's formula}\label{sec:Noether}

Classically, Noether's formula for  a non-singular projective surface $\X$  relates its holomorphic Euler characteristic with its $1^{st}$ and $2^{nd}$ Chern classes \cite{GriffithsHarris}, 
$$\chi(\mathcal{O}_{\X}) = \frac{K_{\X}^2 + c_2(\X)}{ 12 }. $$ 
Here $\chi(\mathcal{O}_{\X})$ is the holomorphic Euler characteristic of $\X$,  $K_{\X}$ is its  canonical class of $\X$ and $c_2(\X)$ is its second Chern class. This formula is a special case of the  Hirzebruch-Riemann-Roch formula for surfaces, which states that for a line bundle $\Ln$ on a compact complex surface $\X$:
$$\chi(\Ln) = \chi(\mathcal{O}_{\X}) + \frac{\Ln.\Ln - \Ln.K_{\X}}{2}.$$

By the relation of tropical $(p, q)$-homology to Hodge numbers from \cite{IKMZ}, (see also \cite[Theorem 7.34]{BIMS}),  we replace the holomorphic Euler characteristic $\chi(\X) = \sum_{p=0}^N h^{0,n}(\X)$ by the  topological Euler characteristic of the tropical surface $X$.

\begin{thm}\label{prop:Noether}
A compact  tropical surface $X$ obtained from tropical toric surfaces via summations and modifications satisfies, 
$$\chi(X) = \frac{K_X^2 + c_2(X)}{12},$$
where $\chi(X)$ is the topological Euler characteristic of $X$. 
\end{thm}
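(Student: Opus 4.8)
The plan is to induct on the number of summations and modifications used to build $X$, keeping track of the ``Noether defect''
$$\lambda(X) := K_X^2 + c_2(X) - 12\,\chi(X),$$
where $c_2(X)$ denotes the degree $\deg(c_2(X))$ of the top Chern cycle. The theorem is equivalent to $\lambda(X)=0$, so it suffices to establish $\lambda=0$ on the atoms (tropical toric surfaces), invariance of $\lambda$ under modification, and additivity of $\lambda$ under summation.

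For the base case I would verify Noether's formula for a compact tropical toric surface $X(\Sigma)$, $\Sigma\subset\R^2$ a complete unimodular fan with $d$ rays. Such a surface is homeomorphic to its moment polygon, hence contracts onto a disk, so $\chi(X(\Sigma))=1$. Its $0$-skeleton consists of the $d$ torus-fixed corner points, each of sedentarity order $2$, so by Lemma~\ref{lem:Chern2mult}(1) each contributes multiplicity $1$ and $c_2(X(\Sigma))=d$. For $K^2$ one writes $K_{X(\Sigma)}=-\partial X(\Sigma)=-\sum_{i=1}^d D_i$ (the interior $1$-skeleton being empty, all boundary edges $1$-valent) and expands $K^2=\sum_i D_i^2+2\sum_{i\sim j}D_i\cdot D_j$ using the self-intersections of Section~\ref{sec:selfbdy} and the corner intersections of Section~\ref{sec:bdybdy}. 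The resulting combinatorial identity is the classical smooth-complete-toric-surface relation $K^2=12-d$, equivalent to $\sum_i a_i=3d-12$ for the relations $v_{i-1}+v_{i+1}=a_iv_i$ among consecutive primitive ray generators (derivable from the balancing of the canonical cycle in Proposition~\ref{prop:canonicalBal} or from lattice geometry). Hence $K^2+c_2=(12-d)+d=12=12\chi$, so $\lambda(X(\Sigma))=0$.

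Modifications leave $\lambda$ unchanged: Corollary~\ref{cor:canMod} gives $K_{\tilde X}^2=K_X^2$, Lemma~\ref{cor:chern2mod} gives $\deg c_2(\tilde X)=\deg c_2(X)$, and the isomorphism $H_{0,q}(\tilde X)\cong H_{0,q}(X)$ (the $p=0$ case of the modification homology theorem, equivalently the fact that $X$ is a strong deformation retract of $\tilde X$) gives $\chi(\tilde X)=\chi(X)$. Thus $\lambda(\tilde X)=\lambda(X)$, and Noether's formula passes between $X$ and $\tilde X$ in either direction.

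The decisive step is summation. For a sum $X=X_1\#X_2$ along a curve $C$, a Mayer--Vietoris argument (the Proposition of Section~\ref{sec:construction} with $A\simeq X_1\setminus V(\sigma_1)$, $B\simeq X_2\setminus V(\sigma_2)$, and $A\cap B\simeq C$, using that deleting the collar $V(\sigma_i)$ preserves homotopy type) yields $\chi(X)=\chi(X_1)+\chi(X_2)-\chi(C)$, and the analogous $\chi(X)=\chi(X')-\chi(C)$ for a self-sum. On the Chern side I would invoke Lemma~\ref{lem:SumChern} to express $K_X^2$ and $c_2(X)$ in terms of the summands and of data along $C$; the target identity is $K_X^2+c_2(X)=\big(K_{X_1}^2+K_{X_2}^2\big)+\big(c_2(X_1)+c_2(X_2)\big)-12\,\chi(C)$, which makes the $C$-corrections cancel in $\lambda$ and gives $\lambda(X)=\lambda(X_1)+\lambda(X_2)$ (resp. $\lambda(X)=\lambda(X')$). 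Granting this, the theorem follows by induction from the base case. The hard part is precisely this reconciliation: as the boundary curves $C_i$ are glued into a single interior curve $C$, the $0$-skeleton vertices change sedentarity order (from $1$ and $2$ in the $X_i$ to $0$ in $X$), so their $c_2$-multiplicities switch between cases (2),(3) and (1) of Lemma~\ref{lem:Chern2mult} while the canonical weights along $C$ shift correspondingly. I expect to package these local changes via the Tropical Adjunction Formula (Theorem~\ref{thm:adjunction}), which rewrites $K_X\cdot C+C^2$ in terms of $b_1(C)$ and hence of $\chi(C)=1-b_1(C)$; the normalization $C_1^2=-C_2^2$ forced by the orientation-reversing bundle isomorphism $g$ is what cancels the self-intersection contributions and collapses the total correction to the single term $12\,\chi(C)$.
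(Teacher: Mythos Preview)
Your strategy is exactly the paper's: verify Noether on toric surfaces, then show the defect $\lambda$ is unchanged by modification (via Corollary~\ref{cor:canMod}, Lemma~\ref{cor:chern2mod}, and the homotopy equivalence) and by summation (via Lemma~\ref{lem:SumChern} and Mayer--Vietoris). Your base case and modification step are correct and more explicit than the paper's.

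For the sum, however, you manufacture a difficulty that is not there. Lemma~\ref{lem:SumChern} already states the precise corrections: $c_2(X)=c_2(X_1)+c_2(X_2)+2K_C$ and $K_X^2=K_{X_1}^2+K_{X_2}^2+4K_C$. Adding these gives a total correction of $6\deg(K_C)$, and for any compact tropical curve $\deg(K_C)=\sum_v(\mathrm{val}(v)-2)=2|E|-2|V|=-2\chi(C)$, so the correction is $-12\chi(C)$ on the nose. No adjunction, no tracking of sedentarity changes, no use of $C_1^2=-C_2^2$ is needed at this stage; those ingredients live inside the proof of Lemma~\ref{lem:SumChern}, which you are already citing as a black box. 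Note also that your picture of the sum is off: in $X_1\#X_2$ the curves $C_i$ are not glued into an interior curve, the neighborhoods $V(\sigma_i)$ containing them are excised and the sections $\sigma_i$ are identified, so the $0$-skeleton vertices of $C_i$ simply disappear rather than change sedentarity.
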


 To prove the above theorem we require a series of lemmas. 
 
\begin{lemma}\label{lem:minusprods}
Suppose $X$ is a tropical sum of $X_1$ and $X_2$ along the irreducible boundary divisors $C_1 \subset X_1$ and $C_2 \subset X_2$ then,
\begin{enumerate}
 \item{$ (K^o_X)^2 = (K^o_{X_1})^2 + (K^o_{X_2})^2$;}
 \item{$ K^o_X \cdot \partial X = K^o_{X_1}\cdot\partial X_1 + K^o_{X_2}\cdot \partial X_2 - 2 \sum_{v \in V(C_1^o)}(\val_{C_1}(v)-2)$;}
\item{$ C_1^2 =  - C_2^2$.}
 \end{enumerate}

\end{lemma}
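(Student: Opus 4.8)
The plan is to take part (3) directly from the normal-bundle definition of self-intersection, and to prove parts (1) and (2) by exploiting that the sum of Definition \ref{def:SUM} alters $X_1$ and $X_2$ only inside the neighborhoods $V(\sigma_1),V(\sigma_2)$ of $C_1,C_2$; consequently every local intersection contribution coming from a point away from $C$ is literally the one computed in $X_1$ or in $X_2$, and only the behaviour along $C$ needs new input. For part (3): by Definition \ref{def:boundaryintersection}, $C_i^2=\deg(\sigma_i)$ for a section $\sigma_i$ of $\pi:N_{X_i}(C_i)\to C_i$, and by \cite[Lemma 1.20]{AllBun} this degree depends only on the isomorphism class of the bundle in $\text{Pic}(C)$. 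The sum is built from an isomorphism $g:N_{X_1}(C)\to N_{X_2}(C)^{-1}$, and since line bundles inverse to each other in $\text{Pic}(C)$ have opposite degrees (the inverse bundle is given by the negated transition functions), I conclude $C_1^2=\deg N_{X_1}(C)=\deg N_{X_2}(C)^{-1}=-\deg N_{X_2}(C)=-C_2^2$.

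The geometric input for (1) and (2) is a description of $K_X$ along $C$. After deleting $V(\sigma_1),V(\sigma_2)$ and gluing the two $\R$-bundle halves by the orientation-reversing map $\tilde g$, the curve $C$ becomes an interior curve and the fibre direction becomes a full line $\R$. First I would record the valencies: along the interior of an edge of $C$ the surface is $\R\times\R$ with $C$ the middle line, so that edge has valency $2$ and hence weight $0$ in $K_X$; thus $C$ contributes to $K^o_X$ only at its vertices. At an interior vertex $v$ of $C$ of valency $k=\val_C(v)$ the local model is $L_v\times\R$, with $L_v$ the fan tropical line with $k$ rays and $\R$ the doubled fibre, a fan of case (3)/lineality type in Corollary \ref{cor:missinglines}; here $K^o_X$ is the fibre line carrying weight $k-2$, lying in the lineality space.

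Part (1): the self-intersection of a $1$-cycle is the sum of local contributions over the sedentarity-$0$ vertices (Definitions \ref{intbddy}, \ref{def:localInt}), and the sedentarity-$0$ vertices of $X$ are exactly the interior vertices of $X_1$, those of $X_2$, and the (now interior) vertices of $C$. At an interior vertex of $X_1$ or $X_2$ the local model is unchanged by the sum, so these reproduce $(K^o_{X_1})^2+(K^o_{X_2})^2$. At each interior vertex $v$ of $C$ the local model $L_v\times\R$ contains a lineality space, so $(K^o_X\cdot K^o_X)_v=K^2_{L_v\times\R}=0$ by Proposition \ref{prop:localdeg}(1). Summing yields (1) with no correction term.

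Part (2): write $\partial X=\sum_i D_i+\sum_j D'_j$ with $D_i$ inherited from $X_1$ and $D'_j$ from $X_2$. Since each $D_i$ lies entirely in the $X_1$-part and $K^o_X$ agrees there with $K^o_{X_1}$, locality of the boundary intersection product gives $K^o_X\cdot D_i=K^o_{X_1}\cdot D_i$, whence $\sum_i K^o_X\cdot D_i=K^o_{X_1}\cdot\partial X_1-K^o_{X_1}\cdot C_1$ and symmetrically for $X_2$. It remains to evaluate $K^o_{X_i}\cdot C_i$: by Definition \ref{2bddy}(1) each interior vertex $v$ of $C_i$ carries a single interior (fibre) edge of $K^o_{X_i}$ of weight $\val_{C_i}(v)-2$, contributing that amount, while under the simple normal crossings hypothesis of Definition \ref{def:SUM} each leaf of $C_i$ meets exactly one other boundary curve, so by the Euler-characteristic computation (\ref{C.K^o}) it contributes $0$; thus $K^o_{X_i}\cdot C_i=\sum_{v\in V(C_i^o)}(\val_{C_i}(v)-2)$. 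Combining the two sides and using $\val_{C_1}=\val_{C_2}$ gives (2). I expect the main obstacle to be precisely these localization claims near the leaves of $C$: that the valency, hence weight, of an interior edge abutting $C$ is unchanged by the gluing, and that $K^o_X\cdot D_i=K^o_{X_1}\cdot D_i$ persists at the new corners $D_i\cap D'_j$ produced by the sum, where the $X_2$-part is nearby; establishing these cleanly requires writing the glued fan-plane model at such a corner and checking, via Definition \ref{2bddy}(2), that the $X_2$-side edges pair trivially with the direction transverse to $D_i$.
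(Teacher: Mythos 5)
Your proposal is correct and follows essentially the same route as the paper's proof: locality of the intersection products away from the seam for (1) and (2), the vertex count $\val_{C}(v)-2$ along $C$ supplying the correction term in (2), and the orientation-reversing bundle isomorphism (equivalently, that inverse bundles in $\text{Pic}(C)$ have sections of opposite degree) for (3). If anything you are more explicit than the paper, which compresses your lineality-space argument at the seam vertices (via Proposition \ref{prop:localdeg}(1)) and your corner checks near the leaves of $C$ into the brief remark that every contributing point lies in exactly one of the skeletons $X_1^{(0)}$ or $X_2^{(0)}$ and that the remaining expressions agree.
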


\begin{proof}
The intersection $(K^o_X)^2$ is supported on points of $X^{(0)}$ that are of  sedentarity $\emptyset$ and also points of sedentarity, which are the intersection of more than $2$ boundary divisors of $X$. Such a  point is in exactly one of the skeletons  $X^{(0)}_1$ or $X^{(0)}_2$ 
by the assumption that $C_1, C_2$ have simple normal crossings with the other boundary divisors of $X_1$ and $X_2$ respectively. Therefore  $(1)$ follows from local intersection multiplicities.

For each of the surfaces there is the equality, 
$$K^o_X \cdot \partial X = \sum_{D \in \A} \sum_{v \in V(D^o)} (\val_D(v)-2) + \sum_{x \ s.t. \ S(x) = 2} m_x(K^o_X \cdot \partial X).$$
When $X$ is a tropical sum of $X_1$ and $X_2$ there is a contribution of $ \val_{C_i}(v)-2$ for an internal vertex  of $C_i$ to  $K^o_{X_i} \cdot  \partial X_i$ for $i = 1, 2$ which does not appear in $K_X^2 \cdot \partial X$, otherwise the expressions are the same. 
Since $C_1$ and $C_2$ are isomorphic the equality in $(2)$ follows.

Part $(3)$ follows simply from Definition \ref{def:boundaryintersection} and that  the sections  $\sigma, \sigma^{\prime}$ of the normal bundles of $C_1$, $C_2$ respectively are related by  $\sigma^{\prime} =  g\circ (\sigma)$ where $g$ is orientation reversing. 
\end{proof}

\begin{lemma}\label{lem:sumsof2}
Suppose $X$ is  a tropical sum of $X_1, X_2$ along boundary curves $C_1 \subset X_1$, $C_2 \subset X_2$. Denote the boundary arrangement of $X_i$ by  $\A_i$ for $i=1, 2$ and the boundary arrangement of $X$ by $\A_X$. Then, 
$$(\sum_{\tilde{D}_i \in \A_X} \tilde{D}_k) ^2 = (\sum_{E_i \in \A_1} E_i)^2 + (\sum_{F_j \in \A_2} F_j)^2 - 4|l (C)| , $$
where $l(C)$ is the set of leaves of $C \cong C_i$. 
\end{lemma}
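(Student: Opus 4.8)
The plan is to decompose the total boundary divisor of each summand into the gluing curve plus the rest, expand the self-intersections, and then match contributions across the sum. Write $\partial X_1 = C_1 + B_1$ where $B_1 = \sum_{E_i \in \A_1,\, E_i \neq C_1} E_i$, and similarly $\partial X_2 = C_2 + B_2$ with $B_2 = \sum_{F_j \in \A_2,\, F_j \neq C_2} F_j$. Expanding gives $(\partial X_i)^2 = C_i^2 + 2\, C_i \cdot B_i + B_i^2$. First I would compute $C_i \cdot B_i$: since $C_i$ has simple normal crossings with the other boundary curves, every point of $C_i \cap B_i$ is a corner (sedentarity-$2$) point, and in the local model $\T^2$ each such corner is a leaf of $C_i \cong \T = [-\infty,\infty)$; conversely every leaf of $C_i \cong C$ sits at such a corner meeting exactly one other boundary curve. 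Hence $C_i \cdot B_i = |l(C)|$ by the definition of intersection of boundary divisors. Invoking Lemma \ref{lem:minusprods}(3) to get $C_1^2 + C_2^2 = 0$, the entire claim reduces to the single identity $(\partial X)^2 = B_1^2 + B_2^2$.

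To establish $(\partial X)^2 = B_1^2 + B_2^2$ I would argue that all intersection contributions are local and track what the sum does near $C$. Away from a neighborhood of $C$ the surface $X$ is identical to $X_1$ and $X_2$, so the self-intersections of boundary curves untouched by the sum, together with all cross terms $E_i \cdot E_{i'}$ and $F_j \cdot F_{j'}$ at corners \emph{not} lying on $C$, are unchanged. Under the sum the curve $C$ becomes interior, while the boundary curves $D^{(1)} \subset X_1$ and $D^{(2)} \subset X_2$ meeting $C_1, C_2$ at corresponding leaves are glued end-to-end into single boundary curves $\tilde D$ of $X$, the former corner becoming an interior vertex of $\tilde D$. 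The identity then comes down to the additivity $\tilde D^2 = (D^{(1)})^2 + (D^{(2)})^2$ for these glued curves.

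This additivity is where the main work lies, and I expect it to be the chief obstacle. I would prove it by computing $\deg(\sigma)$ for a section $\sigma$ of $N_X(\tilde D)$ chosen to restrict to sections of $N_{X_1}(D^{(1)})$ and $N_{X_2}(D^{(2)})$; such a compatible section exists because the gluing map $\tilde g$ reverses the normal direction to $C$ but preserves the normal direction to $\tilde D$. The degree is a sum of balancing weights over the vertices of the graph of $\sigma$, and every vertex away from the gluing point contributes exactly as in $X_1$ and $X_2$. At the glued vertex $l$ — a leaf of $D^{(1)}$ and of $D^{(2)}$, but an interior vertex of $\tilde D$ — I would carry out the local balancing: writing the two incoming edges of the section graph in base--fibre coordinates with slopes $a$ and $b$ and common weight $w$, the interior balancing weight is $w(a-b)$, whereas the two leaf balancing weights in $X_1$ and $X_2$ are $wa$ and $-wb$. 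The sign in the second term comes precisely from the orientation-reversal of the $C$-normal direction under $\tilde g$, which flips the base coordinate on the $X_2$ side, so their sum $wa + (-wb) = w(a-b)$ matches the interior weight and yields the additivity.

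Finally I would assemble the pieces: combining $(\partial X_i)^2 = C_i^2 + 2|l(C)| + B_i^2$ with $C_1^2 + C_2^2 = 0$ and $(\partial X)^2 = B_1^2 + B_2^2$ gives $(\partial X)^2 = (\partial X_1)^2 + (\partial X_2)^2 - 4|l(C)|$, which is the assertion. The delicate points to verify carefully are that a leaf of a boundary curve requires balancing of the section graph only in the fibre (normal) direction, since the base curve terminates there, and that $\tilde g$ flips exactly the base coordinate transverse to $\tilde D$ at $l$; these are the same phenomena already responsible for the sign in Lemma \ref{lem:minusprods}(3).
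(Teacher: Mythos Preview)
Your overall strategy is sound and gives a different route than the paper's. The paper moves all boundary curves of $X$ to rationally equivalent sedentarity-$\emptyset$ cycles via sections, cuts the resulting cycle $\mathbf D$ along the section $\sigma$ used in the tropical sum, and shows that the two halves are rationally equivalent in $X_i$ to $\partial X_i - C_i$; the identity then drops out of $C_1^2 + C_2^2 = 0$ and $(\partial X_i - C_i)\cdot C_i = |l(C)|$. You instead expand $(\partial X_i)^2 = (C_i + B_i)^2$ algebraically, reduce everything to the single identity $(\partial X)^2 = B_1^2 + B_2^2$, and prove that by additivity of normal-bundle degrees under gluing. Both methods ultimately rest on the same section-of-normal-bundle picture, but yours isolates the geometric content (additivity of $\tilde D^2$) more cleanly and avoids the rational-equivalence bookkeeping.

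There is, however, a genuine slip in your local analysis at the glued point $l$. You invoke ``leaf balancing weights'' $wa$ and $-wb$ for the sections $\sigma^{(i)}$ in $X_i$, but a leaf of $D^{(i)}$ is a sedentarity point of $N_{X_i}(D^{(i)})$ and carries no balancing condition: by the construction preceding Definition~\ref{def:boundaryintersection} the degree $\deg(\sigma^{(i)})$ sums weights only over interior non-balanced vertices of the section graph, which simply terminates at sedentarity over a leaf. So your identity $w(a-b) = wa + (-wb)$ matches the kink weight at $l$ against two quantities that are in fact both zero. The correct and simpler argument is this: choose the section $\sigma$ of $N_X(\tilde D)$ to have slope $0$ near $l$ --- you may always do so, and indeed this is forced if the restricted sections are to extend to bounded-below sections of $N_{X_i}(D^{(i)})$ over the leaf. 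Then there is no balancing vertex at $l$ for $\sigma$, the constant extensions of $\sigma|_{V_i}$ to the leaves introduce no new vertex either, and $\deg(\sigma) = \deg(\sigma^{(1)}) + \deg(\sigma^{(2)})$ is immediate since every balancing vertex of $\sigma$ lies in exactly one $V_i$ with unchanged local picture. With this fix your argument goes through; you should also remark that a single $\tilde D_k$ may be glued from more than two pieces (when some $E_i$ meets $C_1$ at several leaves), but only the summed identity $\sum_k \tilde D_k^2 = \sum_i E_i^2 + \sum_j F_j^2$ is needed and the same section argument yields it.
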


\begin{proof}

The section $\sigma$ from the construction of the tropical sum is a subset of $X$. 
For every $\tilde{D}_k \subset \A_X$, we can find a cycle $\tilde{D}^{\prime}_k$ rationally equivalent to $\tilde{D}_k$ and contained in a neighborhood of $\tilde{D}_k$, by taking a section in the sense of Definition \ref{def:boundaryintersection} and completing it to a tropical cycle. We can also insist that $\tilde{D}_k$  intersects $\tilde{D}'_k$ in a finite number of 	points 
in the interior of $1$ dimensional faces of both $\tilde{D}'_k$ and $\tilde{D}_k$ and in the interior of a top dimensional stratum  of $X$. We can also suppose that at each point $x \in \tilde{D}_k \cap \tilde{D}^{\prime}_k$ the tropical intersection multiplicity is $1$, and similarly for the intersection with $\tilde{D}^{\prime}_k$ and $\sigma$. 
Denote  $\textbf{D} = \sum \tilde{D}^{\prime}_k$.

The two connected components $V_1, V_2$ of  $X \backslash \sigma$ are identified with open subsets of $X_1$ and $X_2$ respectively. By  intersecting $\textbf{D}$ with  $V_1$ and extending any open  rays of $\textbf{D} \cap V_1$ to the rest of  $X_1$ in the direction of $C_1 $ we produce a cycle $\textbf{E}$  rationally equivalent to $\sum_{E_i \in \A_1} E_i- C_1$. Moreover, $\textbf{E}. C_1 = |l (C_1)|$ since $C_1$ has simple normal crossings with the rest of the divisors of $X_1$ and the cycles $\tilde{D}^{\prime}_k$ are chosen so that they had intersection multiplicity $1$ with $\sigma$.  The analogous statements hold in $X_2$  using $\textbf{F}$ to denote the cycle $\sum_{F_j \in \A_2} F_j- C_2$. 

By the local intersection multiplicites for cycles of sedentarity $\emptyset$,  we obtain $\textbf{D}^2  = \textbf{E}^2 + \textbf{F}^2$. Now,  $ (\sum_{E_i \in \A_1} E_i)^2   = (\textbf{E} + C_1)^2$ 
and similarly $ (\sum_{F_j \in \A_1} F_j)^2   = (\textbf{F} + C_2)^2$. 
Using that $C_1^2 = -C^2_2$ and $\textbf{E} \cdot C_1 = \textbf{F} \cdot C_2 = |l (C_1)|$, the lemma then follows by equivalence of intersection numbers for rationally equivalent cycles from Proposition \ref{cor:equivInt}.
\end{proof}

\begin{lemma}\label{lem:SumChern}
Let $X_1, X_2$ be compact tropical surfaces and $C_1 \subset X_1$, $C_2 \subset X_2$ boundary curves such that the tropical sum along $C_1, C_2$ exists. Suppose $X = X_1 \#_f X_2$, then
\begin{enumerate}
\item $c_2(X) = c_2(X_1) + c_2(X_2) + 2K_C,$;
\item  $K_X^2  =   K_{X_1}^2 + K_{X_2}^2 + 4K_C$,
\end{enumerate}
where $K_C$ is the canonical class of the curve $C \cong C_i$. 
\end{lemma}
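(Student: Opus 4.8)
The plan is to treat both identities as statements about sums of local contributions over the $0$-skeleton $X^{(0)}$, and to isolate the contributions coming from the glued curve $C$, since away from a neighborhood of $C$ the sum $X$ agrees chart-by-chart with $X_1$ or with $X_2$, so the local models (and hence all local multiplicities) there are unchanged. For the second identity this reduces to a purely formal manipulation of Lemmas \ref{lem:minusprods} and \ref{lem:sumsof2}, whereas the first requires a direct computation of the Chern multiplicity at the vertices of $C$ using Lemma \ref{lem:Chern2mult}. Throughout I read $K_C$ as the degree $\deg(K_C)=\sum_{v}(\val_C(v)-2)$, the sum running over all vertices of $C$, leaves included.

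For part $(2)$ I would first write $K_X = K_X^o - \partial X$ and expand $K_X^2 = (K_X^o)^2 - 2\,K_X^o\cdot\partial X + (\partial X)^2$, and likewise for $X_1$ and $X_2$. Substituting Lemma \ref{lem:minusprods}$(1)$ for $(K_X^o)^2$, Lemma \ref{lem:minusprods}$(2)$ for $K_X^o\cdot\partial X$, and Lemma \ref{lem:sumsof2} for $(\partial X)^2$, and regrouping the terms belonging to $X_1$ and to $X_2$ into $K_{X_1}^2$ and $K_{X_2}^2$, the leftover correction is $4\sum_{v\in V(C^o)}(\val_C(v)-2) - 4|l(C)|$. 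Since each leaf of $C$ satisfies $\val_C(\cdot)-2 = -1$, this correction equals $4\sum_{v}(\val_C(v)-2)=4K_C$, so part $(2)$ is essentially bookkeeping once the two previous lemmas are in hand.

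For part $(1)$ I would partition $X^{(0)}$ into interior points of $X_1$, interior points of $X_2$, and the points lying on $C$, the first two groups contributing to $c_2(X)$ exactly as to $c_2(X_1)$ and $c_2(X_2)$. It then remains to compare, at each point of $C$, the multiplicity in $X$ with the multiplicities in $X_1$ and $X_2$. An internal vertex $v$ of $C$ of valency $d=\val_C(v)$ is a point of sedentarity $1$ in each $X_i$, contributing $2-d$ by Lemma \ref{lem:Chern2mult}$(2)$; in $X$ it becomes an interior point whose local model has support $\R\times L$, with $L=\trp(U_{2,d})$ the link of $v$ in $C$, so the underlying matroid is $U_{2,d}\oplus\{e\}$. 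Evaluating the reduced characteristic polynomial as in the proof of Lemma \ref{lem:Chern2mult} gives $\overline{\chi}_{U_{2,d}\oplus\{e\}}(1)=0$, hence $m_v(c_2(X))=0$ and the net change at $v$ is $2(d-2)$. A leaf $l$ of $C$ is a sedentarity-$2$ corner of each $X_i$ (multiplicity $1$) and becomes a smooth sedentarity-$1$ boundary point of $X$ with $\val(l)=2$ (multiplicity $0$), so its net change is $-2=2(\val_C(l)-2)$. Summing, the total change is $2\sum_{v}(\val_C(v)-2)=2K_C$, as required.

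The main obstacle I anticipate is the local analysis along $C$ in part $(1)$: one must justify that, after the sum, the local model of $X$ at an internal vertex $v$ of $C$ is indeed the product $\R\times L$ (so that the normal bundle twisting does not alter the support, and Theorem \ref{thm:MatEquiv} pins down the matroid as $U_{2,d}\oplus\{e\}$), and that at a leaf the two boundary divisors of $X_1$ and $X_2$ meeting $C$ are glued into a single smooth boundary divisor of $X$ (so that $l$ acquires valency $2$ rather than remaining a corner, which is also what forces the $4|l(C)|$ correction in Lemma \ref{lem:sumsof2}). Both facts follow from the construction of the tropical sum via the normal bundle in Definition \ref{def:SUM}, and once they are established the evaluation $\overline{\chi}_{U_{2,d}\oplus\{e\}}(1)=0$ is the one genuine computation in the argument.
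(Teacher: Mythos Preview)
Your proposal is correct and follows essentially the same route as the paper: part $(2)$ is obtained by expanding $K_X=K_X^o-\partial X$ and plugging in Lemmas \ref{lem:minusprods} and \ref{lem:sumsof2}, and part $(1)$ is the vertex-by-vertex comparison of $c_2$ multiplicities along $C$. The paper compresses part $(1)$ into a single line (``a comparison of vertices''), while you spell out the two cases via Lemma \ref{lem:Chern2mult}; your computation $\overline{\chi}_{U_{2,d}\oplus\{e\}}(1)=0$ is a valid way to see the interior-vertex contribution vanishes (equivalently, the local model $\R\times L$ has a lineality space so the point drops out of $X^{(0)}$), and the appeal to Theorem \ref{thm:MatEquiv} is harmless but not needed, since the $c_2$ multiplicity depends only on the support of the fan.
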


\begin{proof}
A comparison of vertices in $X_1$, $X_2$ and $X$ gives the  first formula for $c_2(X)$ 
\begin{eqnarray*}
c_2(X) & = & c_2(X_1) + c_2(X_2) - 2 \sum_{v \in V(C)} (2-\text{val}_C(v)) \\ 
&= & c_2(X_1) + c_2(X_2) + 2K_C,
\end{eqnarray*}
where  $V(C)$ is the vertex set of the curve $C$ and $\text{val}_C(v)$ is the valency of a vertex. 

Recall that  $K_X = K_X^o - \partial X$ and apply  Lemmas \ref{lem:minusprods} and  \ref{lem:sumsof2}  to obtain:
\begin{eqnarray*}
 K_X^2 - (K_{X_1}^2 +K_{X_2}^2 ) & = & -  2[  \sum E_i \cdot C_1 + \sum F_i \cdot C_2 - K^o_{X_1} \cdot C_1 - K^o_{X_2} \cdot C_2 ] \\
& = & 4\sum_{v \in V(C)} (\text{val}_C(v) -2) \\
&=& 4K_C.
\end{eqnarray*}
This proves the lemma. 
\end{proof}

A similar sequence of lemmas can be proved when $X$ is the result of a self-sum of a surface $X'$ as  in Example \ref{Example:SelfSum}. We summarize the statement in a single lemma below. The proof is omitted since it follows the same lines as for the sum of distinct surfaces. 
  
\begin{lemma}\label{lem:selfsumChern}
Let $X$ be the result of the self-sum of a tropical surface $X'$ along disjoint boundary curves $C_1, C_2$, then 
\begin{enumerate}
\item $c_2(X) = c_2(X') + 2K_C,$;
\item  $K_X^2  =   K_{X'}^2 + 4K_C$,
\end{enumerate}
where $K_C$ is the canonical class of the curve $C \cong C_i$. 
\end{lemma}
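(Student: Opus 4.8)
The plan is to follow the proof of Lemma \ref{lem:SumChern} almost verbatim, the only difference being that the roles of $X_1$ and $X_2$ are now played by a single surface $X'$ carrying both of the disjoint boundary curves $C_1$ and $C_2$. Since $C_1$ and $C_2$ are disjoint, their neighborhoods $V(\sigma_1)$ and $V(\sigma_2)$ can be taken disjoint, so the removal-and-glue operation of Definition \ref{def:SUM} is carried out exactly as in the two-surface case and every relevant computation stays local near the gluing locus.

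First I would record the self-sum analogues of Lemmas \ref{lem:minusprods} and \ref{lem:sumsof2}. For the analogue of Lemma \ref{lem:minusprods}, every point contributing to $(K^o_X)^2$ lies in the sedentarity-$0$ skeleton or at a crossing of boundary divisors of $X'$ away from $C_1, C_2$, so $(K^o_X)^2 = (K^o_{X'})^2$; the internal vertices of both $C_1$ and $C_2$ each contribute $\sum_{v}(\val_C(v)-2)$ to $K^o_{X'}\cdot\partial X'$ and disappear after gluing, giving $K^o_X\cdot\partial X = K^o_{X'}\cdot\partial X' - 2\sum_{v\in V(C^o)}(\val_C(v)-2)$; and $C_1^2 = -C_2^2$ still holds because the bundle isomorphism $g$ is orientation reversing. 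For the analogue of Lemma \ref{lem:sumsof2}, the same rationally equivalent representatives of the boundary divisors can be chosen in disjoint neighborhoods of the two $C_i$, yielding $(\sum_{\tilde D_k\in\A_X}\tilde D_k)^2 = (\sum_{G_i\in\A_{X'}}G_i)^2 - 4|l(C)|$.

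With these in hand, part $(1)$ follows from the vertex comparison afforded by Lemma \ref{lem:Chern2mult}: the internal vertices of $C_1$ and $C_2$, each contributing $2-\val_C(v)$ to $c_2(X')$, merge into interior vertices of $X$ after gluing, and the net change is $-2\sum_{v\in V(C)}(2-\val_C(v)) = 2K_C$, so that $c_2(X) = c_2(X') + 2K_C$. Part $(2)$ then comes from writing $K_X = K^o_X - \partial X$, expanding $K_X^2$, and substituting the self-sum versions of the two lemmas above exactly as in the proof of Lemma \ref{lem:SumChern}, which gives $K_X^2 - K_{X'}^2 = 4\sum_{v\in V(C)}(\val_C(v)-2) = 4K_C$.

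The one genuine point to verify — and the step I expect to require the most care — is the claim that the whole argument is purely local around the gluing curve and hence insensitive to whether $C_1, C_2$ sit in one surface or in two. Concretely, one must confirm that the disjointness of $C_1$ and $C_2$ in $X'$ allows the sections $\sigma_1, \sigma_2$, their neighborhoods, and the auxiliary cycles $\tilde D'_k$ of Lemma \ref{lem:sumsof2} to be chosen with mutually disjoint supports, so that no internal vertex or intersection point is counted twice and no cancellation occurs between the two gluing regions. Once this separation is in place, every local multiplicity computation is identical to the distinct-surface case and both formulas follow.
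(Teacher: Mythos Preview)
Your proposal is correct and follows exactly the approach the paper indicates: the paper omits the proof of this lemma, stating only that ``it follows the same lines as for the sum of distinct surfaces,'' and you carry out precisely that adaptation of Lemma~\ref{lem:SumChern} together with the self-sum analogues of Lemmas~\ref{lem:minusprods} and~\ref{lem:sumsof2}. Your care about disjointness of the neighborhoods of $C_1$ and $C_2$ is the right point to flag, and nothing further is needed.
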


The next corollary immediately follows the above lemma.

\begin{corollary}\label{cor:Sum}
Let $X_1, X_2$ be compact tropical surfaces and $C_1 \subset X_1$, $C_2 \subset  X_2$  irreducible boundary curves such that the tropical sum $X$ along $C_1, C_2$ exists. Then, 
$$K_X^2 - 2c_2(X) = K_{X_1}^2 + K_{X_2}^2 - 2[ c_2(X_1) + c_2(X_2)].$$
\end{corollary}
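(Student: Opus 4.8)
The plan is to combine the two formulas of Lemma \ref{lem:SumChern} directly, with no further geometric input required. Both the top Chern number and the self-intersection of the canonical class acquire, under the tropical sum, a correction term proportional to $K_C$, and the entire purpose of forming the particular combination $K_X^2 - 2c_2(X)$ is that these corrections are calibrated so as to cancel. Concretely, I would take part $(2)$ of Lemma \ref{lem:SumChern}, namely $K_X^2 = K_{X_1}^2 + K_{X_2}^2 + 4K_C$, and subtract twice part $(1)$, which reads $2c_2(X) = 2c_2(X_1) + 2c_2(X_2) + 4K_C$. Since $4K_C - 4K_C = 0$, the curve-canonical-class terms disappear and one is left with $K_X^2 - 2c_2(X) = K_{X_1}^2 + K_{X_2}^2 - 2[c_2(X_1) + c_2(X_2)]$, which is precisely the asserted identity.

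There is essentially no obstacle to surmount here: the corollary is a one-line linear manipulation of Lemma \ref{lem:SumChern}, and all of the genuine work has already been discharged in the proofs of Lemmas \ref{lem:minusprods}, \ref{lem:sumsof2}, and \ref{lem:SumChern}, where the coefficients $2$ and $4$ in front of $K_C$ were computed via the local comparison of vertices and intersection numbers between $X_1$, $X_2$, and their sum $X$. The only point worth emphasizing conceptually is that the cancellation exhibits $K_X^2 - 2c_2(X)$ as a quantity that is strictly \emph{additive} under tropical summation, with no residual correction at all; this additivity is presumably the feature that will be exploited when assembling Noether's formula in Theorem \ref{prop:Noether}.

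Finally, I would remark that the identical cancellation goes through for self-sums by invoking Lemma \ref{lem:selfsumChern} in place of Lemma \ref{lem:SumChern}: there one again has $c_2(X) = c_2(X') + 2K_C$ and $K_X^2 = K_{X'}^2 + 4K_C$, so that $K_X^2 - 2c_2(X) = K_{X'}^2 - 2c_2(X')$, the self-sum analogue of the stated corollary. Thus the same proof covers both the distinct-surface and self-sum cases, which together furnish the building blocks needed for the inductive argument establishing Noether's formula on surfaces built from toric pieces by summations and modifications.
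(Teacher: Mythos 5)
Your proposal is correct and matches the paper exactly: the paper states that the corollary "immediately follows" from Lemma \ref{lem:SumChern}, and the intended argument is precisely your one-line cancellation $4K_C - 2(2K_C) = 0$ obtained by subtracting twice part $(1)$ from part $(2)$. Your additional remark about the self-sum case via Lemma \ref{lem:selfsumChern} is also consistent with how the paper uses these results in the proof of Theorem \ref{prop:Noether}.
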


Hirzebruch's signature formula states that for a compact complex surface $\X$, 
$$3\textbf{Sign}(\X)  = K^2_{\X} - 2c_2(\X),$$ 
where $\textbf{Sign}(\X)$ is the signature of the intersection form on $H^2(\X)$. Once again, this formula is a special case of the Hirzebruch-Riemann-Roch formula applied to the exterior algebra bundle of the cotangent bundle of a manifold, see Section $2$ of \cite{Hirzebruch:Signature}.  By Novikov additivity, the signature of $4$-manifolds is additive under cobordism \cite{Kirby:4Manifolds}, and the above corollary shows that tropically the  right hand side of Hirzebruch's formula is also additive under taking tropical sums of surfaces. This leads to the following conjecture.

\begin{conj}\label{conj:Hirz}
A compact tropical surface $X$ satisfies 
$$3\textbf{Sign}_{1, 1}(X) = K^2_{X} - 2c_2(X),$$
where $\textbf{Sign}_{1, 1}(X) $ is the signature of the intersection form on $H_{1, 1}(X)$. 
\end{conj}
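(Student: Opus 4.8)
The plan is to prove the identity by reducing both sides to the tropical Hodge numbers $h_{p,q}(X) = \dim_{\R} H_{p,q}(X;\R)$ and then matching them through a single Hodge-theoretic relation, imitating the classical derivation of Hirzebruch's formula from the Hodge decomposition. Classically the full signature of the form on $H^2$ equals $\sum_{p,q}(-1)^q h^{p,q}$, and the Hodge index theorem splits it into a positive part $1+2h^{2,0}$ and a negative part $h^{1,1}-1$ inside $H^{1,1}$, together with a positive $(2,0)+(0,2)$ part. The examples in the paper, in particular the Klein bottle (where the form is hyperbolic of signature $0$, and indeed $h_{0,0}+h_{2,2}+2h_{2,0}-h_{1,1}=1+1+0-2=0$), suggest that tropically the whole signature is already carried by the $(1,1)$-form. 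The first goal is therefore the \emph{corrected Hodge index statement}
\[
\textbf{Sign}_{1,1}(X) = h_{0,0}(X)+h_{2,2}(X)+2h_{2,0}(X)-h_{1,1}(X),
\]
in which the failure of the naive Hodge index theorem recorded in \cite{ShawHomo} supplies exactly the extra $2h_{2,0}$ positive directions.

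First I would set up the intrinsic computation of $\textbf{Sign}_{1,1}$. Writing $Q$ for the intersection pairing on $H_{1,1}(X;\R)$ from Section~\ref{sec:int1cyc}, which by Proposition~\ref{prop:11equivInt} agrees with the cycle-level product, I would show $Q$ is nondegenerate via a tropical Poincar\'e--Lefschetz duality for $(p,q)$-homology, so that $\textbf{Sign}_{1,1}=b^+-b^-$ with $b^++b^-=h_{1,1}$. The negative part is the easy half: one exhibits an $(h_{1,1}-1)$-dimensional subspace on which $Q$ is negative definite, so the real content is pinning $b^+$ to $1+2h_{2,0}$. For the lower bound I expect to produce $1+2h_{2,0}$ pairwise-orthogonal classes of positive self-intersection from a K\"ahler-type class together with the images of $H_{2,0}\oplus H_{0,2}$ under the eigenwave and polarization maps of \cite{MikZhaEig}; for the upper bound the same duality should forbid a larger positive-definite subspace.

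Next I would express the right-hand side in the same language. The degree of $c_2(X)$ is the sum over the $0$-skeleton of the local multiplicities of Lemma~\ref{lem:Chern2mult}, and a discrete Gauss--Bonnet and inclusion--exclusion over the strata of $X$ rewrites this sum as an alternating sum of face numbers, which I would identify with a combination of Hodge numbers (for $p=0$ this is just $\sum_q(-1)^q h_{0,q}$, the topological Euler characteristic). For $K_X^2$ I would use that $K_X$ is a balanced $1$-cycle (Proposition~\ref{prop:canonicalBal}), pass to its class $[K_X]$ via Proposition~\ref{prop:11equivInt}, and compute $K_X^2=Q([K_X],[K_X])$. Matching $3\,\textbf{Sign}_{1,1}$ against $K_X^2-2c_2$ then becomes a numerical identity among the $h_{p,q}$, equivalent (given the still-conjectural tropical Noether formula) to the tropical analogue of $\textbf{Sign}=\sum_{p,q}(-1)^q h^{p,q}$.

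Two consistency checks would run in parallel and fix all normalizations. Both sides are additive under the tropical sum: the right-hand side by Corollary~\ref{cor:Sum}, and the left-hand side by a tropical Novikov additivity proved through the Mayer--Vietoris sequence of Section~\ref{sec:construction} applied to the two halves $X_1\setminus C$, $X_2\setminus C$ of $X=X_1\# X_2$ with $A\cap B\simeq N^o(C)$. Both are invariant under modification, since $K_X^2$ is preserved by Corollary~\ref{cor:canMod}, $\deg c_2$ by Lemma~\ref{cor:chern2mod}, and $H_{1,1}$ together with its pairing by the modification-invariance theorem of Section~\ref{sec:modGlobal} combined with the equality of intersection numbers in Corollary~\ref{cor:IntModNum}. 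This reproves the identity on the class generated by toric surfaces and verifies the coefficients against the Klein-bottle computation above. The main obstacle is squarely the corrected Hodge index statement for \emph{arbitrary} compact tropical surfaces: there is no K\"ahler package or Hodge--Riemann bilinear relation available in general, the surfaces need not be orientable or approximable by complex ones, and the paper itself records that the naive positivity fails, so controlling $b^+$ \emph{exactly} rather than merely bounding it is the crux, and is precisely the point at which the statement remains conjectural.
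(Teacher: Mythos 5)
The first thing to say is that this statement is Conjecture~\ref{conj:Hirz}: the paper offers no proof of it at all. The only evidence supplied there is Corollary~\ref{cor:Sum}, showing that $K_X^2 - 2c_2(X)$ is additive under tropical sums, together with the analogy to Novikov additivity of the signature of $4$-manifolds. Your proposal is likewise not a proof, and to your credit you say so: everything funnels into the ``corrected Hodge index statement'' $\textbf{Sign}_{1,1}(X) = h_{0,0}(X) + h_{2,2}(X) + 2h_{2,0}(X) - h_{1,1}(X)$, and nothing in your outline supplies it. Bounding $b^-$ from below and then invoking duality to cap $b^+$ presupposes exactly the Hodge-theoretic control that is absent tropically; the paper explicitly records (via \cite{ShawHomo}) that the naive Hodge-index positivity fails on $H_{1,1}$, so the positivity of your $1 + 2h_{2,0}$ classes built from eigenwave and polarization maps is a hope, not an argument. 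This is the genuine and central gap, and it is precisely why the statement is a conjecture.

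Several supporting steps also exceed what the paper makes available. Nondegeneracy of the pairing on $H_{1,1}(X;\R)$ requires a tropical Poincar\'e duality that is not established here, and the Klein bottle example shows $X$ need not be orientable, so any duality must be for the $\mathcal{F}_p$-coefficient theory rather than borrowed from constant coefficients. Your ``tropical Novikov additivity'' of $\textbf{Sign}_{1,1}$ does not follow from the Mayer--Vietoris sequence alone: classically Novikov additivity needs an algebraic lemma about the kernels and images of the restriction maps on middle homology, and Wall's non-additivity shows that gluings along pieces with corners can destroy it, so the sedentarity-order-$2$ points where $C$ meets the other boundary curves would have to be handled explicitly. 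What your consistency checks would honestly buy, \emph{if} you proved additivity and modification-invariance of the left-hand side (the right-hand side is indeed covered by Corollaries~\ref{cor:Sum} and~\ref{cor:canMod} and Lemma~\ref{cor:chern2mod}), is the exact analogue of Theorem~\ref{prop:Noether}: the signature formula for compact surfaces built from tropical toric surfaces by sums and modifications. That restricted statement is the realistic target of your strategy and would be a genuine contribution; your normalization checks are correct (for the Klein bottle $3\cdot 0 = 0 - 0$, for $\TP^2$ one has $3\cdot 1 = 9 - 2\cdot 3$), but they do not substitute for the index statement on an arbitrary compact tropical surface.
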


\begin{rem}
In the above conjecture we could equivalently replace $\textbf{Sign}_{1, 1}(X)$ by $\textbf{Sign}(X)$ which is the signature of the intersection pairing on $H_{2, 0}(X) \oplus H_{1,1}(X) \oplus H_{0, 2}(X)$. The intersection pairing on $(0, 2)$ and $(2, 0)$ classes is defined in \cite{MikZhaEig}.  

\end{rem}

\begin{proof}[Proof of Theorem  \ref{prop:Noether}]

 For a tropical toric variety we have $\chi(X) = 1$.  The translation of  Noether's formula for toric surfaces to properties of polygons is quite simple and can be found in \cite[Section 4.3]{Ful} 

If $f : \tilde{X} \to X$ is a tropical modification we have  $\chi(\tilde{X}) = \chi(X)$ since $\tilde{X}$ is homotopic to $X$. Combining this with  Lemmas  \ref{cor:canMod} and \ref{cor:chern2mod} proves Noether's formula for $\tilde{X}$.

\comment{
$K_X^2 = K_{\tilde{X}}^2$,  first apply \textcolor{red}{Lemma \ref{EqCanClass}}. Then it is simple to check that, we have $\delta^{\ast}D_i.\delta^{\ast}D_j = D_i.D_j$ and $\delta^{\ast}K^o_X.\delta^{\ast}D_i = K_X^o.D_i$. 
To see that $c_2(X) = c_2(\tilde{X})$, suppose $D \subset X$ is the divisor of the modification $\delta$.  If $x \not \in D^{(0)}$, then the contribution of $x$ to $c_2(X)$ is the same as the contribution of $\delta^{\ast}x$ to  $c_2(\tilde{X})$.  Otherwise, we may check that
$$m_x(X) = m_{\delta^{\ast}x}(\tilde{X}) + m_{x^{\prime}}(\tilde{X}), $$
\textcolor{red}{where $x^{\prime}$ is the point corresponding to $x \in D^{(0)} \subset X$ in the corresponding boundary }divisor $\tilde{D} \subset \tilde{X}$. Therefore, $c_2(X) = c_2(\tilde{X})$ and the tropical Noether formula holds for $\tilde{X}$. }

Suppose $X_1$, $X_2$ both satisfy tropical Noether's formula, and let $X$ denote the sum of $X_1$, $X_2$ along $C_1$, $C_2$ for some choices of $f_1, f_2, g, \sigma_1$. Apply both parts of  Lemma \ref{lem:SumChern} to obtain:
\begin{eqnarray*}
K_X^2 + c_2(X) &= &K_{X_1}^2 + c_2(X_1) + K_{X_2}^2 + c_2(X_2) + 6K_C \\
 & = &  12\chi(X_1) + 12 \chi(X_2) + 6K_C \\
 & = & 12\chi(X_1) + 12 \chi(X_2)  - 12\chi(C),
\end{eqnarray*}
where $C \cong C_i$. 
By the formula for Euler characteristics of non-disjoint unions Noether's formula holds. When $X$ is a self-sum of $X'$ along disjoint boundary curves applying Lemma \ref{lem:selfsumChern} establishes the tropical Noether's formula for the self-sum. 
This proves the theorem.  
\end{proof}

\small
\def\rightmark{\em Bibliography}

\bibliographystyle{alpha}
\bibliography{/homes/combi/shaw/Dropbox/Bib/BiblioMain.bib}

\end{document}